\documentclass[11pt]{article}

\usepackage[utf8]{inputenc}
\usepackage[T1]{fontenc}
\usepackage[a4paper]{geometry}
\usepackage{csquotes}
\usepackage[english]{babel}
\usepackage{amsmath, amsthm}
\usepackage{amsfonts}
\usepackage{amssymb}
\usepackage[all]{xy}
\usepackage{graphicx}
\usepackage{tikz}
\usetikzlibrary{arrows, cd}
\usepackage{wrapfig}
\usepackage{multirow}
\usepackage{a4wide}
\usepackage{enumerate}
\usepackage{hyperref}
\usepackage{stmaryrd}
\usepackage{genyoungtabtikz}
\usepackage{mathtools}
\usepackage{faktor, shuffle}
\usepackage{mathdots}
\usepackage{color}
\definecolor{cyanp}{rgb}{0.0, 0.55, 0.55}
\addtolength{\hoffset}{0cm}
\addtolength{\textwidth}{0cm}
\addtolength{\voffset}{0cm}
\addtolength{\textheight}{0cm}

\usepackage[nottoc, notlof, notlot]{tocbibind}

\allowdisplaybreaks

%\usepackage[backend=bibtex]{biblatex}
%\addbibresource{article.bib}
%\nocite{*}

 %faute d'anglais?
 %modifs

\setcounter{MaxMatrixCols}{50}

\newtheorem{theorem}{Theorem}[section]
\newtheorem{lemma}[theorem]{Lemma}
\newtheorem{proposition}[theorem]{Proposition}
\newtheorem{corollary}[theorem]{Corollary}
\newtheorem{definition}[theorem]{Definition}

\newtheorem{notation}[theorem]{Notation}

\theoremstyle{definition} % remarque et exemples seront en italique
\newtheorem{remark}[theorem]{Remark}
\newtheorem{example}[theorem]{Example}
\numberwithin{figure}{section}
\numberwithin{equation}{section}
\hypersetup{colorlinks=true, citecolor=blue, linkcolor=blue}

% GENERALITES
\newcommand{\eqdef}{\mbox{\ \raisebox{0.2ex}{\scriptsize\ensuremath{\mathrm:}}\ensuremath{=}\ }} % :=
 % =:
 %permet de faire des item rapidement
 %signe
 % ensemble des 1 vérifiant la condition 2
 % groupe des 1 vérifiant la condition 2

%compositions
\newcommand{\Des}{{\operatorname{Des}}}         % Descent set
        % inverse descent set
\newcommand{\cset}{{\operatorname{C}}}          % composition associated to a set
\newcommand{\compof}{\vDash}                    % Composition de
                    % Ordre de raffinement
\newcommand{\lon}{\operatorname{\ell}}          % Longeur
\newcommand{\cont}{\operatorname{cont}}         % Content
\newcommand{\Cont}{\operatorname{cont}}         % Content composition
\newcommand{\interv}[2]{\llbracket #1, #2\rrbracket}

% ALGEBRE
   %Groupes, anneaux, corps
 % any ring of field
 % corps des réels
%\newcommand{\RR}[1]{\mathbb{R}^{#1}} %R^n
\newcommand{\C}{\mathbb{C} } % corps des complexes
\newcommand{\Z}{\mathbb{Z} } % anneaux des entiers relatifs
 % anneaux des entiers modulo n
 % anneaux des entiers modulo n
 % groupes multiplicatifs des entiers modulo n
 % groupes multiplicatifs des entiers modulo n
 % corps des rationnels
\newcommand{\N}{\mathbb{N} } % entiers natures
\newcommand{\F}[1]{\mathbb{F}_{#1}}% corps fini
\newcommand{\Id}{\mathrm{Id}}% corps fini
\newcommand{\Fq}{\F{q}}

 %A_n
\newcommand{\SG}[1]{\mathfrak{S}_{#1}} %S_n
    %Espaces vectoriels, algèbre linéaire

\newcommand{\GL}[2]{\mathbf{GL}_#1 \left( #2 \right)}
 %groupe orthogonal

\newcommand{\emptyw}{\varepsilon}

    % Géométrie

\DeclareMathOperator{\Stell}{Stell}
\DeclareMathOperator{\St}{St}
    % Théorie des groupes

    % Théorie des représentations
\DeclareMathOperator{\Res}{Res}
\DeclareMathOperator{\Ind}{Ind}
\DeclareMathOperator{\Hom}{Hom}
\DeclareMathOperator{\Split}{Split}
    % Arithmétique

	% Topologie

	% Mutisets
%\DeclareMathOperator{\Mult}{m}

	%Monoide
\newcommand{\Hnz}{H_n^0}
\newcommand{\Rnz}{R_n^0}
\newcommand{\Snz}{\Stell_n^0}

	%Théorie de Coxeter
\DeclareMathOperator{\Inv}{Inv} % ensemble d'inversions
 % ensemble de versions
\DeclareMathOperator{\InvB}{\overline{Inv}} % ensemble d'inversions
	%Ordre
\DeclareMathOperator{\supp}{supp} % support
	%Chaines max
\newcommand{\MCR}[1]{\mathcal{MCR}_{#1}} % chain max dans rook monoid
\newcommand{\MCT}[1]{\mathcal{MCT}_{#1}}	% chain max dans Tamari
	%codage
\DeclareMathOperator{\code}{code}
\DeclareMathOperator{\decode}{decode}
\DeclareMathOperator{\PZ}{FZ}
\DeclareMathOperator{\Lehmer}{Lehmer}
\DeclareMathOperator{\CyW}{CycleW}
\DeclareMathOperator{\ChW}{ChainW}
\newcommand{\sfc}{\mathsf{c}}
\newcommand{\sfb}{\mathsf{b}}
\newcommand{\col}[3][black]{\,\scalebox{0.6}{\color{#1}$\begin{array}{ | c | }
     \hline
     #2 \\
     \vdots \\
     #3 \\
     \hline
   \end{array}$\,\,} }
\newcommand{\cols}[3][black]{\,\scalebox{0.6}{\color{#1}$\begin{array}{|| c ||}
     \hline
     #2 \\
     \vdots \\
     #3 \\
     \hline
   \end{array}$\,} }
	%Représentations
\DeclareMathOperator{\rad}{Rad} % radical
\newcommand{\Si}[1]{S_{\lbrace #1 \rbrace}}
	%ev
\newcommand{\ev}[1]{{\left\langle #1 \right\rangle}}
	% Shift
\newcommand\decn[1]{\overline{#1}^{\scalebox{0.7}{n}}}
	% Monoid
\newcommand{\tMonoid}{M}
\newcommand{\idempMon}[1][\tMonoid]{E(#1)} % idempotent of a monoid
	% Green order
\newcommand{\JJ}{\mathcal{J}}
\newcommand{\LL}{\mathcal{L}}
\newcommand{\RR}{\mathcal{R}}
\newcommand{\KK}{\mathcal{K}}
\DeclareMathOperator{\rAut}{rAut}
\DeclareMathOperator{\rfix}{rfix}
\DeclareMathOperator{\lAut}{lAut}
\DeclareMathOperator{\lfix}{lfix}
\newcommand{\meR}{\wedge_\RR} % meet pour le R ordre
	% Mots
\newcommand{\jnR}{\vee_\RR} % join pour le R ordre
	% Mots
\newcommand{\w}[1]{\mathbf{#1}}  % rook vector associé à un element de \Rnz
\newcommand{\un}[1]{\underline{#1}} %quand un element est un mot et non une simple lettre

  % mot sur des générateurs: obsolète???

	%Raccourci mode
\newcommand{\ov}[1]{\overline{#1}}

\newcommand{\qandq}{\qquad\text{and}\qquad}
\newcommand{\suchthat}{\mid}

	% Couleurs

        %boxes
\def\rsbox#1#2#3{\raisebox{#1}{\scalebox{#2}{$#3$}}}

	%couleurs dans block
\newcommand\ylwB{\Yfillcolour{yellow}}

\newcommand\redB{\Yfillcolour{red}}
\newcommand\bluB{\Yfillcolour{blue}}

\newcommand\limeB{\Yfillcolour{lime}}

	% congruences
\newcommand{\eqz}{\equiv_0}

	% Type Weyl & Renner
 % le zero en groupe de Weyl

% Hopf Algebras:
\newcommand{\Sym}{\mbox{\bf Sym}}		% Fonctions symetriques
\newcommand{\QSym}{\mbox{\bf QSym}}		% Fonctions quasi-symetriques
\newcommand{\NCSF}{\mbox{\bf NCSF}}		% Fonctions non commutative

% \date{\todo{2018}}

\title{The $0$-Rook Monoid and its Representation Theory}
\author{Joël~Gay \and Florent~Hivert}

\newcommand{\Addresses}{{% additional braces for segregating \footnotesize
  \bigskip
  \footnotesize\noindent
  Joël Gay: \textit{E-mail address:} \url{joel.gay@lri.fr} \\\noindent
  Florent Hivert: \textit{E-mail address:} \url{florent.hivert@lri.fr} \\
  \textsc{Laboratoire de Recherche en Informatique (LRI, UMR CNRS 8623),
    Université Paris Sud, Université Paris-Saclay, CNRS},
  \par\nopagebreak\noindent
  Bâtiment 650, Université Paris Sud 11, 91405 ORSAY CEDEX, FRANCE.
  \par\nopagebreak%
}}

\begin{document}
\maketitle

\begin{abstract}
  We show that a proper degeneracy at $q=0$ of the $q$-deformed rook monoid of
  Solomon is the algebra of a monoid $R_n^0$ namely the $0$-rook monoid, in
  the same vein as Norton's $0$-Hecke algebra being the algebra of a monoid
  $H_n^0 \eqdef H^0(A_{n-1})$ (in Cartan type~$A_{n-1}$).  As expected,
  $R_n^0$ is closely related to the latter: it contains the $H^0(A_{n-1})$
  monoid and is a quotient of $H^0(B_{n})$. We give a presentation for this
  monoid as well as a combinatorial realization as functions acting on the
  classical rook monoid itself. On the way we get a Matsumoto theorem for the
  rook monoid a result which was conjectured by Solomon.

  The $0$-rook monoid shares many combinatorial properties with the Hecke
  monoid: its Green right preorder is an actual order, and moreover a lattice
  (analogous to the right weak order) which has some nice combinatorial, and
  geometrical features. In particular the $0$-rook monoid is $\JJ$-trivial.

  Following Denton-Hivert-Schilling-Thi\'ery, it allows us to describe its
  representation theory including the description of the simple and projective
  modules. We further show that $R_n^0$ is projective on $H_n^0$ and make
  explicit the restriction and induction functors along the inclusion map. We
  finally give a (partial) associative tower structures on the family of
  $(R_n^0)_{n\in \N}$ and we discuss its representation theory.
\end{abstract}
\tableofcontents

\section{Introduction}

This article is the first of a series of two on the degeneracy at $q=0$ of the
$q$-rook and more generally $q$-Renner monoids and their representation
theory~\cite{GayHivert.2018}. This first paper is focused on Cartan type $A$,
that is only on the rook case. We start by recalling
Iwahori's~\cite{Iwahori.1964} construction of the Iwahori-Hecke algebra, and
the importance of the $q=0$ degeneracy.

\subsection{Iwahori-Hecke algebra and its degeneracy at \texorpdfstring{$q=0$}{q=0}}
\label{sec_Hecke}

Let $\Fq$ be the finite field with $q$ elements. Let $G \eqdef \GL{n}{\Fq}$ be
its general linear group of invertible $n\times n$ matrices, and $B\subset G$
its subgroup of upper triangular matrices. Both groups $G$ and $B$ are finite
of respective cardinalities
$\vert G\vert = (q^n-1)(q^n-q)(q^n-q^2)\dots (q^n-q^{n-1})$ and
$\vert B\vert = (q-1)^n q^{\binom{n}{2}}$.  We denote~$\SG{n}$ the symmetric
group acting on
$\{1,\dots,n\}$ and identify a permutation with its associated permutation
matrix. The Bruhat decomposition~\cite{BjornerBrenti.2005} tells that for all
$M \in G$ there is a unique permutation $\sigma \in \SG{n}$ such that $M \in
B\sigma B$, that is :
\begin{equation}
  G = \bigsqcup_{\sigma \in \mathfrak{S}_n} B\sigma B.
\end{equation}
For $\sigma\in\SG{n}$, let $T_\sigma$ be the element of the
group algebra $\C[G]$ defined by:
\begin{equation}
  T_\sigma \eqdef \frac{1}{\vert B\vert}\sum_{x\in B\sigma B} x.
\end{equation}
The Hecke ring $\mathcal{H}(G,B)$ is the $\Z$-ring spanned by the $T_w$. Its
identity is $\varepsilon = T_\Id = \frac{1}{\vert B\vert}\sum_{b\in B}
b$. Furthermore, $\mathcal{H}(G,B) = \varepsilon\, \Z[G]\, \varepsilon$.
Let $S = \lbrace s_1, \dots, s_{n-1}\rbrace$ be the elementary transpositions
which generate $\SG{n}$ as a group. For $q\in \mathbb{C}$, let
$\mathcal{H}_{\Z}(q)$ denote the $\Z$-algebra defined by generators and
relations as follows:
\begin{alignat}{3}
  T_i^2 &= q\cdot 1 + (q-1)T_i    &\qquad& 1\leq i \leq n-1, \tag{H1}\label{relH1}\\
  T_iT_{i+1}T_i &= T_{i+1}T_iT_{i+1} &&  1\leq i \leq n-2,      \tag{H2}\label{relH2}\\
  T_iT_j &= T_jT_i                &&\vert i-j\vert \geq 2,   \tag{H3}\label{relH3}.
\end{alignat}
If $q$ is the cardinality of a finite field, Iwahori proved that the maps $T_i
\mapsto T_{s_i}$ extends to a full ring isomorphism from $\mathcal{H}_{\Z}(q)$
to $\mathcal{H}(G,B)$ and consequently, the equations above give a
presentation of $\mathcal{H}_{\Z}(q)$. By extending the scalar to $\C$ we get
a $\C$-algebra $\mathcal{H}_{\C}(q)$ which extends the definition of the Hecke
ring outside of prime powers. It is well known that when $q$ is neither zero
nor a root of the unity, the Iwahori-Hecke algebra is isomorphic to the
complex group algebra $\C[\SG{n}]$.
\bigskip

The degeneracy at $q=0$ of the Iwahori-Hecke algebra has many interesting
properties and applications. Its first appearance is perhaps in Demazure
character formula~\cite{Demazure.1974} through divided differences. Then, its
central role in Schubert calculus was discovered by
Lascoux~\cite{Lascoux.2001,Lascoux.2003.book,Lascoux.2003.slc}, with further
recent connection with $K$-theory through Grothendieck polynomials (see
\textit{e.g.}  \cite{Miller.2005,Lam_Schilling_Shimozono.2010}). Its
representation theory was first studied by Norton~\cite{Norton.1979} in type A
and Carter~\cite{Carter.1986} in the other types. In type $A$, Krob and
Thibon~\cite{KrobThibon.1997} explained how induction and restriction of these
modules give an interpretation of the products and coproducts of the Hopf
algebras of noncommutative symmetric functions and quasi-symmetric functions,
giving thus analogue of the well know Frobenius isomorphism from the character
ring of the symmetric groups to symmetric functions~(See
\textit{e.g.}~\cite{MacDonald.1995}). This was the main motivation for the
present work at the beginning. Two other important steps were further made by
Duchamp--Hivert--Thibon~\cite{DuchampHivertThibon.2002} for type $A$ and
Fayers~\cite{Fayers.2005} for other types, using the Frobenius structure to
get more results, including a description of the
Ext-quiver. Denton~\cite{Denton.2010} gave a family of minimal orthogonal
idempotents.

This degeneracy is defined by putting $q=0$ in the relation of the $q$-Iwahori-Hecke
algebra:
\begin{alignat}{2}
  T_i^2 &= -T_i    &\qquad& 1\leq i \leq n-1,\\
  T_iT_{i+1}T_i &= T_{i+1}T_iT_{i+1} &&  1\leq i \leq n-2\\
  T_iT_j &= T_jT_i                &&\vert i-j\vert \geq 2.
\end{alignat}
One interesting remark which as been discovered independently several times is
that this is the algebra of a monoid~\cite{DHST}. To see this, they are two
possibilities: define either $\pi_i \eqdef -T_i$ or $\pi_i \eqdef T_i+1$, and get the
following presentation of the Hecke monoid at $q=0$, which we denote $H_n^0$
(as opposite to its algebra denoted by $H_n(0)$):
\begin{alignat}{2}
  \pi_i^2 &= \pi_i    &\qquad& 1\leq i \leq n-1,
  \tag{M1}\label{relM1}\\
  \pi_i\pi_{i+1}\pi_i &= \pi_{i+1}\pi_i\pi_{i+1} &&  1\leq i \leq n-2,
  \tag{M2}\label{relM2}\\
  \pi_i\pi_j &= \pi_j\pi_i                &&\vert i-j\vert \geq 2.
  \tag{M3}\label{relM3}
\end{alignat}
For a permutation $\sigma$, one defines $\pi_\sigma \eqdef \pi_{i_1}\dots
\pi_{i_k}$ where $s_{i_1}\dots s_{i_k}$ is any reduced word (word of minimal
length) for $\sigma$. Thanks to the braid relations~\ref{relM2},\ref{relM3},
and Matsumoto's theorem the result is independent of the choice of the reduced
word. Then $H_n^0$ is nothing but the set $\{\pi_\sigma\mid\sigma\in\SG{n}\}$
and therefore of cardinality $n!$.

In general, being the algebra of a monoid helps a lot understanding the
representation theory. In this particular case, this is even more true since
the monoid has a very specific property: it is $\mathcal{J}$-trivial. Those
are the monoids which bears an order such that the product of $x$ and $y$ is
smaller than both $x$ and $y$. According to
Denton--Hivert--Schilling--Thiéry~\cite{DHST}, the representation theory of
these kinds of monoids is entirely combinatorial (see section~\ref{j-trivial}
for an overview of their properties). In particular, they showed that many of
the previous works about the representation theory of $\Hnz$ such
as~\cite{Norton.1979,Carter.1986,DuchampHivertThibon.2002,Fayers.2005} are
just particular cases of the general theory for $\mathcal{J}$-trivial monoids.

\subsection{Rook and \texorpdfstring{$q$}{q}-rooks}\label{sec_rooks}

In~\cite{Solomon.1990,Solomon.2004}, Solomon constructed an analogue of
Iwahori's construction replacing the general linear group by its full matrix
monoid $M \eqdef \mathbf{M}_n(\Fq)$. It goes as follows: recall that $B\subset M$
denotes the set of invertible upper triangular matrices. Then $M$ admits a
Bruhat decomposition~\cite{Renner.1995} too: the set of permutation matrices
is replaced by the set $R_n$ of so-called \emph{rook matrices} of size $n$,
that is a $n\times n$ matrices with entries $\lbrace 0,1\rbrace$ and at most
one nonzero entry in each row and column. Then
\begin{equation}
  M = \bigsqcup_{r\in R_n} BrB\,.
\end{equation}
The product of two rook matrices is still a rook matrix so that they form a
submonoid $R_n$ of $M$. For any $r \in R_n$, Solomon defined as in
Section~\ref{sec_Hecke} an element $T_r$ of the monoid algebra $\C[M]$ by
\begin{equation}
  T_r \eqdef \frac{1}{\vert B\vert} \sum_{x\in BrB} x.
\end{equation}
Those elements span a sub algebra $\mathcal{H}(M,B)$ which contains
$\mathcal{H}(G,B)$ with the same identity $\varepsilon$, and can also be
defined by $\mathcal{H}(M,B) = \varepsilon\, \C[M]\, \varepsilon$.

Halverson~\cite{Halverson.2004} further got a presentation of this ring. It is
generated by the two families $T_1, \dots, T_{n-1}$ and $P_1, \dots, P_n$
together with the relations of the Iwahori-Hecke algebra
(Equations~\ref{relH1}, \ref{relH2}, \ref{relH3}) and the following extra
relations:
\begin{alignat}{2}
  P_i^2  &= P_i                  &\qquad& 1\leq i \leq n,
  \tag{RH4}\label{relRH4}\\
  P_i P_j &= P_j P_i             &\qquad& 1\leq i,j \leq n,
  \tag{RH5}\label{relRH5}\\
  P_i T_j &= T_j P_i             &\qquad& i < j
  \tag{RH6}\label{relRH6}\\
  P_i T_j &= T_j P_i = q P_i     &\qquad& j<i
  \tag{RH7}\label{relRH7}\\
  P_{i+1} &= q P_i T_i^{-1} P_i   &\qquad& 1\leq i < n.
  \tag{RH8}\label{relRH8}
\end{alignat}
Note that the last relation can also be reformulated using the first as
\begin{equation}
P_{i+1} = P_iT_iP_i - (q-1) P_i
  \tag{RH8a}\label{relRH8a}
\end{equation}

The question whether there exists a proper degeneracy at $q=0$ of this ring
and if it exists, if it is the monoid-ring of a monoid, is therefore very
natural. The main goal of the present article is to construct such a monoid
denoted $\Rnz$, show that it is, as $\Hnz$, a $\JJ$-trivial monoid, which
allows us to analyze easily its representation theory.

\subsection{Outline of the paper}

The paper is organized as follows: in Section~\ref{sec-background}, after some
background on the rook matrices (or just \emph{rooks}) and their one-line
notations, we sketch out Denton-Hivert-Schilling-Thiéry work on
representation theory of $\JJ$-trivial monoids and how it applies to
$0$-Hecke monoids. We also briefly review Krob-Thibon's
work~\cite{KrobThibon.1997} linking representation theory of $0$-Hecke algebra
to the Hopf algebras of noncommutative symmetric functions and quasi-symmetric
functions.

In Section~\ref{sec-def}, we turn to the definition of the $0$-rook
monoid. We actually give two equivalent definitions:
The first definition is by generators and relations
(Subsection~\ref{sec-def-pres}): We show that a suitable rewriting of
Halverson's presentation when specialized at $q=0$ is actually a monoid
presentation~(Definition~\ref{def-rook_presentation}). We then study some
particular elements of this monoid which allows us to give a simpler
equivalent presentation~(Corollary~\ref{relRn0pi0}).

The second definition is as operators acting on the rook monoid
(Definition~\ref{def_Ro_fun}). To show that these two definitions are actually
equivalent (Corollary~\ref{egalitetouscardinaux}), we choose to go a somewhat
lengthy road, taking the following steps:
\begin{enumerate}
\item We first notice that the operators verify the relations of the
  presentation (Remark~\ref{presFnFn0}).
\item We generalize to rooks a variant of the notion of Lehmer code of
  permutations (Definition~\ref{def_encode}), building a bijection between
  rooks and the so-called $R$-code (Theorem~\ref{codedecode}).
\item After a little combinatorial detour (Section~\ref{subsubsec-count-rook}),
  we associate to each $R$-code $\sfc$, a canonical word
  $\pi_{\sfc}$ (Definition~\ref{def_word_code}) and its corresponding
  $s_{\sfc}$ in the classical rook monoid such that
  (Proposition~\ref{action1n}) for all rook $r \in R_n$ then $1_n\cdot
  \pi_{\code(r)} = 1_n\cdot s_{\code(r)}=r.$
\item We then translate on $R$-code $\sfc$ the action on rook
  (Definition~\ref{defactioncode}), and prove that, for any generator $t$, the
  element $\pi_\sfc t$ is equivalent to $\pi_{\sfc\cdot t}$ modulo the
  relations of the presentation (Theorem~\ref{actionsurcode}).
\item By induction this shows that any word is equivalent to a word
  $\pi_{\sfc}$, but since there are as many $R$-codes as rooks we will
  conclude that the two definitions are equivalent
  (Corollary~\ref{egalitetouscardinaux}).
\end{enumerate}
Note that we do not use the well-known presentation of the classical rook
monoid or of the $q$-rook algebra, but prove them again from scratch. Though
it is combinatorially technical, we argue that our approach has several
advantages. First it is self contained and purely monoidal, providing
arguments for monoid theory people which are not familiar with Coxeter group
theory.  Second, our approach is very explicit and algorithmic providing a
canonical reduced word for all rooks or $0$-rooks together with an explicit
algorithm transforming any word in its equivalent canonical one. Moreover, the
Lehmer code is central ingredient in the theory of Schubert polynomials whose
modern combinatorial incarnation is the pipedream theory. We find interesting
to provide such a combinatorial tool.  Finally, this allows us to have a much
finer understanding of the combinatorics of reduced words. In particular, we
get an analogue of Matsumoto's theorem (Theorem~\ref{theoreom-matsumoto}), an
ingredient which was noticed missing by Solomon~\cite{Solomon.2004}. As a
consequence, all the previous proof of presentation had to rely on some
dimension argument so that they were only valid on a field. Notice that, if we
had this theorem from the beginning, we could have worked only on reduced
words as for the classical case of Hecke algebras.  \bigskip

Section~\ref{sec-r-order} is devoted to the study of the analogue of the weak
permutohedron order on rooks or equivalently to Green's $\RR$-order of the
$0$-rook monoid. Using a generalization of the notion of inversion sets
(Definition~\ref{def-rook-triple}), we provide an algorithm to compare two
rooks (Definition~\ref{def-rook-order} and Theorem~\ref{ordreR0}). A very
important consequence in particular for the representation theory is that
$\Rnz$ is $\RR$-trivial, $\LL$-trivial and thus $\JJ$-trivial
(Corollary~\ref{Jtrivial}). We then show that the right order, as for
permutations, is actually a lattice (Corollary~\ref{cor-order-lattice}),
giving algorithms to compute the meet and the join (Theorem~\ref{meet-r-order}
and \ref{join-r-order}). We moreover provide a formula enumerating the meet
irreducible (Proposition~\ref{prop-meet-irred}), give a bijection for a
certain subposet with the subposet of singletons in the Tamari lattice
(Section~\ref{subsec:chains}) and conclude this section by some geometric
remarks.  \bigskip

Section~\ref{sec-rep-theo} deals with the representation theory of the $0$-rook
monoid. It heavily uses the fact that $\Rnz$ is $\JJ$-trivial through the
theory of Denton--Hivert--Schilling--Thiéry~\cite{DHST}. We describe the set
of idempotents and their lattice structure (Proposition~\ref{prop-Rn0-idemp}
and~\ref{prod_of_idemp}). As for any $\JJ$-trivial monoids, we show that the
simple modules are all $1$-dimensional (Theorem~\ref{theo-Rn0-simple}),
describe the indecomposable projective module as some kind of descent classes
(Theorem~\ref{theo-Rn0-projective}) and describe the quiver
(Theorem~\ref{theo-Rn0-quiver}). We then study how the representation theory
of $\Hnz$ and $\Rnz$ are related. The main result here is that the later is
projective on the former (Theorem~\ref{Rn_proj_Hn}). We moreover give the
decomposition functor (Theorem~\ref{decompo_H_R}).

Finally Section~\ref{sec-tower}, is devoted to the tower of monoids structure
on the sequence of $0$-rook monoids. Recall that
Bergeron-Li~\cite{BergeronLi.2009} gave some necessary condition to get Hopf
algebra structure on the Grothendieck groups generalizing the algebras of
symmetric~\cite{MacDonald.1995}, noncommutative symmetric and quasi-symmetric
functions~\cite{KrobThibon.1997}. This was the main motivation for this work,
but unfortunately, it does not work as nicely as expected. We present such an
associative structure but it does not fulfill all the requirement of
Bergeron-Li. In particular, $R_{m+n}^0$ is not projective over $R_{m}^0\times
R_{n}^0$. We nevertheless explicit some structure and in particular the
induction rule for simple modules (Theorem~\ref{ind_simple_tour}).

\subsection{Aknowledgment}

We thank Vincent Pilaud for numerous fruitful discussions and suggestions. We
also would like to thank Nicolas M. Thiéry, Jean-Christophe Novelli for
various suggestions about representation theory and combinatorics. We are
grateful to Jean-Yves Thibon who suggested the problem. We also would like to
thanks James Mitchell for his help using
\texttt{libsemigroup}~\cite{libsemigroup} setting up some advanced difficult
computations related to this work.

J.~Gay was founded by Fondation DIGITEO, project TRAGIC,
grant~\#2015-3181D. The computation where made using the
Sagemath~\cite{Sagemath} software. Development is supported by the
OpenDreamKit Horizon 2020 European Research Infrastructures project grant
\#676541.

\section{Background}\label{sec-background}

\subsection{Rook monoids}

We start by recalling some basic combinatorial facts about rooks.

\begin{definition} A rook matrix is a $n\times n$ matrix with entries $\lbrace
  0,1\rbrace$ and at most one nonzero entry in each row and column.
\end{definition}
Enumeration of rook matrices has received a considerable research effort in the past
(See \textit{e.g.}~\cite{riordan2002introduction, Butler_rooktheory} and the
references therein) and has recently be renewed by connection with
PASEP~\cite{JosuatVerges.2011}. The product of two rook matrices is still a
rook matrix. Thus the following definition:
\begin{definition}
  The rook monoid of size $n$ is the submonoid $R_n$ of the matrix monoid
  containing the rook matrices of size $n$.
\end{definition}
Identifying permutations with their matrices, we see that $\mathfrak{S}_n$ is
a submonoid of $R_n$. To deal with rook matrices, it is easier to have an analogue of
the so-called one line notation for permutations as in~\cite{CanRenner.2012}:
\begin{notation} We encode a rook matrix by its \emph{rook vector} (or just
  \emph{rook}) of size $n$ whose $i$-th coordinate is $0$ if there is no $1$
  in the $i$-th column of $r$, and the index of the row containing the $1$ in the
  $i$-th column otherwise.
\end{notation}
\begin{example}
Here are two matrices with their associated rook vector:
$$
\begin{array}{cc}
\begin{psmallmatrix}
0 & 0 & 0 & 0 & 1\\
0 & 0 & 1 & 0 & 0\\
0 & 0 & 0 & 1 & 0\\
0 & 1 & 0 & 0 & 0\\
0 & 0 & 0 & 0 & 0
\end{psmallmatrix}&\begin{psmallmatrix}
0 & 0 & 0 & 0 & 1\\
0 & 0 & 0 & 0 & 0\\
0 & 1 & 0 & 0 & 0\\
0 & 0 & 0 & 1 & 0\\
0 & 0 & 0 & 0 & 0
\end{psmallmatrix}\\
\begin{smallmatrix}0&4&2&3&1\end{smallmatrix} &
\begin{smallmatrix}0&3&0&4&1\end{smallmatrix}
\end{array}
$$
\end{example}

In the sequel, we identify rooks matrices and rook vectors and speak about
rooks when there is no ambiguity.
\begin{definition}\label{def_matr_Rn}
  In the monoid $R_n$, let $(s_i)_{i=1\dots n-1}$ denotes the rook matrices of
  the elementary transpositions $(i, i+1)$. Let $P_i$ also denote the
  diagonal $n\times n$ matrix with the $i$ first diagonal entries nul and
  the remaining one as $1$.
\end{definition}
For example with $n=4$, here are the matrices of $s_1,s_2,s_3,P_1,P_2,P_3,P_4$
and their associated vectors
\[
\begin{array}{ccccccc}
\begin{psmallmatrix}0&1&0&0\\1&0&0&0\\0&0&1&0\\0&0&0&1\end{psmallmatrix}&
\begin{psmallmatrix}1&0&0&0\\0&0&1&0\\0&1&0&0\\0&0&0&1\end{psmallmatrix}&
\begin{psmallmatrix}1&0&0&0\\0&1&0&0\\0&0&0&1\\0&0&1&0\end{psmallmatrix}&
\begin{psmallmatrix}0&0&0&0\\0&1&0&0\\0&0&1&0\\0&0&0&1\end{psmallmatrix}&
\begin{psmallmatrix}0&0&0&0\\0&0&0&0\\0&0&1&0\\0&0&0&1\end{psmallmatrix}&
\begin{psmallmatrix}0&0&0&0\\0&0&0&0\\0&0&0&0\\0&0&0&1\end{psmallmatrix}&
\begin{psmallmatrix}0&0&0&0\\0&0&0&0\\0&0&0&0\\0&0&0&0\end{psmallmatrix}\\
\begin{smallmatrix}2&1&3&4\end{smallmatrix}&
\begin{smallmatrix}1&3&2&4\end{smallmatrix}&
\begin{smallmatrix}1&2&4&3\end{smallmatrix}&
\begin{smallmatrix}0&2&3&4\end{smallmatrix}&
\begin{smallmatrix}0&0&3&4\end{smallmatrix}&
\begin{smallmatrix}0&0&0&4\end{smallmatrix}&
\begin{smallmatrix}0&0&0&0\end{smallmatrix}
\end{array}
\]
It is well-known that the $(s_i)_i$ generate the symmetric group as a the group
of permutation matrices and $(s_i)_i, P_1$ generate the rook monoid. We will
later give a presentation (Remark~\ref{presFnFn0}).

\subsection{\texorpdfstring{$\JJ$}{J}-trivial monoids}
\label{j-trivial}

We present here basic facts about monoids.  We refer to \cite{Pin.2010} or
\cite{Steinberg.2016} for more details. Through this paper, all monoids are
supposed to be \emph{finite}.
\medskip

Recall that the left (resp.\ right, resp.\ bi-sided) ideal of $\tMonoid$
generated by $x$ is the set $Mx\eqdef\{m x\mid m\in\tMonoid\}$ (resp.\
$xM\eqdef\{x m\mid m\in\tMonoid\}$, resp.\
$MxM\eqdef\{m x n\mid m, n\in\tMonoid\})$.  In 1951, Green~\cite{Green.1951}
introduced several preorders on monoids related to inclusion of ideals.  The
standard terminology is to write $\RR$ for right ideal, $\LL$ for left and
$\JJ$ for bi-sided. Let $\KK\in\lbrace\RR,\LL,\JJ\rbrace$ and $\tMonoid$ be a
monoid. For $x,y\in \tMonoid$, we write $x \leq_\KK y$ when the $\KK$-ideal
generated by $x$ is contained in the $\KK$-ideal generated by $y$.  For
example, if $\KK=\LL$, this means that $x \leq_\LL y$ if
$\tMonoid x\subseteq \tMonoid y$ or equivalently if $x=uy$ for some
$u\in \tMonoid$. These relations are clearly preorders (reflexive and
antisymmetric) and naturally give rise to equivalence relations denoted simply
by $\KK$: for example $x \, \LL \, y$ if and only if
$\tMonoid x = \tMonoid y$.
\begin{definition}
  A monoid $\tMonoid$ is called $\KK$-trivial if all $\KK$-classes are of
  cardinality one, that is if the $\KK$-preorder is antisymmetric and
  therefore an actual order.
  Specifically, $\tMonoid$ is $\JJ$-trivial if $\tMonoid x \tMonoid = \tMonoid
  y \tMonoid$ implies $x = y$.
\end{definition}
For the reader which is more familiar with Cayley graph, this means that the
$\JJ$-sided Cayley graphs has only trivial (i.e. singletons) strongly connected
components.  Examples of $\JJ$-trivial monoid of interest for this work
include the $0$-Hecke algebra for any Coxeter group~\cite{DHST}. Beware that
$1$ is the largest element of those (pre)-orders. This is the usual convention
in the semigroup community, but is the converse convention from the closely
related notions of left and right weak order in a Coxeter group.  \medskip

%%% We don't use that anymore
% We will use the following characterization of $\RR$-trivial monoid.
% Let $(P, \preceq)$ be a finite poset. A function $f:P\mapsto P$ is called
% \emph{regressive} if $f(x) \preceq x$ for all $x$. The sets of regressive
% functions is a submonoid of the monoid of function under the composition of
% functions.
% \begin{lemma}[\cite{DHST} section 2.1] \label{mon_func_regr}
%   Any submonoid of the monoid of regressive functions on a poset is
%   $\RR$-trivial.
% \end{lemma}
Finally, for finite monoids, $\RR, \LL$ and $\JJ$ are related as follows:
\begin{lemma}[\cite{Pin.2010} V. Theorem 1.9]\label{R_and_L_donc_J}
  A \emph{finite} monoid is $\JJ$-trivial if and only if it is both
  $\RR$-trivial and $\LL$-trivial.
\end{lemma}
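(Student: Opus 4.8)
The plan is to prove both directions directly from the definitions, the nontrivial one being that $\RR$-trivial and $\LL$-trivial together imply $\JJ$-trivial. The easy direction is immediate: if $\tMonoid$ is $\JJ$-trivial and $x \,\RR\, y$, then $x\tMonoid = y\tMonoid$, hence $x\tMonoid\tMonoid = y\tMonoid\tMonoid$, i.e. $\tMonoid x\tMonoid \supseteq x\tMonoid = y\tMonoid \subseteq \tMonoid y\tMonoid$ — more cleanly, $x\tMonoid = y\tMonoid$ forces $\tMonoid x\tMonoid = \tMonoid y\tMonoid$, so $x=y$; thus $\RR$-trivial, and symmetrically $\LL$-trivial. (Here I am implicitly using that $\tMonoid$ is a monoid, so $x\in x\tMonoid$.)

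For the converse, suppose $\tMonoid$ is both $\RR$- and $\LL$-trivial and that $x\,\JJ\,y$, i.e. $\tMonoid x\tMonoid = \tMonoid y\tMonoid$. Then $x = a y b$ and $y = c x d$ for some $a,b,c,d\in\tMonoid$. Substituting gives $x = a c x d b$, and iterating, $x = (ac)^k x (db)^k$ for all $k\ge 1$. First I would analyze the $\RR$-chain $x \ge_\RR xd \ge_\RR xdb \ge_\RR \dots$: from $x = a c x (db)$ we get $x \,\RR\, x(db)^k$ eventually stabilizing — more precisely, $x(db) \le_\RR x$ but also $x = ac\cdot x(db)$ shows $x \le_\RR x(db)$, so $x\,\RR\,x(db)$, and by $\RR$-triviality $x = x(db)$. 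Symmetrically, using the $\LL$-preorder and $y = cx\cdot d$-type relations, one obtains $x = (ca)x$ where the roles are played by left multiplication; combining, a short computation yields $x = ayb$ with enough cancellation to conclude $x\,\RR\,y$ and $x\,\LL\,y$, whence $x = y$. The cleanest packaging: from $x\,\JJ\,y$ deduce $x\le_\RR$ something which is $\JJ$-equivalent and use finiteness to collapse the chain; the key elementary fact is that in a finite monoid, if $x\,\JJ\,z$ and $x\le_\RR z$ then $x\,\RR\,z$ (and the left analogue), proved by the eventual-stabilization / idempotent-power argument on the descending $\RR$-chain. Applying this with $z = y b$ (noting $x = ay b \le_\RR yb$ and $yb\,\JJ\,y\,\JJ\,x$) gives $x\,\RR\,yb$; similarly $y\,\RR\,xb'$ etc., and by $\RR$-triviality these coincide, and likewise on the left, forcing $x=y$.

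The main obstacle is the bookkeeping in the converse direction: one must be careful that the chain-stabilization argument is applied to an $\RR$-descending (resp. $\LL$-descending) sequence all of whose terms stay in the same $\JJ$-class, so that $\RR$-triviality (resp. $\LL$-triviality) can be invoked to force equality rather than mere $\RR$-equivalence. The finiteness hypothesis is essential precisely here — it guarantees the descending ideal chain stabilizes — and it is worth isolating the lemma ``in a finite monoid, $x\le_\RR z$ and $z\le_\JJ x$ imply $x\,\RR\,z$'' as the crux, after which the result follows by two symmetric applications plus the definitions of $\RR$- and $\LL$-triviality. Since this is a cited result from \cite{Pin.2010}, I would in practice just reproduce this short argument for completeness rather than develop new machinery.
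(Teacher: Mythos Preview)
The paper does not prove this lemma at all; it is simply cited from \cite{Pin.2010} as background, so there is no ``paper's own proof'' to compare against. Your overall strategy is the standard one and is correct in spirit: the forward direction is trivial, and for the converse one reduces to the stability lemma ``in a finite monoid, $x\le_\RR z$ and $z\,\JJ\,x$ imply $x\,\RR\,z$'' (and its left analogue), then applies $\RR$- and $\LL$-triviality.

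However, your execution repeatedly confuses left and right. From $x = ac\cdot x(db)$ you conclude $x\le_\RR x(db)$, but this equation exhibits $x$ as a \emph{left} multiple of $x(db)$, so it only gives $x\le_\LL x(db)$; your claim $x\,\RR\,x(db)$ does not follow from what you wrote. The same error recurs when you take $z=yb$: from $x=a(yb)$ you get $x\le_\LL yb$, not $x\le_\RR yb$, so your stated application of the key lemma is to the wrong side. The fix is to work with $z=ay$ instead: from $x=(ay)b$ one has $x\le_\RR ay$, and since $ay\le_\JJ y\le_\JJ x$ all three are $\JJ$-equivalent, so stability gives $x\,\RR\,ay$; then $\RR$-triviality forces $x=ay$. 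Symmetrically $ay\le_\LL y$ and $ay\,\JJ\,y$ give $ay\,\LL\,y$, hence $x=ay=y$ by $\LL$-triviality. With this correction your plan goes through cleanly.
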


\subsection{Representation theory of \texorpdfstring{$\JJ$}{J}-trivial monoids}

The representation theory of $\JJ$-trivial monoids has been well studied by
Denton, Hivert, Schilling and Thi\'ery~\cite{DHST}. It turns out that it is
combinatorial: more precisely, one can compute the simple, projective modules,
the Cartan matrix and even the quiver by computing only in the monoid, without
requiring linear combinations. For example, the representation theory of any
algebra $A$ is largely governed by its idempotents (elements such that
$e^2=e$). However, when dealing with a finite $\JJ$-trivial monoid $\tMonoid$,
it is sufficient to look for idempotents in the monoid $\tMonoid$
itself rather than in its monoid algebra $\C[\tMonoid]$.

In this subsection, $\tMonoid$ will always by a finite $\JJ$-trivial monoid
and we will denote by $\idempMon$ the set of idempotents of $\tMonoid$. They
parameterize the simple $M$-modules:
\begin{theorem}[{\cite[Proposition~3.1
    and~3.3]{DHST}}]\label{proposition.simple}
  There are as many as (isomorphism classes of) simple modules $S_e$ as
  idempotents $e\in\idempMon$, all of dimension $1$. Their structure is as
  follows: $S_e$ is spanned by some vector $\epsilon_e$ with the action of any
  $m\in\tMonoid$ given by
  \begin{equation}
    m\cdot\epsilon_e=
    \begin{cases}
      \epsilon_e &\text{ if $me=e$}, \\
      0 &\text{ otherwise.}
    \end{cases}
  \end{equation}
\end{theorem}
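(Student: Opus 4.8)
The plan is to extract all the simple modules from the left regular module $\C[\tMonoid]$ by a triangularity argument, and then to identify the characters that occur with the idempotents of $\tMonoid$. As preliminary ingredients I would record that, $\tMonoid$ being finite and $\JJ$-trivial, it is also $\RR$- and $\LL$-trivial by Lemma~\ref{R_and_L_donc_J}, that $\leq_\JJ$ is a genuine partial order, and that $1$ is its maximum.

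First I would fix an enumeration $\tMonoid = \{x_1, \dots, x_N\}$ which is a linear extension of $\leq_\JJ$, i.e.\ $x_i \leq_\JJ x_j$ implies $i \le j$ (so $x_N = 1$). The key observation is that for $m \in \tMonoid$ the product $m x_j$ is a single basis element $x_k$ with $x_k = m x_j \leq_\LL x_j$, hence $x_k \leq_\JJ x_j$ and $k \le j$, with $k = j$ exactly when $m x_j = x_j$. Consequently each subspace $V_i \eqdef \operatorname{span}_\C(x_1, \dots, x_i)$ is a left ideal of $\C[\tMonoid]$, so
\[ 0 = V_0 \subset V_1 \subset \dots \subset V_N = \C[\tMonoid] \]
is a composition series of the left regular module with one-dimensional quotients, on which $m \in \tMonoid$ acts by the scalar $\chi_i(m) \in \{0,1\}$ equal to $1$ if $m x_i = x_i$ and $0$ otherwise. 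Since every simple $\C[\tMonoid]$-module occurs among the composition factors of the regular module, every simple module is isomorphic to one of the $\chi_i$.

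Next I would show that each $\chi_i$ equals the character $\chi_e$ of an idempotent $e$, where $\chi_e(m) = 1$ if $me = e$ and $0$ otherwise. For this, set $K \eqdef \{ m \in \tMonoid : m x_i = x_i \}$, a submonoid containing $1$, and let $e$ be a $\leq_\JJ$-minimal element of $K$. For any $k \in K$ one has $ke \in K$ and $ke = k \cdot e \leq_\LL e$, hence $ke \leq_\JJ e$, so minimality forces $ke = e$; in particular $e^2 = e$, so $e \in \idempMon$. The same identity shows $K \subseteq \{ m : me = e\}$, and conversely $me = e$ together with $e x_i = x_i$ gives $m x_i = m e x_i = e x_i = x_i$; hence $\chi_i = \chi_e$. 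Conversely every $\chi_e$ does occur, since taking $i$ with $x_i = e$ gives $\chi_i = \chi_e$; in particular $\chi_e$ is a genuine one-dimensional (hence simple) $\C[\tMonoid]$-module — which is precisely the module $S_e$ of the statement, so no separate verification of its module structure is needed.

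Finally I would check injectivity of $e \mapsto S_e$: if $\chi_e = \chi_f$, evaluating at $m = e$ gives $1 = \chi_e(e) = \chi_f(e)$, i.e.\ $ef = f$; symmetrically $fe = e$; thus $f \leq_\RR e$ and $e \leq_\RR f$, so $e = f$ by $\RR$-triviality. Combining these steps yields a bijection $\idempMon \to \{\text{simple }\C[\tMonoid]\text{-modules}\}/{\cong}$ sending $e$ to $S_e$, each $S_e$ being one-dimensional with the prescribed action. I expect the only genuinely delicate point to be the matching of the diagonal characters $\chi_i$ with idempotents — namely producing, from the submonoid $K$, a $\leq_\JJ$-minimal (hence idempotent, and $K$-absorbing) element $e$ with $\chi_i = \chi_e$; the remainder is bookkeeping with the $\leq_\LL$ and $\leq_\RR$ conventions.
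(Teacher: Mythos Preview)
Your argument is correct. The paper does not give its own proof of this statement; it is quoted as background from \cite{DHST} (their Propositions~3.1 and~3.3) and used without further justification. What you have written is a clean self-contained proof: the filtration $0=V_0\subset\cdots\subset V_N=\C[\tMonoid]$ by a linear extension of $\leq_\JJ$ is exactly the mechanism that makes $\JJ$-trivial monoids combinatorially tractable, and your identification of each diagonal character $\chi_i$ with some $\chi_e$ via a $\leq_\JJ$-minimal element of the stabiliser submonoid is the substantive step. The injectivity argument at the end is also correct (from $ef=f$ one gets $f\in e\tMonoid$, hence $f\leq_\RR e$, and symmetrically). So there is nothing to compare against in the present paper; your proof simply supplies what the paper outsources to the reference.
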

We now describe the structure of the radical. Given $x\in\tMonoid$, the
sequence $(x^i)_{i\in\N}$ is decreasing for the $\JJ$-order, therefore it must
eventually stabilize to an idempotent element which is usually denoted $x^\omega$.
\begin{theorem}[{\cite[Theorem~3.4 and~3.7]{DHST}}]
  Define a product $\star$ on $\idempMon$ by $x \star y \eqdef (x y)^\omega$. Then
  the restriction of $\leq_\JJ$ to $\idempMon$ is a lower semi-lattice such
  that $x \wedge_\JJ y = x \star y$ where $x \wedge_\JJ y$ is the meet of $x$
  and $y$. In particular, $(\idempMon, \star)$ is a commutative monoid.

  Moreover $(\C[\idempMon], \star)$ is isomorphic to
  $\C[\tMonoid]/\rad(\C[\tMonoid])$ and the mapping $\phi : x \mapsto
  x^\omega$ is the canonical algebra morphism associated to this quotient.
\end{theorem}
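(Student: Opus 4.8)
The plan is to prove the combinatorial part---the semilattice structure on $\idempMon$---by pure monoid theory, and then to read off the algebra statement from the classification of simple modules (Theorem~\ref{proposition.simple}). Throughout, $\tMonoid$ is finite and $\JJ$-trivial, hence also $\RR$-trivial and $\LL$-trivial by Lemma~\ref{R_and_L_donc_J}. The crucial lemma is: \emph{for $g,e\in\idempMon$ one has $g\leq_\JJ e\iff ge=g\iff eg=g$}. The implications $ge=g\Rightarrow g\in\tMonoid e\Rightarrow g\leq_\JJ e$ and $eg=g\Rightarrow g\in e\tMonoid\Rightarrow g\leq_\JJ e$ are clear. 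Conversely, if $g\leq_\JJ e$, write $g=aeb$ and normalize by replacing $a,b$ with $a'=ga$ and $b'=bg$: then $a'eb'=g(aeb)g=g^{3}=g$, while $ga'=a'$ and $b'g=b'$. Now $a'e=g(ae)\in g\tMonoid$ so $a'e\leq_\RR g$, whereas $g=(a'e)b'\in(a'e)\tMonoid$ so $g\leq_\RR a'e$; $\RR$-triviality forces $g=a'e$, whence $ge=(a'e)e=a'e=g$. Symmetrically, $eb'=(eb)g\in\tMonoid g$ and $g=a'(eb')\in\tMonoid(eb')$, so $\LL$-triviality forces $g=eb'$ and $eg=e(eb')=eb'=g$.

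Granting this lemma, I obtain the semilattice. The poset $(\idempMon,\leq_\JJ)$ is finite with greatest element $1$, so it is a meet-semilattice provided every pair $e,f\in\idempMon$ has a greatest lower bound in $\idempMon$, and I claim this is $(ef)^{\omega}$. It lies in $\idempMon$ and satisfies $(ef)^{\omega}\leq_\JJ ef\leq_\JJ e$ and $(ef)^{\omega}\leq_\JJ ef\leq_\JJ f$, hence is a common lower bound. If $g\in\idempMon$ with $g\leq_\JJ e$ and $g\leq_\JJ f$, then $ge=g$ and $gf=g$ by the lemma, so $g(ef)=(ge)f=gf=g$, hence $g(ef)^{k}=g$ for every $k$ by induction, hence $g(ef)^{\omega}=g$; thus $g\in\tMonoid(ef)^{\omega}$, i.e.\ $g\leq_\JJ(ef)^{\omega}$. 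So $e\wedge_\JJ f=(ef)^{\omega}=e\star f$, and in particular $\star$ is commutative, associative and idempotent with unit $1$, i.e.\ $(\idempMon,\star)$ is a commutative monoid. I also record the reformulation that feeds the algebra part: for $m\in\tMonoid$ and $e\in\idempMon$, $me=e\iff e\leq_\JJ m^{\omega}$. Indeed $me=e$ implies $m^{k}e=e$ for all $k$, so $m^{\omega}e=e$ and $e\leq_\JJ m^{\omega}$ by the lemma; conversely $e\leq_\JJ m^{\omega}$ gives $m^{\omega}e=e$, i.e.\ $m^{k}e=e$ for all large $k$, so $me=m(m^{k}e)=m^{k+1}e=e$.

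For the algebra statement, recall from Theorem~\ref{proposition.simple} that the simple $\C[\tMonoid]$-modules are exactly the one-dimensional $S_{e}$ ($e\in\idempMon$), on which $m\in\tMonoid$ acts by the scalar $\mathbf{1}[me=e]$. Since $\C[\tMonoid]/\rad(\C[\tMonoid])$ is semisimple with precisely these simples, the canonical projection is, after Artin--Wedderburn, the algebra morphism $\psi\colon\C[\tMonoid]\to\prod_{e\in\idempMon}\operatorname{End}_{\C}(S_{e})\cong\C^{\idempMon}$, $m\mapsto(\mathbf{1}[me=e])_{e}$, which is surjective with kernel $\rad(\C[\tMonoid])$. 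On the other side, let $\theta\colon(\C[\idempMon],\star)\to\C^{\idempMon}$ be the linear extension of $f\mapsto(\mathbf{1}[e\leq_\JJ f])_{e}$. Because $(\idempMon,\leq_\JJ)$ is a meet-semilattice we have $\mathbf{1}[e\leq_\JJ f]\,\mathbf{1}[e\leq_\JJ f']=\mathbf{1}[e\leq_\JJ f\wedge_\JJ f']$, so $\theta$ is an algebra morphism; and it is bijective because the matrix $(\mathbf{1}[e\leq_\JJ f])_{e,f}$ is triangular with $1$'s on the diagonal for any linear extension of $\leq_\JJ$. Finally, by the reformulation above, $\theta(m^{\omega})=(\mathbf{1}[e\leq_\JJ m^{\omega}])_{e}=(\mathbf{1}[me=e])_{e}=\psi(m)$ for all $m\in\tMonoid$, so $\psi=\theta\circ\phi$ where $\phi$ is the linear extension of $x\mapsto x^{\omega}$. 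Since $\theta$ is an algebra isomorphism, $\phi$ is a surjective algebra morphism with kernel $\rad(\C[\tMonoid])$; it therefore induces the isomorphism $(\C[\idempMon],\star)\cong\C[\tMonoid]/\rad(\C[\tMonoid])$ and is the asserted canonical morphism.

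The whole proof hinges on the crucial lemma of the first paragraph: this is the only point where the hypothesis is used in full strength, through $\RR$- and $\LL$-triviality (Lemma~\ref{R_and_L_donc_J}), and the delicate idea is the normalization of a factorization $g=aeb$ into $g=(ga)e(bg)$ using the idempotency of $g$, after which $\RR$- and $\LL$-triviality collapse it. Everything else is routine: manipulations inside the finite poset $(\idempMon,\leq_\JJ)$ for the semilattice, and the standard description of a semisimple quotient by its simple modules for the algebra part.
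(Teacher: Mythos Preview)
Your proof is correct. Note, however, that the paper does not give its own proof of this statement: it is quoted verbatim as background from \cite[Theorems~3.4 and~3.7]{DHST}, so there is no in-paper argument to compare against. What you have written is essentially a clean reconstruction of the DHST argument: the key lemma ``$g\leq_\JJ e\iff ge=g\iff eg=g$'' for idempotents (your normalization trick $g=(ga)e(bg)$ followed by $\RR$- and $\LL$-triviality is exactly the standard proof), the semilattice statement by showing $(ef)^\omega$ is the greatest lower bound, and the identification of the semisimple quotient via the one-dimensional simples of Theorem~\ref{proposition.simple}. The only step worth a second glance is the claim that $\phi$ is an \emph{algebra} morphism, which you obtain indirectly as $\phi=\theta^{-1}\circ\psi$; this is fine since $\theta$ is an algebra isomorphism and $\psi$ is the canonical projection, but it is worth saying explicitly that this is how multiplicativity of $\phi$ is established (it is not obvious a priori that $(xy)^\omega=x^\omega\star y^\omega$ in $\idempMon$).
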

Finally, we also describe the projective module: Define
\begin{equation}
  \label{eq.rfix}
  \rfix(x) \eqdef \min \{e\in \idempMon \mid xe=x\},
  \qandq
  \lfix(x) \eqdef \min \{e\in \idempMon \mid ex=x\},
\end{equation}
the $\min$ being taken for the $\JJ$-order (which exists according to
\cite[Proposition~3.16]{DHST}).
\begin{theorem}[{\cite[Theorem 3.23]{DHST}}]
  \label{theorem.projective_modules}
  For any idempotent $e$ denote by $L(e) \eqdef Me$, and set
  \begin{equation}
    L_=(e) \eqdef \{x\in Me\mid \rfix{x} = e\}
    \qandq
    L_<(e) \eqdef \{x\in Me\mid \rfix{x} <_\LL e\} \, .
  \end{equation}
  Then, the projective module $P_e$ associated to $S_e$ is isomorphic to $\mathbb{K}
  L(e)/\mathbb{K} L_<(e)$. In particular, taking as basis the image of $L_=(e)$ in the
  quotient, the action of $m\in\tMonoid$ on $x\in L_=(e)$ is given by: $m\cdot
  x = mx$ if $\rfix(mx) = e$ and $0$ otherwise.
\end{theorem}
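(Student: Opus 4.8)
The plan is to realize $A\eqdef\K M$ as an iterated extension of the candidate modules $N_e\eqdef\K L(e)/\K L_<(e)$ and then to identify each $N_e$ with $P_e$ by a dimension count. Recall that $P_e$ is the projective cover of the simple module $S_e$ of Theorem~\ref{proposition.simple}; since the $|\idempMon|$ simples are all one-dimensional, ${}_A A\cong\bigoplus_{e\in\idempMon}P_e$, so $\dim_\K A=\sum_e\dim_\K P_e$. This is the total dimension the $N_e$'s must account for.

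First I would build the filtration. The map $x\mapsto\rfix(x)$ partitions $M$ into the sets $L_=(e)$, since $x=x\,\rfix(x)\in M\,\rfix(x)$ gives $x\in L_=(\rfix(x))$. The key structural input is the monotonicity $\rfix(mx)\le_\LL\rfix(x)$ for all $m,x\in M$: a standard property of $\rfix$ in a $\JJ$-trivial monoid, resting on the closure of $\{f\in\idempMon\mid xf=x\}$ under $\star$ and on the coincidence of $\le_\JJ$, $\le_\LL$, $\le_\RR$ on $\idempMon$. Consequences: taking $x=me\in Me$ gives $\rfix(x)\le_\LL\rfix(e)=e$, so $Me=L_=(e)\sqcup L_<(e)$ and $N_e$ has $\K$-basis the image of $L_=(e)$, with $m$ acting by $m\cdot\bar x=\overline{mx}$, which is $0$ precisely when $\rfix(mx)<_\LL e$ — exactly the action in the statement; moreover $\K L(e)=Ae$ and $\K L_<(e)$ are left submodules of $A$; and, fixing a linear extension $e_1,\dots,e_r$ of $(\idempMon,\le_\LL)$, the subspaces $F_k\eqdef\bigoplus_{i\le k}\K L_=(e_i)$ form a chain of submodules of $A$ with $F_k/F_{k-1}\cong N_{e_k}$. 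In particular $\sum_e\dim_\K N_e=\dim_\K A$.

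Next I would show each layer $N_e$ has head isomorphic to $S_e$. Since $\rfix(e)=e$, we have $e\in L_=(e)$, so $N_e=A\bar e$ is cyclic with $e\bar e=\bar e\ne0$. For an idempotent $f$, any $\phi\in\Hom_A(N_e,S_f)$ is determined by $v\eqdef\phi(\bar e)=\phi(e\bar e)=e\cdot v$; as $e$ acts on $S_f$ by the scalar $1$ if $ef=f$ and by $0$ otherwise, this forces $v=0$ whenever $ef\ne f$. If instead $ef=f$ but $f\ne e$, then $f\le_\LL e$ (the orders on $\idempMon$ coinciding), so $f<_\LL e$ and $fe=f$, whence $f\bar e=\bar f=0$ in $N_e$ (because $\rfix(f)=f<_\LL e$), while $f\,\epsilon_f=\epsilon_f\ne0$; well-definedness of $\phi$ again forces $v=0$. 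So $\Hom_A(N_e,S_f)=0$ for every $f\ne e$, whereas $\bar e\mapsto\epsilon_e$ does define a nonzero map: if $\sum_m c_m\,m$ kills $\bar e$, then comparing the coefficient of the basis vector $\bar e$ gives $\sum_{m:\,me=e}c_m=0$, which is exactly the condition to kill $\epsilon_e$. Hence $N_e/\rad N_e\cong S_e$.

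Finally, a module with simple head $S_e$ is a quotient of $P_e$ — lift $N_e\twoheadrightarrow S_e$ along $P_e\twoheadrightarrow S_e$ and invoke Nakayama's lemma to get a surjection $P_e\twoheadrightarrow N_e$ — so $\dim_\K N_e\le\dim_\K P_e$ for each $e$; summing and using $\sum_e\dim_\K N_e=\dim_\K A=\sum_e\dim_\K P_e$ forces equalities, so $P_e\twoheadrightarrow N_e$ is an isomorphism, giving $P_e\cong\K L(e)/\K L_<(e)$ with the stated action. I expect the third step to be the main obstacle: pinning down the head of each $N_e$ as \emph{exactly} $S_e$ is where the tension between the right-handed statistic $\rfix$ and the left-module structure must be resolved through the lattice of idempotents, with the monotonicity $\rfix(mx)\le_\LL\rfix(x)$ underpinning the filtration as the secondary technical ingredient; the rest is a dimension count plus Nakayama.
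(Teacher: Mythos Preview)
The paper does not prove this statement; it is quoted verbatim as \cite[Theorem~3.23]{DHST} in the background section, so there is no ``paper's own proof'' to compare against. Your argument is correct and is essentially the standard one: filter the regular representation by the level sets of $\rfix$, observe that the associated graded pieces are exactly the $N_e=\K L(e)/\K L_<(e)$, show each $N_e$ has simple top $S_e$, and finish by the dimension count $\sum_e\dim N_e=|M|=\sum_e\dim P_e$ together with the surjections $P_e\twoheadrightarrow N_e$.

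A couple of minor remarks on presentation. The monotonicity $\rfix(mx)\le_\LL\rfix(x)$ is immediate once you note that $x\,\rfix(x)=x$ forces $(mx)\,\rfix(x)=mx$, so $\rfix(x)$ right-fixes $mx$ and hence dominates $\rfix(mx)$ in the $\JJ$-order; you then use that $\le_\JJ$, $\le_\LL$, $\le_\RR$ agree on $\idempMon$ (this is \cite[Lemma~3.6]{DHST}, also invoked in the paper at Proposition~\ref{prop-Rn0-idemp}). Your check that $\bar e\mapsto\epsilon_e$ is well defined by comparing the coefficient of $\bar e$ is clean and sufficient. The ``main obstacle'' you flag in the third step is in fact routine once you have $f\bar e=0$ for $f<_\LL e$, which you established; the real work is all in the monotonicity and the partition $M=\bigsqcup_e L_=(e)$, both of which you handled correctly.
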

Of course the corresponding statement holds on the right. Then
\cite[Theorem~3.20]{DHST} further give a formula for the Cartan invariant
matrix: for $i,j\in \idempMon$ is given by:
\begin{equation}
c_{i,j}=|\{x\in \tMonoid \suchthat i=\lfix{x}\text{ and } j=\rfix{x} \}|.
\end{equation}

\subsection{Descent sets, compositions and ribbons}

Before applying the preceding theory to the $0$-Hecke monoid, we recall some
classical combinatorial ingredient: each subset $S$ of $\interv{1}{n-1}$ of
cardinality $p$ can be uniquely associated with a so called \emph{composition
  of $n$} of length $p+1$ that is a tuple $I \eqdef (i_1,\dots,i_{p+1})$ of
positive integers of sum $n$:
\begin{equation}
  \label{eq.set.to.comp}
  S=\{s_1 < s_2 < \dots < s_p\}
  \longmapsto \cset(S)\eqdef(s_1,s_2-s_1,s_3-s_2,\ldots,n-s_p)\,.
\end{equation}
The converse bijection, sending a composition to its \emph{descent set}, is
given by:
\begin{equation}
  \label{eq.comp.to.set}
  I = (i_1,\dots,i_p) \longmapsto
  \Des(I) = \{i_1+\dots+i_j \suchthat j=1,\dots,p-1\}\,.
\end{equation}
we write $I\compof n$ when $I$ is a composition of $n$ and write $\lon(I)$ the
length of $I$.  We will sometimes extend this definition to subsets $J\subset
\interv{0}{n-1}$ by prepending a $0$ to $\cset(S)$ when $0\in S$.

For instance, the composition $(3,1,2,1,2,2)\compof 11$ corresponds to the
subset $\{3,4,6,7,9\}$ of $\interv{0}{10}$ and $(0,3,4,1)\compof 8$
corresponds to the subset $\{0,3,7\}$ of $\interv{0}{7}$.

Compositions can be pictured as a ribbon diagram, that is, a set of rows
composed of square cells of respective lengths $i_j$, the first cell of each
row being attached under the last cell of the previous one. $I$ is called the
\emph{shape} of the ribbon diagram. Recall also that the \emph{descent set}
$\Des(\sigma)$ of a permutation $\sigma$ is the set of $i$ such that
$\sigma(i)>\sigma(i+1)$ (the \emph{descents} of $\sigma$), and the
\emph{(right) descent composition} $\cset(\sigma)$ of $\sigma$ is the unique
composition $I$ of $n$ such that $\Des(I)=\Des(\sigma)$, that is the shape of
a filled ribbon diagram whose row reading is $\sigma$ and whose rows are
increasing and columns decreasing.  For example, Figure~\ref{perm-compo} shows
that the descent composition of $(3,5,4,1,2,7,6)$ is $I=(2,1,3,1)$.
\begin{figure}[ht]
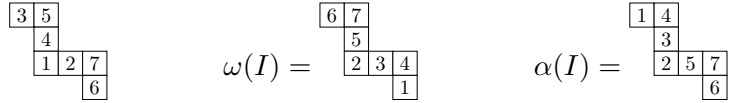

  \[
  \raisebox{20pt}{\scalebox{0.7}{$\gyoung(35,:;4,:;127,:::;6)$}}
  \qquad\qquad
  \omega(I)=\raisebox{20pt}{\scalebox{0.7}{$\gyoung(67,:;5,:;234,:::;1)$}}
  \qquad\qquad
  \alpha(I)=\raisebox{20pt}{\scalebox{0.7}{$\gyoung(14,:;3,:;257,:::;6)$}}
  \]

\caption{\label{perm-compo}The ribbon diagram of the permutation
$3541276$.}
\end{figure}

Conversely, with a composition $I$, associate its \emph{maximal permutation}
$\sigma=\omega(I)$ as the permutation with descent composition $I$ and maximal
inversion number.  Similarly, the minimal permutation $\alpha(I)$ is the
permutation with descent composition $I$ and minimal inversion number.  It is
well known that the set of permutations whose descent composition is $I$ is
the weak order right interval $[\alpha(I),\omega(I)]$ (see
\textit{e.g.}~\cite[Lemma 5.2]{KrobThibon.1997}).
For example, if $I=(2,1,3,1)$, $\omega(I)=6752341$ and $\alpha(I)=1432576$.

\subsection{Representation theory of \texorpdfstring{$0$}{0}-Hecke monoids and algebras}
\label{subsec-reptheo-hn0}

We now shortly explain how the previous theory applies to $H_n^0$. First of all
$H_n^0$ is $\RR$-trivial, the corresponding order being defined as
$\pi_\sigma\leq_\RR\pi_\mu$ if and only if $\mu\leq_R\sigma$ where $\leq_R$ is
the right weak order of the symmetric group. The same holds on the left, and
actually $H_n^0$ is isomorphic to its own opposite. Thanks to
Lemma~\ref{R_and_L_donc_J}, it is then $\JJ$-trivial.

For any composition $I = (i_1,\dots,i_p)\compof n$, we consider the parabolic
submonoid $H_I^0$ generated by $\{\pi_i \mid i \in \Des(I)\}$. It is
isomorphic to the direct product $H_{i_1}^0\times H_{i_2}^0 \times\dots\times
H_{i_p}^0$. Each parabolic submonoid contains a unique zero element
$\pi_J=\pi_{\omega_J}$ where $\omega_J$ is the maximal element of the
parabolic Coxeter subgroup $\SG{J}$. The collection $\{\pi_J \mid J \compof
n\}$ is exactly the set of idempotents in $H_n^0$.

Recall that the length $\ell(\sigma)$ of a permutation $\sigma$ is the minimal
length of a word in the $(s_i)_i$ whose product is $\sigma$. It is also equal to
the number of inversions of $\sigma$. Recall also that such a minimal length
word is called \emph{reduced}. The \emph{left and right descent sets} and
\emph{content} of $w\in \SG{n}$ are respectively defined by:
\begin{gather*}
  D_L(w) = \{ i\in I \mid \ell(s_iw) < \ell(w) \}, \qandq
  D_R(w) = \{ i\in I \mid \ell(w s_i) < \ell(w) \}, \\
  \cont(w) = \{i \in I \mid \text{$s_i$ appears in some reduced word for $w$} \},
\end{gather*}
Write $C_L$, $C_r$ and $\Cont$ the associated compositions.  In this last
condition ``some'' may be replaced by ``any''. Moreover, the above conditions
on $s_iw$ and $ws_i$ are respectively equivalent to $\pi_i \pi_w =\pi_w$ and
$\pi_w\pi_i = \pi_w$. One has $\cont(\pi_J)=D_L(\omega_J)$, or equivalently
$\cont(\pi_J)=D_R(\omega_J)$. Then, for any $\sigma\in\SG{n}$, we have
$\pi_\sigma^\omega=\pi_{\Cont(\sigma)}$,
$\lfix{\pi_\sigma}=\pi_{C_L(\sigma)}$, and
$\rfix{\pi_\sigma}=\pi_{C_R(\sigma)}$.

The left projective module $P_J$ corresponding to the idempotent $\pi_J$ has
its basis $b_w$ indexed by the elements of $w$ having $J$ as right descent
composition. The action of $\pi_i$ coincides with the usual left action,
except that $\pi_i\cdot b_w=0$ if $\pi\cdot w$ has a different right descent
composition than $w$.

\subsection{Induction and restriction of \texorpdfstring{$\Hnz$}{Hn0}-modules}

It is well known that character theory of the family of symmetric groups
$(\SG{n})_n$ can be encoded into symmetric functions via the Frobenius
isomorphism~\cite{MacDonald.1995}. Under this morphism, irreducible characters
$\chi_\lambda$ of $\SG{n}$ are mapped to Schur functions $s_\lambda$ of degree
$n$, induction and restriction along the natural inclusion
$\SG{m}\times\SG{n}\longrightarrow\SG{m+n}$ correspond respectively to
product and coproduct (the so called Littlewood-Richardson rule) of the Hopf
algebra $\Sym$ of symmetric function.

According to Krob-Thibon~\cite{KrobThibon.1997,Thibon.1998}, this construction
has an analogue for the $0$-Hecke monoids $(H_n^0)_n$. However, due to non semi-simplicity of
$H_n^0$, the situation is a little more complicated.
Note that the classical presentation deals with the algebra
$H_n(0)$ rather than the monoid. First of all, the maps
\begin{equation}
  \rho_{m,n}:\left\{
  \begin{array}{ccc}
    H_m^0\times H_n^0&\longrightarrow&H_{m+n}^0\\
    (\pi_i,\pi_j)&\longmapsto&\pi_i\pi_{j+m}=\pi_{j+m}\pi_i
  \end{array}\right.
\end{equation}
are injective monoid morphisms which moreover verify some associativity
conditions endowing $(H_n^0)_n$ with a tower of monoid structure
(see~\cite{BergeronLi.2009,Virmaux.2014} for a precise definition). One can
build two analogues of character rings, namely
$\mathcal{G}_0\eqdef\sum_{n}\C\mathcal{G}_0(H_n^0)$ the direct sum of the
(complexified) Grothendieck groups of $H_n^0$-modules on one hand, and
$\mathcal{K}_0\eqdef\sum_{n}\C\mathcal{K}_0(H_n^0)$ the direct sum of the
Grothendieck groups of projective $H_n^0$-modules on the other hand.  Recall
that $\mathcal{G}_0$ is the free $\Z$-module generated by simple module~$S_I$,
whereas $\mathcal{K}_0$ is the free $\Z$-module generated by the
indecomposable projective modules~$P_I$.

Now for two integers $m$ and $n$, we denote by $\Res_{m,n}$ the restriction
functor from the category of~$H_{m+n}^0$-modules to $H_m^0\times H_n^0$-modules along the
morphism $\rho_{m,n}$. It turns out that this defines proper co-products on
$\mathcal{G}_0$ and $\mathcal{K}_0$. In particular, $H_{m+n}^0$ is projective
over $H_m^0\times H_n^0$. Dually, the induction $\Ind_{m,n}$ defines products
on $\mathcal{G}_0$ and $\mathcal{K}_0$. These products and coproducts are
compatible giving the structure of a Hopf algebra. The analogue of Frobenius
isomorphism goes as follows: let $\QSym$ denote Gessel's~\cite{Gessel.1984}
Hopf algebra of quasi-symmetric functions, and $\NCSF$ denote the Hopf algebra
of noncommutative symmetric functions~\cite{NCSF}. Recall that these two dual
Hopf algebras have their bases indexed by compositions. Then the map
sending the simple module $S_I$ to the element $F_I$ of the fundamental basis
is a Hopf algebra morphism from $\mathcal{G}_0$ to $\QSym$. Dually, the map
sending the indecomposable projective module $P_I$ to the so-called ribbon
basis element $R_I$~\cite{NCSF,KrobThibon.1997} is a Hopf algebra morphism
from $\mathcal{K}_0$ to $\NCSF$. The duality between $\QSym$ and $\NCSF$
mirrors Frobenius duality between $\mathcal{G}_0$ and $\mathcal{K}_0$, the
commutative image $c:\NCSF\to\QSym$ being nothing but the Cartan map.

As an illustration, we give the induction rule of indecomposable projective
$H_n^0$-modules. For any two compositions $I\compof m$ and $J\compof n$:
\begin{equation}\label{induct_proj_hn0}
  \Ind_{m,n}(P_I\otimes P_J) \simeq P_{I \cdot J} \oplus P_{I \triangleright J}\,,
\end{equation}
where $I \cdot J$ is the concatenation of $I$ and $J$ and $I \triangleright J
\eqdef (i_1, \dots i_{k-1}, i_k+j_1, j_2, \dots j_\ell)$. For example,
$\Ind_{6,7}(P_{(3,1,2)}\otimes P_{(3,2,2)}) = P_{(3,1,2,3,2,2)}\oplus
P_{(3,1,5,2,2)}$. This is the same rule as the multiplication rule of the
ribbon basis of $\NCSF$~\cite{NCSF}.

As already said, the main motivation for the present paper was to understand
how this picture translate to rook monoids. Unfortunately, it turns out that
everything does not work as nicely as expected, but this may be because we
did not choose the right tower of monoids structure.

\section{The \texorpdfstring{$0$}{0}-rook monoid}\label{sec-def}
\subsection{Definition of \texorpdfstring{$\Rnz$}{R0} by generators and relations}\label{sec-def-pres}

To define the $0$-rook monoid, we take back Halverson's relations
(Equations~\ref{relH1} to \ref{relH3} and \ref{relRH4} to \ref{relRH8}) and
put $q=0$. In order to get a monoid, we write Equation~\ref{relRH8} as
\begin{equation}
  P_{i+1} = P_iT_iP_i + P_i = P_iT_iP_i+P_iP_i = P_i(T_i+1)P_i\,.
\end{equation}
Setting $\pi_i \eqdef T_i+1$, we finally obtain:
\begin{definition}\label{def-rook_presentation}
  We denote by $G_n^0$ the monoid generated by the two families
  $\pi_1, \dots, \pi_{n-1}$ and $P_1,\dots,P_n$ together with relations
  \begin{alignat}{2}
  \pi_i^2 &= \pi_i    &\qquad& 1\leq i \leq n-1,
  \tag{R1}\label{R1}\\
  \pi_i\pi_{i+1}\pi_i &= \pi_{i+1}\pi_i\pi_{i+1} &&  1\leq i \leq n-2,
  \tag{R2}\label{R2}\\
  \pi_i\pi_j &= \pi_j\pi_i                &&\vert i-j\vert \geq 2.
  \tag{R3}\label{R3}\\
  P_i^2  &= P_i                    &\qquad& 1\leq i \leq n,
  \tag{R4}\label{R4}\\
  P_i P_j &= P_j P_i               &\qquad& 1\leq i,j \leq n,
  \tag{R5}\label{R5}\\
  P_i \pi_j &= \pi_j P_i           &\qquad& i < j,
  \tag{R6}\label{R6}\\
  P_i \pi_j &= \pi_j P_i = P_i     &\qquad& j < i,
  \tag{R7}\label{R7}\\
  P_{i+1} &= P_i \pi_i P_i    &\qquad& 1\leq i < n.
  \tag{R8}\label{R8}
  \end{alignat}
\end{definition}
Using Relation~\ref{R8} we note that it is generated only by $\pi_1,
\dots, \pi_{n-1}$ and $P_1$.

\begin{notation}
%  We will later give the equivalent Definition~\ref{def_Ro_fun} of the
%  $0$-rook monoid by the mean of operators acting on rooks and actually prove
%  that the two definitions coincide. In the meantime, to distinguish between
%  the two we write $G_n^0$ for the rook monoid ($G$ standing for generators
%  and relations). Accordingly, t
  To state that two words are equal in $G_n^0$,
  we rather write explicitely that they are equivalent modulo the relations
  above as $e\eqz f$.
\end{notation}
We recall here the plan we introduced in the summary. Definition~\ref{def-rook_presentation} introduces a monoid defined by generators and relations. The $G$ stands for ``generators''. We will later give a definition of the monoid $F_n^0$ (Definition~\ref{def_Ro_fun}) as a monoid of operators acting on rooks ($F$ stands for ``functions'').  We will actually prove in Corollary~\ref{egalitetouscardinaux} that the two definitions actually coincide. We will then call this monoid the $0$-rook monoid, and denote it by $\Rnz$. 
\medskip

We start by focusing on the monoid generated by the $(P_i)$:
\begin{lemma}\label{prodPi}
  $P_i P_k \eqz P_{\max(i,k)}$.
\end{lemma}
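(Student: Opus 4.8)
The statement $P_iP_k \eqz P_{\max(i,k)}$ is a small combinatorial consequence of the relations \ref{R4}, \ref{R5}, \ref{R7}, \ref{R8}. I will treat the cases $i = k$, $i > k$, and $i < k$; by the commutativity \ref{R5} the last two are symmetric, so it suffices to prove $P_i P_k \eqz P_i$ whenever $k < i$, the case $i=k$ being just \ref{R4}.

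\begin{proof}
If $i=k$ this is Relation~\ref{R4}. By Relation~\ref{R5} the two products $P_iP_k$ and $P_kP_i$ are equal, so it is enough to prove $P_iP_k\eqz P_i$ when $k<i$; then $\max(i,k)=i$ gives the claim, and the symmetric case $i<k$ follows by exchanging the roles of $i$ and $k$.

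We prove $P_iP_k\eqz P_i$ for $k<i$ by induction on $i$. The base case is $i=1$, which is vacuous since there is no $k<1$ (or, if one wishes, $i=2$: then $k=1$, and $P_2P_1 = P_1\pi_1P_1P_1 \eqz P_1\pi_1P_1 \eqz P_2$ using \ref{R8} and \ref{R4}; but one can rather reduce everything to the inductive step below). Assume the result holds for $i$, i.e.\ $P_iP_j\eqz P_i$ for all $j<i$, and consider $P_{i+1}P_k$ with $k\le i$. Using \ref{R8} we rewrite $P_{i+1}=P_i\pi_iP_i$, so
\[
P_{i+1}P_k \eqz P_i\pi_i P_i P_k .
\]
If $k<i$, the inductive hypothesis gives $P_iP_k\eqz P_i$, hence $P_{i+1}P_k\eqz P_i\pi_iP_i\eqz P_{i+1}$ by \ref{R8}. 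If $k=i$, then $P_iP_k=P_iP_i\eqz P_i$ by \ref{R4}, and again $P_{i+1}P_i\eqz P_i\pi_iP_i\eqz P_{i+1}$. This completes the induction, and with it the proof.
\end{proof}

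The only point requiring the slightest care is that one must push the extra $P_k$ through to meet the rightmost $P_i$ in the expansion $P_{i+1}=P_i\pi_iP_i$; since it sits already at the right end after that expansion, no commutation through a $\pi$ is even needed, and the argument is a clean induction. I do not anticipate any genuine obstacle here: the statement is essentially the observation that the submonoid generated by the $P_i$ is the $\max$-semilattice on $\{1,\dots,n\}$, and all of it is already encoded in \ref{R4}, \ref{R5}, \ref{R8}. (Relation~\ref{R7}, relating the $P_i$ to the $\pi_j$, is not even needed for this particular lemma, though it will be used to simplify the presentation afterwards.)
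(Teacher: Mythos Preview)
Your proof is correct and follows essentially the same approach as the paper: both use the recursive expansion \ref{R8} together with the idempotence \ref{R4} (and \ref{R5} to reduce to one ordering of $i,k$). The paper phrases it slightly differently---observing that \ref{R8} unwound gives a word for $P_k$ beginning with $P_i$ when $k\ge i$---but this is the same induction you wrote out explicitly, just viewed from the other side.
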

\begin{proof}
  Thanks to Relation~\ref{R5}, we may assume that $k\geq i$. Relation~\ref{R8}
  shows us that there is a word for $P_k$ beginning with
  $P_i$. Relation~\ref{R4} says that $P_i$ is an idempotent.
\end{proof}

\begin{lemma}\label{Pn}
  The element $P_n$ is the unique zero of the monoid $G_n^0$, that is for any
  $e\in G_n^0$ then $eP_n \eqz P_ne \eqz P_n$. Furthermore $P_n$ have the two
  following expressions:
  \begin{equation}
    \begin{aligned}
      P_n
      & \eqz P_1 \pi_1P_1\pi_2\pi_1 P_1 \pi_3\pi_2\pi_1 P_1\dots P_1\pi_{n-1}\pi_{n-2}\dots\pi_1 P_1\\
      & \eqz P_1\pi_1\pi_2\dots\pi_{n-2}\pi_{n-1} P_1 \dots P_1 \pi_1\pi_2\pi_3 P_1 \pi_1\pi_2 P_1 \pi_1P_1.
    \end{aligned}
  \end{equation}
\end{lemma}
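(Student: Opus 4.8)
The plan is to dispatch the zero property first (it is almost immediate once one notices that $P_n$ is itself one of the generators), and then to extract the two displayed words from two ``peeling'' recursions obtained by unfolding Relation~\ref{R8}.

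\emph{$P_n$ is a zero.} Since $P_n$ is a generator, it suffices to check that it absorbs every generator on both sides. For a generator $P_i$ with $1\le i\le n$, Lemma~\ref{prodPi} gives $P_iP_n\eqz P_n\eqz P_nP_i$. For a generator $\pi_j$ with $1\le j\le n-1$ we have $j<n$, so Relation~\ref{R7} gives $\pi_jP_n = P_n\pi_j = P_n$. A straightforward induction on the length of a word $w=g_1\cdots g_m$ in the generators then yields $wP_n\eqz P_n\eqz P_nw$, hence $eP_n\eqz P_n\eqz P_ne$ for every $e\in G_n^0$. Uniqueness is the usual one-line argument: any other zero $z$ satisfies $z = zP_n = P_n$.

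\emph{First expression.} I would prove by induction on $k$ the peeling identity
\[
P_{k+1}\ \eqz\ P_k\,\pi_k\pi_{k-1}\cdots\pi_1\,P_1 \qquad (1\le k<n).
\]
For $k=1$ this is exactly Relation~\ref{R8}. Assuming it for all smaller indices, telescoping these identities gives $P_m\eqz P_1(\pi_1P_1)(\pi_2\pi_1P_1)\cdots(\pi_{m-1}\cdots\pi_1P_1)$ for every $m\le k$. Now write $P_{k+1}=P_k\pi_kP_k$ (Relation~\ref{R8}) and replace the right-hand $P_k$ by its telescoped form. The letter $\pi_k$ commutes past the initial segment $P_1(\pi_1P_1)\cdots(\pi_{k-2}\cdots\pi_1P_1)$, since that segment involves only $P_1$ (Relation~\ref{R6}, as $1<k$) and the letters $\pi_j$ with $j\le k-2$ (Relation~\ref{R3}); sliding $\pi_k$ to the right of this segment and recognising the segment as $P_{k-1}$ leaves $P_kP_{k-1}\,\pi_k\pi_{k-1}\cdots\pi_1P_1$, and $P_kP_{k-1}\eqz P_k$ by Lemma~\ref{prodPi}. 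Telescoping the peeling identity all the way down from $P_n$ to $P_1$ then produces the first displayed word.

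\emph{Second expression.} One route is the mirror peeling identity $P_{k+1}\eqz P_1\,\pi_1\pi_2\cdots\pi_k\,P_k$, proved by a slightly shorter induction: starting from $P_{k+1}=P_k\pi_kP_k$ one expands the \emph{left} $P_k$ via the identity for $k$, then uses $P_{k-1}\pi_k=\pi_kP_{k-1}$ (Relation~\ref{R6}) and $P_{k-1}P_k\eqz P_k$ (Lemma~\ref{prodPi}) to collapse the resulting middle factor; telescoping yields the second displayed word. A slicker alternative: the defining relations \ref{R1}--\ref{R8} are all invariant under reversal of words, so the map fixing every generator extends to an anti-automorphism of $G_n^0$; applying it to the first displayed word (which read backwards is precisely the second) gives the second expression for free.

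The only delicate point is the inductive step of the first peeling identity: one must verify that $\pi_k$ genuinely commutes across the \emph{entire} telescoped word for $P_{k-1}$, which works exactly because that word contains no occurrence of $\pi_{k-1}$ and no $P_i$ with $i\ge 2$, so the single non-commuting letter $\pi_{k-1}$ ends up precisely at the head of the last block and the regrouping via Lemma~\ref{prodPi} applies cleanly. Everything else is routine rewriting with Relations~\ref{R3}, \ref{R5}, \ref{R6}, \ref{R7}, \ref{R8} and Lemma~\ref{prodPi}.
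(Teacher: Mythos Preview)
Your proof is correct and, for the two displayed expressions, follows essentially the same inductive ``peeling'' argument as the paper: from $P_{k+1}=P_k\pi_kP_k$ one replaces the right-hand $P_k$ by induction, commutes $\pi_k$ leftwards, and collapses with Lemma~\ref{prodPi}. (The paper uses Relation~\ref{R6} directly on $P_{k-1}$ rather than on its telescoped word, saving one line, and obtains the mirror formula by the same symmetry argument you mention.)

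Where you genuinely differ is in the treatment of the zero property. The paper proves it simultaneously with the expressions, by induction on $n$, checking at each stage that the new element is stabilised by the reduced generating set $\{\pi_1,\dots,\pi_n,P_1\}$. Your route is cleaner: you observe that $P_n$ is already a generator in Definition~\ref{def-rook_presentation}, so Relation~\ref{R7} (for every $\pi_j$, since $j<n$) and Lemma~\ref{prodPi} (for every $P_i$) immediately give absorption on both sides, with no induction needed. This separation of concerns makes the argument shorter and conceptually more transparent; the paper's intertwined induction, on the other hand, is the version that later gets recycled in the proof of Corollary~\ref{relRn0} (where the full relation set is not yet available and one must rebuild Lemma~\ref{Pn} from the minimal relations).
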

\begin{proof}
  We prove this by induction on $n\geq 1$. It is obvious that $P_2 \eqz
  P_1\pi_1P_1$ by Relation~\ref{R8}. To show that $P_2$ is a zero, it is
  enough to prove that the generators $\pi_1$ et $P_1$ stabilize it. It is
  clear for $P_1$ which is idempotent, and $\pi_1P_1\pi_1P_1 \eqz \pi_1 P_2
  \eqz P_2$ by the Relation~\ref{R7}.

  Assume that the result is proven for all $1\leq k \leq n$. Let us prove it
  for $n+1$:
  \begin{align*}
    P_{n+1} \eqz P_n \pi_n P_n & \eqz P_n \pi_n P_{n-1} \pi_{n-1}\pi_{n-2}\dots \pi_3\pi_2\pi_1 P_1 \text{ (by induction)}\\
    & \eqz P_n P_{n-1} \pi_n \pi_{n-1}\pi_{n-2}\dots \pi_3\pi_2\pi_1 P_1
    \text{ (by~\ref{R6})}\\
    & \eqz P_n \pi_n \pi_{n-1}\pi_{n-2}\dots \pi_3\pi_2\pi_1 P_1 \text{ (by
      Lemma \ref{prodPi}).}
  \end{align*}
  Thus the result holds. Since all the relations are symmetric, we get the
  other formula.

  To show that $P_{n+1}$ is a zero we prove that it is stabilized under
  multiplication by any generator among $\pi_1, \dots, \pi_n, P_1$. The
  stability by $P_1$ is obvious by Lemma \ref{prodPi}. For all the others, we
  deduce from Relation~\ref{R7} that $\pi_i P_n \eqz P_n$ since $i\leq n-1$.

  Finally, the uniqueness of the zero holds in any semigroup.
\end{proof}

\begin{corollary}\label{relRn0}
  In the presentation of $G_n^0$ one can replace the
  Relations~\ref{R4}, \ref{R5}, \ref{R6} and \ref{R7} by the following three
  and still get the same monoid:
  \begin{alignat}{2}
    P_1^2 &= P_1\,, &\quad&
  \tag{R4.1}\label{R4.1}\\
  P_1 \pi_j &= \pi_j P_1           && j \neq 1,
  \tag{R5.1}\label{R5.1}\\
  \pi_1P_1\pi_1P_1&=P_1\pi_1P_1=P_1\pi_1P_1\pi_1\,. &&
  \tag{R6.1}\label{R6.1}
  \end{alignat}
  In particular the monoid $G_n^0$ is generated by $(\pi_i)_{1\leq i \leq
    n-1}$ and $P_1$ subject to Relations~\ref{R1} to~\ref{R3}
  and \ref{R4.1} to \ref{R6.1}; Relation~\ref{R8} being seen as a
  definition for $P_i$ for $i>1$.
\end{corollary}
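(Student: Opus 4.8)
The plan is to show that Relations~\ref{R4.1}--\ref{R6.1}, together with Relation~\ref{R8} used as a \emph{definition} of $P_i$ for $i>1$, are equivalent to the original Relations~\ref{R4}--\ref{R8}. Since we already know from Lemma~\ref{prodPi} and Lemma~\ref{Pn} how the $P_i$ behave, the work is essentially bookkeeping in two directions.

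\textbf{From the new presentation to the old.} Assume \ref{R1}--\ref{R3}, \ref{R4.1}--\ref{R6.1}, and take \ref{R8} as the defining equation $P_{i+1}\eqdef P_i\pi_iP_i$ (so \ref{R8} holds by fiat). I must recover \ref{R4}, \ref{R5}, \ref{R6}, \ref{R7}. First I would prove by induction on $i$ that each $P_i$ commutes with every $\pi_j$ for $j<i$ in the strong sense $P_i\pi_j = \pi_jP_i = P_i$ (that is \ref{R7}), and commutes with $\pi_j$ for $j>i$ (that is \ref{R6}); the base case $i=1$ is \ref{R5.1} together with the observation that there is no $j<1$. For the inductive step, write $P_{i+1}=P_i\pi_iP_i$ and push a generator $\pi_j$ past it using the inductive hypotheses on $P_i$, the braid relations \ref{R1}--\ref{R3}, and crucially \ref{R6.1} to handle the interaction with the ``new'' $\pi_i$ sitting in the middle (this is exactly the $j=i$ and $j=i+1$ borderline cases). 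Once \ref{R6} and \ref{R7} are in hand, idempotency \ref{R4} of $P_{i+1}$ follows: $P_{i+1}P_{i+1} = P_i\pi_iP_i\cdot P_i\pi_iP_i = P_i\pi_iP_i\pi_iP_i$, and \ref{R6.1} collapses the middle $\pi_iP_i\pi_i$ (or $P_i\pi_iP_i$) back down, giving $P_{i+1}$. Finally \ref{R5}, i.e.\ $P_iP_k = P_kP_i$, is just Lemma~\ref{prodPi} (whose proof only used \ref{R4}, \ref{R5}, \ref{R8}, but can be re-derived here from what we have just established, or one notes it follows from \ref{R7} applied to the word for $P_k$ beginning with $P_i$).

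\textbf{From the old presentation to the new.} Conversely, assuming the full original presentation, I must check \ref{R4.1}, \ref{R5.1}, \ref{R6.1}. Now \ref{R4.1} is the $i=1$ case of \ref{R4}, and \ref{R5.1} is the combination of the $j>1$ case of \ref{R6} (with $i=1$) and the vacuous $j<1$ case of \ref{R7}; so only \ref{R6.1} needs an argument. But $P_2 = P_1\pi_1P_1$ by \ref{R8}, and then $\pi_1 P_1\pi_1 P_1 = \pi_1 P_2 = P_2$ by \ref{R7} (since $1<2$), while $P_1\pi_1P_1\pi_1 = P_2\pi_1 = P_2$ by \ref{R7} again; together with $P_1\pi_1P_1=P_2$ this gives all three equalities of \ref{R6.1}. (This is precisely the computation already made inside the proof of Lemma~\ref{Pn}.)

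\textbf{Expected main obstacle.} The genuinely delicate point is the inductive step in the first direction: verifying that the \emph{defined} elements $P_i$ satisfy the strong absorption \ref{R7} and the commutation \ref{R6} when all one is allowed to use is the three-relation bundle \ref{R4.1}--\ref{R6.1} plus the braid relations. The borderline indices $j=i-1,i,i+1$ require carefully moving generators through the expression $P_{i-1}\pi_{i-1}P_{i-1}$, and it is here that \ref{R6.1} does the real work that \ref{R7} did in the original presentation; one should also double-check that Lemma~\ref{prodPi} (used implicitly to simplify products of several $P$'s) is still available, which it is since its proof only invokes relations we have already recovered. Everything else is a routine unwinding of definitions.
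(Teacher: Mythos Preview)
Your overall strategy is sound and in fact more direct than the paper's, but two points need tightening. First, the induction must be \emph{simultaneous} in \ref{R4}, \ref{R6}, \ref{R7}: the key borderline case $j=i$ of \ref{R7} for $P_{i+1}$ (i.e.\ $\pi_iP_{i+1}=P_{i+1}$), after expanding $P_i=P_{i-1}\pi_{i-1}P_{i-1}$, commuting $\pi_i$ past $P_{i-1}$ and applying the braid relation, requires both $P_{i-1}^2=P_{i-1}$ and $\pi_{i-1}P_i=P_i$; so you cannot establish \ref{R6}, \ref{R7} for all $i$ first and only afterwards deduce \ref{R4}. Second, \ref{R6.1} itself only handles the base case $i=1$; at higher levels what ``collapses the middle'' is this inductive hypothesis, not \ref{R6.1} directly. (Minor: the borderline for $P_{i+1}$ is $j=i$, not $j=i+1$, since neither \ref{R6} nor \ref{R7} relates $P_{i+1}$ to $\pi_{i+1}$.) With these fixes your argument goes through. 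By contrast, the paper's proof runs a simultaneous induction on different statements: it establishes the explicit word for $P_n$ from Lemma~\ref{Pn} together with $P_iP_k=P_{\max(i,k)}$ and $P_k^2=P_k$, via a somewhat involved computation (the ``$\rho$'' manipulation), and only then reads off \ref{R5}, \ref{R6}, \ref{R7} from that explicit word. Your route avoids the explicit word entirely; the paper's detour is longer but yields Lemma~\ref{Pn} under the reduced hypotheses as a byproduct.
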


\begin{proof}
  Deducing Relations~\ref{R5.1} and \ref{R6.1} from
  Relations~\ref{R1} to \ref{R8} is obvious: Relation~\ref{R6.1} is only
  Relation~\ref{R7} applied with $i=2$ and $j=1$.

  Let us prove the converse: Relations~\ref{R1} to \ref{R8} can be deduced from
  Relations~\ref{R1} to \ref{R4}, \ref{R5.1}, \ref{R6.1} and \ref{R8} seen as
  a definition.
  We will now prove that Lemma \ref{prodPi} and Lemma \ref{Pn} (and Relation
  \ref{R4}) are still true. We prove simultaneously by induction on $n$ the
  following statements
  \begin{itemize}
  \item for all $k\leq n$, the element $P_k$ is given by the relation of Lemma
    \ref{Pn}.
  \item for all $i,k \leq n$, then $P_k^2 \eqz P_k$ and $P_iP_k\eqz P_{\max(i,k)}$.
  \end{itemize}
  The case $n=1$ is obvious with Relation~\ref{R4}.

  We now assume the statements for $n\geq 1$.  We only have to prove that two
  words for $P_{n+1}$ are given by Lemma \ref{Pn}, that $P_{n+1}^2 \eqz
  P_{n+1}$ and that $\forall i\leq n+1$, $P_{n+1}P_i \eqz P_{n+1}$.

  Regarding the words for $P_{n+1}$, a close look to the proof of Lemma
  \ref{Pn} shows that we use only Relation~\ref{R6.1} (for the basis step),
  Relation~\ref{R6} when $i<j\leq n$, Relation~\ref{R4} when $i\leq n$ and
  Lemma~\ref{prodPi} for $i,k\leq n$. But all these relations have already
  been proved by induction. Consequently we have the two expressions for
  $P_{n+1}$.

  From there, the relation $P_iP_{n+1} \eqz P_{n+1}P_i \eqz P_{n+1}$ for
  $i\leq n$ is clear using these words and the fact that $P_i^2 = P_i$. It
  remains only to prove that $P_{n+1}$ is idempotent.
  \begin{align*}
    P_{n+1}^2 & \eqz P_1\pi_1\pi_2\dots\pi_{n-1}\pi_{n} P_1 \dots P_1 \pi_1\pi_2 P_1 \pi_1P_1 \cdot P_1 \pi_1P_1\pi_2\pi_1 P_1 \dots P_1\pi_{n}\pi_{n-1}\dots\pi_1 P_1\\
    & \eqz P_1\pi_1\pi_2\dots \pi_{n-1}\pi_{n}P_1P_nP_n \pi_n\pi_{n-1}\dots
    \pi_2\pi_1P_1\\
    & \eqz P_1\pi_1\pi_2\dots \pi_{n-1}\pi_{n}P_1P_n \pi_n\pi_{n-1}\dots \pi_2\pi_1P_1\,,\\
  \intertext{by induction. Now using \ref{R3} and \ref{R5.1}:}
    P_{n+1}^2 & \eqz P_1\pi_1\pi_2\dots\pi_{n-1}\pi_{n}
    P_1\pi_1\pi_2\dots\pi_{n-1}\pi_{n} P_1 \dots P_1 \pi_1\pi_2\pi_3 P_1
    \pi_1\pi_2 P_1 \pi_1P_1\,.\tag{*}\label{Rstar}
  \end{align*}
  Now, call $\rho$, the first
  part of the previous calculation:
  \begin{alignat*}{2}
    \rho & \eqdef P_1\pi_1\pi_2\dots\pi_{n-1}\pi_{n}
    P_1\pi_1\pi_2\dots\pi_{n-1}\pi_{n}\,.
    \intertext{Then}
    \rho & \eqz P_1\pi_1\pi_2 P_1\pi_1\pi_2\dots\pi_{n-1}\pi_{n} \pi_2\pi_3\dots\pi_{n-2}\pi_{n-1} &\quad&\text{ (by \ref{R2}, \ref{R3} and \ref{R5.1})}\\
	& \eqz P_1\pi_1P_1 \pi_2 \pi_1\pi_2\dots\pi_{n-1}\pi_{n} \pi_2\pi_3\dots\pi_{n-2}\pi_{n-1} &&\text{ (by \ref{R5.1})}\\
	& \eqz P_1\pi_1P_1 \pi_1 \pi_2\pi_1\pi_3\dots\pi_{n-1}\pi_{n} \pi_2\pi_3\dots\pi_{n-2}\pi_{n-1} &&\text{ (by \ref{R2})}\\
	& \eqz P_1\pi_1P_1 \pi_1 \pi_2\pi_3\dots\pi_{n-1}\pi_{n} \pi_1\pi_2\pi_3\dots\pi_{n-2}\pi_{n-1} &&\text{ (by \ref{R3})}\\
	& \eqz P_1\pi_1P_1 \pi_2\pi_3\dots\pi_{n-1}\pi_{n} \pi_1\pi_2\pi_3\dots\pi_{n-2}\pi_{n-1} &&\text{ (by \ref{R6.1})}\\
	& \eqz P_1\pi_1\pi_2\dots\pi_{n-1}\pi_{n} P_1\pi_1\pi_2\dots\pi_{n-2}\pi_{n-1}  &&\text{ (by \ref{R5.1})}.
  \end{alignat*}
  Taking back Relation~(\ref{Rstar}) we thus have:
  \begin{multline*}
    P_{n+1}^2 \eqz P_1\pi_1\pi_2\dots\pi_{n-1}\pi_{n}\\
    P_1\pi_1\pi_2\dots\pi_{n-2}\pi_{n-1} P_1\pi_1\pi_2\dots\pi_{n-2}\pi_{n-1}P_1
    \dots P_1 \pi_1\pi_2\pi_3 P_1 \pi_1\pi_2 P_1 \pi_1P_1\,.
  \end{multline*}
  We recognize the end of the left term to be Equation~\ref{Rstar} for $n$ instead
  of $n+1$. Thus:
  \begin{equation*}
    P_{n+1}^2 \eqz P_1\pi_1\pi_2\dots\pi_{n-1}\pi_{n} P_n P_n \eqz
    P_1\pi_1\pi_2\dots\pi_{n-1}\pi_{n} P_n \eqz P_{n+1}\,.
  \end{equation*}
  Finally we have proved that the statement holds for $n+1$:
  indeed, we have thus Relations~\ref{R1} to \ref{R4} and the two
  Lemmas~\ref{prodPi} and \ref{Pn}. Relation \ref{R5} follows directly from
  Lemma~\ref{prodPi}, and Relation~\ref{R6} can be deduced from Lemma~\ref{Pn}
  using \ref{R5.1} and \ref{R3}.

  It remains to prove \ref{R7} using only \ref{R6.1} and Lemma \ref{Pn}. Since
  Lemma \ref{Pn} and all the relations are symmetric, we only need to show that
  $\pi_jP_i \eqz P_i$ for $j<i$, the proof of the other case could be
  conducted the same way.

  For $j=1$ and $i=2$ it is exactly Relation~\ref{R6.1}. For $j=1$ without
  condition on $i$, it comes from the fact that, because of Lemma~\ref{Pn}, a
  word for $P_i$ begin with $P_1\pi_1P_1$, and we conclude with~\ref{R6.1}.

  Otherwise, for $j\geq 2$ and $i>j$, we get:
\begin{align*}
  \pi_j P_i &
  \eqz \pi_j P_1 \pi_1P_1\pi_2\pi_1 P_1 \dots P_1 \pi_{j-1}\pi_{j-2}\dots \pi_2\pi_1 P_1 \pi_j\pi_{j-1}\dots \pi_2\pi_1 P_1\dots P_1\pi_{i-1}\pi_{i-2}\dots\pi_1 P_1 \\
  \intertext{with \ref{R3} and \ref{R5.1}:}
  & \eqz P_1 \pi_1P_1\pi_2\pi_1 P_1 \dots P_1 \pi_j \pi_{j-1}\pi_{j-2}\dots
  \pi_2\pi_1 P_1 \pi_j\pi_{j-1}\dots \pi_2\pi_1 P_1\dots
  P_1\pi_{i-1}\pi_{i-2}\dots\pi_1 P_1 \\
  & \eqz P_1 \pi_1P_1\pi_2\pi_1 P_1
    \dots P_1\pi_{i-1}\pi_{i-2}\dots\pi_1 P_1 = P_i \text{ (with $\rho$).}
\end{align*}
Hence the result.
\end{proof}

We finally get a new shorter presentation for $G_n^0$, by setting $\pi_0\eqdef P_1$.
\begin{corollary}\label{relRn0pi0}
  The monoid $G_n^0$ is generated by $\pi_0, \dots, \pi_{n-1}$ subject to
  the relations:
  \begin{alignat}{2}
  \pi_i^2 &= \pi_i    &\qquad& 0\leq i \leq n-1,
  \tag{RB1}\label{RB1}\\
  \pi_i\pi_{i+1}\pi_i &= \pi_{i+1}\pi_i\pi_{i+1} &&  1\leq i \leq n-2,
  \tag{RB2}\label{RB2}\\
  \pi_1\pi_0\pi_1\pi_0&=\pi_0\pi_1\pi_0=\pi_0\pi_1\pi_0\pi_1\,,
  \tag{RB3}\label{RB3}\\
  \pi_i\pi_j &= \pi_j\pi_i &&
  0\leq i,j \leq n-1,\qquad\vert i-j\vert \geq 2.
  \tag{RB4}\label{RB4}
  \end{alignat}
\end{corollary}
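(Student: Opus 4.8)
The plan is to deduce this directly from Corollary~\ref{relRn0} by the mere change of notation $\pi_0 \eqdef P_1$, checking that the relation list \ref{R1}--\ref{R3}, \ref{R4.1}--\ref{R6.1} becomes exactly \ref{RB1}--\ref{RB4} once $P_1$ is renamed $\pi_0$. By Corollary~\ref{relRn0}, $G_n^0$ is presented by the generators $\pi_1,\dots,\pi_{n-1},P_1$ subject to Relations~\ref{R1}, \ref{R2}, \ref{R3}, \ref{R4.1}, \ref{R5.1}, \ref{R6.1}, with Relation~\ref{R8} serving only as an abbreviation for the $P_i$ with $i>1$, which therefore do not occur in the new presentation. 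Writing $\pi_0$ for $P_1$, the generating set becomes $\pi_0,\dots,\pi_{n-1}$, as claimed.

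It then remains to match the relations, in both directions. First, Relation~\ref{R1} (for $1\le i\le n-1$) together with Relation~\ref{R4.1} (the case $i=0$) is precisely Relation~\ref{RB1} (for $0\le i\le n-1$). Relation~\ref{R2} is literally Relation~\ref{RB2}, and Relation~\ref{R6.1} is literally Relation~\ref{RB3}. Finally Relation~\ref{R3} (for $1\le i,j\le n-1$ with $|i-j|\ge 2$) together with Relation~\ref{R5.1} (asserting that $\pi_0$ commutes with $\pi_j$ for $j\ne 1$) is precisely Relation~\ref{RB4} (for $0\le i,j\le n-1$ with $|i-j|\ge 2$): the subcase $i,j\ge 1$ is exactly \ref{R3}, while the subcase where one of the two indices equals $0$ forces the other index $j$ to satisfy $j\ge 2$, which is the same as $j\ne 1$, i.e.\ exactly the range of \ref{R5.1}. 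Since each relation of one list is thus a consequence of the other and conversely, the two presentations define the same monoid.

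There is no genuine difficulty in the argument; the only point deserving a moment's attention is the bookkeeping of index ranges, in particular the observation that the constraint ``$j\ne 1$'' in \ref{R5.1} is the same as ``$|0-j|\ge 2$'', so that \ref{RB4} imposes no relation between $\pi_0$ and $\pi_1$ beyond what \ref{RB3} already records. Hence the ``hard part'' is merely to confirm that nothing is gained or lost in passing from the list \ref{R1}--\ref{R3}, \ref{R4.1}--\ref{R6.1} to the list \ref{RB1}--\ref{RB4}.
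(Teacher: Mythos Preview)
Your proof is correct and follows exactly the paper's approach, which simply states that the result is obvious from Corollary~\ref{relRn0} by setting $\pi_0 = P_1$. You have merely spelled out in detail the bookkeeping of relations that the paper leaves implicit.
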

\begin{proof}
It is obvious from Corollary~\ref{relRn0} by letting $\pi_0 = P_1$.
\end{proof}

\begin{remark}\label{quotient_type_B}
  We can see that $G_n^0$ is a quotient of the Hecke monoid of type $B_n$ at
  $q=0$ (see~\cite{Fayers.2005} for a study of the representation theory of it).
\end{remark}

\subsection{Definition by action and \texorpdfstring{$R$}{R}-codes}
\label{sec-def-act}

The goal of this section is to construct a bijection between $R_n$ and $R_n^0$
which generalizes the bijection between $\SG{n}$ and $\Hnz$. In the case of
permutations, one argues using Matsumoto's theorem: recall that it says that two
reduced words (words of minimal length) generate the same permutation if and
only if they are congruent using only braid Relations~\ref{relM2}, \ref{relM3}
and not the quadratic one. Then, for a permutation $\sigma$, one defines
$\pi_\sigma \eqdef \pi_{i_1}\dots \pi_{i_k}$ where $s_{i_1}\dots s_{i_k}$ is any
reduced word for $\sigma$. Thanks to Matsumoto's theorem the result is
independent of the choice of the reduced word. One concludes that $H_n^0$ is
nothing but the set $\{\pi_\sigma\mid\sigma\in\SG{n}\}$ and therefore of
cardinality $n!$. The same argument is in fact valid for the algebras
$H_n(q)$ and is often found in this case in the literature.

Unfortunately, as noticed by Solomon~\cite[p.~209, bottom of the middle
paragraph]{Solomon.2004}, such a theorem is not known for the rook monoid. So
we choose a different path (see the discussion in the outline) effectively
ending up proving the generalization of Matsumoto's theorem. We introduce another monoid defined in term of a faithful action of
$R_n^0$ on $R_n$. It will turns out (Corollary~\ref{action_mulr_R0}) that this
action is nothing but the right multiplication.

\begin{definition}\label{def_Ro_fun}
  We denote $F_n^1$ the submonoid of the monoid of functions on $R_n$ generated by
  $s_1, \dots, s_{n-1}, P_1$ acting on $R_n$ by right multiplication of
  matrices. Namely, if $(r_1, \dots, r_n)$ is a rook then:
  \begin{align}
    \label{act_rook_sk}
    (r_1 \dots r_n)\cdot s_k &= r_1 r_2 \dots r_{k-1} r_{k+1} r_k r_{k+2} \dots r_n\,,\\
    \label{act_rook_P1}
    (r_1 \dots r_n)\cdot P_1 &= 0\, r_2  \dots r_n\,.
  \end{align}

  We denote $F_n^0$ the submonoid of the monoid of functions generated by
  $\pi_1, \dots, \pi_{n-1}, P_1$ acting on $R_n$ by the action:
  \begin{equation}
    \label{act_rook_pik}
    (r_1 \dots r_n)\cdot \pi_k \eqdef
      \begin{cases}
        (r_1 \dots r_n)\cdot s_k & \text{ if } r_k < r_{k+1}, \\
        (r_1 \dots r_n) & \text{ otherwise.}
      \end{cases}
    \end{equation}
\end{definition}
\begin{remark}\label{presFnFn0}
  A simple calculation shows that the generators of $F_n^0$ satisfy the
  Relations~\ref{R1} to~\ref{R3} and \ref{R4.1} to \ref{R6.1}. Similarly,
  the generators of $F_n^1$ satisfy
  \begin{alignat}{2}
    s_i^2 &= 1    &\qquad& 1\leq i \leq n-1,
  \tag{Rs1}\label{Rs1}\\
  s_is_{i+1}s_i &= s_{i+1}s_is_{i+1} &&  1\leq i \leq n-2,
  \tag{Rs2}\label{Rs2}\\
  s_is_j &= s_js_i                &&\vert i-j\vert \geq 2.
  \tag{Rs3}\label{Rs3}\\
  P_1^2 &= P_1, &\quad&
  \tag{Rs4.1}\label{Rs4.1}\\
  P_1 s_j &= s_j P_1           && j \neq 1
  \tag{Rs5.1}\label{Rs5.1}\\
  s_1P_1s_1P_1&=P_1s_1P_1=P_1s_1P_1s_1. &&
  \tag{Rs6.1}\label{Rs6.1}
\end{alignat}
We denote by $G_n^1$ the monoid generated by $\lbrace s_1, \dots, s_{n-1},
P_1\rbrace$ with the relations above.  We can rephrase Remark~\ref{presFnFn0}
as follows: there are two surjective morphisms of monoids:
\begin{equation}
  \Phi_1 : G_n^1 \twoheadrightarrow F_n^1
  \qquad\text{ and }\qquad
 \Phi_0 : G_n^0 \twoheadrightarrow F_n^0.
\end{equation}
Furthermore, these two morphisms give us an action of $G_n^1$ and $G_n^0$ over
$R_n$.
\end{remark}
\begin{remark}\label{kill-other-0}
  The map $(r_1r_2) \mapsto(r_10)$ is equal to the composition $s_1P_1s_1$ and
  therefore belongs to $F_2^1$. However, it can be checked that it does not
  belong to $F_2^0$, neither to its algebra $\C[F_2^0]$. More generally, in
  $F_n^0$, for any subset $I\subset\interv{1}{n}$ which is not of the form
  $\interv{1}{k}$ the maps replacing the letter in position $i$ by $0$,
  does not belong to $F_n^0$ or $\C[F_n^0]$.
\end{remark}
Our goal is now to show that $\Phi_1$ and $\Phi_0$ are actually isomorphisms.

\subsubsection{\texorpdfstring{$R$}{R}-code and rooks}\label{sec_code}

In this subsection, we build a combinatorial tool, namely the \emph{$R$-code},
which allows us to define for any rook a canonical reduced word. A classical
way to do that for permutations is to proceed by induction along the chain of
inclusions
$\SG1\subset\SG2\subset\dots\subset\SG{n-1}\subset\SG{n}\subset\dots$ noticing
that the number of cosets in $\SG{n-1} \backslash \SG{n}$ is exactly $n$. One
can for example take $\{1, s_{n-1}, s_{n-1}s_{n-2},
s_{n-1}s_{n-2}s_{n-3},\dots\}$ as a cross-section. In a more combinatorial
setting, this is equivalent to say that given a permutation
$\sigma\in\SG{n-1}$ there are exactly $n$ permutations which give back
$\sigma$ when erasing the letter $n$. Therefore any permutation can be encoded
by a sequence $\sfc=(\sfc_1\dots\sfc_n)$ satisfying $0\leq\sfc_i< i$. This can
be done by the \emph{Lehmer code}~(\cite[Page~330]{Lothaire.2002}) of the
permutation, or a variant of thereof.  See Remark~\ref{remark-compare-codes}
for a definition of the Lehmer code and how it relates to our generalized
$R$-code.  \medskip

The case of rooks is more involved because some times $n$ does not appear in
the rook vector and to go from $R_n$ to $R_{n-1}$ one has to erase a $0$. It
turns out that the right choice to minimize the number of moves (since we are
looking for a reduced word) is to remove the \emph{first} $0$. However, this
means that, given a rook $r$ of size $n-1$, the number of rook of size $n$
which give back $r$ depends on $r$ and more precisely on the position of its
first $0$.  We now unravel the corresponding combinatorics, starting with some
notations:
\begin{notation}[Word and Letter]
  The length of a word $w$ is denoted by $\ell(w)$.  The empty word (the only
  word of length $0$) will be denoted by $\emptyw$. When we need to
  distinguish between words and letters (for example when matching a word), we
  use the convention that words will be underlined as in $\un{w}$, while $i$
  will rather be a single letter. If the letter $i\in \Z$ appears in the word
  $\un{w}$ we write it $i\in \un{w}$; it means for example that $\un{w}$ can
  be written as $\un{w}=\un{a}i\un{b}$.
\end{notation}
\begin{definition}\label{def_encode}
  For a rook $r$ of length $n$, we call the \emph{code of $r$} and denote
  $\code(r)$ the word on $\Z$ of length $n$ defined recursively by:
  \begin{enumerate}
  \item If $n=0$ then $\code(\emptyw) \eqdef \emptyw$.
  \item Otherwise, if $n\in r$, then $r$ can be written uniquely
    $r=\un{b}n\un{e}$. Let $r'\eqdef\un{b}\un{e}$ (the subword of
    $r$ where the unique occurrence of $n$ is removed). Then $\code(r)
    \eqdef \code(r')\cdot (\ell(\un{b})+1)$.
  \item Otherwise, $n\notin r$, and therefore $r$ can be written uniquely
    $r=\un{b}0\un{e}$ with $0\notin \un{b}$. Let $r'\eqdef\un{b}\un{e}$ (the
    subword of $r$ where the first $0$ is removed). Then $\code(r) \eqdef
    \code(r')\cdot (-\ell(\un{b}))$.
  \end{enumerate}
\end{definition}
\begin{notation}
  When writing a code, $\ov{i}$ stands for $-i$ for $i\in \N$.
\end{notation}
\begin{example}
Let $r= 02401$. Then:
$$\code(02401) = \code(2401)\mathsf{0} = \code(201)\mathsf{20 }=
\code(21)\mathsf{\ov{1}20} = \code(1)\mathsf{1\ov{1}20} =
\mathsf{11\ov{1}20}.$$
An easy remark is that $r$ is a permutation if and only if its code contains
only positive letters.
\end{example}

\begin{remark}\label{remark-compare-codes}
  Recall that the Lehmer code~\cite[Page~330]{Lothaire.2002} of a permutation is
  defined by
  \begin{equation}
    \Lehmer(\sigma)=\sfc_1\dots \sfc_n
    \qquad\text{with}\qquad
    \sfc_i \eqdef |\{ j > i \mid \sigma(i) > \sigma(j) \}|\,.
  \end{equation}
  When $r$ is actually a permutation $\sigma$, the codes are related as
  follows: write the code as $\code(\sigma) = r_1\dots r_n$ and the Lehmer
  code as $\Lehmer(\sigma)=\sfc_1\dots \sfc_n$. Then
  $\sfc_i=\sigma(i)-r_{\sigma(i)}$. For example taking $\sigma = 516432$, then
  $\code(\sigma)=122213$ and $\Lehmer(\sigma)=403210$.
\end{remark}

We now describe a subset $C_n$ of $\Z^n$ that we call the set of $R$-codes. We
will see in Proposition~\ref{PZcode} and Theorem~\ref{codedecode} that it is
exactly the set of codes of a rook.
\begin{definition}\label{def_r_code}
  To each word $\un{w}$ over $\Z$, we associate a nonnegative number
  $m(\un{w})$ defined recursively by: $m(\emptyw) = 0$ and for any word $\un{w}$
  and any letter $d$,
  \begin{equation}
    m(\un{w} d) \eqdef
    \begin{cases}
      -d  & \text{if $d\leq0$\,,} \\
      m(\un{w})+1 & \text{if $0<d\leq m(\un{w})+1$\,,} \\
      m(\un{w})   & \text{if $d> m(\un{w})+1$\,.}
    \end{cases}
  \end{equation}

  A word on $\Z$ is an \emph{$R$-code} if it can be obtained by the following
  recursive construction: the empty word $\emptyw$ is a code, and $\un{w}d$ is
  a code if $\un{w}$ is a code and $-m(\un{w}) \leq d \leq n$. We denote by
  $C_n$ the set of $R$-codes of size $n$.
\end{definition}

\begin{notation}
  In order to make the difference between the rook $1234$ and the code
  $\mathsf{1234}$, we make the convention to write codes in sans-serif
  font.
\end{notation}

\begin{example}
  $m(\mathsf{12836427}) = 5$: there is no negative letter, thus it only
  increments on integers $1$, $2$, $3$, $4$ and $2$ in this order.
  $m(\mathsf{364\ov{4}294\ov{3}52538}) = 6$. Indeed, the last negative letter
  is $-3$, thus $m(\mathsf{364\ov{4}294\ov{3}}) = 3$ and it increments on
  letters $2$, $5$ and $3$ in this order. Similarly, $m(\mathsf{021\ov{1}1254}) = 4$.
\end{example}

\begin{example} Here are the first $R$-codes:
    $C_1=\{\sf 0, 1\}, C_2=\{\sf 00, 01, 02, 1\ov{1}, 10, 11, 12\}$ and
    \begin{multline*}
      C_3=\{\sf 000, 001, 002, 003, 01\ov{1}, 010, 011, 012, 013, 020, 021,
        022, 023,1\ov{1}\ov{1}, 1\ov{1}0, 1\ov{1}1, 1\ov{1}2, \\
        \qquad\sf 1\ov{1}3, 100, 101, 102, 103, 11\ov{2}, 11\ov{1}, 110, 111, 112,
        113, 12\ov{2}, 12\ov{1}, 120, 121, 122, 123\}\,.
    \end{multline*}
    The $R$-codes of $C_9$ with prefix $\mathsf{021\ov{1}1254}$ are
    ${\sf021\ov{1}1254\ov{4}}$ , $\mathsf{021\ov{1}1254\ov{3}}$ , $\dots$,
    ${\sf021\ov{1}12549}$.
\end{example}
\begin{remark}
  If $\sfc\in C_n$, then necessarily we have $m(\sfc) \leq \ell(\sfc)$.
\end{remark}

\begin{definition}
  We note $\PZ$ (standing for First Zero) the function defined for any rook
  $r=r_1\dots r_n$ by
  \begin{equation}
    \PZ (r) \eqdef \min\{ j\leq n \, \vert \, r_j = 0 \} - 1 \,,
  \end{equation}
  with the convention that if there is no zero among the $r_j$ (that is $r$ is in
  fact a permutation), we set $\PZ(r)=n$.
\end{definition}
We now show that $\code$ is a bijection between $R$-codes and rook vectors
of the same length.
\begin{proposition}\label{PZcode}
  If $r\in R_n$ then $\code(r) \in C_n$ and $\PZ(r) = m(\code(r)).$
\end{proposition}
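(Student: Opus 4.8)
The plan is to prove both statements simultaneously by induction on $n$, following the recursive structure of both $\code$ and $m$. The base case $n=0$ is immediate: $\code(\emptyw)=\emptyw$, $m(\emptyw)=0$, and $\PZ(\emptyw)=0$ by the convention. For the inductive step, I would take a rook $r$ of length $n\geq 1$, split into the two cases of Definition~\ref{def_encode}, and in each case apply the inductive hypothesis to the shorter rook $r'$ of length $n-1$ (so $\code(r')\in C_{n-1}$ and $\PZ(r')=m(\code(r'))$), then verify two things: that the appended letter lies in the admissible range $-m(\code(r'))\leq d\leq n$ required by Definition~\ref{def_r_code}, so that $\code(r)\in C_n$; and that the update rule for $m$ applied to this letter reproduces $\PZ(r)$.

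First I would handle the case $n\in r$, writing $r=\un{b}n\un{e}$ with $r'=\un{b}\,\un{e}$ and $d=\ell(\un{b})+1$. Here $d\geq 1>0$, and $d\leq n$ since $r'$ has length $n-1$ so $\ell(\un{b})\leq n-1$. For the range condition I need $d\leq m(\code(r'))+1$, i.e. $\ell(\un{b})\leq m(\code(r'))=\PZ(r')$ (by induction): this holds because the prefix $\un{b}$ of $r'$ contains no $0$ — any $0$ of $r'$ before position $\ell(\un{b})+1$ would already have been a $0$ of $r$ before the $n$, but $\un{b}$ by definition is a prefix of $r$ containing no letter $n$, and actually the relevant point is that $\un{b}$ is precisely the part of $r$ to the left of $n$, so the first $0$ of $r$ (if any) occurs at position $>\ell(\un{b})$ or not at all, giving $\PZ(r)\geq\ell(\un{b})$, hence $\PZ(r')\geq\ell(\un{b})$ since removing $n$ does not move any earlier $0$. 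Then $m$ falls in its middle case and $m(\code(r))=m(\code(r'))+1 = \PZ(r')+1$; I must check this equals $\PZ(r)$. Removing $n$ from $r$ to get $r'$: if $r$ has no $0$ at all then $r'$ may or may not — but in fact if $n\in r$ and $r$ has no zero then $r$ is a permutation, $\PZ(r)=n$; meanwhile $r'$ is then a permutation of length $n-1$, $\PZ(r')=n-1$, and indeed $\PZ(r')+1=n$. If $r$ has a first $0$ at position $p$, then since $n$ sits somewhere in $r$, removing it shifts the first $0$ left by one if $n$ was before position $p$, leaving $\PZ(r')=\PZ(r)$; but we also argued $\PZ(r)\geq\ell(\un{b})$, and one checks carefully that $n$ appears at position $\ell(\un{b})+1\leq p$, so $n$ is before the first zero, hence $\PZ(r')=\PZ(r)-1$, giving $\PZ(r)=\PZ(r')+1$ as needed.

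Next the case $n\notin r$, so $r=\un{b}0\un{e}$ with $0\notin\un{b}$, $r'=\un{b}\,\un{e}$, and $d=-\ell(\un{b})\leq 0$. Then $m$ is in its first case: $m(\code(r))=-d=\ell(\un{b})=\PZ(r)$ directly from the definition of $\PZ$, since the first $0$ of $r$ is at position $\ell(\un{b})+1$. For the range condition $d\geq -m(\code(r'))$: I need $\ell(\un{b})\leq m(\code(r'))=\PZ(r')$ by induction; this holds because $r'=\un{b}\,\un{e}$ where $\un{b}$ contains no $0$, so the first $0$ of $r'$ (if any) is at position $>\ell(\un{b})$, hence $\PZ(r')\geq\ell(\un{b})$ — and if $\un{e}$ contains no $0$ either then $r'$ is a permutation of length $n-1$ with $\PZ(r')=n-1\geq\ell(\un{b})$. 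This establishes $\code(r)\in C_n$ and $\PZ(r)=m(\code(r))$.

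The main obstacle I anticipate is bookkeeping: in the first case one must be careful that the position of $n$ in $r$ is $\le$ the position of the first zero (so that deletion of $n$ shifts that zero), and one must separately track the degenerate situations where $r$ or $r'$ is actually a permutation (no zero), where the $\PZ$ convention $\PZ=n$ kicks in. Once these edge cases are dispatched, the argument is a routine verification matching the three branches of the $m$-recursion against the two branches of the $\code$-recursion plus the permutation/non-permutation dichotomy. No deep idea is needed beyond carefully aligning the recursions.
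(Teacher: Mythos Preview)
Your treatment of the case $n\notin r$ is fine, but there is a genuine gap in the case $n\in r$. You assert that the prefix $\un{b}$ (the part of $r$ to the left of the letter $n$) contains no $0$, equivalently that the first $0$ of $r$ lies to the right of $n$. This is simply false: nothing in Definition~\ref{def_encode} prevents zeros from appearing before $n$. Take $r=023\in R_3$: here $\un{b}=02$, $r'=02$, $d=\ell(\un{b})+1=3$, and $m(\code(r'))=m(\mathsf{02})=0$, so $d=3>m(\code(r'))+1=1$. Thus $m$ lands in its \emph{third} branch, giving $m(\code(r))=m(\code(r'))=0=\PZ(r)$, whereas your argument would put it in the middle branch and yield $m(\code(r))=1\neq\PZ(r)$.

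The fix is exactly the case split you skipped: when $n\in r$, distinguish whether $0\in\un{b}$ or not. If $0\notin\un{b}$, then indeed $\PZ(r')\geq\ell(\un{b})$, so $d\leq m(\code(r'))+1$, the middle branch applies, and $\PZ(r)=\PZ(r')+1=m(\code(r'))+1=m(\code(r))$. If $0\in\un{b}$, then the first zero of $r$ sits inside $\un{b}$, hence $\PZ(r)=\PZ(r')<\ell(\un{b})$, so $d=\ell(\un{b})+1>m(\code(r'))+1$, the third branch applies, and $m(\code(r))=m(\code(r'))=\PZ(r')=\PZ(r)$. (Your remark that ``one must be careful that the position of $n$ is $\leq$ the position of the first zero'' is not something to be \emph{verified}; it is the dichotomy on which the argument hinges.) This is precisely how the paper organizes the proof.
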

\begin{proof}
  We show the result by induction on $n$: it is trivial for $n = 0$. We now
  show the induction step, assuming that it holds for $n-1$. Let $r \in
  R_n$. Let us first prove the case $n \in r$. We then write $r =
  \un{b}n\un{e}$ and $r' = \un{b}\un{e}$. By induction $\code(r') \in C_{n-1}$
  and $\code(r) = \code(r')\cdot (\ell(\un{b})+1)$ with $(\ell(\un{b})+1) \in
  \interv{1}{n} \subset \interv{-m(\code(r'))}{n}$ so
  that $r \in R_n$.

  The only remaining case is $n\notin r$. We write $r = \un{b}0\un{e}$ with
  $0\notin \un{b}$, $r' = \un{b}\un{e}$. By induction $\code(r') \in C_{n-1}$
  and $\code(r) = \code(r') \cdot -\ell(\un{b})$. By definition of $\PZ$ we
  have $\ell(\un{b}) = \PZ(r')$, and $\PZ(r') = m(\code(r'))$ by induction. So
  $-\ell(\un{b}) \in \interv{ -m(\code(r')) }{0}\subset
  \interv{-m(\code(r'))}{n}$ and so $r \in R_n$.

  We have proven the first part of the statement in every case. Let us now
  focus on the second part. First of all, if $0\notin r$, then $r$ is a
  permutation and its code $\sfc_1\dots\sfc_n$ is such that $0<\sfc_i\leq i$. As a
  consequence $m(\code(r)) = n = \PZ(r)$.

  We finally need to prove that when $0\in r$ then $\PZ(r)=m(\code(r))$,
  knowing by induction that $\PZ(r') = m(\code(r'))$.  We distinguish 
  the two nontrivial cases:
  \begin{itemize}
  \item[$\bullet$] If $n \in r$ then $r = \un{b}n\un{e}$ and
    $r' = \un{b}\un{e}$. The number of $0$ of $r$ is the same that~$r'$. We have
    two possibilities:
    \begin{itemize}
    \item If $0 \notin \un{b}$ then the first zero of $r'$ is in
      $\un{e}$. Thus $\PZ(r) = \PZ(r')+1$. But also $\code(r) = \code(r')\cdot
      (\ell(\un{b}) +1)$ with $\ell(\un{b})+1 \leq m(\code(r')) = \PZ(r')$. So,
      by definition of $m$, $m(\code(r)) = m(\code(r'))+1$. Hence the
      equality.
    \item If $0 \in \un{b}$ then $\PZ(r) = \PZ(r')$. Furthermore $m(\code(r))
      = m(\code(r'))$ by definition of $m$. So that we get $\PZ(r) = \PZ(r') =
      m(\code(r')) = m(\code(r))$.
    \end{itemize}
  \item[$\bullet$] If $n \notin r$, then $r = \un{b}0\un{e}$ with $0\notin \un{b}$,
    $r' = \un{b}\un{e}$ and $\code(r) = \code(r')\cdot -\ell(\un{b})$. Since
    ${0 \notin \un{b}}$ we have $\PZ(r) = \ell(\un{b})$. We write $\code(r) =
    \sfc_1\dots\sfc_n$ then $\PZ(r) = -\sfc_n$ by definition of
    $\code$. Furthermore $m(\code(r)) = -\sfc_n$ so that
    $\PZ(r)=m(\code(r))$. \qedhere
\end{itemize}
\end{proof}

We now define a candidate for the converse bijection.
\begin{definition}\label{def_decode}
  For $\sfc=\sfc_1\dots \sfc_n\in C_n$, we define inductively a vector
  $\decode(\sfc)$ as follows: first, set $\decode(\mathsf{\emptyw}) \eqdef \emptyw$.
  Then, let $r' \eqdef \decode(\sfc_1\dots \sfc_{n-1})$. If $\sfc_n$ is
  nonnegative, insert the letter $n$ in $r'$ at the position
  $\sfc_n$. Otherwise insert $0$ at $-\sfc_n+1$.
\end{definition}
\begin{proposition}
If $\sfc \in C_n$ then $\decode(\sfc) \in R_n$. 
\end{proposition}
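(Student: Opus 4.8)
The plan is to prove by induction on $n$ that $\decode(\sfc) \in R_n$ for every $\sfc \in C_n$, and simultaneously track the relationship between $\PZ(\decode(\sfc))$ and $m(\sfc)$, so that the induction hypothesis is strong enough to control the insertion positions. Indeed, the base case $n = 0$ is trivial since $\decode(\mathsf{\emptyw}) = \emptyw \in R_0$. For the induction step, write $\sfc = \sfc_1 \dots \sfc_n$ with $\sfc' \eqdef \sfc_1 \dots \sfc_{n-1}$; by definition of $C_n$ we have $\sfc' \in C_{n-1}$ and $-m(\sfc') \leq \sfc_n \leq n$. By the induction hypothesis, $r' \eqdef \decode(\sfc') \in R_{n-1}$, a rook vector of length $n-1$.

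The key point is to verify that the insertion operation defining $\decode(\sfc)$ from $r'$ actually produces a valid rook vector of length $n$. If $\sfc_n \geq 0$, we insert the letter $n$ into $r'$ at position $\sfc_n$ (meaning after the first $\sfc_n$ letters); since $0 \leq \sfc_n \leq n$, this is a legitimate position in a word of length $n-1$, yielding a word of length $n$. The resulting vector still has at most one occurrence of each value in $\{1, \dots, n\}$ (the value $n$ is new and $r'$ had no letter equal to $n$), and at most one nonzero entry condition is preserved, so it is a rook vector in $R_n$. If $\sfc_n < 0$, we insert a $0$ at position $-\sfc_n + 1$; here I need $-\sfc_n + 1 \leq n$, i.e. $-\sfc_n \leq n-1$, which follows from $-\sfc_n \leq m(\sfc') \leq \ell(\sfc') = n-1$ (using the Remark that $m(\sfc) \leq \ell(\sfc)$ for $\sfc \in C_n$, applied to $\sfc'$). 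Inserting a $0$ never violates the rook condition, so $\decode(\sfc) \in R_n$.

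The only real subtlety — and the step I expect to require the most care — is making sure the insertion positions are in range, which is exactly why I fold the inequality $m(\sfc') \leq \ell(\sfc')$ into the argument; without it, the case $\sfc_n < 0$ could in principle ask for insertion beyond the end of the word. One should also double-check that in the case $\sfc_n < 0$ with $-\sfc_n = m(\sfc')$, the quantity $m(\sfc')$ genuinely counts something bounded by $n-1$, which is the content of that Remark. Beyond this, everything is a routine unwinding of Definition~\ref{def_decode} and Definition~\ref{def_r_code}: the class of rook vectors is closed under inserting a fresh maximal value at any position and under inserting a $0$ at any position, so the recursion stays inside $R_n$ at every stage. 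This gives the claim; the sharper statement that $\code$ and $\decode$ are mutually inverse bijections (Theorem~\ref{codedecode}) will presumably be handled afterward, but it is not needed here.
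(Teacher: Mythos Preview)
Your proof is correct and follows the same inductive unpacking of Definition~\ref{def_decode} that the paper uses, though the paper's own proof is extremely terse (two sentences: ``only $0$ can be repeated'' and ``the size is clear''). Your explicit verification that the insertion positions are in range---in particular the bound $-\sfc_n \leq m(\sfc') \leq \ell(\sfc') = n-1$ for the negative case---is a detail the paper simply glosses over, so in that sense you are being more careful than the text itself.

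One small remark: in your opening plan you announce that you will ``simultaneously track the relationship between $\PZ(\decode(\sfc))$ and $m(\sfc)$,'' but you never actually use that relationship in the body of the argument---the only auxiliary fact you invoke is the remark $m(\sfc') \leq \ell(\sfc')$, which is independent of $\PZ$. The full identification $\PZ(\decode(\sfc)) = m(\sfc)$ is the content of the paper's next Proposition~\ref{PZdecode}, and it is not needed here. So your proof stands, but the stated plan slightly overstates what is required.
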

\begin{proof}
  It is clear that we get a rook, since only $0$ can be repeated. The size is also clear. 
\end{proof}

\begin{example}
  Let $\sfc = \mathsf{11\ov{1}20}$. Then $\decode(\mathsf{1})=1$.
  $\decode(\mathsf{11}) = 21$.
  $\decode(\mathsf{11\ov{1}}) =201$.
  $\decode(\mathsf{11\ov{1}2}) = 2401$.
  Finally  $\decode(\mathsf{11\ov{1}20}) = 02401$.
\end{example}

\begin{proposition}\label{PZdecode}
  Let $\sfc = \sfc_1\dots \sfc_n \in C_n$. Then $\PZ(\decode(\sfc)) =
  m(\sfc)$. In particular, if $\sfc_n \leq 0$, $\PZ(\decode(\sfc))=-\sfc_n$.
\end{proposition}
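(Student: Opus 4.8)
The plan is to prove $\PZ(\decode(\sfc))=m(\sfc)$ by induction on $n$, mirroring the case analysis of the proof of Proposition~\ref{PZcode} but running the recursion defining $\decode$ rather than the one defining $\code$. For $n=0$ the statement is trivial: $\decode(\emptyw)=\emptyw$ is a (length-$0$) permutation, so $\PZ(\emptyw)=0=m(\emptyw)$.

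For the inductive step, write $\sfc=\sfc_1\dots\sfc_n$, put $\sfc'\eqdef\sfc_1\dots\sfc_{n-1}$ (which is again an $R$-code of size $n-1$ by Definition~\ref{def_r_code}) and $r'\eqdef\decode(\sfc')$, so that by induction $p\eqdef\PZ(r')=m(\sfc')$. Thus either $r'$ is a permutation of length $n-1$ (and then $p=n-1$), or the first $0$ of $r'$ occupies position $p+1\le n-1$; moreover the last-letter constraint of an $R$-code gives $-p\le\sfc_n\le n$. I would then split according to whether $\sfc_n$ is positive or not, which is essentially the split in the definition of $\decode$.

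If $\sfc_n\le 0$, then $\decode(\sfc)$ is obtained from $r'$ by inserting a $0$ in position $-\sfc_n+1$. Since $-\sfc_n\le p$, the first $-\sfc_n$ letters of $r'$ are nonzero, so the newly inserted $0$ becomes the first $0$ of $\decode(\sfc)$; hence $\PZ(\decode(\sfc))=-\sfc_n$. On the other hand $m(\sfc)=-\sfc_n$ by the first branch of the definition of $m$, so the two agree, and this also proves the ``in particular'' clause. If $\sfc_n\ge 1$, then $\decode(\sfc)$ is obtained from $r'$ by inserting the letter $n$ in position $\sfc_n$. The key elementary observation is that such an insertion pushes the first $0$ of $r'$ one step to the right exactly when $\sfc_n\le p+1$ (this includes the subcase where $r'$ is a permutation, in which $\sfc_n\le n=p+1$ holds automatically and $\decode(\sfc)$ stays a permutation), and leaves it in place when $\sfc_n\ge p+2$. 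Therefore $\PZ(\decode(\sfc))$ equals $p+1$ in the first case and $p$ in the second, matching the values $m(\sfc)=m(\sfc')+1=p+1$ for $0<\sfc_n\le p+1$ and $m(\sfc)=m(\sfc')=p$ for $\sfc_n>p+1$ prescribed by the last two branches of the definition of $m$. This closes the induction.

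The proof is essentially bookkeeping; the only point deserving care is the ``insertion of $n$'' case, where one must correctly locate the first zero of the resulting word in terms of the position $\sfc_n$ of the inserted letter and the position $p+1$ of the first zero of $r'$, and separately treat the degenerate situation in which $r'$ has no zero at all. No genuinely new idea is needed beyond what already appears in the proof of Proposition~\ref{PZcode}.
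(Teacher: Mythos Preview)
Your proof is correct and follows essentially the same inductive approach as the paper, with the same case split on the sign of $\sfc_n$ and the same bookkeeping on how the insertion moves (or does not move) the first zero. Your treatment of the case $\sfc_n\le 0$ is even slightly more streamlined than the paper's, which separates out the sub-case where $\sfc'$ has only positive entries; you handle both uniformly via the inequality $-\sfc_n\le p$.
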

\begin{proof}
  We prove it by induction on $n$. The assertion is clear for words of length
  $0$.  Otherwise, assume that we have proved the result for all words
  of length strictly less than $n$. Let $\sfb \eqdef \sfc_1\dots \sfc_{n-1}$.
  \begin{itemize}
  \item[$\bullet$] If $\sfc_n > 0$: by induction $\PZ(\decode(\sfb)) =
    m(\sfb)$. But $\PZ(\decode(\sfc)) = \PZ(\decode(\sfb))+1$ if $\sfc_n \leq
    \PZ(\decode(\sfb)) +1$ and $\PZ(\decode(\sfc)) = \PZ(\decode(\sfb))$
    otherwise. By definition of function $m$ we get $\PZ(\decode(\sfc)) =
    m(\sfc)$.
  \item[$\bullet$] If $\sfc_n \leq 0$ we have two possibilities:
    \begin{itemize}
    \item If $\forall i \leq n-1$, $\sfc_i > 0$ then $0 \notin \decode(\sfb)$
      by definition, and so $\decode(\sfc)$ has a single zero which is the one
      inserted between $\decode(\sfb)$ and $\decode(\sfc)$, and is thus at
      position $(-\sfc_n +1) -1 = m(\sfc)$.
    \item Otherwise, by induction $m(\sfb) = \PZ(\decode(\sfb))$. By definition of $m$, $m(\sfc) = -\sfc_n$. By definition of $R$-codes we get $-\sfc_n \leq m(\sfb) = \PZ(\decode(\sfb))$. Thus the zero inserted at position $-\sfc_n+1$ is left to the former first zero. \\
      Finally $\PZ(\decode(\sfc)) = -\sfc_n = m(\sfc)$.  \qedhere
    \end{itemize}
  \end{itemize}
\end{proof}

\begin{theorem}\label{codedecode}
  The functions $\code$ and $\decode$ are inverse one from the other: for all
  $\sfc \in C_n$ and $r \in R_n$ then
  \begin{equation}
    \code(\decode(\sfc)) = \sfc
    \qandq
    \decode(\code(r)) = r.
  \end{equation}
\end{theorem}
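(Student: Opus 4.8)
The plan is to establish the two identities $\decode(\code(r))=r$ and $\code(\decode(\sfc))=\sfc$ separately, each by induction on the common length $n$, the case $n=0$ being trivial. Both inductions follow the shape of the recursive definitions, peeling off the last letter of the code and, correspondingly, either the (unique) occurrence of $n$ or the \emph{first} $0$ in the rook; the companion facts $\PZ(r)=m(\code(r))$ (Proposition~\ref{PZcode}) and $\PZ(\decode(\sfc))=m(\sfc)$ (Proposition~\ref{PZdecode}) are exactly what is needed to make the ``first zero'' bookkeeping consistent between the two maps.

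For $\decode(\code(r))=r$, I would split along the two cases in the definition of $\code$. If $n\in r$, write $r=\un{b}n\un{e}$ and $r'=\un{b}\un{e}$, so $\code(r)=\code(r')\cdot(\ell(\un{b})+1)$ with positive last letter; the induction hypothesis gives $\decode(\code(r'))=r'$, and $\decode$ then inserts $n$ into $r'$ at position $\ell(\un{b})+1$, rebuilding $\un{b}n\un{e}=r$. If $n\notin r$, write $r=\un{b}0\un{e}$ with $0\notin\un{b}$ and $r'=\un{b}\un{e}$, so $\code(r)=\code(r')\cdot(-\ell(\un{b}))$ with nonpositive last letter; again $\decode(\code(r'))=r'$, and $\decode$ inserts a $0$ into $r'$ at position $\ell(\un{b})+1$, which produces $\un{b}0\un{e}=r$ since $0\notin\un{b}$.

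For $\code(\decode(\sfc))=\sfc$, write $\sfc=\sfb d$ with $\sfb\in C_{n-1}$, put $r'=\decode(\sfb)$ (so $\code(r')=\sfb$ by induction) and $r=\decode(\sfc)$. If $d>0$, then $r$ is $r'$ with the letter $n$ inserted at position $d$; as $n$ cannot occur in the length-$(n-1)$ rook $r'$, we get $r=\un{b}n\un{e}$ with $\ell(\un{b})=d-1$ and $\un{b}\un{e}=r'$, hence $\code(r)=\code(r')\cdot(\ell(\un{b})+1)=\sfb d=\sfc$. If $d\le 0$, then $r$ is $r'$ with a $0$ inserted at position $-d+1$, and $n\notin r$, so $\code$ must strip off the first $0$ of $r$; by Proposition~\ref{PZdecode} that first $0$ sits at position $\PZ(r)+1=m(\sfc)+1=-d+1$, i.e. it is precisely the $0$ just inserted. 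Removing it returns $r'$, the prefix before it has length $-d$, and therefore $\code(r)=\code(r')\cdot(-(-d))=\sfb d=\sfc$.

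The only genuinely delicate step is the last one: one must know that the $0$ inserted by $\decode$ at the final stage is exactly the leftmost $0$ of the resulting rook, so that $\code$ strips off the same letter — this does not follow from the recursive definitions alone and is the reason Propositions~\ref{PZcode} and~\ref{PZdecode} were proved beforehand. Once both compositions are shown to be the identity, $\code$ and $\decode$ are mutually inverse bijections between $R_n$ and $C_n$; in particular this also identifies $C_n$ with the set of codes of rooks of size $n$.
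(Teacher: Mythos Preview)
Your proof is correct and follows essentially the same approach as the paper: induction on $n$, splitting according to the sign of the last code letter (equivalently whether $n\in r$), and invoking Proposition~\ref{PZdecode} in the nonpositive case to ensure that the $0$ inserted by $\decode$ is precisely the first $0$ that $\code$ removes. Your identification of this last point as the only delicate step is exactly right; note that Proposition~\ref{PZcode} is not actually needed in the argument, only Proposition~\ref{PZdecode}.
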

\begin{proof}
  We proceed by induction on the size $n$ of $r$ and $\sfc$.  The result is
  clear if $n = 0$. Assume now that we have proved the result up to $n-1$. We
  begin with rooks. Let $r\in R_n$.
  \begin{itemize}
  \item If $n \in r$, write $r = \un{b}n\un{e}$ and $r' =
    \un{b}\un{e}$ with $\decode(\code(r')) = r'$ by induction.
    Since ${\code(r) = \code(r')
      \cdot (\ell(\un{b})+1)}$, $\code(r)$ is the word $\code(r')$ with the
    position of $n$ as final letter. Since $\decode(\code(r))$ inserts in
    $\decode(\code(r')) = r'$ the $n$ at this position, we have the result.
  \item Otherwise $\code(r)$ is the word $\code(r')$ with at the end the
    opposite of the position minus $1$ of the first zero of $r$. But
    $\decode(\code(r))$ insert a zero in $\decode(\code(r')) = r'$ at this
    position.
  \end{itemize}
  We now do the proof for $R$-codes in a similar way: Let
  $\sfc = \sfc_1\dots \sfc_n \in C_n$ and $\sfc' = \sfc_1\dots \sfc_{n-1}$,
  and assume that $\code(\decode(\sfc')) = \sfc'$.
  \begin{itemize}
  \item If ${c_n} > 0$ then $\decode(\sfc)$ inserts in $\decode(\sfc')$ a
    letter $n$ at position ${c_n}$. Computing further $\code(\decode(\sfc))$
    adds at the end of $\code(\decode(\sfc')) = \sfc'$ this position.
  \item Otherwise, $\decode(\sfc)$ insert in $\decode(\sfc')$ a letter $0$ in
    position $-{c_n}+1$. Since it is the first zero of $\decode(\sfc)$ by
    Proposition \ref{PZdecode}, $\code(\decode(\sfc))$ add ${c_n}$ at the end
    of ${\code(\decode(\sfc')) = \sfc'}$. \qedhere
  \end{itemize}
\end{proof}
In particular, there are as many $R$-codes of size $n$ as rooks:
\begin{corollary}\label{CnRn}
For all $n$: $\vert C_n \vert = \vert R_n \vert$.
\end{corollary}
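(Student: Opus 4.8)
The plan is to observe that Corollary~\ref{CnRn} is an immediate consequence of the bijection established in Theorem~\ref{codedecode}. First I would recall that Proposition~\ref{PZcode} guarantees that $\code$ is a well-defined map $R_n \to C_n$ and Proposition~\ref{PZdecode} (together with the preceding proposition on $\decode$) guarantees that $\decode$ is a well-defined map $C_n \to R_n$. Then Theorem~\ref{codedecode} asserts that these two maps are mutually inverse, so each is a bijection between $R_n$ and $C_n$.

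Concretely, the argument is just: $\code : R_n \to C_n$ is injective because $\decode \circ \code = \mathrm{id}_{R_n}$, and it is surjective because $\code \circ \decode = \mathrm{id}_{C_n}$ exhibits every $\sfc \in C_n$ as $\code(\decode(\sfc))$. A bijection between two finite sets forces $\vert C_n\vert = \vert R_n\vert$.

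There is essentially no obstacle here: all the real work has already been carried out in Propositions~\ref{PZcode}, \ref{PZdecode} and Theorem~\ref{codedecode}. The only thing worth flagging is that the statement implicitly uses finiteness of both sets (which is clear, since $R_n \subseteq \{0,1,\dots,n\}^n$ is finite, and $C_n \subseteq \Z^n$ is finite because each $R$-code has bounded entries), but even this can be bypassed since a map with a two-sided inverse is a bijection regardless.

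\begin{proof}
  By Proposition~\ref{PZcode} the map $\code$ sends $R_n$ into $C_n$, and by
  Proposition~\ref{PZdecode} (and the proposition preceding it) the map
  $\decode$ sends $C_n$ into $R_n$. Theorem~\ref{codedecode} shows that
  $\decode\circ\code = \mathrm{id}_{R_n}$ and $\code\circ\decode =
  \mathrm{id}_{C_n}$, so $\code : R_n \to C_n$ is a bijection with inverse
  $\decode$. Hence $\vert C_n \vert = \vert R_n \vert$.
\end{proof}
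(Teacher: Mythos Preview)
Your proposal is correct and matches the paper's approach exactly: the corollary is stated immediately after Theorem~\ref{codedecode} with no separate proof, since the bijection $\code \leftrightarrow \decode$ established there directly yields equality of cardinalities. Your write-up simply makes explicit what the paper leaves implicit.
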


\subsubsection{Counting rook according to the position of the first \texorpdfstring{$0$}{0}}
\label{subsubsec-count-rook}

This subsection is a little detour through enumerative combinatorics and
permutations statistics.  It is interesting to count rooks of size $n$
according to the position of the first zero. We denote
$R(n,k)\eqdef\{r\in R_n\mid\PZ(r)=k \}$ and $r(n,k)\eqdef|R(n,k)|$. Here are the first
values:
\begin{equation*}
\begin{array}{r|rrrrrrrr}
n/k & 0 & 1 & 2 & 3 & 4 & 5 & 6 & 7\\\hline
0 & 1 \\
1 & 1&     1 \\
2 & 3&     2&     2 \\
3 & 13&    9&     6&     6 \\
4 & 73&    52&    36&    24&    24 \\
5 & 501&   365&   260&   180&   120&   120 \\
6 & 4051&  3006&  2190&  1560&  1080&  720&   720 \\
7 & 37633& 28357& 21042& 15330& 10920& 7560& 5040& 5040 \\
\end{array}
\end{equation*}
For example, here are the rooks of size $2$ sorted according to their first
zero:
\[
R(2,0)=\{00, 01, 02\},\qquad
R(2,1)=\{10, 20\},\qquad
R(2,2)=\{12, 21\}
\]
\begin{lemma}\label{lemma-count-fz}
  The sequence $r(n,k)$ verifies the following recurrence relation for $n>0$:
  \begin{equation}
    r(n,k) = k\,r(n-1, k-1) + (n-k-1)r(n-1,k) + \sum_{i=k}^{n} r(n-1,i)\,,
  \end{equation}
  with the convention that $r(n,k)=0$ if $k<0$ or $k>n$.
\end{lemma}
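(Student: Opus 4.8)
The idea is to set up a bijective decomposition of $R(n,k)$ by analyzing, for a rook $r\in R_n$ with $\PZ(r)=k$, what happens to its first zero when we pass to a smaller rook by removing the letter $n$ (if $n$ occurs in $r$) or by deleting the first zero (if it does not). This mirrors exactly the recursive structure used in Definition~\ref{def_encode} and will produce the three terms on the right-hand side. Concretely, split $R(n,k)$ according to whether $n$ appears in $r$ or not. I will handle the ``$n\notin r$'' case first, since it is the source of the summation term.

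If $n\notin r$, write $r=\un b 0\un e$ with $0\notin\un b$, so $\ell(\un b)=k$, and let $r'=\un b\un e\in R_{n-1}$. Deleting the first zero removes the marker of $\PZ$, so the position of the first zero of $r'$ is somewhere in $\interv{k}{n-1}$ (it is $\geq k$ because the entries of $\un b$ are nonzero, and it can be any value up to $n-1$, or $n-1$ meaning $r'$ is a permutation — note $r'$ has size $n-1$ so ``no zero'' corresponds to $\PZ(r')=n-1$). Conversely, given $r'\in R_{n-1}$ with $\PZ(r')=i\geq k$, there is exactly one way to insert a new zero so that it becomes the \emph{first} zero in position $k+1$: insert $0$ just before position $k+1$. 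This gives a bijection between $\{r\in R(n,k)\mid n\notin r\}$ and $\bigsqcup_{i=k}^{n-1}R(n-1,i)$, hence this piece contributes $\sum_{i=k}^{n-1}r(n-1,i)$. Since $r(n-1,i)=0$ for $i>n-1$, this equals $\sum_{i=k}^{n}r(n-1,i)$, matching the summation in the statement.

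If $n\in r$, write $r=\un b n\un e$ and $r'=\un b\un e\in R_{n-1}$; here $r$ and $r'$ have the same zeros (in the same relative order), only shifted by the deletion of $n$. Two subcases: if $n$ sits at a position $\leq k$ (i.e. $\ell(\un b)+1\leq k$, inside or just at the ``prefix before the first zero''), then removing it shifts the first zero left by one, so $\PZ(r')=k-1$; there are exactly $k$ choices of where $n$ could have been (positions $1,\dots,k$), giving the term $k\,r(n-1,k-1)$. If instead $n$ sits at a position $>k+1$ — strictly to the right of the first zero — then $\PZ(r')=k$, and the number of positions available is the total number of positions in $r$ ($n$ of them) minus the $k+1$ positions at or before the first zero, i.e. $n-k-1$; but one must double-check the boundary: $n$ cannot land exactly in the first-zero slot of $r'$ in a way that changes $\PZ$, and the case $\PZ(r)=n$ (permutation) needs separate bookkeeping — there $r'$ is also a permutation and insertion of $n$ in any of $n$ positions keeps $\PZ=n$, while $k\,r(n-1,k-1)=n\,r(n-1,n-1)$ and $(n-k-1)r(n-1,k)=0$, so the formula still checks out with the stated conventions. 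Summing the two subcases of ``$n\in r$'' yields $k\,r(n-1,k-1)+(n-k-1)r(n-1,k)$, and adding the ``$n\notin r$'' contribution gives the claimed recurrence.

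The main obstacle is purely a matter of careful case analysis at the boundaries: getting the off-by-one counts right for where $n$ may be inserted relative to the first zero (positions $\leq k$ versus $\geq k+2$, with the position-$(k+1)$ slot being exactly where a fresh first zero would go), and correctly folding the permutation case $k=n$ (no zero) into the same formula via the convention $\PZ=n$ and $r(n,k)=0$ outside $\interv0n$. Everything else is a routine insertion/deletion bijection. One can alternatively verify the result purely on $R$-codes using Definition~\ref{def_r_code} and Proposition~\ref{PZcode}: a code of length $n$ is a code $\sfc'$ of length $n-1$ with a final letter $d\in\interv{-m(\sfc')}{n}$, and $m$ of the new code is $-d$ if $d\le 0$, $m(\sfc')+1$ if $0<d\le m(\sfc')+1$, and $m(\sfc')$ if $d>m(\sfc')+1$; counting codes with $m=k$ by the value of the last letter reproduces the three terms directly, which may be the cleaner write-up.
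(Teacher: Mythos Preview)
Your argument is correct and follows essentially the same route as the paper: both proofs split $R(n,k)$ according to whether $n$ appears in the rook, use the deletion/insertion of $n$ (yielding the two ``product'' terms, according to whether $n$ lies among the first $k$ positions or strictly after the first zero) and the deletion/insertion of the first zero (yielding the summation term). Your additional remarks on the boundary cases $k=n$ and the alternative count via $R$-codes and the function $m$ are sound and amount to the same bijection through Proposition~\ref{PZcode}.
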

\begin{proof}
  To get the set of rooks of size $n$ from the set of rooks of size $n-1$, one
  has either to insert $n$ or to insert a $0$. To make sure to get each rook
  only once, one has to insert $0$ only before the first zero. According to
  the definition of $\PZ$, in what follows, positions are counted starting with
  $0$. Then
  \begin{itemize}
  \item $k\cdot r(n-1, k-1)$ is the number of rooks where $n$ is (and
    therefore was inserted) before position $\PZ$.
  \item $k(n-k-1)r(n-1,k)$ is the number of rooks where $n$ is after the first
    $0$.
  \item $\sum_{i=k}^{n} r(n-1,i)$ is the number of rooks where $n$ does not
    appear. They are obtained by inserting a $0$ in position $k$, in a rook
    $r$ such that $i \eqdef \PZ(r) \geq k$.  \qedhere
  \end{itemize}
\end{proof}
One recognizes the triangle A206703 of~\cite{OEIS}. It is defined as the number
$C(n,k)$ of the injective partial function on $\interv{1}{n}$ where the union the cycle supports has cardinality $k$. Recall that a rook vector $r=(r_1,\dots,r_n)$
can been seen as an injective partial function by setting $r(i)=r_i$ if $r_i\neq 0$ and
$r(i)$ is undefined otherwise. We consider the generalization of the notion of
cycle of permutations to rooks (See~\cite[Example II.21, page
132]{FlajoletSedgewick.2009}), this combinatorics was studied in details
in~\cite{GanMaz.2006}): the sequence of the iterated images $(r^n(i))_{n\in\N}$
of some integer $i$ under $r$ can have one of the two following behaviors:
\begin{itemize}
\item Either for some $n\geq 1$ one has $r^n(i)=i$ (the sequence must be
  periodic and not only ultimately periodic because of injectivity). We say
  that $i$ belongs to a \emph{cycle} of $r$.
\item Or starting from some $n\geq 1$ the iterated image $r^n(i)$ stops being
  defined; we say that $i$ belongs to a \emph{chain} of $r$.
\end{itemize}
Rooks can therefore be decomposed as two sets: the set of its cycles
(counting fixed points) and the set of its maximal chains, that is maximal
finite sequences $(c_1,\dots,c_k)$ such that $r(c_i)=c_{i+1}$ if $i<k$ and
undefined otherwise. Clearly, the \emph{supports} of the cycles and the chains
of the rook $r$ form a partition of $\interv{1}{n}$.

\begin{example}\label{expl_cycles_chains}
  Consider the rook vector $r=205109706$, it corresponds to the function
  \[\left(\begin{array}{ccccccccc}
      1&2&3&4&5&6&7&8&9\\
      2&\bot&5&1&\bot&9&7&\bot&6\\
    \end{array}\right),\]
  where $\bot$ means undefined. It has two cycles $(6,9)$ and $(7)$ and three
  maximal chains $(4,1,2)$, $(3,5)$ and $(8)$.
\end{example}
\begin{proposition}
   Let $C(n,k)$ be the set of rooks of size $n$ where the union of cycle supports 
  has cardinality $k$, and denote by $c(n,k)$ its cardinality. Then $c(n,k)=r(n,k)$ 
  for all $k$ and $n$.
\end{proposition}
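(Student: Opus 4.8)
The plan is to show that $c(n,k)$ satisfies the \emph{same} recurrence as $r(n,k)$, namely the one of Lemma~\ref{lemma-count-fz}, together with the same boundary data, and then conclude by induction on $n$; indeed that recurrence expresses level $n$ in terms of level $n-1$ and, with $r(0,0)=1$ and $r(n,k)=0$ for $k\notin\interv{0}{n}$, it has a unique solution. The starting point is the structural decomposition of rooks already described before the statement: since a rook $r$ is an injective partial function, every orbit under iteration is either a genuine cycle or a genuine finite chain (no $\rho$-shaped orbit can occur, precisely because two distinct elements cannot share an image). Hence $R_n$ is in bijection with the data of a subset $S\subseteq\interv{1}{n}$ — the union of the cycle supports — together with a permutation of $S$ and an unordered partition of $\interv{1}{n}\setminus S$ into nonempty sequences (the maximal chains, read as \emph{lists}). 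Writing $L_\ell$ for the number of such ``sets of lists'' on an $\ell$-element set (with $L_0=1$; this is sequence A000262 in~\cite{OEIS}), we obtain $c(n,k)=\binom{n}{k}\,k!\,L_{n-k}$.

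Next I would establish the recurrence $L_\ell=(\ell-1)L_{\ell-1}+\sum_{t=0}^{\ell-1}\binom{\ell-1}{t}(\ell-1-t)!\,L_t$ for $\ell\ge 1$, by examining the list containing $\ell$ in a set of lists on $\interv{1}{\ell}$. If $\ell$ is the first entry of its list, that list is determined by the ordered tuple of its remaining entries — an ordered subset of $\interv{1}{\ell-1}$ of some size $\ell-1-t$ — together with an arbitrary set of lists on the $t$ leftover elements; summing over $t$ gives the sum in the formula. If $\ell$ is not the first entry, deleting $\ell$ and splicing its predecessor to its successor (if any) is a bijection onto pairs (choice of predecessor among the $\ell-1$ other elements) $\times$ (set of lists on $\interv{1}{\ell-1}$), contributing $(\ell-1)L_{\ell-1}$.

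Finally I would substitute $c(n,k)=\binom{n}{k}k!\,L_{n-k}=\frac{n!}{(n-k)!}L_{n-k}$ into the right-hand side of Lemma~\ref{lemma-count-fz}, reindex the tail $\sum_{i=k}^{n}c(n-1,i)$ by $t=n-1-i$ (the $i=n$ summand vanishes, $c(n-1,n)=0$), divide through by $(n-1)!$, and set $\ell=n-k$. The term $k\,c(n-1,k-1)$ is proportional to $L_{\ell}$, the \emph{same} $L$ as the left-hand side, so it cancels part of it, leaving $\tfrac{n-k}{\ell!}L_\ell=\tfrac{1}{(\ell-1)!}L_\ell$ on the left; after multiplying by $(\ell-1)!$ the identity collapses to exactly the $L_\ell$-recurrence of the previous paragraph (using $\frac{(\ell-1)!}{t!}=\binom{\ell-1}{t}(\ell-1-t)!$). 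With the base case $c(0,0)=L_0=1=r(0,0)$ and the evident vanishing for $k\notin\interv{0}{n}$, induction on $n$ gives $c(n,k)=r(n,k)$ for all $n$ and $k$.

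The main obstacle is not conceptual but bookkeeping: one has to be careful with the index shifts and with the boundary cases $i=n$ and $k=n$, where several of the factorials are ``negative-factorial zero'', so that the somewhat unwieldy recurrence of Lemma~\ref{lemma-count-fz} really does reduce coefficient-by-coefficient to the clean recurrence for $L_\ell$. As an alternative one could run the whole argument through bivariate exponential generating functions — the cycle/chain decomposition directly yields $\sum_{n,k}c(n,k)\,u^k x^n/n! = \frac{1}{1-ux}\exp\!\bigl(\frac{x}{1-x}\bigr)$, and the same series can be extracted from Lemma~\ref{lemma-count-fz} — but that route trades the bookkeeping above for the task of identifying $\sum_\ell L_\ell\,x^\ell/\ell! = \exp\!\bigl(\frac{x}{1-x}\bigr)$, which the recurrence argument avoids.
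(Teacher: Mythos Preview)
Your argument is correct: the closed form $c(n,k)=\binom{n}{k}\,k!\,L_{n-k}$ follows from the cycle/chain decomposition, your recurrence for $L_\ell$ is properly justified, and the algebraic reduction of Lemma~\ref{lemma-count-fz} to that $L_\ell$-recurrence checks out (including the boundary cases $k=0$ and $k=n$, where the formal negative-index terms all vanish as you indicate). The initial condition and the uniqueness of the solution to the recurrence then close the induction.

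The paper, however, proceeds quite differently: it gives an explicit bijection $\Phi:C(n,k)\to R(n,k)$, an adaptation of Foata's fundamental transformation. One writes the cycles of $r$ starting from their minima, sorted by those minima in decreasing order, to form a word $\CyW(r)$; one writes the maximal chains backwards with their first (now leading) element replaced by $0$, sorted by their last element in increasing order, to form $\ChW(r)$; and sets $\Phi(r)=\CyW(r)\cdot\ChW(r)$. The inverse is described explicitly as well, via cutting at zeros and at lower records. Compared to your approach, the paper's bijection yields more structural information --- an element-by-element correspondence rather than just equality of counts --- and connects the result to the classical Foata story. Your route, on the other hand, isolates the pleasant closed form $c(n,k)=\dfrac{n!}{(n-k)!}\,L_{n-k}$, which makes the dependence on $k$ transparent and immediately suggests the generating-function identity $\sum_{n,k} c(n,k)\,u^k x^n/n! = \dfrac{1}{1-ux}\exp\!\bigl(\tfrac{x}{1-x}\bigr)$ that the paper records (with $u\leftrightarrow y$) just after the proposition.
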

We show here the rooks of size $2$ sorted according to their number of points
in a cycle:
\[
C(2,0)=\{00, 01, 20\}\,,\qquad
C(2,1)=\{10, 02\}\,,\qquad
C(2,2)=\{12, 21\}\,.
\]
\begin{proof}
  We define a bijection $\Phi$ from $C(n,k)$ to $R(n,k)$. It is an adaptation
  of Foata fundamental transformation (See~\cite[Chapter.~10]{Lothaire.2002}). For
  $r\in C(n,k)$, write its cycles starting from the smallest elements and sort
  the set of cycles according to their smallest element in decreasing
  order. By concatenating those words one obtains a first word
  $\CyW(r)$. Second, write the maximal chain backward replacing the last
  element of the chain (now the first of the word) by a $0$ and sort the
  chains according to their last element in increasing order. By concatenating
  those words one obtains a second word $\ChW(r)$. Now define
  $\Phi(r)\eqdef\CyW(r)\cdot\ChW(r)$. Then $\Phi(r)$ is a rook of size $n$
  whose first zero is in position~$k$, so that $\Phi(r)\in R(n,k)$.

  We now explain how to recover $r$ from $s\eqdef\Phi(r)$, that is the converse
  bijection: cut $s$ at the places just before the zeros replacing those
  zeros by the values missing in $s$ in increasing order. The various words
  obtained except the first one are the (reversed) chains of $r$. One recover
  the cycle of $r$ by cutting the first word before the \emph{lower records}
  (elements that are only preceded by larger ones) and interpret each part as a
  cycle. Knowing all the chains and cycles of $r$ is sufficient to recover
  $r$.
\end{proof}
\begin{example} We get back to Example~\ref{expl_cycles_chains}.  The rook
  vector $r=205109706$ has cycles $(6,9)$ and $(7)$ and chains $(4,1,2)$,
  $(3,5)$ and $(8)$. Therefore $\CyW(r)=769$ and $\ChW(r)=014030$, so that
  $\Phi(r)=769014030$.

  To demonstrate the computation of the inverse, we start with
  $769014030$. The missing numbers are $\{2,5,8\}$. Replacing the zeros by
  them and cutting gives $769|214|53|8$. So that we already got the chains
  $(4,1,2)$, $(3,5)$ and $(8)$. Now the word $769$ is cut as $7|69$ recovering
  the cycles.
\end{example}
Using the so-called symbolic method (See~\cite[Example II.21, page
132]{FlajoletSedgewick.2009}), the decomposition by cycles and chains shows
that the generating series is given by
\begin{equation}
  \sum_{n,k} r(n,k)\,\frac{x^ny^k}{n!} = \frac{\exp(x/(1-x))}{1-xy}\,.
\end{equation}

\subsection{Equivalence of the definitions of \texorpdfstring{$\Rnz$}{Rn0}}
\label{sec_action_Rn}

We now get back to the $0$-rook monoid.
Thanks to the previously defined $R$-code, we are now in position to define
the canonical reduced word $\pi_{\sfc}$ associated to a $R$-code and thus to a
rook. To define $\pi_{\sfc}$, the following notation is handy:
\begin{notation}
  For $i, n \in \mathbb{N}$ we write (with $\pi_0 \eqdef P_1$):
  $$ \col{n}{i} \eqdef \begin{cases}
    1                                     & \text{if } i>n, \\
    \pi_n\dots \pi_i                      & \text{if } 0\leq i\leq n,\\
    \pi_n\dots \pi_1\pi_0\pi_1\dots \pi_i & \text{if } i<0,
  \end{cases}
  \text{ and } \quad
  \cols{n}{i} \eqdef \begin{cases}
    1                                     & \text{if } i>n, \\
    s_n\dots s_i                          & \text{if } 0\leq i\leq n,\\
    s_n\dots s_1\pi_0 s_1\dots s_i        & \text{if } i<0.
  \end{cases}$$
\end{notation}
\textit{A priori} $\col{n}{i}\in G_n^0$ and $\cols{n}{i}\in G_n^1$. Using
$\Phi_0$ and $\Phi_1$ of Remark \ref{presFnFn0} we will sometimes see them as
elements of $F_n^0$ or $F_n^1$.

\begin{definition}\label{def_word_code}
  For any $R$-code $\sfc = \sfc_1\dots \sfc_n \in C_n$, we define $\pi_\sfc
  \in G_n^0$ and $s_\sfc \in G_n^1$ by
  \begin{equation}
    \pi_{\sfc} \eqdef
    \col{0}{\sfc_1}\cdot\col{1}{\sfc_2}\cdot\dots\cdot\col{n-1}{\sfc_n}\,,
    \qandq
    s_{\sfc} \eqdef
    \cols{0}{\sfc_1}\cdot\cols{1}{\sfc_2}\cdot\dots\cdot\cols{n-1}{\sfc_n}\,.
  \end{equation}
\end{definition}

\begin{example}
  Let $\sfc = \mathsf{11\ov{1}20}$. Then:
  $$
  \pi_\sfc = \col{0}{1}\cdot \col{1}{1}\cdot \col{2}{-1}\cdot \col{3}{2} \cdot
  \col{4}{0} = 1\cdot\pi_1 \cdot \pi_2\pi_1 \pi_0 \pi_1 \cdot \pi_3\pi_2 \cdot
  \pi_4\pi_3\pi_2\pi_1\pi_0\,.
  $$
  Going further, let us show how $\pi_\sfc$ acts on the identity rook $12345$:
  \begin{multline*}
    12345 \cdot \pi_c = 12345 \cdot
    1\cdot\pi_1\cdot\col{2}{-1}\cdot\col{3}{2}\cdot\col{4}{0} =
    21\mathbf{3}45\cdot\pi_2\pi_1 \pi_0 \pi_1\cdot\col{3}{2}\cdot\col{4}{0} = \\
    2\mathbf{3}145\cdot\pi_1 \pi_0 \pi_1\cdot\col{3}{2}\cdot\col{4}{0} =
    \mathbf{3}2145\cdot\pi_0 \pi_1\cdot\col{3}{2}\cdot\col{4}{0}=
    \mathbf{0}2145\cdot\pi_1\cdot\col{3}{2}\cdot\col{4}{0} = \\
    2\mathbf{0}145\cdot\pi_3\pi_2\cdot\col{4}{0} =
    24015\cdot\pi_4\pi_3\pi_2\pi_1\pi_0 =
    02401 = \decode(\sfc)\,.
  \end{multline*}
  We see that the $i$-th column of $\pi_\sfc$ places the letter $i$ (or the
  corresponding zero), at its place, effectively decoding $\sfc$. This is
  actually a general fact and it is also true replacing $\pi_i$ by $s_i$:
\end{example}
\begin{proposition}\label{action1n}
  If $r \in R_n$ then $1_n\cdot \pi_{\code(r)} = 1_n\cdot
  s_{\code(r)} = r.$
\end{proposition}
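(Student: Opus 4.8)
The plan is to induct on $n$, mirroring the two cases in the recursive definition of $\code$ (Definition~\ref{def_encode}); the case $n=0$ is vacuous. For the inductive step, put $\sfc=\code(r)$, write $\sfc'=\sfc_1\dots\sfc_{n-1}$, and let $r'\in R_{n-1}$ be the rook produced in the recursion of Definition~\ref{def_encode}, so that $\sfc'=\code(r')$ and, by the induction hypothesis, $1_{n-1}\cdot\pi_{\sfc'}=1_{n-1}\cdot s_{\sfc'}=r'$. The first point to record is a ``padding'' observation: every generator occurring in $\pi_{\sfc'}$, resp.\ in $s_{\sfc'}$, lies among $\pi_0,\dots,\pi_{n-2}$, resp.\ among $s_1,\dots,s_{n-2}$ and $\pi_0=P_1$, and by \eqref{act_rook_sk}, \eqref{act_rook_P1} and \eqref{act_rook_pik} each such generator inspects and modifies only the first $n-1$ letters of a rook while leaving the last one fixed. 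Consequently $1_n\cdot\pi_{\sfc'}=r'n$ and $1_n\cdot s_{\sfc'}=r'n$, where $r'n=(a_1,\dots,a_{n-1},n)$ denotes $r'=(a_1,\dots,a_{n-1})$ with the letter $n$ appended; in particular, after this first phase the letter $n$ — the \emph{maximal} entry — still occupies position $n$.

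Suppose $n\in r$, and write $r=\un{b}n\un{e}$. Then $\sfc_n=p\eqdef\ell(\un{b})+1\in\{1,\dots,n\}$ and the last factor of $\pi_\sfc$ is $\col{n-1}{p}=\pi_{n-1}\dots\pi_p$ (the empty product when $p=n$). Applied to $(a_1,\dots,a_{n-1},n)$, the conditional swap in $\pi_{n-1}$ fires because $a_{n-1}<n$, and iterating, $\pi_{n-1}\dots\pi_p$ shifts $n$ left by $n-p$ steps, yielding $(a_1,\dots,a_{p-1},n,a_p,\dots,a_{n-1})$. As $\un{b}=(a_1,\dots,a_{p-1})$ and $\un{e}=(a_p,\dots,a_{n-1})$, this is precisely $r$. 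The computation with $s_\sfc$ is identical since each $s_i$ swaps unconditionally.

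Suppose instead $n\notin r$, and write $r=\un{b}0\un{e}$ with $0\notin\un{b}$. Then $\sfc_n=-q$ with $q\eqdef\ell(\un{b})$, and the last factor is $\col{n-1}{-q}=\pi_{n-1}\dots\pi_1\pi_0\pi_1\dots\pi_q$. Starting from $(a_1,\dots,a_{n-1},n)$: the block $\pi_{n-1}\dots\pi_1$ drives the maximal letter $n$ to the front (every comparison favours the swap); then $\pi_0=P_1$ replaces it by $0$; then $\pi_1\dots\pi_q$ pushes this $0$ rightwards past $a_1,\dots,a_q$, each swap $\pi_j$ firing because $a_j$, being the $j$-th letter of $\un{b}$, is nonzero. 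The result is $(a_1,\dots,a_q,0,a_{q+1},\dots,a_{n-1})=\un{b}\,0\,\un{e}=r$. Again the $s$-version runs the same way, with $\pi_0=P_1$ (not an ``$s_0$'') carrying out the deletion and no conditions to check for the $s_i$.

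The argument is essentially bookkeeping; the two genuinely substantive ingredients are exactly what is needed to make all the conditional swaps in the $\pi$-version fire, namely that $n$ remains the maximal entry throughout the first phase and that the first $q$ letters of $r'$ (the entries of $\un{b}$) are nonzero, both of which are immediate from the construction of $\code$. The remaining care is in the degenerate cases $p=n$ (trivial last factor, $\un{e}$ empty) and $q=0$ ($\un{b}$ empty, last factor $\pi_{n-1}\dots\pi_0$), which I would check explicitly but which present no obstacle; the main thing to get right is the matching between the position at which $\code$ records $n$ (or the first $0$) and the number of one-step shifts performed by the last factor.
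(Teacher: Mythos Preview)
Your proof is correct and follows essentially the same approach as the paper's own proof: induction on $n$, the padding observation that $\pi_{\sfc'}$ only touches the first $n-1$ letters so $1_n\cdot\pi_{\sfc'}=r'n$, and then the two cases according to whether $n\in r$ or not. The paper simply says ``a direct calculation gives us'' where you have spelled out explicitly why every conditional $\pi_i$ fires (maximality of $n$ for the leftward phase, nonvanishing of the $\un{b}$-entries for the rightward phase), which is a welcome clarification.
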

\begin{proof}
  We will prove it by induction on $n$. It is evident for $n = 0$.
  Assume that we have proved the result up to step $n-1$, and let $r \in R_n$.

  If $n \in r$ then $r$ writes $r=\un{b}n\un{e}$, $r' = \un{b}\un{e}$ and
  $\code(r) = \code(r')\cdot (\ell(\un{b}) +1)$.  By definition we have
  $\pi_{\code(r)} = \pi_{\code(r')}
  \col{n}{\ell(\un{b})+1}$. % t_\pi(n, \ell(d)+1)
  By induction $1_{n-1}\cdot \pi_{\code(r')} = r'$. So $1_{n}\cdot
  \pi_{\code(r')} = r'n = \un{be}n$, since $\pi_{\code(r')}$ only acts on the
  first $n-1$ coordinates. Since $0 < \ell(\un{b})+1 \leq n$, a direct
  calculation gives us $\un{be}n\cdot \col{n}{\ell(\un{b})+1} = \un{b}n\un{e}
  = r$. So $1_n\cdot \pi_{\code(r)} = r.$

  Otherwise $n \notin r$. Then $r$ writes $r=\un{b}0\un{e}$ with $0\notin
  \un{b}$, $r' = \un{be}$ and $\code(r) = \code(r')\cdot -\ell(\un{b})$. We
  get in the exact same way $1_{n}\cdot \pi_{\code(r')} = r'n =
  \un{be}n$. Since $-\ell(\un{b}) \leq 0$, a simple calculation gives us
  $\un{be}n \cdot \col{n}{-\ell(\un{b})} = \un{b}0\un{e} = r$. So $1_n\cdot
  \pi_{\code(r)} = r.$

  The same proof works \emph{mutatis mutandis} for $s$.
\end{proof}

\begin{corollary}\label{plusdeRn0}
  For all $n$, $\vert G_n^0\vert \geq \vert F_n^0\vert \geq \vert R_n\vert =
  \vert C_n\vert$ et $\vert G_n^1\vert \geq \vert F_n^1\vert \geq \vert
  R_n\vert = \vert C_n\vert$.
\end{corollary}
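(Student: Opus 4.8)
The plan is to chain together three facts already established in the excerpt: the surjectivity of the presentation-to-functions morphisms, the decoding property of the canonical words, and the bijection between rooks and $R$-codes.

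First I would dispose of the outer inequalities $\vert G_n^0\vert \geq \vert F_n^0\vert$ and $\vert G_n^1\vert \geq \vert F_n^1\vert$: these are immediate because Remark~\ref{presFnFn0} exhibits surjective monoid morphisms $\Phi_0 : G_n^0 \twoheadrightarrow F_n^0$ and $\Phi_1 : G_n^1 \twoheadrightarrow F_n^1$, and a surjection cannot increase cardinality.

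Next, for $\vert F_n^0\vert \geq \vert R_n\vert$ I would build an explicit injection $R_n \hookrightarrow F_n^0$. For $r \in R_n$, the word $\pi_{\code(r)} \in G_n^0$ has image $\Phi_0(\pi_{\code(r)}) \in F_n^0$, which is a function on $R_n$. Proposition~\ref{action1n} says precisely that this function sends the identity rook $1_n$ to $r$. Hence if $r \neq r'$ then $\Phi_0(\pi_{\code(r)})$ and $\Phi_0(\pi_{\code(r')})$ disagree at $1_n$ and so are distinct elements of $F_n^0$; the map $r \mapsto \Phi_0(\pi_{\code(r)})$ is therefore injective, giving $\vert F_n^0\vert \geq \vert R_n\vert$. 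The same argument with $s_{\code(r)}$ in place of $\pi_{\code(r)}$ and $\Phi_1$ in place of $\Phi_0$ yields $\vert F_n^1\vert \geq \vert R_n\vert$. Finally, $\vert R_n\vert = \vert C_n\vert$ is exactly Corollary~\ref{CnRn}.

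There is no real obstacle here — the corollary is purely a bookkeeping step that records the lower bounds which, combined later with a matching upper bound, will force all the cardinalities to coincide (Corollary~\ref{egalitetouscardinaux}). The only thing worth emphasizing in the write-up is that the injectivity uses nothing beyond evaluating the operators at the single point $1_n$, so that Proposition~\ref{action1n} does all the work.
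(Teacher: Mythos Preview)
Your argument is correct and matches the paper's proof essentially line for line: distinctness of the $\pi_{\code(r)}$ (resp.\ $s_{\code(r)}$) is obtained by evaluating at $1_n$ via Proposition~\ref{action1n}, the equality $|R_n|=|C_n|$ is Corollary~\ref{CnRn}, and the outer inequalities come from the surjections of Remark~\ref{presFnFn0}. The only difference is that you spell out the morphisms $\Phi_0,\Phi_1$ explicitly, which the paper leaves implicit.
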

\begin{proof}
  All the functions $\pi_{\code(r)}$ and $s_{\code(r)}$ for $r\in R_n$ are
  distinct since they have a distinct action on identity $1_n$. We conclude
  with Corollary \ref{CnRn} and Remark \ref{presFnFn0}.
\end{proof}

The next step is to transfer on $R$-codes the action on rooks:
\begin{definition}\label{defactioncode}
  For $\sfc= \sfc_1\dots \sfc_n \in C_n$ and
  $t\in\lbrace\pi_0,\pi_1,\dots,\pi_{n-1}\rbrace\subset G_n^0$ we define
  $\sfc\cdot t$ recursively the following way:
  \medskip

  \noindent $\bullet$\ If $n= 1$ and $t = \pi_0$ then $\sfc\cdot t \eqdef \mathsf{0}$.
  \medskip

  \noindent Otherwise we proceed by induction depending on the sign
  of ${c_n}$ and the value of $t$:
  \begin{itemize}
  \item[Pos.] If $\sfc_n=i\geq 1$:
    \begin{enumerate}[a.]
    \item If $t = \pi_i$ then $\sfc\cdot t \eqdef \sfc$.
    \item If $t = \pi_{i-1}$ then $\sfc\cdot t \eqdef \sfc_1 \dots \sfc_{n-1}
      (\sfc_n-1)$.
    \item If $t = \pi_j$ with $j<i-1$ then $\sfc\cdot t \eqdef \left[(\sfc_1\dots
        \sfc_{n-1})\cdot \pi_j\right] \sfc_n$.
    \item If $t = \pi_j$ with $j>i$ then $\sfc\cdot t \eqdef
      \left[(\sfc_1\dots\sfc_{n-1})\cdot \pi_{j-1}\right] \sfc_n$.
    \end{enumerate}
  \item[Neg.] If $\sfc_n = -i \leq 0$:
    \begin{enumerate}[a.]
    \item If $t = \pi_i$ then $\sfc\cdot t \eqdef \sfc$.
    \item If $t = \pi_j$ with $0<j<i$ then $\sfc\cdot t \eqdef
      \left[(\sfc_1\dots \sfc_{n-1})\cdot \pi_j\right] \sfc_n$.
    \item If $t = \pi_j$ with $j > i+1$ then $\sfc\cdot t \eqdef
      \left[(\sfc_1\dots \sfc_{n-1})\cdot \pi_{j-1}\right] \sfc_n$.
    \item If $t = \pi_0$ then $\sfc\cdot t \eqdef
      \left[(\sfc_1\dots \sfc_{n-1})\cdot \pi_0\dots \pi_{i-1}\right]
      \mathsf{0}$. (In particular $\sfc\cdot t = \sfc$ if $i = 0$.)
    \item If $t = \pi_{i+1}$ (thus $i\neq n$) we have two possibilities:
      \begin{itemize}
        % \item[$\alpha$.] Il existe $j \leq n-1$ avec $c_j = -i$ et pour tout
        %   $ j < k < n-1$ on a $c_k \geq i+2$ alors $c \cdot t = c$.
      \item[$\alpha$.] If $m(\sfc_1\dots \sfc_{n-1}) = i$ then
        $\sfc\cdot t \eqdef\sfc$.
      \item[$\beta$.] Otherwise $\sfc\cdot t \eqdef \sfc_1\dots \sfc_{n-1}\
        \ov{i+1}$.
      \end{itemize}
    \end{enumerate}
  \end{itemize}
\end{definition}

\begin{lemma}\label{codebiendef}
  For any code $\sfc= \sfc_1\dots \sfc_n \in C_n$ and generator
  $t\in\lbrace\pi_0,\pi_1,\dots,\pi_{n-1}\rbrace\subset G_n^0$, then
  $\sfc\cdot t$ is a code of size $n$.
\end{lemma}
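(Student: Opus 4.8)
The plan is to proceed by induction on $n=\ell(\sfc)$, proving simultaneously the two statements: (i) $\sfc\cdot t\in C_n$; and (ii) if $t=\pi_j$ with $j\geq 1$, then $m(\sfc\cdot t)\geq m(\sfc)$. Carrying (ii) along is the key point: the bare fact that $\sfc$ is a code will not quite suffice to close (i) in two of the subcases below, and it is (ii) for size $n-1$ that fills the gap. For $n=1$ the only generator is $\pi_0$ and $\sfc\cdot\pi_0=\mathsf 0\in C_1$, so both statements hold. Fix $n\geq 2$ and write $\sfc=\sfb\,\sfc_n$ with $\sfb\eqdef\sfc_1\dots\sfc_{n-1}$; since $\sfc\in C_n$ we have $\sfb\in C_{n-1}$, $-m(\sfb)\leq\sfc_n\leq n$, and $m(\sfb)\leq\ell(\sfb)=n-1$.

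Next I would inspect Definition~\ref{defactioncode} case by case. In every case the word $\sfc\cdot t$ has length $n$, and its prefix of length $n-1$ is either $\sfb$ unchanged, or obtained from $\sfb$ by applying one or more generators $\pi_\ell$ with $\ell\leq n-2$ (a single $\pi_{j'}$ in cases Pos.c, Pos.d, Neg.b, Neg.c, and the word $\pi_0\cdots\pi_{i-1}$ in case Neg.d, where $i\leq m(\sfb)\leq n-1$). Hence by the inductive hypothesis (i) in size $n-1$ — applied once or iterated — this prefix $\sfb'$ is an $R$-code of size $n-1$, and it remains only to check that the last letter $d$ of $\sfc\cdot t$ satisfies $-m(\sfb')\leq d\leq n$. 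The upper bound $d\leq n$ is clear in every case, since $d$ is negative, or equals $\sfc_n\leq n$, or equals $\sfc_n-1$, or equals $0$. The lower bound is automatic whenever $d\geq 0$, which disposes of all the positive cases together with Neg.a and Neg.d. Two cases remain. In case Neg.e$\beta$ one has $d=\ov{i+1}$ and $\sfb'=\sfb$; the case hypothesis is $m(\sfb)\neq i$, while $\sfc_n=-i$ being an admissible last letter of $\sfc$ forces $m(\sfb)\geq i$, hence $m(\sfb)\geq i+1$ and $-m(\sfb)\leq d$. In cases Neg.b and Neg.c one has $d=\sfc_n=-i$ and $\sfb'=\sfb\cdot\pi_{j'}$ with $j'\geq 1$ (namely $j'=j$ in Neg.b, $j'=j-1\geq 1$ in Neg.c); the inductive hypothesis (ii) then gives $m(\sfb')\geq m(\sfb)\geq i$, so once more $-m(\sfb')\leq d$. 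This establishes (i).

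It then remains to verify (ii) for size $n$, i.e.\ when $t=\pi_j$ with $j\geq 1$. If $\sfc\cdot t=\sfc$ (cases Pos.a, Neg.a, Neg.e$\alpha$) there is nothing to prove. If the last letter of $\sfc\cdot t$ equals $\sfc_n=-i\leq 0$ — this covers Neg.b and Neg.c, the only $j\geq 1$ cases in which a negative letter is re-appended — then the defining recursion for $m$ in Definition~\ref{def_r_code} yields $m(\sfc\cdot t)=i=m(\sfc)$ regardless of the prefix. Cases Pos.c and Pos.d, where $\sfc\cdot t=[\sfb\cdot\pi_{j'}]\,\sfc_n$ with $\sfc_n=i\geq 1$ and $j'\geq 1$, follow from the inductive $m(\sfb\cdot\pi_{j'})\geq m(\sfb)$ together with that same three‑case formula; and the last remaining case, Pos.b with $i\geq 2$ (so $\sfc\cdot t=\sfb\,(i-1)$), is settled by comparing $m(\sfb\,(i-1))$ with $m(\sfb\,i)=m(\sfc)$ directly from the formula, distinguishing $i\leq m(\sfb)+1$ from $i\geq m(\sfb)+2$. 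I do not expect any genuinely hard step: the proof is a systematic walk through the (many) cases of Definition~\ref{defactioncode}, and the only thing one must anticipate is that monotonicity of $m$ has to be threaded through the same induction, since in cases Neg.b and Neg.c the codehood bound $m(\sfb)\geq i$ alone is not enough to guarantee that $-m(\sfb\cdot\pi_{j'})\leq\sfc_n$.
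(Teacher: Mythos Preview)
Your argument is essentially the same as the paper's: induction on $n$ carrying along the auxiliary monotonicity statement $m(\sfc\cdot\pi_j)\geq m(\sfc)$ for $j\geq 1$, then a case walk through Definition~\ref{defactioncode}. One small slip: in your verification of (ii) you assert that Neg.b and Neg.c are ``the only $j\geq 1$ cases in which a negative letter is re-appended'', but Neg.e$\beta$ also has $j=i+1\geq 1$ and appends $\ov{i+1}$; there $m(\sfc\cdot t)=i+1>i=m(\sfc)$ directly from the last-letter rule, so the fix is immediate.
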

\begin{proof}
  We will prove the result by induction on $n$, and we will prove along the
  way that $m(\sfc\cdot t) \geq m(\sfc)$ if $t\neq \pi_0$. It is evident if $n=1$.

  For all subcases of case Pos.\ of Definition~\ref{defactioncode} it is evident that we get a code by induction
  since the last value is positive which do not lead to difficulties (we add
  to $\sfc_1\dots \sfc_{n-1}$ either $c_n$ or $c_{n}-1$). The property of
  function $m$ is clear for subcase a.\ In b.\ if $i-1\neq 0$ then ${c_n-1} > 0$
  so $m(\sfc_1 \dots \sfc_{n-1} (\sfc_n-1)) \geq m(\sfc)$. In c. the induction
  gives us $m((\sfc_1\dots \sfc_{n-1})\cdot \pi_j) \geq m(\sfc_1\dots
  \sfc_{n-1})$ and we conclude with the definition of $m$ to get
  $m(\left[(\sfc_1\dots \sfc_{n-1})\cdot \pi_j\right] \sfc_n) \geq
  m(\sfc_1\dots \sfc_{n-1}\sfc_n)$ (we do the same for d.).

  The subcase Neg.a. is clear. We prove subcases Neg.b. and Neg.c. using the
  induction on the condition of $m$ and the fact that in these two subcases
  $m(\sfc\cdot t) = \sfc_n = m(\sfc)$. The subcase Neg.d. is clear by induction
  (we do not have to prove the condition of $m$ here), as subcase Neg.e.$\alpha$.
  The subcase Neg.e.$\beta$ remains, whose condition gives us
  $m(\sfc_1\dots \sfc_{n-1}) > i$ (since $\sfc \in C_n$) so $\sfc\cdot t \in
  C_n$ and $m(\sfc\cdot t) = i+1 > m(\sfc) = i$.
\end{proof}

It therefore makes sense to apply the $\decode$ algorithm to $\sfc\cdot
t$. The crucial fact that motivated the definition of the action on a code is
that, forall $R$-code $\sfc$
\begin{equation}
  \decode(\sfc\cdot t) = \decode(\sfc)\cdot t\,.
\end{equation}
We could prove this fact right away, by a tedious explicit calculation,
distinguishing all cases. We urge the reader who want to understand the
motivation of Definition~\ref{defactioncode} to do so. For example, in case
Neg.e.$\alpha$, the assumption that $m(\sfc_1\dots \sfc_{n-1}) = i = -\sfc_n$
shows that, using Proposition~\ref{PZdecode}, $\PZ(\decode(\sfc_1\dots
\sfc_{n-1})) = i$. Therefore $\decode(\sfc_1\dots \sfc_{n-1})$ is of the
form
$$
\decode(\sfc_1\dots \sfc_{n-1}) = r_1\dots r_{i}0r_{i+2}\dots r_{n-1}n\,,
$$
where none of the $r_j$ for $j\leq i$ vanish. Decoding further, since
$\sfc_n=-i$, on finds that
$$
\decode(\sfc_1\dots \sfc_{n})=r_1\dots r_{i}00r_{i+2}\dots r_{n-1}\,.
$$
So that, $\decode(\sfc)\cdot\pi_{i+1}=\decode(\sfc)$. That's why, in case Neg.e.$\alpha$, we defined $\sfc\cdot\pi_{i+1}\eqdef\sfc$. Instead of doing the
proof in all other cases, we will get the properties as a corollary of the
much stronger fact that $\pi_{\sfc\cdot t} \eqz \pi_\sfc t$ using the
morphism $\Phi_0 : G_n^0 \twoheadrightarrow F_n^0$.
\bigskip

We turn now to the proof of that later statement. It will use intensively
the following technical lemma:
\begin{lemma}\label{lemtechnmultcolonne}
  If $i>0$, $k<0$ and $j<i-1$ we have the following identities:
  \begin{equation}
    \pi_j \col{i}{k} = \col{i}{k}\pi_j\ \text{ if }\  0<j<\vert k\vert
    \qandq
    \pi_j \col{i}{k} = \col{i}{k}\pi_{j+1}\ \text{ if }\ j>\vert k\vert .
  \end{equation}
  In particular, by immediate induction:
  \begin{equation}
    \col{j}{l}\cdot \col{i}{k} = \col{i}{k} \cdot \col{j}{l}
    \quad\text{ if }  0<l\leq j<\min(i, \vert k\vert).
  \end{equation}
\end{lemma}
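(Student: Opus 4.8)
The plan is to reduce the statement to a handful of elementary ``braid‑slide'' identities and then assemble them. Writing $k<0$ and $m\eqdef-k=\vert k\vert>0$, the definition of the column notation unfolds to
\[
  \col{i}{k}\;\eqz\;D\cdot A,\qquad\text{where}\qquad D\eqdef\pi_i\pi_{i-1}\cdots\pi_1\pi_0,\quad A\eqdef\pi_1\pi_2\cdots\pi_m,
\]
a descending run $D$ over the indices $\{0,1,\dots,i\}$ followed by an ascending run $A$ over $\{1,\dots,m\}$. First I would establish, for these two runs, the following slide rules: \textup{(L1)} $\pi_a D\eqz D\,\pi_{a+1}$ whenever $1\le a\le i-1$; \textup{(L2)} $\pi_a A\eqz A\,\pi_{a-1}$ whenever $2\le a\le m$; and \textup{(L3)} $\pi_a A\eqz A\,\pi_a$ whenever $a\ge m+2$.

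Each of \textup{(L1)}--\textup{(L3)} is the classical braid computation for the $0$-Hecke monoid of type~$A$: for \textup{(L1)} one commutes $\pi_a$ leftward past the generators of $D$ of index $\ge a+2$ (Relation~\ref{RB4}), applies the single braid relation $\pi_a\pi_{a+1}\pi_a\eqz\pi_{a+1}\pi_a\pi_{a+1}$ (this is~\ref{RB2}, legitimate because $a\ge1$), and then commutes the freshly produced $\pi_{a+1}$ rightward past $\pi_{a-1},\dots,\pi_1,\pi_0$ (again~\ref{RB4}, since each of these differs from $a+1$ by at least $2$); \textup{(L2)} and \textup{(L3)} are the analogous computations inside $A$. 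The one point needing care, and the only departure from a pure type‑$A$ statement, is the presence of $\pi_0=P_1$: one must check that in these slides $\pi_0$ is only ever met by a generator $\pi_{a+1}$ with $a+1\ge2$, so that it is moved solely by the commutation~\ref{RB4} and the exotic relation~\ref{RB3} is never triggered — a $\pi_1$ is never pushed across $\pi_0$. This is exactly why the hypotheses are arranged to keep all sliding indices $\ge1$.

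Granting \textup{(L1)}--\textup{(L3)}, the two displayed identities drop out. If $0<j<m$ (so $1\le j\le i-2$), then using \textup{(L1)} with $a=j$ and \textup{(L2)} with $a=j+1$ (legitimate since $2\le j+1\le m$),
\[
  \pi_j\col{i}{k}\;\eqz\;\pi_j D A\;\eqz\;D\pi_{j+1}A\;\eqz\;D A\pi_j\;\eqz\;\col{i}{k}\pi_j\,,
\]
while if $j>m$ the same first step followed by \textup{(L3)} with $a=j+1$ (legitimate since $j+1\ge m+2$) gives $\pi_j\col{i}{k}\eqz D\pi_{j+1}A\eqz D A\pi_{j+1}\eqz\col{i}{k}\pi_{j+1}$. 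For the ``in particular'' clause, take $0<l\le j<\min(i,m)$, so $\col{j}{l}=\pi_j\pi_{j-1}\cdots\pi_l$ has all of its indices in $[l,j]\subseteq[1,i-1]$. Applying \textup{(L1)} repeatedly (a one‑line induction on the length $j-l+1$) gives $\col{j}{l}\,D\eqz D\,\col{j+1}{l+1}$, every index raised by one; the run $\col{j+1}{l+1}=\pi_{j+1}\cdots\pi_{l+1}$ then has all indices in $[l+1,j+1]\subseteq[2,m]$ (using $l\ge1$ and $j+1\le m$), so applying \textup{(L2)} repeatedly in the same way yields $\col{j+1}{l+1}\,A\eqz A\,\col{j}{l}$, every index lowered by one. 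Composing, $\col{j}{l}\col{i}{k}\eqz\col{j}{l}DA\eqz D\col{j+1}{l+1}A\eqz DA\col{j}{l}\eqz\col{i}{k}\col{j}{l}$, as claimed. (Since \textup{(L1)} is in fact valid up to $a=i-1$, one also checks directly that the two slide identities hold for all $0<j\le i-1$ — splitting into $j=i-1$ using \textup{(L2)} resp.\ \textup{(L3)} on the emerging $\pi_i$ — so that the ``in particular'' alternatively follows just by commuting the generators of $\col{j}{l}$ through $\col{i}{k}$ one at a time.)

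The only genuine obstacle is the $\pi_0$‑bookkeeping in \textup{(L1)}--\textup{(L3)}: one must verify that no slide ever demands pushing $\pi_1$ across $\pi_0$, so that Relation~\ref{RB3} stays out of the picture and all moves reduce to the familiar braid moves of the type‑$A$ $0$-Hecke monoid. Everything beyond that is routine arithmetic on indices.
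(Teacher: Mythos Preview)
Your proof is correct and follows essentially the same approach as the paper, which tersely says to apply \ref{RB4}, \ref{RB2}, \ref{RB4}, \ref{RB2}, \ref{RB4} (resp.\ \ref{RB4}, \ref{RB2}, \ref{RB4}) in order; your decomposition $\col{i}{k}=DA$ and slide rules \textup{(L1)}--\textup{(L3)} simply make the same computation explicit and well organized. One useful addition in your write-up is the observation that \textup{(L1)} (and hence the two main identities) remains valid at $j=i-1$, which is needed for the ``in particular'' clause when $j=i-1$ since the lemma's hypothesis $j<i-1$ otherwise blocks a direct induction on the generators of $\col{j}{l}$.
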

\begin{proof}
  We will only use relations (\ref{RB1} to \ref{RB4}) of
  Remark~\ref{presFnFn0} written according to Corollary~\ref{relRn0pi0}.  For
  the first equality we just apply successively in this order \ref{RB4},
  \ref{RB2}, \ref{RB4}, \ref{RB2} and \ref{RB4}. For the second we only apply
  \ref{RB4}, \ref{RB2} and \ref{RB4}.
\end{proof}
We may now proceed to the main theorem of this section:
\begin{theorem}\label{actionsurcode}
  For a code $\sfc= \sfc_1\dots \sfc_n \in C_n$ and a generator $t\in \lbrace
  \pi_0, \pi_1,\dots , \pi_{n-1}\rbrace\subset G_n^0$, the congruence
  $\pi_{\sfc\cdot t} \eqz \pi_\sfc t$ holds. Furthermore $\ell(\pi_{\sfc\cdot
    t})\leq \ell(\pi_\sfc)+1$.
\end{theorem}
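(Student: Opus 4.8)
The plan is to proceed by induction on $n$, following exactly the case structure of Definition~\ref{defactioncode}, and in each case to massage the word $\pi_\sfc t$ into $\pi_{\sfc\cdot t}$ using only the relations (\ref{RB1})--(\ref{RB4}) from Corollary~\ref{relRn0pi0} (equivalently those listed in Remark~\ref{presFnFn0}). Recall that $\pi_\sfc = \col{0}{\sfc_1}\col{1}{\sfc_2}\cdots\col{n-1}{\sfc_n}$, so that $t$, which is one of $\pi_0,\dots,\pi_{n-1}$, has to be commuted leftward past the last column $\col{n-1}{\sfc_n}$ — and, when $t$ does not directly interact with that column, recursively into the prefix $\pi_{\sfc_1\dots\sfc_{n-1}}$, which is where the induction hypothesis enters. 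The base case $n=1$ is immediate: $\pi_\sfc$ is either $1$ or $\pi_0$, $t=\pi_0$, and $\pi_0^2\eqz\pi_0$ by~\ref{RB1}.

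The induction step splits into the two families of subcases of Definition~\ref{defactioncode}. First I would handle the cases where $t$ commutes past the last column (cases Pos.c, Pos.d, Neg.b, Neg.c): here one uses Lemma~\ref{lemtechnmultcolonne} — or, when $\sfc_n>0$, the simpler fact that $\pi_j$ commutes with $\col{n-1}{i}$ for $j<i-1$ and turns into $\pi_{j-1}$ when $j>i$ — to slide $t$ (with its index possibly shifted by $1$) to the left of $\col{n-1}{\sfc_n}$, after which the induction hypothesis $\pi_{(\sfc_1\dots\sfc_{n-1})\cdot\pi_{j'}}\eqz\pi_{\sfc_1\dots\sfc_{n-1}}\pi_{j'}$ finishes the job. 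The "absorbing" cases Pos.a ($t=\pi_i$ where $\sfc_n=i$) and Neg.a ($t=\pi_i$ where $\sfc_n=-i$) reduce to checking $\col{n-1}{i}\pi_i\eqz\col{n-1}{i}$, i.e. that a descending run ending in $\pi_i$ is idempotent on the right by $\pi_i$; this follows from~\ref{RB1} together with the commutations in~\ref{RB4}. Case Pos.b ($t=\pi_{i-1}$) is the elementary identity $(\pi_{n-1}\cdots\pi_i)\pi_{i-1} = \pi_{n-1}\cdots\pi_i\pi_{i-1} = \col{n-1}{i-1}$, literally extending the column by one letter. Case Neg.d ($t=\pi_0$) requires pushing $\pi_0$ leftward across $\col{n-1}{-i}=\pi_{n-1}\cdots\pi_1\pi_0\pi_1\cdots\pi_{-i}$: the tail $\pi_1\cdots\pi_{-i}$ commutes with $\pi_0$ by~\ref{RB4}, then $\pi_0\pi_1\cdots\pi_0$ type expressions are handled by~\ref{RB3}, leaving $\pi_0\dots\pi_{i-1}$ to be absorbed into the prefix by induction, and a trailing $\mathsf{0}$, matching the definition.

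The genuinely delicate case is Neg.e, $t=\pi_{i+1}$ with $\sfc_n=-i$, which is precisely the one the authors single out before the theorem. Here $\col{n-1}{-i}\pi_{i+1} = \pi_{n-1}\cdots\pi_1\pi_0\pi_1\cdots\pi_i\pi_{i+1}$, so $\pi_{i+1}$ does \emph{not} trivially extend the column (it would need $\pi_{i-1}$ after $\pi_i$, not $\pi_{i+1}$); instead one must recognize that $\pi_{i+1}$ can interact with the $\pi_{i+1}$ sitting inside $\col{n-1}{-i}$ via the braid relation~\ref{RB2}, and the outcome depends on whether the prefix already "reaches" level $i$, which is exactly the condition $m(\sfc_1\dots\sfc_{n-1})=i$ versus $m(\sfc_1\dots\sfc_{n-1})>i$. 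In subcase $\alpha$ one must show $\pi_\sfc\pi_{i+1}\eqz\pi_\sfc$; the key is that $m(\sfc_1\dots\sfc_{n-1})=i$ forces, via Proposition~\ref{PZdecode} and the structure of $\pi_{\sfc_1\dots\sfc_{n-1}}$, a $\pi_{i+1}$-redundancy that I expect to extract by moving a suitable $\pi_{i+1}$ out of the prefix using~\ref{RB4} and Lemma~\ref{lemtechnmultcolonne}, then colliding it with the new $\pi_{i+1}$ and applying~\ref{RB1},~\ref{RB2}. In subcase $\beta$ one shows $\pi_\sfc\pi_{i+1}\eqz\pi_{\sfc_1\dots\sfc_{n-1}\,\ov{i+1}} = \pi_{\sfc_1\dots\sfc_{n-1}}\col{n-1}{-(i+1)}$, which amounts to the word identity $\col{n-1}{-i}\,\pi_{i+1}\eqz\col{n-1}{-(i+1)}$, i.e. $\bigl(\pi_{n-1}\cdots\pi_{i+1}\cdots\pi_1\pi_0\pi_1\cdots\pi_i\bigr)\pi_{i+1}\eqz \pi_{n-1}\cdots\pi_1\pi_0\pi_1\cdots\pi_{i+1}$; this is a pure braid-relation computation that I would carry out by repeated application of~\ref{RB2} and~\ref{RB4} (a "commutation of $\pi_{i+1}$ through the palindromic column" argument), and it is the main obstacle since it needs the hypothesis $m(\sfc_1\dots\sfc_{n-1})>i$ to guarantee the $\pi_{i+1}$ on the ascending side is actually present in $\pi_\sfc$ — without it the identity fails and we are in subcase $\alpha$.

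Finally, the length bound $\ell(\pi_{\sfc\cdot t})\le\ell(\pi_\sfc)+1$ is read off case by case from the explicit form of $\sfc\cdot t$: in the "no change" cases ($\sfc\cdot t=\sfc$, i.e. Pos.a, Neg.a, Neg.e$\alpha$, and Neg.d when $i=0$) the length is unchanged; in Pos.b the last column loses one letter; in Pos.c, Pos.d, Neg.b, Neg.c the last column is untouched and the prefix length increases by at most $1$ by induction; in Neg.d and Neg.e$\beta$ one compares the lengths $\ell(\col{n-1}{-i})=n-1+i+1$ against the replacement, again invoking the induction hypothesis on the prefix, and checks the increment is at most one. In every case the bound on the \emph{word length} of the canonical representatives is elementary once the congruence itself is established.
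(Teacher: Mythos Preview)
Your overall architecture is right and matches the paper: induction on $n$, case split following Definition~\ref{defactioncode}, commute $t$ past the last column using Lemma~\ref{lemtechnmultcolonne} in the ``pass-through'' cases, absorb via~\ref{RB1} in the idempotent cases, and append in the ``extend the column'' cases. Cases Pos.a--d and Neg.a--d are handled as in the paper.

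The gap is in Neg.e, where you have inverted the difficulty of the two subcases. Subcase~$\beta$ is \emph{trivial}: the identity
\[
\col{n-1}{-i}\,\pi_{i+1}
=\pi_{n-1}\cdots\pi_1\pi_0\pi_1\cdots\pi_i\,\pi_{i+1}
=\col{n-1}{-(i+1)}
\]
is a literal equality of words (you are just appending one letter to the ascending tail), exactly as in Pos.b; no braid relations are needed, and the hypothesis $m(\sfc_1\dots\sfc_{n-1})>i$ is used only in Lemma~\ref{codebiendef} to certify that $\sfc'\,\ov{i+1}$ is a code, not for the congruence itself.

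Subcase~$\alpha$ is the genuinely hard one, and your plan (``extract a $\pi_{i+1}$ from the prefix and collide it with the new one via \ref{RB1}, \ref{RB2}'') does not work as stated. What the paper actually does is structural: it locates the last nonpositive entry $\sfc_k$ of the prefix and the subsequent indices $j_1<\dots<j_{i+\sfc_k}$ at which $m$ increments, splits each column $\col{j_u-1}{\sfc_{j_u}}$ into a top and bottom part (Equation~\eqref{eq_split_col}), and commutes the bottom parts to the far right using Lemma~\ref{lemtechnmultcolonne}. The heart of the computation then reduces to a word of the shape $\pi_0\,\pi_1\pi_0\,\pi_1\cdots\pi_i\pi_{i+1}\,\pi_1\cdots\pi_i$, and it is precisely relation~\ref{RB3} (the non-Coxeter relation $\pi_1\pi_0\pi_1\pi_0=\pi_0\pi_1\pi_0$) that makes the extra $\pi_{i+1}$ disappear; you never mention \ref{RB3} in your plan for~$\alpha$, yet it is indispensable here --- this is exactly the place where the $0$-rook monoid diverges from $H^0(B_n)$, and no combination of \ref{RB1}, \ref{RB2}, \ref{RB4} alone will suffice. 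See the worked example after the proof in the paper with $\sfc=\mathsf{1234\ov{2}126\ov{4}}$, $t=\pi_5$.
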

\begin{proof}
  We will only use the relations of the proof of Lemma
  \ref{lemtechnmultcolonne}. We then prove the theorem by induction on $n$
  depending on ${c_n}$ and $t$. The remark on the length can be checked
  systematically in all the cases, we left it to the reader.

  If $n= 1$ and $t = \pi_0$ then $\sfc\cdot t = 0$. Then $\pi_{\sfc\cdot t} =
  \pi_0 = \pi_c t$ by~\ref{RB1}.

  Otherwise we write $\sfc' \eqdef \sfc_1\dots \sfc_{n-1}$ and we recall that $\pi_{\sfc} =
  \pi_{\sfc'}\col{n-1}{\sfc_n}$.
\begin{itemize}
\item[Pos.] $\sfc_n = i \geq 1$
\begin{itemize}
\item[a.] If $t = \pi_i$ then $\sfc\cdot t = \sfc$. Then
  $\pi_{\sfc'}\col{n-1}{\sfc_n}t = \pi_{\sfc'}\pi_{n-1}\dots \pi_i \pi_i \eqz
  \pi_{\sfc'}\col{n-1}{\sfc_n}$ by~\ref{RB1}.
\item[b.] If $t = \pi_{i-1}$ then $\sfc\cdot t = \sfc'(\sfc_n-1)$. The
  relation is just $\pi_{\sfc'}\col{n-1}{\sfc_n}\pi_{i-1} =
  \pi_{\sfc'}\col{n-1}{\sfc_{n}-1}$.
\item[c.] If $t = \pi_j$ with $j<i-1$ then $\sfc\cdot t = (\sfc'\cdot \pi_j)
  \sfc_n$. Then
  \begin{equation*}
    \pi_\sfc \, t = \pi_{c'}\col{n-1}{\sfc_n}\pi_j
    \eqz \pi_{\sfc'}\pi_j \col{n-1}{\sfc_n}
    \eqz \pi_{\sfc' \cdot \pi_j} \col{n-1}{\sfc_n}
    = \pi_{(\sfc'\cdot \pi_j)\sfc_n} = \pi_{\sfc\cdot t}.
  \end{equation*}
  Indeed, the first congruency is Lemma~\ref{lemtechnmultcolonne}, and the
  second holds by induction.
\item[d.] If $t = \pi_j$ with $j>i$ then $\sfc\cdot t = (\sfc'\cdot \pi_{j-1})
  \sfc_n$. We do the same than in Pos.c. using this time Relation~\ref{RB2} and
  Relation~\ref{RB4}.
\end{itemize}
\item[Neg.] $\sfc_n = -i \leq 0$
\begin{itemize}
\item[a.] If $t = \pi_i$ we do the same than in Pos.a. with~\ref{RB1}.
\item[b.] If $t = \pi_j$ with $0<j<i$ we do the same than in Pos.c. with~\ref{RB4}.
\item[c.] If $t = \pi_j$ with $j > i+1$ we do the same than in
  Pos.d. with~\ref{RB2} and \ref{RB4}.
\item[d.] If $t = \pi_0$ ($i\neq 0$) then $\sfc\cdot t = \left[(\sfc_1\dots \sfc_{n-1})\cdot \pi_0\dots \pi_{i-1}\right] 0$. Furthermore
\begin{alignat*}{2}
  \col{n-1}{\sfc_n}\pi_0 &
  =_{\phantom{0}}
  \pi_{n-1}\dots \pi_2\pi_1\pi_0\pi_1\pi_2\dots \pi_i \pi_0 \\
  & \eqz \pi_{n-1}\dots \pi_2\pi_1\pi_0 \pi_1\pi_0 \pi_2\dots \pi_i
  &\qquad&\text{ by~\ref{RB4}} \\
  & \eqz \pi_{n-1}\dots \pi_2 \pi_0\pi_1\pi_0 \pi_2\dots \pi_i
  &&\text{ by~\ref{RB3}} \\
  & \eqz \pi_0 \pi_{n-1}\dots \pi_2\pi_1\pi_0 \pi_2\dots \pi_i
  = \pi_0 \col{n-1}{0} \pi_2\dots \pi_i
  &&\text{ by~\ref{RB4}}.
\end{alignat*}
Now using iteratively Lemma~\ref{lemtechnmultcolonne}, one gets
\begin{equation}
  \pi_0 \col{n-1}{0} \pi_2\dots \pi_i
  \eqz \pi_0\pi_1\col{n-1}{0}\pi_3\dots \pi_i \eqz\cdots
  \eqz \pi_0\dots \pi_{i-1} \col{n-1}{0}.
\end{equation}
Thus $\pi_\sfc\pi_0\ \eqz\ \pi_{\sfc'}\,
(\pi_0\dots\pi_{i-1}) \col{n-1}{0}\ \eqz\ \pi_{\sfc'\cdot (\pi_0\dots
  \pi_{i-1})0}\ =\ \pi_{\sfc\cdot \pi_0}$.
\item[e.] If $t = \pi_{i+1}$ (so $i\neq n$) we have two possibilities:
\begin{itemize}
\item[$\alpha$.] Either $m(\sfc_1\dots \sfc_{n-1}) = i$;
\item[$\beta$.] Or $m(\sfc_1\dots \sfc_{n-1}) \neq i$.
  In this second case $\sfc\cdot t = \sfc'\,\ov{i+1}$, and we
  proceeds as in case Pos.b.
\end{itemize}
\end{itemize}
\end{itemize}
The last remaining case is then $\sfc_n = -i \leq 0$ with $t = \pi_{i+1}$ and
$m(\sfc_1\dots\sfc_{n-1})=i$. In this case we have $\sfc\cdot t=\sfc$.

Let $k$ be the index of the last non-positive $c_k\leq0$.  Since, by hypothesis,
$m(\sfc_1\dots \sfc_{n-1}) = i$, there are $i-|\sfc_k|=i+\sfc_k$ further indexes where
the value of $m$ increase, we write them as $k < j_1 < \dots < j_{i+\sfc_k} <
n$. In other words, these are the steps of the inductive construction of
$\decode(\sfc)$ where the value of $\PZ$ change. For each such index $j_u$,
we split the columns of the corresponding decoded word into two parts as
\begin{equation}\label{eq_split_col}
  \col{j_u-1}{\sfc_{j_u}} = \col{j_u-1}{|\sfc_k|+u+1}\col{|\sfc_k|+u}{\sfc_{j_u}}\,.
\end{equation}
For the other indexes not belonging to the $j_u$, we consider them as first
parts, leaving their second parts empty. Thanks to
Lemma~\ref{lemtechnmultcolonne}, all the second parts commute with the first
parts on their right so that:
\begin{align*}
  \pi_{\sfc}t
  & =_{\phantom{0}}
  \pi_{\sfc_1\dots \sfc_{k-1}}\col[red]{k-1}{\sfc_k}
  \dots\col[blue]{j_1-1}{\sfc_{j_1}}
  \dots\col[cyanp]{j_2-1}{\sfc_{j_2}}
  \dots \col[cyan]{j_{i+\sfc_k}-1}{\sfc_{j_{i+\sfc_k}}}
  \dots \col[olive]{n-1}{-i} \textcolor{olive}{\pi_{i+1}} \\
  & =_{\phantom{0}}
  \pi_{\sfc_1\dots \sfc_{k-1}}\col[red]{k-1}{\sfc_k}
  \dots\col[blue]{j_1-1}{\vert\sfc_k\vert+2}
  \col[blue]{\vert\sfc_k\vert + 1}{\sfc_{j_1}}
  \dots\col[cyanp]{j_2-1}{\vert\sfc_k\vert+3}
  \col[cyanp]{\vert\sfc_k\vert+2}{\sfc_{j_2}}
  \dots\col[cyan]{j_{i+\sfc_k}-1}{|\sfc_k|+i+\sfc_k+1}
  \col[cyan]{|\sfc_k|+i+\sfc_k}{\sfc_{j_{i+c_k}}}
  \dots \col[olive]{n-1}{-i-1} \\
  & \eqz \pi_{\sfc_1\dots \sfc_{k-1}} \col[red]{k-1}{\sfc_k}
  \dots\col[blue]{j_1-1}{\vert\sfc_k\vert+2}
  \dots\col[cyanp]{j_2-1}{\vert\sfc_k\vert+3}
  \dots\col[cyan]{j_{i+\sfc_k}-1}{i+1}
  \dots\col[olive]{n-1}{-i-1}
  \cdot\col[blue]{\vert\sfc_k\vert + 1}{\sfc_{j_1}}
  \col[cyanp]{\vert\sfc_k\vert+2}{\sfc_{j_2}}
  \dots\col[cyan]{i}{\sfc_{j_{i+c_k}}}\,.
\end{align*}
We similarly further split the column $\col[red]{k-1}{\sfc_k}$ into its
negative and positive part, and commute the negative part as
\begin{equation*}
  \eqz \pi_{\sfc_1\dots \sfc_{k-1}}\col[red]{k-1}{1}
  \dots\col[blue]{j_1-1}{\vert\sfc_k\vert+2}
  \dots\col[cyan]{j_{i+\sfc_k}-1}{i+1}
  \ \dots\
  \textcolor{red}{\pi_0\pi_1\dots \pi_{\vert \sfc_k\vert }}
  \col[olive]{n-1}{-i-1}
  \col[blue]{\vert \sfc_k\vert + 1 }{\sfc_{j_1}}
  \dots \col[cyan]{i}{\sfc_{j_{i+ c_k}}}\,.
\end{equation*}
We now focus on the product of the the second parts which we call
$S$. Using~\ref{RB4}, and striping the second parts from their topmost
element, we get:
\begin{align*}
  S & \eqdef
  \textcolor{red}{\pi_0\pi_1\dots \pi_{\vert \sfc_k\vert }}
  \col[olive]{n-1}{-i-1}
  \col[blue]{\vert \sfc_k\vert + 1 }{\sfc_{j_1}}
  \dots \col[cyan]{i}{\sfc_{j_{i+ c_k}}} \\
  & \eqz
  \textcolor{red}{\pi_0}\col[olive]{n-1}{-i-1}
  \textcolor{red}{\pi_1\dots \pi_{\vert \sfc_k\vert }}
  \textcolor{blue}{\pi_{\vert \sfc_k\vert +1}}
  \dots\textcolor{cyan}{\pi_i}
  \col[blue]{\vert \sfc_k\vert }{\sfc_{j_1}}
  \dots\col[cyan]{i-1}{\sfc_{j_{i+c_k}}}\\
  & \eqz
  \textcolor{red}{\pi_0}
  \col[olive]{n-1}{2}
  \textcolor{olive}{\pi_1 \pi_0 \pi_1 \dots \pi_i \pi_{i+1}}
  \textcolor{red}{\pi_1\dots \pi_{\vert \sfc_k\vert }}
  \textcolor{blue}{\pi_{\vert \sfc_k\vert +1}}
  \dots \textcolor{cyan}{\pi_i}
  \col[blue]{\vert \sfc_k\vert }{\sfc_{j_1}}
  \dots\col[cyan]{i-1}{\sfc_{j_{i+c_k}}} \\
  & \eqz
  \col[olive]{n-1}{2}
  \textcolor{red}{\pi_0}\textcolor{olive}{\pi_1\pi_0\pi_1\dots\pi_i\pi_{i+1}}
  {\pi_1\dots\pi_i}
  \col[blue]{\vert\sfc_k\vert}{\sfc_{j_1}}
  \dots\col[cyan]{i-1}{\sfc_{j_{i+c_k}}}\,.
\intertext{We can now use~\ref{RB3} and redistribute the colors:}
  & \eqz
  \col[olive]{n-1}{2}
  \textcolor{red}{\pi_0} \textcolor{olive}{\pi_1 \pi_0}
  \textcolor{red}{\pi_2\dots \pi_{\vert \sfc_k\vert+1}}
  \textcolor{blue}{\pi_{\vert \sfc_k\vert +2}}
  \dots \textcolor{cyan}{\pi_{i+1}}
  \textcolor{olive}{\pi_1\dots \pi_i}
  \col[blue]{\vert \sfc_k\vert }{\sfc_{j_1}}
  \dots\col[cyan]{i-1}{\sfc_{j_{i+c_k}}}\,. \\
\intertext{Now thanks to Lemma~\ref{lemtechnmultcolonne}:}
  & \eqz
  \textcolor{red}{\pi_0} \col[olive]{n-1}{1}
  \textcolor{red}{\pi_2\dots \pi_{\vert \sfc_k\vert+1}}
  \textcolor{blue}{\pi_{\vert \sfc_k\vert +2}}
  \dots \textcolor{cyan}{\pi_{i+1}}
  \textcolor{olive}{\pi_0\pi_1\dots \pi_i}
  \col[blue]{\vert \sfc_k\vert }{\sfc_{j_1}}
  \dots\col[cyan]{i-1}{\sfc_{j_{i+c_k}}} \\
  & \eqz
  \textcolor{red}{\pi_0 \pi_1 \dots\pi_{\vert \sfc_k\vert}}
  \textcolor{blue}{\pi_{\vert \sfc_k\vert+1}}
  \dots \textcolor{cyan}{\pi_{i}}
  \col[olive]{n-1}{1}
  \textcolor{olive}{\pi_0 \pi_1\dots \pi_i}
  \col[blue]{\vert \sfc_k\vert }{\sfc_{j_1}}
  \dots\col[cyan]{i-1}{\sfc_{j_{i+c_k}}}\\
  & =_{\phantom{0}}
  \textcolor{red}{\pi_0 \pi_1 \dots\pi_{\vert \sfc_k\vert}}
  \textcolor{blue}{\pi_{\vert \sfc_k\vert+1}}
  \dots \textcolor{cyan}{\pi_{i}}
  \col[olive]{n-1}{-i-1}
  \col[blue]{\vert \sfc_k\vert }{\sfc_{j_1}}
  \dots\col[cyan]{i-1}{\sfc_{j_{i+c_k}}}\,.
\end{align*}
Going back to the main computation we can undo the splitting of
Equation~\ref{eq_split_col}:
\begin{align*}
  \pi_{\sfc}t
  & \eqz
  \pi_{\sfc_1\dots \sfc_{k-1}} \col[red]{k-1}{1}
  \dots\col[blue]{j_1-1}{\vert \sfc_k\vert+2}
  \dots\col[cyan]{j_{i+\sfc_k}-1}{i+1}
  \textcolor{red}{\pi_0 \pi_1 \dots\pi_{\vert \sfc_k\vert}}
  \textcolor{blue}{\pi_{\vert \sfc_k\vert+1}}
  \dots \textcolor{cyan}{\pi_{i}}
  \col[olive]{n-1}{-i-1}
  \col[blue]{\vert \sfc_k\vert }{\sfc_{j_1}}
  \dots\col[cyan]{i-1}{\sfc_{j_{i+c_k}}} \\
  & \eqz \pi_{\sfc_1\dots \sfc_{k-1}}\col[red]{k-1}{\sfc_k}
  \dots\col[blue]{j_1-1}{\vert \sfc_k\vert+1} \
  \dots\col[cyan]{j_{i+\sfc_k}-1}{i}
  \dots\col[olive]{n-1}{-i}
  \cdot\col[blue]{\vert \sfc_k\vert }{\sfc_{j_1}}
  \dots\col[cyan]{i-1}{\sfc_{j_{i+c_k}}} \text{ by~\ref{RB4}}\\
  & \eqz \pi_{\sfc_1\dots \sfc_{k-1}}
  \col[red]{k-1}{\sfc_k}
  \dots\col[blue]{j_1-1}{\sfc_{j_1}}
  \dots\col[cyan]{j_{i+c_k}-1}{\sfc_{j_{i+c_k}}}
  \dots\col[olive]{n-1}{-i} \text{ by Lemma \ref{lemtechnmultcolonne}}.
\end{align*}
So that we have proved that $\pi_{\sfc}t=\pi_{\sfc}$ in the last remaining case.

As told at the beginning of the proof, the remark on the length has been
checked through all cases.
\end{proof}
\begin{example}
  Since this last calculation is huge using specific notations, we now give an
  explicit example of calculation in case Neg.e.$\alpha$. We take
  $\sfc=\mathsf{1234\textcolor{red}{\ov{2}}\textcolor{blue}{1}\textcolor{cyan}{2}6\textcolor{olive}{\ov{4}}}$. Then, with $t = \pi_5$:
  \begin{align*}
    \pi_{\sfc}t
    & =_{\phantom{0}}
    \col[red]{4}{-2} \cdot \col[blue]{5}{1}\cdot \col[cyan]{6}{2}\cdot \col{7}{6} \cdot \col[olive]{8}{-4} \textcolor{olive}{\pi_5} \\
    & \eqz
    \col[red]{4}{-2} \cdot \col[blue]{5}{4}\cdot \col[cyan]{6}{5}\cdot \col{7}{6} \cdot \col[olive]{8}{-5} \cdot \col[blue]{3}{1}\cdot \col[cyan]{4}{2} \text{ by~\ref{RB4} and Lemma~\ref{lemtechnmultcolonne}}\\
    & \eqz
    \col[red]{4}{0} \cdot \col[blue]{5}{4}\cdot \col[cyan]{6}{5}\cdot \col{7}{6} \cdot \col[olive]{8}{-5} \textcolor{red}{\pi_1\pi_2} \textcolor{blue}{\pi_3} \textcolor{cyan}{\pi_4} \cdot \col[blue]{2}{1}\cdot \col[cyan]{3}{2} \text{ by~\ref{RB4}}\\
    & \eqz
    \col[red]{4}{1} \cdot \col[blue]{5}{4}\cdot \col[cyan]{6}{5}\cdot \col{7}{6} \cdot \col[olive]{8}{2}\textcolor{red}{\pi_0}\textcolor{olive}{\pi_1\pi_0\pi_1\pi_2\pi_3\pi_4\pi_5} \textcolor{red}{\pi_1\pi_2} \textcolor{blue}{\pi_3} \textcolor{cyan}{\pi_4} \cdot \col[blue]{2}{1}\cdot \col[cyan]{3}{2} \text{ by~\ref{RB4}}\\
    & \eqz
    \col[red]{4}{1} \cdot \col[blue]{5}{4}\cdot \col[cyan]{6}{5}\cdot \col{7}{6} \cdot \col[olive]{8}{2}\textcolor{red}{\pi_0}\textcolor{olive}{\pi_1\pi_0}\textcolor{red}{\pi_2\pi_3}\textcolor{blue}{\pi_4}\textcolor{cyan}{\pi_5} \textcolor{olive}{\pi_1\pi_2 \pi_3 \pi_4 }\cdot \col[blue]{2}{1}\cdot \col[cyan]{3}{2} \text{ by~\ref{RB3} and redistributing.}\\
    & \eqz
    \col[red]{4}{1} \cdot \col[blue]{5}{4}\cdot \col[cyan]{6}{5}\cdot \col{7}{6} \cdot \textcolor{red}{\pi_0\pi_1\pi_2}\textcolor{blue}{\pi_3}\textcolor{cyan}{\pi_4} \col[olive]{8}{2}\textcolor{olive}{\pi_1\pi_0\pi_1\pi_2 \pi_3 \pi_4 } \cdot \col[blue]{2}{1}\cdot \col[cyan]{3}{2} \text{ by~\ref{RB2} and~\ref{RB4}} \\
    & \eqz
    \col[red]{4}{-2} \cdot \col[blue]{5}{3}\cdot \col[cyan]{6}{4}\cdot \col{7}{6} \cdot \col[olive]{8}{-4}\cdot \col[blue]{2}{1}\cdot \col[cyan]{3}{2}
    \eqz \col[red]{4}{-2} \cdot \col[blue]{5}{1}\cdot \col[cyan]{6}{2}\cdot \col{7}{6} \cdot \col[olive]{8}{-4} = \pi_\mathsf{c } \text{ by Lemma~\ref{lemtechnmultcolonne}}.
  \end{align*}
\end{example}

\begin{remark}\label{casq=1}
  The Definition \ref{defactioncode}, the Lemma \ref{codebiendef} and the
  Theorem \ref{actionsurcode} can be also adapted to the case of $G_n^1$,
  using the transformation $\pi_i\mapsto s_i$ for $i\neq 0$ and $\pi_0 \mapsto
  \pi_0$. There are only few cases which differ; they are precisely those
  where relation \ref{RB1} is used (with $i\neq 0$), that is case Pos.a. and Neg.a. The
  modifications in the definition are thus the followings:
\begin{itemize}\setlength{\itemindent}{1cm}
\item[Pos.a.] ${c_n} = i > 0$ and $t = s_i$ then $\sfc\cdot s_i =
  \mathsf{c_1\dots c_{n-1} (c_n+1)}$.
\item[Neg.a.] ${c_n} = -i \leq 0$ and $t = s_i$ then $\sfc\cdot s_i =
  \mathsf{c_1\dots c_{n-1} (c_n+1)}$.
\end{itemize}
The equivalent of Lemma \ref{codebiendef} can be proved the same way. Finally
the proof of Theorem \ref{actionsurcode} only use the relation $s_i^2 = 1$ in
these two cases.
\end{remark}

\begin{corollary}\label{picodesurj}
  Let $\mathsf{1^c_n}$ denote the code of the identity rook of size $n$.  For
  any $\pi \in G_n^0$ and $s\in G_n^1$, the congruencies
  $\pi\eqz\pi_{\mathsf{1^c_n}\cdot\pi}$ et $s\equiv_1s_{\mathsf{1^c_n}\cdot
    s}$ hold.
\end{corollary}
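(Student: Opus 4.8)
The plan is a straightforward induction on the length of a word representing $\pi$, using Theorem~\ref{actionsurcode} as the engine. First I would fix a factorisation $\pi = t_1 t_2 \cdots t_k$ of $\pi$ over the generators $\pi_0,\dots,\pi_{n-1}$ of $G_n^0$ and \emph{define} $\mathsf{1^c_n}\cdot\pi$ to be the code obtained by applying $t_1,\dots,t_k$ in turn to $\mathsf{1^c_n}$ via the action of Definition~\ref{defactioncode}; by iterating Lemma~\ref{codebiendef} every intermediate word is again a code of size $n$, so this makes sense. The statement to establish is then that $\pi_{\mathsf{1^c_n}\cdot(t_1\cdots t_j)}\eqz t_1\cdots t_j$ for all $j\le k$, which I would prove by induction on $j$.

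For the base case $j=0$ I need $\pi_{\mathsf{1^c_n}}\eqz 1$. Since the identity rook $1_n$ has rook vector $(1,2,\dots,n)$, an immediate induction from Definition~\ref{def_encode} gives $\mathsf{1^c_n}=\code(1_n)=\mathsf{12\cdots n}$; feeding this into Definition~\ref{def_word_code}, the $j$-th column $\col{j-1}{j}$ equals $1$ (because $j>j-1$), so $\pi_{\mathsf{1^c_n}}$ is the empty product, namely $1$. For the inductive step, set $\sfc\eqdef\mathsf{1^c_n}\cdot(t_1\cdots t_{j-1})\in C_n$; the induction hypothesis gives $\pi_\sfc\eqz t_1\cdots t_{j-1}$, and Theorem~\ref{actionsurcode} then yields $\pi_{\sfc\cdot t_j}\eqz\pi_\sfc\,t_j\eqz t_1\cdots t_j$, which is the claim for $j$. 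Taking $j=k$ gives $\pi\eqz\pi_{\mathsf{1^c_n}\cdot\pi}$.

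It then remains to note that the code $\mathsf{1^c_n}\cdot\pi$ does not in fact depend on the chosen factorisation of $\pi$, so that the notation is unambiguous: by Theorem~\ref{codedecode} every code $\sfc\in C_n$ equals $\code(\decode(\sfc))$, and by Proposition~\ref{action1n} the image $\Phi_0(\pi_\sfc)\in F_n^0$ sends $1_n$ to $\decode(\sfc)$, so distinct codes give non-congruent $\pi_\sfc$'s in $G_n^0$; since $\pi_{\mathsf{1^c_n}\cdot\pi}\eqz\pi$, this forces $\mathsf{1^c_n}\cdot\pi$. The $G_n^1$ statement $s\equiv_1 s_{\mathsf{1^c_n}\cdot s}$ is proved by the exact same induction, replacing $\eqz$, $\pi_\sfc$, $F_n^0$ by $\equiv_1$, $s_\sfc$, $F_n^1$ and invoking the variants of Definition~\ref{defactioncode}, Lemma~\ref{codebiendef} and Theorem~\ref{actionsurcode} recorded in Remark~\ref{casq=1} (here $s_{\mathsf{1^c_n}}=1$ since each $\cols{j-1}{j}$ equals $1$). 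There is no genuine obstacle beyond bookkeeping: all of the substantive content is already contained in Theorem~\ref{actionsurcode}, and the only points needing a modicum of care are the base-case identity $\pi_{\mathsf{1^c_n}}=1$ and the observation that a letter-by-letter action on codes is compatible with the defining relations of $G_n^0$, which is exactly what makes the statement meaningful.
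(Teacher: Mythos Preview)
Your proof is correct and follows exactly the approach the paper sketches: induction on the length of a word for $\pi$ using Theorem~\ref{actionsurcode} (and Remark~\ref{casq=1} for the $G_n^1$ case), with the base case $\pi_{\mathsf{1^c_n}}=1$. The paper's proof is a two-line sketch of the same argument; you have simply filled in the details, including the extra observation on well-definedness of $\mathsf{1^c_n}\cdot\pi$ (which the paper does not explicitly address but which is indeed justified by your injectivity argument via $\Phi_0$ and Proposition~\ref{action1n}).
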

\begin{proof}
  We use Theorem \ref{actionsurcode} and Remark \ref{casq=1} at $\sfc =
  \mathsf{1^c_n}$ and proceed by induction on the length of the words $\pi$ or $s$.
\end{proof}
We now have an easy proof of the identities that motivated
Definition~\ref{defactioncode}:
\begin{corollary}\label{code_action_commute}
  For any generator $t$ the following diagram is commutative:
  \begin{equation*}
  \begin{tikzcd}[column sep=2cm]
    R_n \arrow[r, bend left=10, "\code"] \arrow[d, "\cdot t"] &
    C_n \arrow[l, bend left=10, "\decode"] \arrow[d, "\cdot t"] \\
    R_n \arrow[r, bend left=10, "\code"] &
    C_n \arrow[l, bend left=10, "\decode"]
  \end{tikzcd}
\end{equation*}
\end{corollary}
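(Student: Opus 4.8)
The plan is to deduce this purely from three facts already established: the bijectivity of $\code$ and $\decode$ (Theorem~\ref{codedecode}), the decoding property $1_n\cdot\pi_{\code(r)}=r$ (Proposition~\ref{action1n}), and the congruence $\pi_{\sfc\cdot t}\eqz\pi_\sfc t$ (Theorem~\ref{actionsurcode}). Commutativity of the square amounts to the single identity $\code(r\cdot t)=\code(r)\cdot t$ for all $r\in R_n$ and all generators $t\in\{\pi_0,\dots,\pi_{n-1}\}$, where the left $\cdot t$ is the action on rooks of Definition~\ref{def_Ro_fun} and the right one is the action on codes of Definition~\ref{defactioncode}; the dual identity $\decode(\sfc\cdot t)=\decode(\sfc)\cdot t$ then comes for free by applying $\decode$ and using $\code\circ\decode=\Id=\decode\circ\code$.

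First I would record the auxiliary identity $1_n\cdot\pi_\sfc=\decode(\sfc)$ valid for every $\sfc\in C_n$: writing $\sfc=\code(\decode(\sfc))$ (Theorem~\ref{codedecode}) and applying Proposition~\ref{action1n} to the rook $\decode(\sfc)$ gives exactly this. Then, starting from $r=1_n\cdot\pi_{\code(r)}$ and letting $t$ act, I get $r\cdot t=1_n\cdot(\pi_{\code(r)}\,t)$. By Lemma~\ref{codebiendef} the word $\code(r)\cdot t$ lies in $C_n$, so Theorem~\ref{actionsurcode} gives $\pi_{\code(r)}\,t\eqz\pi_{\code(r)\cdot t}$ in $G_n^0$; pushing this equality through the morphism $\Phi_0\colon G_n^0\twoheadrightarrow F_n^0$ of Remark~\ref{presFnFn0} shows the two sides act identically on $R_n$, whence
\[
  r\cdot t \;=\; 1_n\cdot\pi_{\code(r)\cdot t} \;=\; \decode\bigl(\code(r)\cdot t\bigr)
\]
by the auxiliary identity. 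Applying $\code$ and $\code\circ\decode=\Id$ yields $\code(r\cdot t)=\code(r)\cdot t$, as desired.

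The argument is essentially a bookkeeping exercise once Theorems~\ref{codedecode} and~\ref{actionsurcode} are available, so I do not expect a genuine obstacle; the only point deserving care is to keep track of which ``$\cdot t$'' operates at each corner of the diagram, and to invoke that $\eqz$-congruent elements of $G_n^0$ act equally on $R_n$ precisely because $\Phi_0$ is a well-defined monoid morphism. I would also remark that the same proof, run with $s_\sfc$, $G_n^1$ and the variant of Theorem~\ref{actionsurcode} from Remark~\ref{casq=1}, gives the analogous commuting square for the classical rook monoid.
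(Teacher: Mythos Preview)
Your proof is correct and follows essentially the same approach as the paper's: apply Theorem~\ref{actionsurcode} to the identity rook $1_n$ via the morphism $\Phi_0$, then invoke Proposition~\ref{action1n} and Theorem~\ref{codedecode}. The paper's version is simply more terse (starting from a code $\sfc$ rather than a rook $r$), but the logical chain is identical.
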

\begin{proof}
  We start by Theorem \ref{actionsurcode}, $\pi_{\sfc\cdot t} \eqz \pi_\sfc
  t$. Now since $\Phi_0: G_n^0 \rightarrow F_n^0$ is a morphism, we can apply 
  this relation to the rook
  $1_n$. We obtain: $1_n\cdot \pi_{\sfc\cdot t} = 1_n\cdot (\pi_\sfc \, t) =
  (1_n\cdot \pi_\sfc)\,t.$
  We conclude thanks to Proposition \ref{action1n} and Theorem \ref{codedecode}.
\end{proof}

\begin{corollary}\label{egalitetouscardinaux}
  The maps
  $\left\{\begin{array}{@{}c@{\,}c@{\,}c@{}}
    C_n &\twoheadrightarrow& G_n^0 \\
    c&\mapsto&\pi_c
  \end{array}\right.$
  and
  $\left\{\begin{array}{@{}c@{\,}c@{\,}c@{}}
    C_n &\twoheadrightarrow& G_n^1 \\
    c&\mapsto& s_c
  \end{array}\right.$
  are surjective; the following cardinalities coincide:
  \begin{equation*}
    \vert C_n\vert = \vert R_n\vert =
    \vert F_n^0\vert = \vert G_n^0\vert = \vert F_n^1\vert = \vert G_n^1\vert\,.
  \end{equation*}
  Moreover, $F_n^0 \simeq G_n^0$, $F_n^1 \simeq G_n^1$ as monoids.
\end{corollary}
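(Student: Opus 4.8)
The plan is to run a squeeze argument on cardinalities: combine the chain of inequalities already recorded in Corollary~\ref{plusdeRn0} with a surjectivity statement coming from Corollary~\ref{picodesurj}, and then upgrade the resulting numerical coincidence to an isomorphism of monoids by using finiteness.

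First I would establish that the map $c\mapsto\pi_c$ from $C_n$ to $G_n^0$ is surjective. Given $\pi\in G_n^0$, choose any word for $\pi$ in the generators $\pi_0,\dots,\pi_{n-1}$. By Lemma~\ref{codebiendef}, starting from $\mathsf{1^c_n}$ and applying the action $\sfc\mapsto\sfc\cdot t$ of these generators one letter at a time stays inside $C_n$, so $\mathsf{1^c_n}\cdot\pi\in C_n$ is defined; and Corollary~\ref{picodesurj} gives $\pi\eqz\pi_{\mathsf{1^c_n}\cdot\pi}$. Hence every element of $G_n^0$ is of the form $\pi_c$ for some $c\in C_n$, so $\vert G_n^0\vert\leq\vert C_n\vert$. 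The same argument, using Remark~\ref{casq=1} and the congruence $s\equiv_1 s_{\mathsf{1^c_n}\cdot s}$, shows $c\mapsto s_c$ is surjective onto $G_n^1$ and $\vert G_n^1\vert\leq\vert C_n\vert$.

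Combining these with Corollary~\ref{plusdeRn0}, which gives $\vert C_n\vert=\vert R_n\vert\leq\vert F_n^0\vert\leq\vert G_n^0\vert$ and likewise on the $1$-side, the two chains of inequalities collapse, yielding
\[
\vert C_n\vert=\vert R_n\vert=\vert F_n^0\vert=\vert G_n^0\vert=\vert F_n^1\vert=\vert G_n^1\vert\,.
\]
Finally, $\Phi_0:G_n^0\twoheadrightarrow F_n^0$ from Remark~\ref{presFnFn0} is a surjective monoid morphism between two finite sets of equal cardinality, hence a bijection, hence a monoid isomorphism; the same holds for $\Phi_1:G_n^1\twoheadrightarrow F_n^1$. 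This proves $F_n^0\simeq G_n^0$ and $F_n^1\simeq G_n^1$. For this corollary there is essentially no obstacle left: all the real work has been front-loaded into Theorem~\ref{codedecode} and Theorem~\ref{actionsurcode} (and thus Corollary~\ref{picodesurj}). The only delicate point is that $\mathsf{1^c_n}\cdot\pi$ should not depend on the chosen word for $\pi$, but $\pi_{\sfc\cdot t}\eqz\pi_\sfc\, t$ precisely says that the action on codes is compatible with the relations, which is what we rely on.
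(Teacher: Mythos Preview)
Your proof is correct and follows essentially the same squeeze argument as the paper: surjectivity of $c\mapsto\pi_c$ (which you draw from Corollary~\ref{picodesurj}) combined with the chain of inequalities from Corollary~\ref{plusdeRn0} forces all cardinalities to agree, and then the surjective monoid morphism $\Phi_0$ between finite sets of equal size is automatically an isomorphism. The paper's own proof is terser and cites Corollary~\ref{code_action_commute} rather than Corollary~\ref{picodesurj}, but the logical content is the same.
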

\begin{proof}
  Using both Remark~\ref{presFnFn0} and Corollary~\ref{code_action_commute},
  we get the following sequence of surjective maps: $C_n \twoheadrightarrow G_n^0
  \twoheadrightarrow F_n^0. $ Furthermore $\vert F_n^0\vert \geq \vert
  C_n\vert$ by Corollary \ref{plusdeRn0}. Consequently $\vert C_n\vert = \vert
  F_n^0\vert = \vert G_n^0\vert$ and $F_n^0 \simeq G_n^0$ as monoids.
\end{proof}

\begin{example}\label{exmpl_decode}
Let $r = 240503$ and $t = \pi_0$. Then $r\cdot t = 040503$. Let us check our algorithm.

Firstly $\code(r) = \mathsf{01323\ov{2}}$. Our algorithm gives us the following serie of operations:
\[\begin{aligned}
\mathsf{01323\ov{2}} \cdot \pi_0 	& = \left[(\mathsf{01323} )\cdot \pi_0\pi_1 \right] 0\\
							& = \left[\left((\mathsf{0132})\cdot \pi_0\right)\mathsf{3} \cdot \pi_1 \right] \mathsf{0} = \left[\left((\mathsf{013})\cdot \pi_0\right) \mathsf{23} \cdot \pi_1 \right] \mathsf{0} = \left[\left((\mathsf{01})\cdot \pi_0\right) \mathsf{323} \cdot \pi_1 \right] \mathsf{0}\\
							& = \left[\mathsf{00323} \cdot \pi_1 \right] \mathsf{0} = \left[\mathsf{0032} \cdot \pi_1 \right] \mathsf{30}\\
							& = \mathsf{003130}
\end{aligned}\]
Finally we really have $\decode(\mathsf{003130}) = 040503$.
\end{example}

Now, there is no need to distinguish between the monoids of functions from the presented monoids, since we have the proof that they are isomorphic. 
\begin{notation}
  We denote $\Rnz \eqdef F_n^0 \simeq G_n^0$ the $0$-rook monoid.

  For any rook $r$ we also denote $\pi_r \eqdef \pi_{\code(r)}$.
\end{notation}

\begin{corollary}\label{action_mulr_R0}
  $\pi_r$ is the unique element of $\Rnz$ such that $1_n\cdot \pi_r = r$.
  With the identification $r\leftrightarrow \pi_r$, the action of $\Rnz$ on
  $R_n$ is nothing but the right multiplication in $\Rnz$:
  $\pi_r\pi_s =\pi_{r\cdot \pi_s}$.
\end{corollary}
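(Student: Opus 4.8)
The plan is to deduce both assertions from the bijection $C_n\to G_n^0\simeq F_n^0=\Rnz$ of Corollary~\ref{egalitetouscardinaux}, together with the compatibility statements of Proposition~\ref{action1n}, Corollary~\ref{picodesurj} and Corollary~\ref{code_action_commute}. Nothing new has to be computed; the corollary is really a bookkeeping consequence of these results.

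First I would prove the uniqueness statement. Fix $\phi\in\Rnz=F_n^0$ and set $r\eqdef 1_n\cdot\phi$. Writing $\phi$ as a product of generators and applying Corollary~\ref{code_action_commute} one letter at a time, one gets $\mathsf{1^c_n}\cdot\phi=\code(1_n)\cdot\phi=\code(1_n\cdot\phi)=\code(r)$. Combined with Corollary~\ref{picodesurj} this gives $\phi\eqz\pi_{\mathsf{1^c_n}\cdot\phi}=\pi_{\code(r)}=\pi_r$, hence $\phi=\pi_r$ in $\Rnz$. On the other hand, Proposition~\ref{action1n} asserts $1_n\cdot\pi_r=r$, so $1_n\cdot\pi_r=1_n\cdot\pi_{r'}$ forces $r=r'$. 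Thus $\pi_r$ is characterised among the elements of $\Rnz$ by the single equation $1_n\cdot\pi_r=r$, and $r\mapsto\pi_r$ is a bijection $R_n\to\Rnz$.

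For the second assertion, let $r,s\in R_n$. The product $\pi_r\pi_s$ lies in $\Rnz$, so by the uniqueness just established it equals $\pi_{r'}$ with $r'\eqdef 1_n\cdot(\pi_r\pi_s)$. Since $\Rnz$ is a monoid of functions acting on $R_n$ on the right, so that $x\cdot(fg)=(x\cdot f)\cdot g$, Proposition~\ref{action1n} yields $1_n\cdot(\pi_r\pi_s)=(1_n\cdot\pi_r)\cdot\pi_s=r\cdot\pi_s$. Hence $\pi_r\pi_s=\pi_{r\cdot\pi_s}$. As every element of $\Rnz$ is of the form $\pi_s$ for a unique rook $s$, this is exactly the statement that under the identification $r\leftrightarrow\pi_r$ the action of $\Rnz$ on $R_n$ coincides with right multiplication in $\Rnz$.

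The only step needing a little care — but it is entirely routine — is the passage from Corollary~\ref{code_action_commute}, stated for a single generator $t$, to an arbitrary $\phi\in\Rnz$, which is done by an immediate induction on the length of a word representing $\phi$. I do not expect any genuine obstacle here.
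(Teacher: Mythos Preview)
Your argument is correct and follows the same overall strategy as the paper: use Proposition~\ref{action1n} for existence, establish uniqueness, and then compute $1_n\cdot(\pi_r\pi_s)=(1_n\cdot\pi_r)\cdot\pi_s=r\cdot\pi_s$ to conclude by uniqueness. The second half of your proof is essentially verbatim the paper's.

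For the uniqueness step, however, you work harder than necessary. The paper simply invokes cardinalities: since $|R_n|=|\Rnz|$ by Corollary~\ref{egalitetouscardinaux} and the map $\Rnz\to R_n$, $\phi\mapsto 1_n\cdot\phi$ is surjective (every $r$ is hit by $\pi_r$), it is automatically a bijection. Your route through Corollary~\ref{code_action_commute} and Corollary~\ref{picodesurj} is perfectly valid and yields an explicit inverse, but the bare counting argument is shorter and avoids the (routine) induction you flag at the end.
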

\begin{proof}
  The identity $1_n\cdot \pi_r = r$ is Proposition~\ref{action1n}, and $\pi_r$
  is unique thanks to cardinalities. Finally,
  $1_n\cdot\pi_r\pi_s=(1_n\cdot\pi_r)\cdot\pi_s=r\cdot\pi_s$ and we conclude
  by unicity.
\end{proof}
We have, by the way, re-proven the presentation for the classical rook monoid:
\begin{corollary}%\label{FR1}
  For all $n$, We have the following isomorphisms of monoids: $F_n^1 \simeq R_n
  \simeq G_n^1$.
\end{corollary}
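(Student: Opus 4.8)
The plan is to reduce everything to Corollary~\ref{egalitetouscardinaux}, which already gives $F_n^1 \simeq G_n^1$ as monoids; it therefore suffices to produce a monoid isomorphism $R_n \simeq F_n^1$ and then compose. The natural candidate is the right regular representation of $R_n$ on itself.

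Concretely, for $g \in R_n$ let $\rho_g \colon R_n \to R_n$ be the map $r \mapsto rg$ (right multiplication of rook matrices). With the convention used in Definition~\ref{def_Ro_fun} — operators act on the right and are read left to right, i.e. $r\cdot(fg) = (r\cdot f)\cdot g$ — the assignment $\Psi\colon g\mapsto\rho_g$ is a monoid homomorphism from $R_n$ into the monoid of functions on $R_n$: indeed $\rho_{1_n}$ is the identity function and $r\cdot(\rho_g\rho_h) = (rg)h = r\,(gh) = r\cdot\rho_{gh}$. A one-line matrix computation shows that $\rho_{s_k}$ and $\rho_{P_1}$ act exactly by formulas~\eqref{act_rook_sk} and~\eqref{act_rook_P1}, so they are the distinguished generators of $F_n^1$. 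Since $\{s_1,\dots,s_{n-1},P_1\}$ generates $R_n$ as a monoid, the image $\Psi(R_n)$ is generated by $\rho_{s_1},\dots,\rho_{s_{n-1}},\rho_{P_1}$ and hence equals $F_n^1$; thus $\Psi\colon R_n\twoheadrightarrow F_n^1$ is surjective.

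For injectivity, evaluate at the identity rook: $1_n\cdot\rho_g = 1_n g = g$, so $\rho_g=\rho_h$ forces $g=h$. Therefore $\Psi$ is a monoid isomorphism $R_n\simeq F_n^1$, and composing with the isomorphism $F_n^1\simeq G_n^1$ from Corollary~\ref{egalitetouscardinaux} gives $F_n^1\simeq R_n\simeq G_n^1$; in particular \eqref{Rs1}--\eqref{Rs6.1} is a presentation of the classical rook monoid.

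There is essentially no serious obstacle left at this stage: the substantive work — the bijection between $R$-codes and rooks (Theorem~\ref{codedecode}), the compatibility of the action with the presentation (Theorem~\ref{actionsurcode}), and the resulting cardinality identities $|C_n|=|R_n|=|F_n^1|=|G_n^1|$ — was already carried out before Corollary~\ref{egalitetouscardinaux}. The only point that needs a moment's care is the composition convention: one must check that ``act on the right, read left to right'' makes $\Psi$ a homomorphism rather than an anti\-homomorphism, which it is, precisely because performing $r\mapsto rg$ and then $r\mapsto rh$ composes to $r\mapsto rgh$.
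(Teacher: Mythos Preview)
Your proof is correct and follows essentially the same route as the paper: both use the right regular representation $g\mapsto(r\mapsto rg)$ to identify $R_n$ with $F_n^1$, and then invoke Corollary~\ref{egalitetouscardinaux} for $F_n^1\simeq G_n^1$. The only cosmetic difference is that you establish injectivity directly by evaluating at $1_n$, whereas the paper deduces it from the cardinality equality $|F_n^1|=|R_n|$ (and thereby also reproves that $s_1,\dots,s_{n-1},P_1$ generate $R_n$, rather than citing it as well-known); either argument works.
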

\begin{proof}
The monoid morphism $\left\{\begin{array}{@{}c@{\ }c@{\ }c@{}}
\langle s_1, \dots, s_{n-1}, \pi_0\rangle \subseteq R_n & \longrightarrow & F_n^1 \subseteq \mathcal{F}(R_n, R_n)\\
r & \longmapsto & (r' \mapsto r'\cdot r)
\end{array}\right.$ is well-defined, and surjective. By Corollary \ref{egalitetouscardinaux} we can deduce that ${\langle s_1, \dots, s_{n-1}, \pi_0\rangle \simeq R_n \simeq F_n^1}$.
\end{proof}
\bigskip

Here is a further immediate consequence of the presentation:
\begin{corollary}\label{autoopp}
  The monoid $\Rnz$ is isomorphic to its opposite.
\end{corollary}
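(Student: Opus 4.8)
The plan is to exhibit an explicit anti-automorphism of $\Rnz$ using the short presentation obtained in Corollary~\ref{relRn0pi0}. Since $\Rnz = F_n^0 \simeq G_n^0$ by Corollary~\ref{egalitetouscardinaux}, it suffices to build an isomorphism $G_n^0 \simeq (G_n^0)^{\mathrm{op}}$, and for this one works directly with generators and relations.

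First I would let $W$ be the free monoid on the alphabet $\{\pi_0,\dots,\pi_{n-1}\}$ and introduce the word-reversal map $\theta\colon W\to W$, $a_1a_2\cdots a_k\mapsto a_k\cdots a_2a_1$. This is an anti-isomorphism of $W$ (that is, $\theta(uv)=\theta(v)\theta(u)$) and satisfies $\theta^2=\mathrm{id}$. The key step is to check that $\theta$ preserves the congruence $\eqz$ generated by Relations~\ref{RB1}--\ref{RB4}: it is enough to verify that reversing both sides of each defining relation yields a relation already in the list. Relation~\ref{RB1} ($\pi_i^2=\pi_i$) and Relation~\ref{RB4} ($\pi_i\pi_j=\pi_j\pi_i$ for $|i-j|\ge 2$) are symmetric, hence fixed by $\theta$; the braid Relation~\ref{RB2}, $\pi_i\pi_{i+1}\pi_i=\pi_{i+1}\pi_i\pi_{i+1}$, has a palindromic word on each side, so it is fixed too; and although the single displayed string~\ref{RB3}, $\pi_1\pi_0\pi_1\pi_0=\pi_0\pi_1\pi_0=\pi_0\pi_1\pi_0\pi_1$, is not literally fixed by $\theta$, reversing it produces $\pi_0\pi_1\pi_0\pi_1=\pi_0\pi_1\pi_0=\pi_1\pi_0\pi_1\pi_0$, which encodes exactly the same pair of equalities. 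Therefore $u\eqz v$ implies $\theta(u)\eqz\theta(v)$, so $\theta$ descends to a well-defined map $\bar\theta\colon G_n^0\to G_n^0$ with $\bar\theta(xy)=\bar\theta(y)\bar\theta(x)$ and $\bar\theta^2=\mathrm{id}$. Equivalently $\bar\theta$ is an isomorphism $G_n^0\to (G_n^0)^{\mathrm{op}}$, and transporting it along the isomorphism $\Rnz\simeq G_n^0$ gives $\Rnz\simeq(\Rnz)^{\mathrm{op}}$.

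The hard part is essentially non-existent: the only subtlety is that Relation~\ref{RB3}, read as a single chain of equalities, is not symmetric as a string, so one must be a little careful and check that the \emph{set} of relations it abbreviates (not the literal word) is stable under reversal. Once the presentation of Corollary~\ref{relRn0pi0} is available, the whole argument is a formal verification, which is why this appears as an immediate corollary.
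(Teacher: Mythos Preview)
Your proof is correct and is precisely the argument the paper has in mind: the paper's own proof is the one-line observation that the defining relations of the presentation (Corollary~\ref{relRn0pi0}) are symmetric under word reversal, which is exactly what you have unpacked in detail.
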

\begin{proof}
  It comes from the fact that the relations of the presentation of $\Rnz$ are
  symmetrical.
\end{proof}

\subsection{A Matsumoto theorem for rook monoids}
We now turn to the specific study of reduced words.
\begin{proposition}
  The words $s_{\code(r)}$ and $\pi_{\code(r)}$ are reduced expressions
  (i.e. of minimal length) respectively for $r\in R_n$ and $\pi_r\in\Rnz$.
\end{proposition}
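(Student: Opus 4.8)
The plan is to show that, for every rook $r$, the canonical word $\pi_{\code(r)}$ realizes the minimal length among all words in the generators $\pi_0,\dots,\pi_{n-1}$ that are congruent to $\pi_r$ modulo the relations of $G_n^0$; the claim for $s_{\code(r)}$ in $G_n^1\simeq R_n$ will then follow verbatim using the $G_n^1$-version of everything provided by Remark~\ref{casq=1}. The crucial input is the length bound already contained in Theorem~\ref{actionsurcode}: for every code $\sfc\in C_n$ and every generator $t$ one has $\ell(\pi_{\sfc\cdot t})\le\ell(\pi_\sfc)+1$. For the base of the iteration I would record that the code of the identity rook is $\mathsf{1^c_n}=\mathsf{12\cdots n}$, and that $\pi_{\mathsf{1^c_n}}=\col{0}{1}\cdot\col{1}{2}\cdots\col{n-1}{n}=1$, since each column $\col{i-1}{i}$ equals $1$ (because $i>i-1$); in particular $\ell(\pi_{\mathsf{1^c_n}})=0$.

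Now I would take an arbitrary word $w=t_1\cdots t_k$ over $\{\pi_0,\dots,\pi_{n-1}\}$ with $w\eqz\pi_{\code(r)}$, set $\sfc^{(0)}\eqdef\mathsf{1^c_n}$ and inductively $\sfc^{(j)}\eqdef\sfc^{(j-1)}\cdot t_j$, each $\sfc^{(j)}$ being a genuine code by Lemma~\ref{codebiendef}. Iterating Theorem~\ref{actionsurcode} yields, on one hand, $\pi_{\sfc^{(k)}}\eqz\pi_{\sfc^{(0)}}t_1\cdots t_k=t_1\cdots t_k\eqz w$, and on the other hand, together with the base case, $\ell(\pi_{\sfc^{(k)}})\le\ell(\pi_{\sfc^{(0)}})+k=k=\ell(w)$. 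It then remains to identify $\sfc^{(k)}$ with $\code(r)$: iterating the commuting square of Corollary~\ref{code_action_commute}, starting from $\decode(\mathsf{1^c_n})=\decode(\code(1_n))=1_n$ (Theorem~\ref{codedecode}), gives $\decode(\sfc^{(k)})=1_n\cdot(t_1\cdots t_k)=1_n\cdot w$; and since $w\eqz\pi_{\code(r)}$ acts on $R_n$ through $\Phi_0$, Proposition~\ref{action1n} gives $1_n\cdot w=1_n\cdot\pi_r=r$, so $\sfc^{(k)}=\code(r)$ by Theorem~\ref{codedecode}. Hence $\ell(\pi_{\code(r)})\le\ell(w)$ for \emph{every} word $w$ congruent to $\pi_r$; as $\pi_{\code(r)}$ is itself such a word, it is of minimal length, i.e.\ reduced.

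For $s_{\code(r)}$, the same chain of steps applies with $s_\sfc$ in place of $\pi_\sfc$: the $G_n^1$-analogues of Lemma~\ref{codebiendef} and Theorem~\ref{actionsurcode} (including the length bound) are guaranteed by Remark~\ref{casq=1}, the base case is still $s_{\mathsf{1^c_n}}=1$, and the commuting square of Corollary~\ref{code_action_commute} holds for the $s$-action as well. I do not expect a genuine obstacle here: all the real content is the length statement in Theorem~\ref{actionsurcode}, which is already proved, and the rest is bookkeeping. The only points needing a little care are (i) that the length remark of Theorem~\ref{actionsurcode} does hold in the $G_n^1$ setting — it does, since in the modified cases Pos.a and Neg.a of Remark~\ref{casq=1} the canonical word grows by exactly one letter $s_i$ — and (ii) that ``reduced'' is understood with respect to the generating sets $\{\pi_0,\dots,\pi_{n-1}\}$ (resp.\ $\{s_1,\dots,s_{n-1},\pi_0\}$), which is harmless since by Corollary~\ref{relRn0pi0} these already generate the whole monoid.
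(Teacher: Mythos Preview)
Your proof is correct and follows essentially the same approach as the paper: iterate the length bound of Theorem~\ref{actionsurcode} starting from the identity code to show that the canonical word $\pi_{\code(r)}$ is no longer than any word representing $\pi_r$, and then use Proposition~\ref{action1n}/Corollary~\ref{code_action_commute} to identify the code you land on. One small inaccuracy in your parenthetical for the $G_n^1$ case: in the modified cases Pos.a and Neg.a of Remark~\ref{casq=1}, the canonical word actually \emph{shrinks} by one letter (the last column $\cols{n-1}{\sfc_n}$ becomes the shorter $\cols{n-1}{\sfc_n+1}$), rather than growing; but this only makes the required inequality $\ell(s_{\sfc\cdot t})\le\ell(s_\sfc)+1$ hold with room to spare, so your conclusion is unaffected.
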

\begin{proof}
  Corollary~\ref{picodesurj} tells us that every element of $R_n$ and $\Rnz$
  can be written as $\pi_\sfc$ and $s_\sfc$ for some code $\sfc$.  Moreover,
  according to Theorem~\ref{actionsurcode} the rewriting of any word to
  $\pi_\sfc$ and $s_\sfc$ only decrease the length. To conclude, we still have
  to argue that $\pi_\sfc$ and $s_\sfc$ cannot be obtained with a different
  shorter code, which is clear from Proposition~\ref{action1n}.
\end{proof}

\begin{remark}
  The Corollary \ref{picodesurj} gives us a standard expression for every
  element of $\Rnz$. We can now look back at Lemma \ref{Pn} and realize that
  $P_n$ corresponds to the $R$-code $00\dots 0$ ($n$ times), and thus to the
  action of replacing all the entries by $0$.
\end{remark}

A final important consequence of our construction is a proof of the analogue
of Matsumoto's theorem, answering a question of Solomon~\cite[p.~209, bottom
of the middle paragraph]{Solomon.2004}:
\begin{theorem}[Matsumoto theorem for Rook monoids]
  \label{theoreom-matsumoto}
  If $\un{u}$ and $\un{v}$ are two reduced words over
  $\{\pi_0,s_1\dots,s_{n-1}\}$ (resp. $\{\pi_0, \pi_1,\dots,\pi_{n-1}\}$) for the
  same element $r$ of $R_n^1$ (resp.~$R_n^0$), then they are congruent using
  only the two Relations~\ref{RB2} and \ref{RB4}, namely the braid relations:
  \begin{alignat}{2}
  s_is_{i+1}s_i &= s_{i+1}s_is_{i+1} &&  1\leq i \leq n-2,
  \tag{Rs2}\label{RBraids2}\\
  s_is_j &= s_js_i                &&\vert i-j\vert \geq 2.
  \tag{Rs3}\label{RBraids3}\\
  \pi_0 s_j &= s_j \pi_0           && j \neq 1.
  \tag{Rs5.1}\label{RBraids5.1}\\
\intertext{Respectively:}
  \pi_i\pi_{i+1}\pi_i &= \pi_{i+1}\pi_i\pi_{i+1} &\qquad&  1\leq i \leq n-2,
  \tag{RB2}\label{RBraid2}\\
  \pi_i\pi_j &= \pi_j\pi_i &&
  0\leq i,j \leq n-1,\qquad\vert i-j\vert \geq 2.
  \tag{RB4}\label{RBraid4}
  \end{alignat}
\end{theorem}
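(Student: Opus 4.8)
The plan is to leverage the machinery already built, in particular Corollary~\ref{picodesurj} together with Theorem~\ref{actionsurcode}, rather than attempting a direct combinatorial argument on reduced words. The key observation is that all of the rewriting in Theorem~\ref{actionsurcode}, expressing $\pi_{\sfc}t\eqz\pi_{\sfc\cdot t}$, was carried out using \emph{only} Relations~\ref{RB2} and \ref{RB4} \emph{except} in the two cases Pos.a.\ and Neg.a., where the idempotency relation \ref{RB1} was used (and, in case Neg.e.$\alpha$, Relation~\ref{RB3}). Now, when $\un{u}$ is already a \emph{reduced} word, applying the rewriting of Theorem~\ref{actionsurcode} letter by letter never decreases the length (by the length bound $\ell(\pi_{\sfc\cdot t})\le\ell(\pi_\sfc)+1$ combined with the fact that the end result $\pi_{\sfc}$ with $\sfc=\mathsf{1^c_n}\cdot\un{u}$ is reduced, as established in the Proposition just before Theorem~\ref{theoreom-matsumoto}), so in fact at every step the length must stay equal. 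Consequently the length-decreasing cases Pos.a.\ and Neg.a.\ — which are exactly the ones consuming the relation \ref{RB1} (resp.\ $s_i^2=1$) — are \emph{never invoked} when rewriting a reduced word. This is the crux: reducedness forces the rewriting path to stay inside the sub-rewriting-system generated by the braid relations alone.

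\textbf{Step 1.} First I would make precise that for a reduced word $\un{u}$ for $r$, the sequence of codes produced by reading $\un{u}$ left to right, starting from $\mathsf{1^c_n}$ and applying Definition~\ref{defactioncode} one generator at a time, has strictly increasing length at each step. Indeed Proposition~\ref{action1n} shows $\pi_{\mathsf{1^c_n}\cdot\un{u}'}$ acts on $1_n$ the same way as the prefix $\un{u}'$ of $\un{u}$, and since $\un{u}$ is reduced every proper prefix is reduced too, forcing $\ell(\pi_{\mathsf{1^c_n}\cdot\un{u}'})=\ell(\un{u}')$; combined with $\ell(\pi_{\sfc\cdot t})\le\ell(\pi_\sfc)+1$ this pins the length down exactly. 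In particular the reading never passes through a length-preserving step of type Pos.a., Neg.a., or Neg.e.$\alpha$ (the last one also preserves length), because each of those would make $\ell(\pi_{\sfc\cdot t})=\ell(\pi_\sfc)$ while the prefix length has grown by one — a contradiction.

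\textbf{Step 2.} I would then trace through the proof of Theorem~\ref{actionsurcode} in the remaining cases (Pos.b., Pos.c., Pos.d., Neg.b., Neg.c., Neg.d., Neg.e.$\beta$) and record that every congruence invoked there is an instance of \ref{RB2} or \ref{RB4} (equivalently \ref{RBraids2}, \ref{RBraids3}, \ref{RBraids5.1} in the $G_n^1$ setting), via Lemma~\ref{lemtechnmultcolonne} whose proof, as stated, uses only \ref{RB1}--\ref{RB4} but in fact in the relevant column-commutation identities uses only \ref{RB2} and \ref{RB4}. One must double-check case Neg.d.\ (the $\pi_0$ case), which in the general Theorem uses \ref{RB3}; but for a \emph{reduced} word, case Neg.d.\ with $i\ne 0$ is length-increasing hence allowed, and a careful look shows the only \ref{RB3} use there is inside the manipulation $\col{n-1}{\sfc_n}\pi_0\eqz\pi_0\col{n-1}{0}\pi_2\cdots\pi_i$ — here I need to verify whether \ref{RB3} is genuinely required or can be replaced; if it is genuinely required, then case Neg.d.\ must itself be excluded for reduced words by a separate length or code-shape argument (it seems plausible since $\decode$ of the relevant code would have two zeros, cf.\ the $P_n$ remark, but this needs checking). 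This verification — isolating exactly which rewriting steps a reduced word can traverse and confirming all of them are braid-only — is the main obstacle; it is bookkeeping rather than deep, but it must be done carefully case by case.

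\textbf{Step 3.} Finally, given two reduced words $\un{u},\un{v}$ for the same $r$, both rewrite, using only braid relations \ref{RB2} and \ref{RB4} (resp.\ \ref{RBraids2}, \ref{RBraids3}, \ref{RBraids5.1}), to the canonical reduced word $\pi_{\code(r)}$ (resp.\ $s_{\code(r)}$), because $\mathsf{1^c_n}\cdot\un{u}=\code(1_n\cdot\un{u})=\code(r)=\mathsf{1^c_n}\cdot\un{v}$ by Corollary~\ref{picodesurj} and Corollary~\ref{action_mulr_R0}. Composing the rewriting $\un{u}\rightsquigarrow\pi_{\code(r)}$ with the reverse of $\un{v}\rightsquigarrow\pi_{\code(r)}$ — legitimate since braid relations are symmetric — yields a sequence of braid moves transforming $\un{u}$ into $\un{v}$, which is exactly the assertion of the theorem. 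The same argument applies verbatim in the $R_n^1$ case using Remark~\ref{casq=1}, where the only cases touched by a reduced word are again the braid-only ones, the relation $s_i^2=1$ appearing solely in the excluded length-decreasing cases Pos.a.\ and Neg.a.
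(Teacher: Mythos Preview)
Your overall strategy is exactly the one the paper uses: reduce to the canonical word $\pi_{\code(r)}$, run the rewriting of Theorem~\ref{actionsurcode} letter by letter, and observe that for a \emph{reduced} input every step must strictly increase the length of the canonical form, hence the length-non-increasing cases --- precisely those that invoke \ref{RB1} or \ref{RB3} --- are never traversed.

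There is, however, a concrete error in your Step~2. You claim that ``case Neg.d.\ with $i\ne 0$ is length-increasing hence allowed''. It is not. In case Neg.d.\ one has $\sfc_n=-i$ with $i>0$ and $t=\pi_0$; the last column of $\pi_\sfc$ is $\col{n-1}{-i}$ of length $n+i$, whereas after the rewriting $\sfc\cdot t=[\sfc'\cdot\pi_0\cdots\pi_{i-1}]\,\mathsf{0}$ the last column is $\col{n-1}{0}$ of length $n$, and the prefix has gained at most $i$ in length. So $\ell(\pi_{\sfc\cdot t})\le\ell(\pi_{\sfc'})+i+n=\ell(\pi_\sfc)<\ell(\pi_\sfc)+1$, i.e.\ Neg.d.\ is strictly length-\emph{decreasing}. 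It therefore belongs on your list of excluded cases alongside Pos.a., Neg.a., and Neg.e.$\alpha$, and no ``separate length or code-shape argument'' is needed: the same length argument of your Step~1 rules it out. This is exactly what the paper records: the four non-length-increasing cases are Pos.a.\ and Neg.a.\ (using \ref{RB1}) and Neg.d.\ and Neg.e.$\alpha$ (using \ref{RB3}); all remaining cases use only \ref{RB2} and \ref{RB4}. Once Neg.d.\ is correctly placed, your Steps~1--3 constitute a complete proof identical to the paper's.
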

\begin{proof}
  First of all, we only do the proof at $q=0$, the $q=1$ case is done
  similarly. Moreover, by transitivity, it is sufficient to work in the case
  where $\un{v} = \pi_\sfc$ whith $\sfc = \code(1_n\cdot r)$.  We proceed by
  induction on the common length $\ell$ of $\un{u}$ and $\un{v}$. It is
  obvious when $\ell=0$. We now consider a reduced word $\un{v}=\un{v'}t$ for
  an element $r$. Then $\un{v'}$ is also reduced for an element $r'$, so
  that $r't=r$. We assume by induction that $\un{v'}$ is congruent to
  $\pi_{\sfc'}$ where $\sfc'\eqdef\code(1_n\cdot r')$ using only
  Relations~\ref{RB2} and \ref{RB4}. Therefore~$\un{v'}t$ and $\pi_{\sfc'}t$
  are congruent too. In the proof of Theorem~\ref{actionsurcode}, we
  explicitely gave how to go from $\pi_{\sfc'}t$ to $\pi_{\sfc'\cdot
    t}$. Hence we only need to check that Relations~\ref{RB1} and \ref{RB3}
  are only used in the case where $\un{v'}t$ is not reduced that is when the
  length of $\un{v'}t$ is larger that the length of $\pi_{\sfc'\cdot t}$. This
  indeed holds, namely, in cases Pos.a., Neg.a which use~\ref{RB1} on one
  hand, and cases Neg.d, Neg.e.$\alpha$ which use~\ref{RB3} on the other hand.
\end{proof}
As a consequence reduced words for $R_n^1$ and $\Rnz$ are the same:
\begin{corollary}\label{action_reduced}
  Let $\un{w}^1\in G_n^1$ a word for a rook $r$ and $\un{w}^0$ its
  corresponding word in $G_n^0$ obtained by replacing $s_i$ by $\pi_i$ and
  leaving $P_1$. Then $\un{w}^1$ is reduced if and only if $\un{w}^0$ is
  reduced. Moreover, when they are, for any $k=0,\dots,|w|$, one has $1_n\cdot
  w^1_1\cdots w^1_k=1_n\cdot w^0_1\cdots w^0_k$ and the elements $(1_n\cdot
  w^0_1\cdots w^0_k)_{k=0\dots|w|}$ are all distinct.
\end{corollary}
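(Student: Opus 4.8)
The plan is to deduce everything from the Matsumoto theorem (Theorem~\ref{theoreom-matsumoto}) together with the observation that the braid relations of $G_n^0$ and of $G_n^1$ \emph{match} under the letter-by-letter relabelling $\psi$ sending $s_i\mapsto\pi_i$ ($1\le i\le n-1$) and $P_1\mapsto P_1=\pi_0$. Indeed $\psi$ turns each instance of \ref{RBraids2}, \ref{RBraids3} or \ref{RBraids5.1} into an instance of \ref{RBraid2} or \ref{RBraid4} (note that \ref{RBraids5.1}, i.e.\ $P_1 s_j = s_j P_1$ for $j\neq 1$, becomes the case $i=0$ of \ref{RBraid4}), and conversely; crucially, neither list involves the idempotent relations \ref{RB1} or \ref{RB3} (or their $G_n^1$-analogues), so a chain of braid-rewrites in one free monoid transports verbatim to a chain of braid-rewrites in the other, preserving both the value of the word and its length (each braid relation has the same number of letters on both sides). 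I will also use the already established facts that $s_{\code(r)}$ and $\pi_{\code(r)}$ are reduced words for $r\in R_n$ and $\pi_r\in\Rnz$, and that they have the \emph{same} length (same formula, Definition~\ref{def_word_code}, with column words of equal length), so that $\ell(r)=\ell(\pi_r)$ for every rook $r$.

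\textbf{The equivalence.} If $\un{w}^1$ is reduced for $r$, then by Theorem~\ref{theoreom-matsumoto} it is congruent to $s_{\code(r)}$ using only braid relations; applying $\psi$, the word $\un{w}^0$ is congruent in $G_n^0$ to $\pi_{\code(r)}$ using only braid relations. As braid-rewrites preserve value and length, $\un{w}^0$ represents $\pi_r$ and has length $\ell(\pi_r)$, hence is reduced. The converse is identical, applying $\psi^{-1}$ to a Matsumoto chain for $\un{w}^0$. This argument applies to \emph{any} word, in particular to each prefix, and uses that the element of $R_n$ (resp.\ $\Rnz$) represented by a word is exactly its action on the identity rook $1_n$ (which is the monoid identity), via Proposition~\ref{action1n}.

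\textbf{The prefixes and distinctness.} Assume $\un{w}^1$ (equivalently $\un{w}^0$) reduced, and fix $k$. A prefix of a reduced word is reduced (if $\un{a}\,\un{b}$ is reduced and $\un{a}$ were not, replacing $\un{a}$ by a strictly shorter word for the same element shortens $\un{a}\,\un{b}$), so $\un{w}^1_{\le k}:=w^1_1\cdots w^1_k$ and $\un{w}^0_{\le k}:=w^0_1\cdots w^0_k$ are both reduced, of length $k$. Let $\rho_k\in\Rnz$ be the element represented by $\un{w}^0_{\le k}$ and put $s_k:=1_n\cdot\rho_k$; by Matsumoto, $\un{w}^0_{\le k}$ is braid-congruent to $\pi_{\code(s_k)}$, so applying $\psi^{-1}$, $\un{w}^1_{\le k}$ is braid-congruent to $s_{\code(s_k)}$ and therefore represents $s_k$. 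Hence $1_n\cdot\un{w}^1_{\le k}=s_k=1_n\cdot\rho_k=1_n\cdot\un{w}^0_{\le k}$, which is the asserted equality. For distinctness, the prefix $\un{w}^0_{\le k}$ being reduced of length $k$, the element $\rho_k$ has $\ell(\rho_k)=k$, so the $\rho_k$ are pairwise distinct; since $v\mapsto 1_n\cdot v$ is a bijection $\Rnz\to R_n$ (Corollary~\ref{action_mulr_R0}), the rooks $1_n\cdot\un{w}^0_{\le k}=1_n\cdot\un{w}^1_{\le k}$ are pairwise distinct.

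\textbf{Expected difficulty.} Given the Matsumoto theorem, there is no genuine obstacle; the only points needing care are the exact correspondence of the two braid-relation lists under $\psi$ (in particular that the idempotent/square relations, which would \emph{not} transport, occur in neither Matsumoto statement), and the systematic identification ``word $\leftrightarrow$ its action on $1_n$'' through Proposition~\ref{action1n} and Corollary~\ref{action_mulr_R0}.
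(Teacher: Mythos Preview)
Your proof is correct and follows essentially the same route as the paper for the equivalence: Matsumoto plus the observation that the braid-relation lists on the two sides correspond letter-for-letter under $\psi$, so a braid-rewrite chain transports verbatim. For the prefix equality the paper is slightly more direct: it first argues distinctness (as you do), and then simply invokes the defining Equation~\eqref{act_rook_pik}, which says each $\pi_i$ acts on a rook either as $s_i$ or as the identity; distinctness rules out the identity case, so an induction on $k$ gives $1_n\cdot w^0_1\cdots w^0_k=1_n\cdot w^1_1\cdots w^1_k$ without re-invoking Matsumoto on every prefix. Your version reapplies Matsumoto to each prefix instead. Both are fine; the paper's argument is marginally more elementary, yours is more uniform in that it uses a single tool throughout.
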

\begin{proof}
  Any reduced word is congruent by braid relations to a canonical one:
  $s_\sfc$ and $\pi_c$. Moreover, the canonical words corresponds by the
  exchange $s\leftrightarrow\pi$ and the braid relations keep this
  correspondence, so that the first statement holds. Now assume that a word
  $\un{w}^i$ is reduced. Thanks to Corollary~\ref{action_mulr_R0}, we know
  that the sequence of elements are distinct, otherwise it would imply that
  some products $w^i_1\cdots w^i_k$ are equal for two different values of $k$
  leading to a shorter word. Now Equation~\ref{act_rook_pik}, prove the
  equality.
\end{proof}

As explained by Solomon~\cite{Solomon.2004}, this is sufficient to give a presentation of
the $q$-rook algebra. Here is a quick sketch on how to do that: fix a
parameter $q$ in a ring $\mathbf{R}$ and define an endomorphism $T_i$ of $\mathbf{R} R_n$
interpolating between $q=1$ and $q=0$ by
\begin{equation}\label{equ_def_Hecke_generic}
r\cdot T_i \eqdef q (r\cdot s_i) + (1 - q)(r \cdot (\pi_i - 1))\,,
\end{equation}
for $i=1, \dots, n-1$ (where $s_i$ and $\pi_i$ acts according to
Equations~\ref{act_rook_sk} and \ref{act_rook_pik}). It is well
known~\cite{Lascoux.2003.book,LascouxSchutzenberger.1987} that these
operators generate the Hecke algebra. We now consider the algebra generated by
those generators plus $P_1$ defined as in Equation~\ref{act_rook_pik}. Since
$P_1$ commutes with $s_i$ and $\pi_i$ for $i\geq2$, it commutes with
$T_i$. Therefore for any rook $r$, it makes sense to define $T_r \eqdef T_{i_1}
T_{i_2} \dots P_1 \dots T_{i_k}$ for any reduced word $s_{i_1} P s_{i_2} \dots
P_1 \dots s_{i_k}$. Due to the braid relations the result is independent from the
chosen reduced word. Moreover for each of those words
\begin{equation}
  1\cdot T_r = r + \text{shorter terms},
\end{equation}
so that these $(T_r)_{r\in R_n}$ are linearly independent. It finally suffices
to add four more relations which explain how to simplify non reduced words. Namely:
\begin{align}
  (T_i + 1) (T_i - q) &= 0, \\
  P_1^2 &= P_i,\\
  (P_1-1)T_1(P_1-1)T_1 &= T_1(P_1-1)T_1(P_1-1),\\
  P_1(T_1-q)P_1(T_1(1-P_1)T_1-q)&=0.
\end{align}
We remark that this presentation is true over $\Z$ and therefore over any ring,
and not only on fields. As far as we know, this was unknown before.

\subsection{More actions of \texorpdfstring{$\Rnz$}{Rn0}}\label{sec_action_Rnz}

In Definition~\ref{def_Ro_fun}, we have given a right action of $\Rnz$ on
$R_n$. It is now clear from Corollary~\ref{action_mulr_R0} that this action is
nothing but the right multiplication in $\Rnz$. Under this action, $P_j$ acts
by killing the first $j$ entries:
\begin{equation}
(r_1\dots r_n)\cdot P_j = 0\dots 0\,r_{j+1}\dots r_n\,.
\end{equation}

The inverse of a permutation matrix is its transpose. Transposing a rook
matrix still gives a rook matrix, so that one can transfer the notion to rook
vectors. It is computed as follows: for a rook $r$, the $i$-th coordinate of
$r^t$ is the position of $i$ in $r$ if $i\in r$, and $0$ otherwise. For
instance $(105203)^t = 146030$.

Transposing the natural right action, we naturally get a left action of the
opposite monoid on rooks. However $\Rnz$ is isomorphic to its oppose. It is
therefore possible to define a left natural action:
\begin{definition}\label{def_left_action}
  For $0\leq i\leq n$ and $r = r_1\dots r_n\in R_n$, define
  \begin{equation}
    \pi_i \cdot r\eqdef (r^t \cdot \pi_i)^t
    \qquad \text{ so that } \qquad
    r\cdot \pi_i = (\pi_i\cdot r^t)^t\,.
  \end{equation}
  More explicitely, for $0\leq j \leq n$, we write $j \in r$ if $j \in \lbrace
  r_1,\dots, r_n\rbrace$.  Then for any rook $r$:
  \begin{itemize}
  \item $\pi_0$ replaces $1$ by $0$ in $r$ if $1\in r$, and fixes $r$
    otherwise.
  \item For $i>0$, the action of $\pi_i$ on $r$ is
    \begin{itemize}
    \item if $i, i+1\in r$, call $k$ and $l$ their respective positions. Then
      $\pi_i$ fixes $r$ if $l<k$, otherwise it exchanges $i$ and~$i+1$.
    \item if $i \notin r$ and $i+1\in r$, then $\pi_i$ replaces $i+1$ by $i$.
    \item if $i+1 \notin r$ then $\pi_i$ fixes $r$.
    \end{itemize}
  \end{itemize}
\end{definition}
\begin{lemma}\label{left_action}
  The previous definition is a left monoid action of $\Rnz$ on $R_n$ called
  the \emph{left natural action}.  Under this action, $P_j$ acts by replacing
  the entries smaller than $j$ by $0$.
\end{lemma}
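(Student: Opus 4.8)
The plan is to obtain the left natural action by transporting the known right action of $\Rnz$ on $R_n$ (Definition~\ref{def_Ro_fun}, which by Corollary~\ref{action_mulr_R0} is right multiplication) through two anti-automorphisms. First I would recall that matrix transposition $r\mapsto r^t$ is an involutive anti-automorphism of the monoid $R_n$, that is $(rs)^t=s^tr^t$ and $(r^t)^t=r$; on rook vectors it is exactly the operation described just before Definition~\ref{def_left_action}. Second, by Corollary~\ref{autoopp}, word reversal $\pi_{i_1}\cdots\pi_{i_k}\mapsto\pi_{i_k}\cdots\pi_{i_1}$ descends to a well-defined anti-automorphism $\theta$ of $\Rnz$, and $\theta$ fixes each generator $\pi_i$ since a one-letter word is its own reverse.

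Then the key observation is that, for a word $\un{w}=\pi_{i_1}\cdots\pi_{i_k}$ in the generators, iterating the generator-by-generator prescription of Definition~\ref{def_left_action} yields
\[
  \pi_{i_1}\cdot\bigl(\pi_{i_2}\cdot\bigl(\cdots\cdot(\pi_{i_k}\cdot r)\bigr)\bigr)
  =\bigl(r^t\cdot(\pi_{i_k}\cdots\pi_{i_1})\bigr)^t
  =\bigl(r^t\cdot\theta(\un{w})\bigr)^t\,,
\]
because sliding a generator $\pi_i$ from the outside of the transpose to the inside turns it into right multiplication by $\pi_i$. Since both the right action and $\theta$ are well defined on $\Rnz$, the right-hand side depends only on the element of $\Rnz$ represented by $\un{w}$; equivalently, the partial actions of the generators send the defining relations \ref{RB1}--\ref{RB4} to identities (relations are preserved both by $\theta$ and by the right action). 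Hence these generator actions extend to a genuine left monoid action, with unitality and associativity immediate from the displayed formula and the fact that $\theta$ and transposition are anti-automorphisms. I expect this to be the conceptual heart of the proof; the only mildly delicate point is bookkeeping the two transpositions when checking that the explicit case description in Definition~\ref{def_left_action} really equals $(r^t\cdot\pi_i)^t$ (and $(r^t\cdot P_1)^t$ for $\pi_0$) — a routine split according to whether $i,i+1$ (respectively $1$) occur in $r$ and, if so, in which order — which I would relegate to a short verification or remark.

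For the claim about $P_j$, I would first note that $P_j=P_{j-1}\pi_{j-1}P_{j-1}$ (Relation~\ref{R8}) together with $P_1=\pi_0$ exhibits, by an immediate induction, a palindromic word for $P_j$ in $\pi_0,\dots,\pi_{j-1}$; hence $\theta(P_j)=P_j$ and therefore $P_j\cdot r=(r^t\cdot P_j)^t$. Now $r^t\cdot P_j$ is obtained from $r^t$ by zeroing its first $j$ coordinates, and a value $k$ occurs in $r^t$ precisely when $r_k\neq0$, namely at position $r_k$; thus $k$ survives in $r^t\cdot P_j$ if and only if $r_k>j$, still at position $r_k$. Transposing back, the $k$-th entry of $P_j\cdot r$ is $r_k$ when $r_k>j$ and $0$ otherwise, i.e. $P_j\cdot r$ is obtained from $r$ by replacing every entry that is $\le j$ by $0$ (for $j=1$ this specializes to the action of $\pi_0$ given in Definition~\ref{def_left_action}). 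The main obstacle here is merely keeping the direction of the two transpositions straight; there is no genuine difficulty beyond careful bookkeeping.
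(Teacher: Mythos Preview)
Your proof is correct and follows precisely the approach the paper intends: the paper does not give an explicit proof of this lemma, treating it as immediate from the sentence preceding Definition~\ref{def_left_action} (transpose the right action to get a left action of the opposite monoid, then use Corollary~\ref{autoopp}). Your argument makes this explicit by identifying the anti-automorphism $\theta$ as word reversal, verifying the formula $\un{w}\cdot r=(r^t\cdot\theta(\un{w}))^t$, and carrying out the transpose computation for $P_j$; this is exactly the content the paper leaves implicit. One small remark: your computation correctly shows that $P_j$ zeroes the entries that are $\leq j$ (not strictly smaller), which is also what the case $j=1$ in Definition~\ref{def_left_action} says; the phrase ``smaller than $j$'' in the statement is a minor imprecision of the paper, not of your argument.
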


\begin{example}
$\pi_0 \cdot 0342= 0342, \quad \pi_1 \cdot 0342 = 0341, \quad \pi_2 \cdot 0342 = 0342, \quad {\pi_3 \cdot 0342 = 0432}$, \quad ${\pi_0 \cdot 132 = 032}.$
\end{example}

This sheds some light on the link with the type $B_n$: it is well known that
type $B_n$ can be realized using signed permutations. The quotient giving the
$0$-rook monoid can be realized by replacing the negative numbers by zeros.

\begin{proposition}\label{action_mull_R0}
  $\pi_r$ is the unique element of $\Rnz$ such that $\pi_r \cdot 1_n = r$.
  With the identification $r\leftrightarrow \pi_r$, the left action of $\Rnz$
  on $R_n$ is nothing but the left multiplication in $\Rnz$: $\pi_r\pi_s
  =\pi_{\pi_r\cdot s}$.
\end{proposition}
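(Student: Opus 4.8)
The plan is to reduce the whole statement to a single identification: the anti-automorphism of $\Rnz$ furnished by Corollary~\ref{autoopp} is matrix transposition $r\mapsto r^t$ on rooks; granting this, everything else is bookkeeping. Since the defining relations \ref{RB1}--\ref{RB4} of $\Rnz=G_n^0$ are stable under reversing words, word reversal induces a well-defined anti-automorphism $\iota\colon\Rnz\to\Rnz$ (the isomorphism $\Rnz\xrightarrow{\sim}\Rnz^{\mathrm{op}}$ of Corollary~\ref{autoopp}); it fixes every generator $\pi_i$ and is an involution, in particular length preserving, so the reversal of a reduced word is reduced. The key step will be to prove that $\iota(\pi_r)=\pi_{r^t}$ for all $r\in R_n$.

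To prove $\iota(\pi_r)=\pi_{r^t}$: the canonical word $\pi_{\code(r)}$ is reduced for $\pi_r$ (the proposition preceding Theorem~\ref{theoreom-matsumoto}), hence so is its reversal $\un{v}$, which represents $\iota(\pi_r)$. Let $\un{v}^1$ be the same reversed word read over $\{s_1,\dots,s_{n-1},P_1\}$, i.e. the reversal of $s_{\code(r)}$, a reduced word of $G_n^1$. In $F_n^1$ the generators act by right matrix multiplication, so the equality $1_n\cdot s_{\code(r)}=r$ of Proposition~\ref{action1n} exhibits $r$ as a product of the symmetric matrices $s_i,P_1$; as transposition is an anti-homomorphism of the matrix monoid fixing these generators, $1_n\cdot\un{v}^1=r^t$, so $\un{v}^1$ represents $s_{\code(r^t)}$ in $G_n^1$. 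By Corollary~\ref{action_reduced}, since $\un{v}$ and $\un{v}^1$ are reduced corresponding words, $1_n\cdot\un{v}=1_n\cdot\un{v}^1=r^t$ for the right action of $\Rnz$, and uniqueness (Corollary~\ref{action_mulr_R0}) yields $\iota(\pi_r)=\pi_{r^t}$.

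Next, iterating the generator rule $\pi_i\cdot r=(r^t\cdot\pi_i)^t$ of Definition~\ref{def_left_action} along any word for $w\in\Rnz$, using that the left action is well defined (Lemma~\ref{left_action}) and that reversal realizes $\iota$, a straightforward induction on word length gives $w\cdot r=(r^t\cdot\iota(w))^t$ for all $w\in\Rnz$ and $r\in R_n$. Taking $w=\pi_r$ and $r=1_n$ (so $r^t=1_n$) and invoking the key step together with Corollary~\ref{action_mulr_R0}, we get $\pi_r\cdot1_n=(1_n\cdot\pi_{r^t})^t=(r^t)^t=r$. Since the elements $\pi_r$ ($r\in R_n$) exhaust $\Rnz$ (Corollary~\ref{egalitetouscardinaux}) and take pairwise distinct values on $1_n$, this also shows $\pi_r$ is the unique element of $\Rnz$ sending $1_n$ to $r$.

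Finally, fix rooks $r,s$. By Corollary~\ref{action_mulr_R0} we have $\pi_r\pi_s=\pi_{r\cdot\pi_s}$, so it suffices to check $\pi_r\cdot s=r\cdot\pi_s$ as rooks. Let $v$ be the rook with $\pi_v=\pi_r\pi_s$, i.e. $v=r\cdot\pi_s$. Applying $\iota$ and the key step, $\pi_{v^t}=\iota(\pi_v)=\iota(\pi_s)\iota(\pi_r)=\pi_{s^t}\pi_{r^t}$, so by Corollary~\ref{action_mulr_R0} the rook $s^t\cdot\pi_{r^t}$ equals $v^t$; hence by the displayed formula $\pi_r\cdot s=(s^t\cdot\iota(\pi_r))^t=(s^t\cdot\pi_{r^t})^t=(v^t)^t=v=r\cdot\pi_s$, so $\pi_{\pi_r\cdot s}=\pi_v=\pi_r\pi_s$. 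The only real obstacle is the identification $\iota(\pi_r)=\pi_{r^t}$ of the first two paragraphs: because the left action is \emph{defined} by transposing the right one, recognizing the abstract reversal anti-automorphism of $\Rnz$ as honest transposition is exactly what forces ``left action'' and ``left multiplication'' to agree, and the transfer to the classical rook monoid $G_n^1$ through the coincidence of reduced words (Corollary~\ref{action_reduced}) is the clean way to see it.
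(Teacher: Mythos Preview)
Your proof is correct and follows essentially the same route as the paper's. Both arguments hinge on the identity $\iota(\pi_r)=\pi_{r^t}$ (in the paper's notation, ${}_r\pi=\pi_r$, where ${}_r\pi$ is the reversal of $\pi_{r^t}$), and both establish it by passing through the classical rook monoid $G_n^1$ via Corollary~\ref{action_reduced}: reduced words coincide in $G_n^0$ and $G_n^1$, and in $G_n^1$ the generators are symmetric matrices so reversal is literally matrix transposition. Your exposition is more explicit---you name the anti-automorphism $\iota$, state and prove the key identity cleanly, and derive the general formula $w\cdot r=(r^t\cdot\iota(w))^t$---whereas the paper compresses the same content into the temporary notation ${}_r\pi$ and the phrase ``transposing Corollary~\ref{action_mulr_R0}''. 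One small simplification you could make in the last paragraph: once you know $\pi_u\cdot 1_n=u$ uniquely determines $\pi_u$, the identity $\pi_r\pi_s=\pi_{\pi_r\cdot s}$ follows immediately from $(\pi_r\pi_s)\cdot 1_n=\pi_r\cdot(\pi_s\cdot 1_n)=\pi_r\cdot s$, without the detour through $\iota$ and $v^t$.
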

\begin{proof}
  For a rook $r$, let us call temporarily ${}_r\pi$ the reverse of the word
  $\pi_{r^t}$. Transposing Corollary~\ref{action_mulr_R0} we get that
  ${}_r\pi$ is characterized by ${}_r\pi \cdot 1_n = r$ and ${}_r\pi{}_s\pi =
  {}_{\pi_r\cdot s}\pi$.  However, at this stage it's not clear that
  ${}_r\pi=\pi_{r}$ (as element of $R_n^0$). Nevertheless, for generators that
  is words of length $1$, the equality ${}_r\pi=\pi_{r}$ holds. Now given any
  reduced word $\un{w}=w_1\dots w_l$ for an element $x\in\Rnz$, set $r \eqdef 1_n
  \cdot \un{w} = 1_n\cdot w_1 \cdot w_2 \cdots w_l$ so that $x=\pi_r$ in
  $\Rnz$. Since $\un{w}$ is reduced, using Corollary~\ref{action_reduced}, one
  gets that $r = \un{w}^1$ (the product of the corresponding word in $R_n^1$
  which is nothing but a matrix product). But this gives that $r = \un{w}^1
  \cdot 1_n$ so that using the transpose of Corollary~\ref{action_reduced},
  $r=\un{w} \cdot 1_n$. By unicity, one concludes that ${}_r\pi=\pi_{r}$.
\end{proof}
\begin{corollary}\label{commutdroitegauche}
  The natural left and right actions of $\Rnz$ on $R_n$ commute.
\end{corollary}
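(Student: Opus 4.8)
The plan is to deduce the commutation directly from associativity of the product in $\Rnz$, using the two identifications of $R_n$ with $\Rnz$ that have already been established. First I would recall that, by Corollary~\ref{action_mulr_R0}, the map $r\mapsto\pi_r=\pi_{\code(r)}$ is a bijection $R_n\to\Rnz$ under which the natural \emph{right} action of $\Rnz$ on $R_n$ becomes right multiplication: $r\cdot\pi_s$ corresponds to $\pi_r\pi_s$, equivalently $\pi_{r\cdot\pi_s}=\pi_r\pi_s$. Next, I would invoke Proposition~\ref{action_mull_R0}: it asserts not only that the natural \emph{left} action becomes left multiplication, $\pi_x\cdot r$ corresponding to $\pi_x\pi_r$, but crucially that the element characterised by $\pi_r\cdot 1_n=r$ is the \emph{same} canonical element $\pi_r$ as the one characterised by $1_n\cdot\pi_r=r$. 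Hence both identifications $R_n\simeq\Rnz$ coincide.

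With this in hand the verification is immediate. Fix $\pi_x,\pi_y\in\Rnz$ and $r\in R_n$. On one side, $\pi_x\cdot r$ is the rook whose associated element is $\pi_x\pi_r$, and applying the right action of $\pi_y$ gives the rook associated to $(\pi_x\pi_r)\pi_y$. On the other side, $r\cdot\pi_y$ is the rook associated to $\pi_r\pi_y$, and applying the left action of $\pi_x$ gives the rook associated to $\pi_x(\pi_r\pi_y)$. By associativity of the product in $\Rnz$ these two elements are equal, and by the unicity clause of Corollary~\ref{action_mulr_R0} (the map $r\mapsto\pi_r$ is a bijection) the two rooks are equal. Thus $(\pi_x\cdot r)\cdot\pi_y=\pi_x\cdot(r\cdot\pi_y)$, which is the assertion.

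I do not expect any genuine obstacle: the only point that needs care is precisely the compatibility of the left and right identifications of $R_n$ with $\Rnz$, and that is exactly what the proof of Proposition~\ref{action_mull_R0} was arranged to supply (via the observation that ${}_r\pi=\pi_r$). One could alternatively give a hands-on proof by checking, for generators $\pi_i$ and $\pi_j$, that $(\pi_i\cdot r)\cdot\pi_j=\pi_i\cdot(r\cdot\pi_j)$ using the explicit descriptions in Definition~\ref{def_Ro_fun} and Definition~\ref{def_left_action}, distinguishing the positions of $i,i+1$ and of $j,j+1$ in $r$; but this is a lengthy case analysis that the identification argument renders unnecessary.
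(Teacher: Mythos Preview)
Your proof is correct and follows exactly the paper's approach: invoke Corollary~\ref{action_mulr_R0} and Proposition~\ref{action_mull_R0} to identify the left and right actions with left and right multiplication in $\Rnz$ via the same bijection $r\leftrightarrow\pi_r$, and then observe that commutation of the two actions is simply associativity of the product. The paper's proof is the one-line version of what you wrote.
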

\begin{proof}
  Thanks to~\ref{action_mulr_R0} and \ref{action_mull_R0}, this is just
  associativity in $\Rnz$.
\end{proof}
\bigskip

One can also extend the action of $\Hnz$ by isobaric divided differences on
polynomials: the monoid $\Rnz$ acts also on the polynomials in $n$
indeterminates over any ring $k$, $k[X_1, \dots, X_n]$ in the following way.
\begin{lemma}
 Let $f \in k[X_1,\dots, X_n]$. Define
\begin{equation}
  f \cdot \pi_0 \eqdef f_{\vert X_1 = 0} = f(0, X_2, \dots X_n),
  \qquad\text{and}\qquad
  f \cdot \pi_i \eqdef \frac{X_i f - (X_i f) \cdot s_i}{X_i-X_{i+1}}\,.
\end{equation}
This definition is a right monoid action of $\Rnz$ over $k[X_1, \dots, X_n]$.
Under this action,
\begin{equation}
f\cdot P_j = f(0, \dots0, X_j, \dots X_n)\,.
\end{equation}
\end{lemma}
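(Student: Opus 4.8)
The plan is to reduce everything to the compact presentation of $\Rnz$ given in Corollary~\ref{relRn0}: $\Rnz$ is generated by $\pi_1,\dots,\pi_{n-1}$ and $P_1$ subject to Relations~\ref{R1}--\ref{R3} and \ref{R4.1}--\ref{R6.1}, the elements $P_i$ for $i>1$ being \emph{defined} by Relation~\ref{R8}. So it suffices to exhibit operators $\pi_1,\dots,\pi_{n-1},P_1$ on $k[X_1,\dots,X_n]$ (with $\pi_0\eqdef P_1$) satisfying those relations; the right action of $\Rnz$ is then forced, and the description of $P_j$ follows by a separate short induction.

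First I would check well-definedness. The operator $\pi_0\colon f\mapsto f(0,X_2,\dots,X_n)$ is the substitution endomorphism $X_1\mapsto 0$, so it preserves $k[X_1,\dots,X_n]$. For $1\le i\le n-1$ one has $f\cdot\pi_i=\partial_i(X_if)$ where $\partial_i h\eqdef\bigl(h-h\cdot s_i\bigr)/(X_i-X_{i+1})$ is the usual divided difference; the numerator $X_if-(X_if)\cdot s_i$ vanishes whenever $X_i=X_{i+1}$ (there $s_i$ fixes everything), hence is divisible by $X_i-X_{i+1}$, so $f\cdot\pi_i\in k[X_1,\dots,X_n]$.

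Next come the relations. Relations~\ref{R1}--\ref{R3} among $\pi_1,\dots,\pi_{n-1}$ are exactly the classical relations of the isobaric divided differences (see e.g.\ \cite{Lascoux.2003.book}); for idempotency I would recall that $h\cdot s_i=h$ iff $h$ is symmetric in $X_i,X_{i+1}$, that $(\partial_ih)\cdot s_i=\partial_ih$ always (so $f\cdot\pi_i$ is symmetric in $X_i,X_{i+1}$), and that $\partial_i(X_ih)=h$ when $h$ is already symmetric in $X_i,X_{i+1}$, which together give $\pi_i^2=\pi_i$. For $P_1=\pi_0$: Relation~\ref{R4.1} holds because applying $X_1\mapsto0$ twice equals applying it once; Relation~\ref{R5.1} holds because for $j\ge2$ the operator $\pi_j$ involves only $X_j,X_{j+1}$ and hence commutes with the substitution $X_1\mapsto0$. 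The one genuine computation is Relation~\ref{R6.1}: unravelling the formulas step by step gives $f\cdot P_1\pi_1P_1=f(0,0,X_3,\dots,X_n)$ and likewise $f\cdot\pi_1P_1\pi_1P_1=f(0,0,X_3,\dots,X_n)$, so $\pi_1P_1\pi_1P_1=P_1\pi_1P_1$; moreover $f(0,0,X_3,\dots,X_n)$ is free of $X_1,X_2$, hence fixed by $\pi_1$, which gives $P_1\pi_1P_1\pi_1=P_1\pi_1P_1$. This establishes the three equalities of~\ref{R6.1}, hence the right action of $\Rnz$.

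Finally, for the $P_j$: Relation~\ref{R8} reads $P_j=P_{j-1}\pi_{j-1}P_{j-1}$ for $j>1$, so by induction on $j$ (base case $j=1$ being the definition of $\pi_0$) one shows that $P_j$ acts by specialising $X_1,\dots,X_j$ to $0$, in agreement with the displayed formula and with the action of $P_j$ on rooks. Indeed, if $g\eqdef f\cdot P_{j-1}$ is free of $X_1,\dots,X_{j-1}$, then $g\cdot\pi_{j-1}=\partial_{j-1}(X_{j-1}g)$, and applying $P_{j-1}$ (which on such a $g$ amounts to $X_{j-1}\mapsto0$) returns $g|_{X_j=0}$. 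I expect the only delicate point to be the bookkeeping in Relation~\ref{R6.1}, where one must track carefully which variable is specialised at each stage and use that $f\cdot\pi_1$ is symmetric in $X_1,X_2$ without being a polynomial in $X_2$ alone.
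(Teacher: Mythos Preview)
Your proposal is correct and follows essentially the same route as the paper's proof: cite the classical fact that isobaric divided differences realise $H_n^0$ for Relations~\ref{R1}--\ref{R3}, verify Relation~\ref{R6.1} by the explicit computation that all three members send $f$ to $f(0,0,X_3,\dots,X_n)$, and obtain the formula for $P_j$ by induction via $P_j=P_{j-1}\pi_{j-1}P_{j-1}$. You simply give more detail than the paper (explicitly invoking the presentation of Corollary~\ref{relRn0}, spelling out well-definedness, and isolating Relations~\ref{R4.1} and~\ref{R5.1}), but the argument is the same.
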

\begin{proof}
  It is a well-known fact~\cite{LascouxSchutzenberger.1987} that isobaric
  divided differences give an action of the Hecke algebra at $q=0$. It remains
  only to show the relation
  $\pi_1P_1\pi_1P_1=P_1\pi_1P_1=P_1\pi_1P_1\pi_1$. We easily check by an
  explicit computation that the three members are equals to the operator $P_2$
  defined by $f\cdot P_2 = f(0, 0, X_2,\dots, X_n)$.  The action of $P_n$ can
  be easily obtained by induction with $P_{i+1} = P_i \pi_i P_i$.
\end{proof}
Actually, there is an extra relation, which can be checked by a explicit
computation:
\begin{equation}
  f\cdot \pi_1\pi_0\pi_1 = f\cdot \pi_0\pi_1\pi_0\,.
\end{equation}
This shows that the monoid which is actually acting is $H^0(A_{n+1})$ (Cartan
type $A_{n+1}$) thanks to the following sequence of surjective morphisms:
\begin{equation}
  H^0(B_{n}) \twoheadrightarrow \Rnz \twoheadrightarrow H^0(A_{n+1}).
\end{equation}
Finally, we note that it is actually possible to get an action of the full
generic $q$-rook algebra by taking the same definition as
Relation~\ref{equ_def_Hecke_generic}.

\section{The \texorpdfstring{$\RR$}{R}-order on rooks}
\label{sec-r-order}

In this section, we seek for combinatorial, order theoretic and geometric
analogs of the permutohedron for rooks. Recall that the right Cayley graph of
the symmetric group $\SG{n}$ has several interpretations, namely:
\begin{itemize}
\item the Hasse diagram of the right weak order of $\SG{n}$ seen as a Coxeter
  group, which is naturally a lattice~\cite{GuilbaudRosenstiehl};
\item the Hasse diagram of Green's $\RR$-order of the $0$-Hecke monoid
  $H_n^0$~\cite{DHST};
\item the skeleton of the polytope obtained as the convex hull of the set of
  points whose coordinates are permutations~\cite[Example 0.10]{ZieglerGeom}.
\end{itemize}
As we will see, some of these properties have an analog for rooks. 
\begin{figure}[ht]
  $$
  \vcenter{\hbox{\includegraphics[scale=0.6]{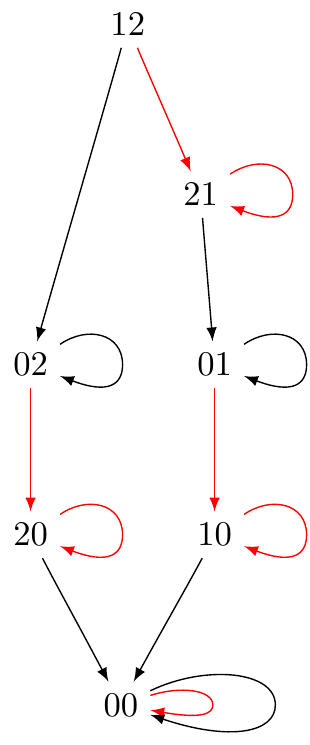}}}
  \hspace{-2cm}
  \raisebox{3cm}{\includegraphics[scale=1]{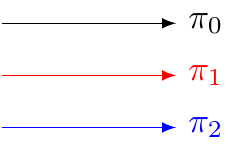}}
  \qquad
  \vcenter{\hbox{\includegraphics[scale=0.6]{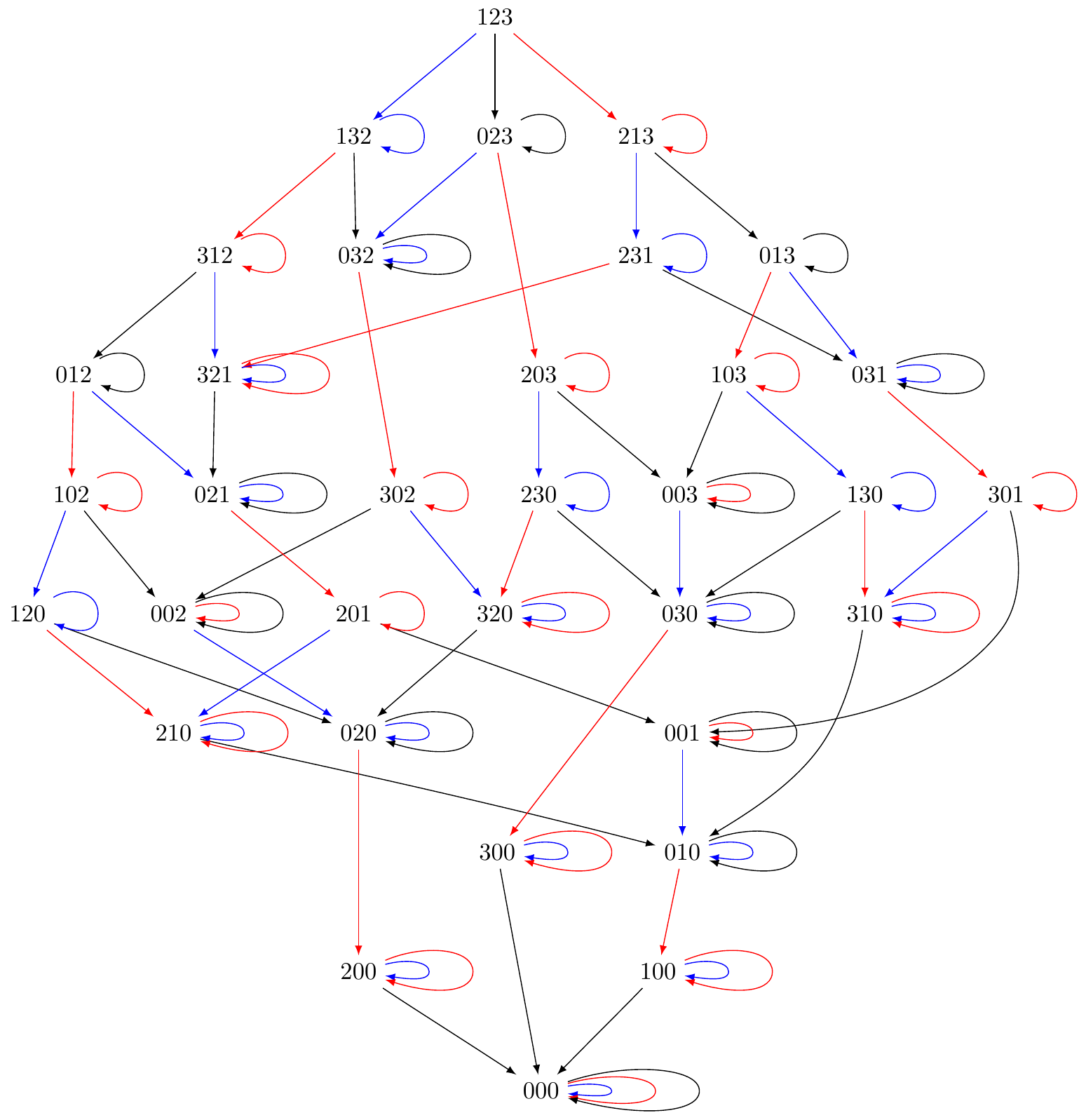}}}
  $$
  \caption{\label{CayleyR02} The right Cayley graph of $R_2^0$ and $R_3^0$.}
\end{figure}

We first notice an important difference: on the contrary to $\SG{n}$ the right
order is not graded. This has been already noted for $R_2^0$. Indeed in the left part of
Figure~\ref{CayleyR02} we see two paths from $12$ to $00$ namely
$\pi_0\pi_1\pi_0$ on the left and $\pi_1\pi_0\pi_1\pi_0$ on the
right. Starting with $n=3$ the right order is moreover not isomorphic to its
dual order.

\subsection{\texorpdfstring{$\RR$}{R}-triviality of \texorpdfstring{$\Rnz$}{Rn0}}

In this section we study the right Cayley graph of $\Rnz$ showing that except
for loops (edge from a vertex to itself) it is acyclic. In monoid theoretic
terminology, one says that $\Rnz$ is $\RR$-trivial. From Coxeter group point
of view, this is the analogue on rook of the (dual) right weak order. Note
that the order considered here is different to the (strong) Bruhat order. Its
analogue for rook is the subject of~\cite{CanRenner.2012}.

Having shown this acyclicity, we will deduce from the symmetry of the
relations of $\Rnz$ that the left sided Cayley graph is also acyclic. By a
standard semigroup theory argument, this will imply that the two-sided Cayley
graph is acyclic too, that is that $\Rnz$ is actually $\JJ$-trivial.

We first recall a combinatorial description of the $\RR$-order of the
$0$-Hecke monoid (or equivalently the dual right-weak order of the symmetric
group seen as a Coxeter group)~\cite{BjornerBrenti.2005}. Recall that for two
permutations $\sigma$ and $\tau$ one has $\sigma\leq_\RR\tau$ if there exists
a sequence $(i_1, \dots, i_k)$ with $0<i_j<n$ such that
$\sigma=\tau\cdot\pi_{i_1}\dots\pi_{i_k}$. Note that, in accord with the
monoid convention and contrary to the Coxeter group convention, the identity
is the largest element for this order. An algorithmic way to compare two
permutations is to use values inversions (sometimes called co-inversions). We
give here a definition which is also valid for rooks:
\begin{definition}\label{def_inversions}
  For a rook $r$, the \emph{set of inversions} of $r$ is defined by
  \begin{equation}
    \Inv(r) \eqdef\left\{ (r_i, r_j) \mid i<j\text{ and }r_i>r_j>0\right\}.
  \end{equation}
\end{definition}
It is a subset of $\Delta\eqdef\{(b, a)\mid n\geq b>a>0\}$, but
not all subsets are inversions sets of permutations and of rooks as we will see.
\begin{definition}
  A subset $I\subseteq \Delta$ is \emph{transitive} if $(c,b)\in I$ and $(b,a)\in I$
  implies $(c,a)\in I$.
\end{definition}
Here is a characterization of inversions sets:
\begin{lemma}\label{perm_lin_ext}
  Given a set $I\subseteq \Delta$, there exists a permutation $\sigma$ such that
  $\Inv(\sigma)=I$ if and only if $I$ and $\Delta \setminus I$ are both
  transitive. When this holds the permutation $\sigma$ is unique.
\end{lemma}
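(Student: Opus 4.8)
The statement is the classical characterization of inversion sets of permutations, transcribed to the ``value inversion'' (co-inversion) convention used here, so the proof is a standard back-and-forth. First I would establish the easy direction: if $\sigma$ is a permutation and $I=\Inv(\sigma)$, then both $I$ and $\Delta\setminus I$ are transitive. For $I$: if $(c,b)\in I$ and $(b,a)\in I$, then in the one-line notation of $\sigma$ the value $c$ appears before $b$ and $b$ appears before $a$, hence $c$ appears before $a$ with $c>a>0$, so $(c,a)\in I$. For the complement: if $(c,b)\notin I$ and $(b,a)\notin I$ with $c>b>a>0$, then $b$ appears before $c$ and $a$ appears before $b$, so $a$ appears before $c$, giving $(c,a)\notin I$; here one uses that for any pair $n\ge y>x>0$ exactly one of $(y,x)\in I$ or $(y,x)\in\Delta\setminus I$ holds, and that this is determined by the relative order of the positions of $x$ and $y$ in $\sigma$ (both values are present since $\sigma$ is a permutation). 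Uniqueness is immediate from this last remark: $\Inv(\sigma)$ records, for every pair of values, which one comes first, and a permutation of $\{1,\dots,n\}$ is determined by all these pairwise precedence relations.

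For the converse, suppose $I\subseteq\Delta$ with $I$ and $\Delta\setminus I$ both transitive. The plan is to define a binary relation $\prec$ on $\{1,\dots,n\}$ by declaring, for $y>x>0$, that $x\prec y$ if $(y,x)\in\Delta\setminus I$ and $y\prec x$ if $(y,x)\in I$. Since $\Delta$ is exactly the set of such pairs, $\prec$ is a total relation that is antisymmetric and irreflexive by construction; I would then check it is transitive by a short case analysis on the relative numerical order of the three elements involved, invoking transitivity of $I$ in the cases where the ``larger beats smaller'' edges dominate and transitivity of $\Delta\setminus I$ in the complementary cases (the mixed cases being forced). A total order $\prec$ on a finite set is a linear order, so listing the elements from $\prec$-smallest to $\prec$-largest yields a word $\sigma$ which is a permutation of $\{1,\dots,n\}$, and by definition of $\prec$ one has $(y,x)\in\Inv(\sigma)$ iff $y$ precedes $x$ in $\sigma$ iff $x\prec y$ iff $(y,x)\in I$, so $\Inv(\sigma)=I$.

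The only slightly delicate point, and the one I would be most careful about, is the transitivity check for $\prec$: with three values there are several orderings of their magnitudes and each produces edges of both ``types'' ($I$-edges and complement-edges), so one must verify in each configuration that the hypothesis actually applies — essentially, that a cyclic triple $a\prec b\prec c\prec a$ would force either an $I$-triple $(c,b),(b,a),(a,c)$ contradicting transitivity of $I$, or a $\Delta\setminus I$-triple contradicting transitivity of the complement, after relabelling so the three values are $x<y<z$. This is elementary but is where all the content sits; everything else is bookkeeping. I would phrase it once, cleanly, by reducing to the representative case $x<y<z$ and examining which of $(z,y),(z,x),(y,x)$ lie in $I$.
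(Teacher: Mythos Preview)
Your proposal is correct and follows exactly the approach sketched in the paper: the paper merely remarks that this is a folklore result and that one recovers $\sigma$ by showing the relation $I \cup \{(i,j)\mid (j,i)\in\Delta\setminus I\}$ is a total order, which is precisely the relation~$\prec$ you define and whose transitivity you check. Your write-up is simply a fuller version of the paper's one-line hint.
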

\begin{proof}
  This is a folklore result. To reconstruct $\sigma$ from its inversion set,
  one shows that the relation
  $I \cup \{(i,j) \mid (j, i)\in \Delta \setminus I \}$
  is a total order, that is a permutation.
\end{proof}
Inversion sets allow to characterize the right order:
\begin{lemma}[\cite{BjornerBrenti.2005}]
  Let $\sigma, \tau \in \SG n$, then $\sigma\leq_\RR\tau$ if
  and only if $\Inv(\tau)\subseteq\Inv(\sigma)$.
\end{lemma}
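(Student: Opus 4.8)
The plan is to run the standard induction on the number of inversions, with the combinatorics of inversion sets (Lemma~\ref{perm_lin_ext}) doing all the work. For the easy implication, suppose $\sigma\leq_\RR\tau$, say $\sigma=\tau\pi_{i_1}\cdots\pi_{i_k}$. By the definition of the action (Equation~\ref{act_rook_pik}), applying $\pi_i$ to a permutation either leaves it unchanged or swaps two adjacent entries that are in increasing order, and such a swap adds exactly one value-inversion, namely $(\tau_{i+1},\tau_i)$, and removes none; so $\Inv$ can only grow along the word, giving $\Inv(\tau)\subseteq\Inv(\sigma)$ by induction on $k$.

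For the converse, assume $\Inv(\tau)\subseteq\Inv(\sigma)$. Since $\rho\mapsto\Inv(\rho)$ is injective on $\SG n$ (Lemma~\ref{perm_lin_ext}) and $|\Inv(\rho)|=\ell(\rho)$, I would induct on $N\eqdef|\Inv(\sigma)|-|\Inv(\tau)|\geq 0$. If $N=0$ the two inversion sets coincide, hence $\sigma=\tau$ by uniqueness in Lemma~\ref{perm_lin_ext}, and there is nothing to prove. If $N>0$, the key step is to produce an index $i$ with $1\leq i\leq n-1$, $\tau_i<\tau_{i+1}$, and $(\tau_{i+1},\tau_i)\in\Inv(\sigma)$: granting this, $\tau'\eqdef\tau\pi_i=\tau s_i$ satisfies $\Inv(\tau')=\Inv(\tau)\cup\{(\tau_{i+1},\tau_i)\}\subseteq\Inv(\sigma)$ with $|\Inv(\sigma)|-|\Inv(\tau')|=N-1$, so by induction $\sigma\leq_\RR\tau'$, and therefore $\sigma\leq_\RR\tau'=\tau\pi_i\leq_\RR\tau$, which closes the induction.

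To find such an $i$, I would consider $D\eqdef\Inv(\sigma)\setminus\Inv(\tau)$, which is nonempty and, being the intersection of the transitive sets $\Inv(\sigma)$ and $\Delta\setminus\Inv(\tau)$ (Lemma~\ref{perm_lin_ext}), is itself transitive. Choose $(b,a)\in D$ minimizing the gap between the positions of $a$ and $b$ in $\tau$ (note that $a$ precedes $b$ in $\tau$, since $(b,a)\notin\Inv(\tau)$). I claim these positions are adjacent. Otherwise, any value $c$ occurring strictly between them must satisfy $c<a$: if $a<c<b$, then whichever of $(c,a)$, $(b,c)$ lies in $\Inv(\sigma)$ lies in $D$ with a strictly smaller gap, and if neither does, transitivity of $\Delta\setminus\Inv(\sigma)$ contradicts $(b,a)\in\Inv(\sigma)$; and if $c>b$, then $(c,b)\in\Inv(\tau)\subseteq\Inv(\sigma)$ while $(c,a)\notin\Inv(\sigma)$ (else $(c,a)\in D$ with smaller gap), producing the cyclic order $c,b,a,c$ in $\sigma$, which is impossible. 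Now let $m$ be the value immediately to the left of $b$ in $\tau$; then $m<a$, minimality forces $(b,m)\notin\Inv(\sigma)$, hence in $\sigma$ the value $m$ precedes $b$, which precedes $a$, so $(a,m)\notin\Inv(\sigma)$, contradicting $(a,m)\in\Inv(\tau)\subseteq\Inv(\sigma)$. Thus $a$ and $b$ are adjacent in $\tau$ with $a<b$, and $i$ is the position of $a$.

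The main obstacle is precisely this last paragraph: the existence of a ``legal'' covering inversion sitting inside $\Inv(\sigma)$. It is elementary but requires a careful case analysis organized entirely around the transitivity of an inversion set and of its complement, which is exactly what Lemma~\ref{perm_lin_ext} supplies; everything else is a routine induction.
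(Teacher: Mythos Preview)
Your proof is correct. The paper does not actually supply a proof of this lemma; it is merely cited from Bj\"orner--Brenti as a standard background fact about the weak order on~$\SG{n}$. Your argument is the classical one: the easy direction follows because each $\pi_i$ either fixes the permutation or adds a single value-inversion, and for the converse you locate a covering pair in $\Inv(\sigma)\setminus\Inv(\tau)$ via the transitivity of inversion sets and of their complements (Lemma~\ref{perm_lin_ext}), which then lets you peel off one $\pi_i$ at a time.
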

\begin{proposition}[\cite{BjornerBrenti.2005}]\label{meet-permuto}
  The right $\RR$-order on permutations is a lattice. The meet $\sigma\meR\mu$
  of $\sigma$ and $\tau$ is characterized by: $\Inv(\sigma\meR\mu)$ is the
  transitive closure of~$\Inv(\sigma)\cup\Inv(\mu)$. The join of $\sigma$ and
  $\tau$ is characterized by: $\Delta\setminus\Inv(\sigma\jnR\mu)$ is the
  transitive closure of $(\Delta\setminus\Inv(\sigma)) \cup
  (\Delta\setminus\Inv(\mu))$.
\end{proposition}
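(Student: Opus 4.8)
The plan is to carry everything over to inversion sets, building on Lemma~\ref{perm_lin_ext} and the lemma recording that $\sigma\leq_\RR\tau$ iff $\Inv(\tau)\subseteq\Inv(\sigma)$. The crux is the following claim: for any $\sigma,\mu\in\SG{n}$, the transitive closure $K$ of $\Inv(\sigma)\cup\Inv(\mu)$ is again the inversion set of a permutation (necessarily unique, by Lemma~\ref{perm_lin_ext}). Granting this, call that permutation $\sigma\meR\mu$. It is a lower bound of $\sigma$ and $\mu$ for $\leq_\RR$ since $\Inv(\sigma)\cup\Inv(\mu)\subseteq K$; and if $\nu\leq_\RR\sigma$ and $\nu\leq_\RR\mu$, then $\Inv(\sigma)\cup\Inv(\mu)\subseteq\Inv(\nu)$ and $\Inv(\nu)$ is transitive (being an inversion set, by Lemma~\ref{perm_lin_ext}), hence $K\subseteq\Inv(\nu)$, i.e.\ $\nu\leq_\RR\sigma\meR\mu$. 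Thus $\sigma\meR\mu$ really is the meet, and $(\SG{n},\leq_\RR)$ is a meet-semilattice. Since it also has a greatest element, namely the identity (its inversion set $\emptyset$ is contained in every $\Inv(\tau)$), the standard argument applies: the set of common upper bounds of $\sigma$ and $\mu$ is finite and nonempty, so it has a meet, which is then the least common upper bound $\sigma\jnR\mu$; hence $(\SG{n},\leq_\RR)$ is a lattice.

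It remains to prove the claim. By Lemma~\ref{perm_lin_ext} I must check that $K$ and $\Delta\setminus K$ are both transitive. For $K$ this is automatic. For $\Delta\setminus K$, I argue contrapositively: let $c>b>a$ with $(c,a)\in K$, and produce $(c,b)\in K$ or $(b,a)\in K$. By definition of the transitive closure, fix a chain $c=x_0>x_1>\dots>x_m=a$ with every consecutive pair $(x_\ell,x_{\ell+1})$ in $\Inv(\sigma)\cup\Inv(\mu)$ (it is automatically strictly decreasing since $\Inv(\sigma),\Inv(\mu)\subseteq\Delta$). If $b=x_\ell$ for some $\ell$, then $0<\ell<m$ and $(c,b)=(x_0,x_\ell)\in K$ by transitivity along the initial subchain. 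Otherwise, since $x_0=c>b>a=x_m$, there is a unique index $j$ with $x_j>b>x_{j+1}$; say $(x_j,x_{j+1})\in\Inv(\sigma)$, so $x_j$ occurs before $x_{j+1}$ in the one-line notation of $\sigma$. Now compare the positions of $b$ and $x_j$ in $\sigma$. If $x_j$ occurs before $b$, then $(x_j,b)\in\Inv(\sigma)\subseteq K$ (as $x_j>b$), and concatenating with $x_0>\dots>x_j$ gives $(c,b)\in K$. If $b$ occurs before $x_j$, then $b$ occurs before $x_{j+1}$ as well, so $(b,x_{j+1})\in\Inv(\sigma)\subseteq K$ (as $b>x_{j+1}$), and concatenating with $x_{j+1}>\dots>x_m$ gives $(b,a)\in K$. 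This proves that $\Delta\setminus K$ is transitive, establishing the claim.

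For the explicit form of the join I would use the map $\tau\mapsto\tau w_0$, where $w_0$ is the reverse permutation $n,n-1,\dots,1$: reversing the one-line notation turns inversions into non-inversions, so $\Inv(\tau w_0)=\Delta\setminus\Inv(\tau)$, which shows $\tau\mapsto\tau w_0$ is an order-reversing involution of $(\SG{n},\leq_\RR)$, hence exchanges meets and joins. Applying the meet formula to $\sigma w_0$ and $\mu w_0$ and transporting back through this involution yields exactly that $\Delta\setminus\Inv(\sigma\jnR\mu)$ is the transitive closure of $(\Delta\setminus\Inv(\sigma))\cup(\Delta\setminus\Inv(\mu))$. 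The one genuinely nontrivial point in the whole argument is the claim above, that forming a transitive closure cannot spoil the transitivity of the complement; the rest is routine bookkeeping with inversion sets.
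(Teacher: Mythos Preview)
The paper does not actually prove this proposition: it is quoted as a known result from \cite{BjornerBrenti.2005} and used as background for the rook-lattice theorems that follow. So there is no ``paper's own proof'' to compare against.

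That said, your argument is correct and self-contained. The only substantive step is the claim that the transitive closure $K$ of $\Inv(\sigma)\cup\Inv(\mu)$ has a transitive complement, and your chain argument handles this cleanly: given $c>b>a$ with $(c,a)\in K$, locating $b$ relative to a witnessing chain and using that each link lies in a genuine inversion set (hence comes from a total order on positions) forces $(c,b)\in K$ or $(b,a)\in K$. The deduction that $(\SG{n},\leq_\RR)$ is then a lattice, and the use of the involution $\tau\mapsto\tau w_0$ (which indeed reverses the one-line word and complements $\Inv$) to transport the meet formula to the join formula, are both standard and correctly carried out.
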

\medskip

We now present how to adapt inversion sets to rooks. The idea is to record
usual inversions as well as inversion with a $0$ letter. Here is a way to do
it:
\begin{definition}\label{def-rook-triple}
  We call the \emph{support} of a rook $r$ denoted $\supp(r)$ the set of
  non-zero letters appearing in its rook vector.  For each letter
  $\ell\in\supp(r)$, we denote $Z_r(\ell)$ the number of $0$ which appear
  after $\ell$ in the rook vector of $r$.

  We finally say that $(\supp(r), \Inv(r), Z_r)$ is the \emph{rook triple
    associated to $r$}.
\end{definition}
\begin{example}\label{rook-triple}
  For example for $r=2054001$, one gets $\supp(r) = \{1,2,4,5\}$,
  together with $\Inv(r)=\{(2,1),(4,1),(5,4),(5,1)\}$, $Z_r(1)=0$, $Z_r(2)=3$ and
  $Z_r(4)=Z_r(5)=2$.
\end{example}
Here is a characterization of the rook triples:
\begin{proposition}\label{rook-triple-characterization}
  A triple $(S, I, Z)$ where $S\subseteq\{1,\dots n\}$, $I\subseteq\Delta$ and
  $Z:S \mapsto \N$ is the rook triple of a rook $r$ if and only if the three
  following properties hold:
  \begin{itemize}
  \item the sets $I\subset \Delta\cap S^2$ and $I$ and $(\Delta\cap S^2) \setminus I$
    are both transitive.
  \item for $\ell\in S$, one has $0\leq Z(\ell) \leq n - |S|$;
  \item if $(b,a)\in I$ then $Z(b) \geq Z(a)$ else $Z(b) \leq Z(a)$.
  \end{itemize}
  Moreover, when these properties hold the corresponding rook $r$ is unique.
\end{proposition}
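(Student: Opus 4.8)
The plan is to split the combinatorial content into two independent layers: the \emph{shape} carried by the nonzero letters of $r$ (governed by $S$ and $I$) and the placement of the zeros relative to each nonzero letter (governed by $Z$). Throughout, given a rook $r$ I will write $w$ for the word obtained by erasing all the $0$'s of $r$; this is an arrangement of $S \eqdef \supp(r)$ listing each element exactly once, and since zeros create no inversions one has $\Inv(r) = \Inv(w)$. The point of Lemma~\ref{perm_lin_ext} is that, applied to the linearly ordered finite set $S$ in place of $\{1,\dots,n\}$ (its statement and proof transfer verbatim along the unique order isomorphism $S \cong \{1,\dots,|S|\}$, under which words on $S$, their inversion sets, and transitivity all correspond), a subset $I \subseteq \Delta \cap S^2$ is the inversion set of such an arrangement if and only if $I$ and $(\Delta\cap S^2)\setminus I$ are both transitive, and in that case $w$ is uniquely determined by $I$.

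For the direct implication I would take a rook $r$ with triple $(S, I, Z)$ and read off the three properties. The first one is exactly the characterization of $\Inv(w)$ just recalled. The second holds because $r$ has precisely $n - |S|$ entries equal to $0$, so $0 \le Z_r(\ell) \le n - |S|$ for every $\ell$. For the third, the key observation is that if $\ell$ occurs before $\ell'$ in $r$ then every $0$ lying to the right of $\ell'$ also lies to the right of $\ell$, hence $Z_r(\ell) \ge Z_r(\ell')$; applying this to the pair $(b,a)$ when $(b,a)\in I$ (so $b$ precedes $a$) and to the pair $(a,b)$ when $(b,a)\in(\Delta\cap S^2)\setminus I$ (so $a$ precedes $b$) gives exactly $Z(b)\ge Z(a)$ in the first case and $Z(b)\le Z(a)$ in the second.

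For the converse and for uniqueness I would reconstruct $r$ from $(S,I,Z)$. By Lemma~\ref{perm_lin_ext} applied to $S$ there is a unique arrangement $w = w_1\cdots w_m$ ($m = |S|$) with $\Inv(w) = I$. The crux is to show that $Z(w_1),\dots,Z(w_m)$ is weakly decreasing: for $i<j$ the letter $w_i$ precedes $w_j$, so if $w_i>w_j$ then $(w_i,w_j)\in I$ and the third property gives $Z(w_i)\ge Z(w_j)$, while if $w_i<w_j$ then $(w_j,w_i)\in(\Delta\cap S^2)\setminus I$ and the third property gives $Z(w_j)\le Z(w_i)$. Then the numbers of zeros to put in the $m+1$ gaps of $w$ are forced by telescoping: $g_0 = (n-|S|) - Z(w_1)$, $g_i = Z(w_i) - Z(w_{i+1})$ for $1\le i\le m-1$, and $g_m = Z(w_m)$ (with $g_0 = n-|S|$ when $m=0$). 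These are nonnegative precisely because $Z$ is weakly decreasing along $w$ and $Z(w_1)\le n-|S|$ (second property), and they sum to $n-|S|$. Inserting $g_i$ zeros into the corresponding gap produces a rook $r$; it has support $S$, inversion set $\Inv(w) = I$, and the number of zeros after $w_i$ equals $g_i+\dots+g_m = Z(w_i)$, so its triple is $(S,I,Z)$. Uniqueness follows because $w$ is uniquely determined by $I$ and the gap counts are then uniquely determined by $Z$; the degenerate cases $S=\emptyset$ (yielding $r = 0\cdots 0$) and $S=\{1,\dots,n\}$ (yielding a permutation with no zeros) are covered by the same formulas.

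The proof is in essence bookkeeping: an invocation of Lemma~\ref{perm_lin_ext} together with a telescoping-sum computation. The one place that requires genuine care — and the only real obstacle — is the equivalence between the third property and monotonicity of $Z$ along $w$, which is exactly where the slightly asymmetric phrasing of that property (using membership in $I$ versus in $(\Delta\cap S^2)\setminus I$) is needed; handling the two sub-cases $w_i>w_j$ and $w_i<w_j$ correctly is the heart of the argument.
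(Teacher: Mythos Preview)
Your proof is correct and follows essentially the same route as the paper's: both directions hinge on Lemma~\ref{perm_lin_ext} (transported to the ordered set $S$) to handle the $(S,I)$ layer, and on the observation that the third condition is equivalent to $Z$ being weakly decreasing along the arrangement $w$. The only cosmetic difference is in the reconstruction step: the paper groups the letters of $w$ into blocks $\sigma^Z_i = \{w_j : Z(w_j)=i\}$ and inserts a single zero between consecutive blocks (some blocks being empty), whereas you compute the gap counts $g_i$ directly by telescoping; these two descriptions are easily seen to produce the same rook.
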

\begin{proof}
  We first prove the direct implication. The first statement says that if one
  erases the zeros from a rook, one gets a permutation of its support. The
  second statement says that there are $n-|\supp(r)|$ zeros. The third
  statement says that if $a$ is after $b$ in $r$, then there are less $0$ to
  the right of $a$ than to the right of $b$.

  Conversely, given such a triple, we can reconstruct a rook~$r$ in two steps:
  the first condition ensures that there is a unique permutation~$\sigma$ of
  the support~$S$ with inversions set~$I$. The third statement says that the
  function~$Z$ is decreasing along the word~$\sigma$. As a consequence,
  writing~$\sigma^Z_i$ the subword of~$\sigma$ composed by the letters $\ell$
  such that $Z(\ell)=i$, one has
  \begin{equation}
    \sigma = \sigma^Z_{n-|\supp(r)|}\,\dots\,\sigma^Z_2\,\sigma^Z_1\,\sigma^Z_0\,.
  \end{equation}
  Note that some of the $\sigma^Z_i$ may be empty. Then the rook
  \begin{equation}
    r = \sigma^Z_{n-|\supp(r)|}\,0\,\dots\,0\,\sigma^Z_2\,0\,\sigma^Z_1\,0\,\sigma^Z_0\,.
  \end{equation}
  is indeed associated with the triple $(S, I, Z)$ and is by construction
  unique.
\end{proof}
\begin{example}
  Going back to Example~\ref{rook-triple}, consider the following triple
  with $n=7$:
  $$
  (S, I, Z) = \left(\{1,2,4,5\},\{(2,1),(4,1),(5,4),(5,1)\},
    \begin{psmallmatrix}
    1&2&4&5\\
    0&3&2&2
    \end{psmallmatrix}\right)\,.
  $$
  There is a unique permutation $\sigma$ of $S$ with inversion set $I$,
  namely $2541$. Writing $Z(i)$ below $i$ for each letter of $\sigma$, we get
  $\begin{psmallmatrix}
    2&5&4&1\\
    3&2&2&0\\
  \end{psmallmatrix}$
  and see that $Z$ is indeed decreasing. We then get that
  $\sigma^Z_3=(2)$, $\sigma^Z_2=(54)$, $\sigma^Z_1=()$, $\sigma^Z_0=(1)$,
  so that we recover $r=2054001$.
\end{example}
\bigskip

Our aim is now to show that the $\RR$-order is actually an order. To do so, we
start by defining combinatorially an order $r\leq_I u$, and then show that
$\leq_I$ and $\leq_\RR$ are actually equivalent.
\begin{definition}\label{def-rook-order}
  Let $r$ and $u\in R_n$.  We write $r\leq_I u$ if and only if the three
  following properties hold:
  \begin{itemize}
  \item $\supp(r)\subseteq \supp(u)$,
  \item $\left\{ (b, a) \in \Inv(u) \mid b \in \supp(r)\right\}
    \subseteq \Inv(r)$,
  \item $Z_{u}(\ell) \leq Z_{r}(\ell)$ for $\ell\in\supp(r)$.
  \end{itemize}
\end{definition}
\begin{remark}\label{rem-compat}
  If $r$ and $u$ are permutations, then $\supp(r)=\supp(u)=\{1,\dots,n\}$, so
  that $r\leq_I u$ if and only if $\Inv(u)\subset\Inv(r)$.
\end{remark}
Moreover, as a consequence of the second condition, if $(b,a)\in\Inv(u)$ and
$b\in\supp(r)$ then $a\in\supp(r)$. We abstract this fact with the following
definition and lemma:
\begin{definition}\label{def-compat-set}
  Let $I\subseteq \Delta$ and $S\subset \interv{1}{n}$. We say that $S$ is
  \emph{$I$-compatible} if $(b,a)\in I$ and $b\in S$ implies $a\in S$, for all
  $b,a$.
\end{definition}
The previous remark now rephrases as:
\begin{lemma}
  If $r\leq_\RR u$ then $\supp(r)$ is $\Inv(u)$-compatible.
\end{lemma}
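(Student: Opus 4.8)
The plan is to unwind $\leq_\RR$ into a statement about the right natural action and then track supports and inversions along a chain of generator moves. By Corollary~\ref{action_mulr_R0}, under the identification $r\leftrightarrow\pi_r$ the $\RR$-order is right multiplication in $\Rnz$, which is exactly the right natural action on $R_n$; so $r\leq_\RR u$ means there is a word $t_1\cdots t_k$ in the generators $\pi_0,\dots,\pi_{n-1}$ with $r=u\cdot t_1\cdots t_k$. I would set $u^{(0)}\eqdef u$ and $u^{(j)}\eqdef u^{(j-1)}\cdot t_j$, so $u^{(k)}=r$, and argue by following this chain one step at a time.

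Two elementary facts about a single step $u^{(j-1)}\mapsto u^{(j)}$ do the work. First, supports can only shrink: for $i\geq 1$, Equation~\ref{act_rook_pik} shows $u^{(j-1)}\cdot\pi_i$ either equals $u^{(j-1)}$ or transposes two adjacent entries, leaving the support unchanged, whereas $\pi_0=P_1$ merely zeroes the first entry, so the support loses at most one value (a positive value occurs at most once in a rook); hence $\supp(u^{(k)})\subseteq\dots\subseteq\supp(u^{(0)})$. Second, if two positive values $b>a$ occur in $u^{(j-1)}$ with $b$ before $a$, then the same holds in $u^{(j)}$ so long as both still occur: the only move that could reverse their order is a transposition $\pi_i$ applied while they sit in positions $i,i+1$, but by Equation~\ref{act_rook_pik} such a $\pi_i$ acts only when position $i$ carries the smaller entry, which excludes the pair $b>a$; and $\pi_0$ does not reorder the surviving letters.

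Now take $(b,a)\in\Inv(u)$ with $b\in\supp(r)$; by Definition~\ref{def_inversions}, $b>a>0$, the value $a$ occurs in $u$, and $b$ precedes $a$ in $u$. Since supports are non-increasing along the chain and $b\in\supp(r)=\supp(u^{(k)})$, the value $b$ occurs in every $u^{(j)}$. A short induction using the second fact gives that $a$ too occurs in every $u^{(j)}$ with $b$ before it: if this holds at step $j-1$, then $a$, being preceded by $b$, is not in the first position, so $t_j$ cannot delete it, and the second fact preserves the ordering. Taking $j=k$ yields $a\in\supp(r)$, i.e.\ $\supp(r)$ is $\Inv(u)$-compatible. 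The only mildly delicate point is the second fact: one must check that no step --- including a transposition involving just one of $b,a$ and a third letter, or the deletion of the first entry --- can bring $a$ in front of $b$; this is a routine case analysis on Equations~\ref{act_rook_sk}--\ref{act_rook_pik}, the structural reason being simply that the right action only ever sorts adjacent entries into weakly decreasing order. (If one instead reads the statement with $\leq_I$ in place of $\leq_\RR$, it is immediate from the second clause of Definition~\ref{def-rook-order} together with the definition of $\Inv$.)
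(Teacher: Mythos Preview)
Your proof is correct. The chain-tracking argument via the right natural action is sound: the key monotonicity facts (supports only shrink; an existing inversion $(b,a)$ cannot be undone while both letters survive) are exactly what one needs, and your induction showing that $a$ can never reach position~$1$ to be killed by $\pi_0$ is the right way to close it.

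The paper, however, does not argue this way. There the lemma is introduced as a direct rephrasing of the observation immediately preceding Definition~\ref{def-compat-set}: from the second clause of Definition~\ref{def-rook-order}, if $(b,a)\in\Inv(u)$ and $b\in\supp(r)$ then $(b,a)\in\Inv(r)$, hence $a\in\supp(r)$. That is precisely your final parenthetical. The paper states the lemma with $\leq_\RR$ rather than $\leq_I$, tacitly anticipating Theorem~\ref{ordreR0}; your chain argument has the virtue of proving the $\leq_\RR$ formulation directly, without forward reference to that equivalence, at the cost of a longer case analysis. Either route is fine; the paper's is the one-line version you already noted.
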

We will further need the following basic facts about compatibility:
\begin{lemma}
  The union $S_1\cup S_2$ of two $I$-compatibles sets $S_1$ and $S_2$ is
  $I$-compatible.

  If $S$ is $I_1$ and $I_2$-compatible, then it is $I_1\cup I_2$-compatible.

  If $S$ is $I$-compatible then it is compatible with the transitive closure
  of $I$.
\end{lemma}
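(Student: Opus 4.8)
The plan is to unwind the definition of $I$-compatibility in each of the three cases; all three are immediate, so the ``main obstacle'' is really just bookkeeping, with the third part requiring a one-line induction along a chain witnessing the transitive closure.

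For the first assertion, I would take $(b,a)\in I$ with $b\in S_1\cup S_2$. Then $b\in S_1$ or $b\in S_2$; in the first case $I$-compatibility of $S_1$ gives $a\in S_1\subseteq S_1\cup S_2$, and symmetrically in the second case. Hence $S_1\cup S_2$ is $I$-compatible. For the second assertion I would take $(b,a)\in I_1\cup I_2$ with $b\in S$. Then $(b,a)\in I_1$ or $(b,a)\in I_2$; applying $I_1$-compatibility of $S$ in the first case and $I_2$-compatibility in the second gives $a\in S$ in both cases, so $S$ is $I_1\cup I_2$-compatible.

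For the third assertion, write $\overline{I}$ for the transitive closure of $I$ and take $(b,a)\in\overline{I}$ with $b\in S$; I want $a\in S$. By definition of the transitive closure there is a finite chain $b=c_0,c_1,\dots,c_k=a$ with $(c_i,c_{i+1})\in I$ for all $0\le i<k$. I then argue by induction on $i$ that $c_i\in S$: the base case $c_0=b\in S$ holds by hypothesis, and if $c_i\in S$ then, since $(c_i,c_{i+1})\in I$ and $S$ is $I$-compatible, we get $c_{i+1}\in S$. In particular $a=c_k\in S$, so $S$ is $\overline{I}$-compatible. (One may also phrase this more slickly: the collection of $J\subseteq\Delta$ for which $S$ is $J$-compatible is closed under taking the transitive closure, since if $(c,b),(b,a)\in J$ and $c\in S$ then $b\in S$ hence $a\in S$; the chain induction is just the iteration of this single step.)

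No genuine difficulty arises here; the only point to be slightly careful about is that the transitive closure is obtained by iterating the ``compose two related pairs'' operation, so an induction on the length of the witnessing chain (equivalently, on the stage of the iteration) is the clean way to reduce the third part to the one-step case.
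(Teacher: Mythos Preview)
Your proof is correct; all three parts are the natural unwinding of the definition, and the induction along a witnessing chain for the transitive closure is exactly the right argument. The paper in fact states this lemma without proof, so there is nothing to compare against.
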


We get back to the study of~$\leq_I$.
\begin{proposition}\label{Rn_poset}
  The set $R_n$ endowed with the relation~$\leq_I$ is a poset with maximal
  element $1_n$ and minimal element $0_n = 0\dots 0$.
\end{proposition}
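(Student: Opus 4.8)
The plan is to check the three poset axioms (reflexivity, transitivity, antisymmetry) directly from Definition~\ref{def-rook-order}, and then identify $1_n$ and $0_n$ as the greatest and least elements by substituting them into the definition. The only step relying on earlier results is antisymmetry, which will use the uniqueness clause of Proposition~\ref{rook-triple-characterization}.

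Reflexivity of $\leq_I$ is immediate: $\supp(r)\subseteq\supp(r)$; the set $\{(b,a)\in\Inv(r)\mid b\in\supp(r)\}$ is simply $\Inv(r)$, since by definition every inversion has its larger component in $\supp(r)$, hence it is contained in $\Inv(r)$; and $Z_r(\ell)\le Z_r(\ell)$. For transitivity, assume $r\leq_I u$ and $u\leq_I v$. The inclusions $\supp(r)\subseteq\supp(u)\subseteq\supp(v)$ give the first condition. For the second, take $(b,a)\in\Inv(v)$ with $b\in\supp(r)$; since $\supp(r)\subseteq\supp(u)$ we first deduce $(b,a)\in\Inv(u)$ from $u\leq_I v$, and then, $b$ still lying in $\supp(r)$, we deduce $(b,a)\in\Inv(r)$ from $r\leq_I u$. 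For the third, for $\ell\in\supp(r)\subseteq\supp(u)$ we chain $Z_v(\ell)\le Z_u(\ell)\le Z_r(\ell)$. Hence $r\leq_I v$.

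For antisymmetry, suppose $r\leq_I u$ and $u\leq_I r$. The two support inclusions force $\supp(r)=\supp(u)=:S$. Since $\Inv(u)\subseteq\Delta\cap S^2$ (and similarly for $r$), the second condition in each direction simplifies to $\Inv(u)\subseteq\Inv(r)$ and $\Inv(r)\subseteq\Inv(u)$, so $\Inv(r)=\Inv(u)$; and the third condition in each direction gives $Z_r(\ell)=Z_u(\ell)$ for all $\ell\in S$. Thus $r$ and $u$ have the same rook triple, and the uniqueness statement of Proposition~\ref{rook-triple-characterization} yields $r=u$. Finally, for any rook $r$ one has $r\leq_I 1_n$ because $\supp(1_n)=\interv{1}{n}$, $\Inv(1_n)=\emptyset$ and $Z_{1_n}\equiv0$, so $1_n$ is the greatest — in particular the unique maximal — element; and $0_n\leq_I r$ because $\supp(0_n)=\emptyset$ makes all three conditions vacuous, so $0_n$ is the least — in particular the unique minimal — element. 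None of these steps presents a genuine obstacle; the argument is essentially bookkeeping, the only substantive ingredient being the uniqueness of the rook attached to a prescribed rook triple, invoked for antisymmetry.
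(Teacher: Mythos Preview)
Your proof is correct and follows essentially the same route as the paper's: both check reflexivity, transitivity, and antisymmetry directly from the three conditions of Definition~\ref{def-rook-order}. The only cosmetic difference is that for antisymmetry you invoke the uniqueness clause of Proposition~\ref{rook-triple-characterization}, whereas the paper spells out the same conclusion in one sentence (``the non-zero letters appear in the same order in $r$ and $u$ and the zeros are in the same places''); and you add the (trivial) verification that $1_n$ and $0_n$ are the top and bottom, which the paper leaves implicit.
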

\begin{proof}
The relation $\leq_I$ is reflexive, by definition.

If $r,u\in R_n$ are such that $r\leq_I u$ and $u\leq_I r$ then $\supp(r) =
\supp(u)$ and therefore $\Inv(r) = \Inv(u)$ and $Z_r=Z_{u}$. As a
consequence, the non-zero letters appear in the same order in $r$ and $u$ and
the zeros are in the same places. Thus $\leq_I$ is antisymmetric.

Let $r\leq_I u \leq_I v$. Then $\supp(r) \subseteq \supp(v)$. Let
$(b,a)\in \Inv(v)$ with $b\in\supp(r)$. Necessarily $b\in \supp(u)$ so that
$(b, a)\in \Inv(u)$ and consequently $(b, a)\in \Inv(r)$. Finally if
$\ell\in\supp(r)$ then $Z_{v}(\ell) \leq Z_{u}(\ell) \leq
Z_{r}(\ell)$. Thus $\leq_I$ is transitive.
\end{proof}

\begin{theorem}\label{ordreR0}
  Let $r, u \in R_n.$ Then $\pi_r\leq_\RR \pi_u$ if and only if $r\leq_Iu$.
\end{theorem}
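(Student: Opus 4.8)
The strategy is to prove the two implications separately, both by induction on lengths using the action. Throughout, recall that by Corollary~\ref{action_mulr_R0} we have $\pi_r \leq_\RR \pi_u$ iff there is a sequence of generators $t_1,\dots,t_k\in\{\pi_0,\dots,\pi_{n-1}\}$ with $u\cdot t_1\cdots t_k = r$ (reading the right action), and by Definition~\ref{def_Ro_fun} each generator either fixes a rook or performs an elementary move.

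\textbf{Step 1: $\pi_r\leq_\RR\pi_u \Rightarrow r\leq_I u$.} Since $\leq_I$ is transitive (Proposition~\ref{Rn_poset}) it suffices to check that $u\leq_I u$ (clear) and that, whenever $v = w\cdot t$ for a single generator $t$, one has $v\leq_I w$; the general case follows by composing. So I would fix a rook $w$ and a generator $t$ and verify $w\cdot t\leq_I w$ by inspecting the three conditions of Definition~\ref{def-rook-order}. If $t$ fixes $w$ there is nothing to do. Otherwise: for $t=\pi_0$, the move replaces the letter $1$ by $0$ in position, say, $p$; then $\supp(w\cdot\pi_0)=\supp(w)\setminus\{1\}\subseteq\supp(w)$, the inversions of $w$ involving a surviving letter $b$ as larger coordinate are unchanged (we only removed the value $1$, which is minimal, so it was never the larger member of a pair), and for each surviving letter $\ell$ the count $Z$ can only increase since we created a new $0$. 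For $t=\pi_i$ with $i\geq1$, the move either transposes an adjacent ascent $r_k<r_{k+1}$ of non-zero values — which removes exactly one inversion, keeps the support, and keeps every $Z$ value (no zeros moved) — or replaces an $i+1$ sitting to the right of a $0$ by $i$, which shrinks the support (drops $i+1$, since $i$ was absent, so this is literally a relabelling), cannot create an inversion with a surviving larger letter, and leaves the $Z$-counts of surviving letters unchanged. In every subcase the three inequalities of $\leq_I$ hold, so $w\cdot t\leq_I w$.

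\textbf{Step 2: $r\leq_I u \Rightarrow \pi_r\leq_\RR\pi_u$.} This is the harder direction and where I expect the real work. The plan is to show: if $r\leq_I u$ and $r\neq u$, then there is a generator $t$ with $u\cdot t\leq_I u$, $u\cdot t\neq u$, and $r\leq_I u\cdot t$; then induct on a suitable statistic (e.g. $|\Inv(u)|+2\sum_{\ell}Z_u(\ell) - |\Inv(r)| - 2\sum Z_r(\ell)$, or more simply the length $\ell(\pi_u)$ which drops under a non-trivial generator move by Theorem~\ref{actionsurcode}/the reducedness of $\pi_{\code(\cdot)}$). To find such a $t$: since $r\leq_I u$ but $r\neq u$, either $\supp(r)\subsetneq\supp(u)$, or the supports agree but some inversion or some $Z$-value differs. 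If the minimal element of $\supp(u)\setminus\supp(r)$ equals $1$, apply $\pi_0$. If it is some $m>1$, then (using $\supp(r)$ being $\Inv(u)$-compatible, Lemma after Definition~\ref{def-rook-order}) $m-1\notin\supp(u)$, and one wants $\pi_{m-1}$ to be applicable to $u$ — this requires $m$ to sit in $u$ to the right of some $0$; if instead $m$ is left of all zeros one first uses adjacent transpositions $\pi_j$ ($j\geq m$, all applicable because they concern values $>m$ forming part of a permutation pattern that $\leq_I$-comparison with $r$ does not forbid) to slide $m$ rightward past a zero, staying $\geq_I r$, then apply $\pi_{m-1}$. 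If the supports already agree, then $\Inv(u)\subsetneq\Inv(r)$ or some $Z_u(\ell)<Z_r(\ell)$; in the first case pick an inversion in $\Inv(r)\setminus\Inv(u)$ that is "adjacent" (a descent of the underlying permutation of $u$ that becomes an ascent — standard permutohedron argument, Proposition~\ref{meet-permuto}) and kill it with the corresponding $\pi_i$; in the second case there is a non-zero letter with a zero to its left that should move right, achieved again by a short product of applicable $\pi_j$'s. In each case one must check the resulting rook is still $\geq_I r$, which is the analogue of Step~1's verification run in reverse and is the part requiring care.

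\textbf{Main obstacle.} The delicate point is Step~2: unlike the permutohedron, here one cannot always decrease in a single elementary move while staying above $r$, because lowering a letter's $Z$-value or removing a value from the support may require first shuffling larger values rightward past zeros, and one must argue these auxiliary moves (i) are genuinely applicable to $u$ as $\pi_i$-moves (the ascent condition $r_k<r_{k+1}$), and (ii) do not overshoot below $r$. Organizing this as a clean induction — choosing the right quantity to decrease and the right case split on "what is the obstruction to $r=u$" — is the crux; once the single-step lemma is established, both implications close by the length induction together with Corollary~\ref{action_mulr_R0}.
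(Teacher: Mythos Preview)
Your overall plan (single-generator reduction plus induction) is the same as the paper's, but your case analysis is built on a confusion between the \emph{right} action on positions and the \emph{left} action on values (Definition~\ref{def_left_action}). The theorem concerns $\leq_\RR$, hence the right action of Definition~\ref{def_Ro_fun}: $\pi_0$ replaces the \emph{first position} by $0$ (so $\supp(w\cdot\pi_0)=\supp(w)\setminus\{w_1\}$, not $\setminus\{1\}$), and $\pi_i$ for $i\geq1$ swaps \emph{positions} $i,i+1$ when $w_i<w_{i+1}$. In particular $\pi_i$ with $i\geq1$ \emph{never} shrinks the support; the second subcase you describe (``replaces an $i{+}1$ sitting to the right of a $0$ by $i$'') is the left action and is irrelevant here. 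The correct remaining subcase for Step~1 is $w_i=0<w_{i+1}$: the swap keeps $\supp$, keeps $\Inv$, and increases $Z(w_{i+1})$ by one. (Also, swapping an ascent \emph{adds} an inversion, not removes one; that is what you need for $\leq_I$.)

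This confusion breaks Step~2. Your claim that if $m=\min(\supp(u)\setminus\supp(r))>1$ then $m-1\notin\supp(u)$ is simply false (take $u=213$, $r=210$: here $m=3$ but $2\in\supp(u)$), and compatibility says nothing of the sort. Likewise ``apply $\pi_0$ when the minimal missing value is $1$'' does not remove the value $1$ under the right action unless $1$ happens to sit in position~$1$. As a result the multi-step shuffling you anticipate is an artefact of the wrong action, not a genuine obstacle.

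The paper avoids all of this by choosing the \emph{right} witness in each case so that a \emph{single} generator suffices: when $\supp(r)\subsetneq\supp(u)$ take $\ell=\max(\supp(u)\setminus\supp(r))$ and look at the letter $k$ immediately to the left of $\ell$ in $u$; compatibility of $\supp(r)$ with $\Inv(u)$ forces $k<\ell$ (else $k\in\supp(u)\setminus\supp(r)$ with $k>\ell$), so either $\ell$ is in position~$1$ and $\pi_0$ works, or the positional swap $\pi_i$ works. When supports agree but $Z_u\neq Z_r$, take the \emph{leftmost} $\ell$ in $u$ with $Z_u(\ell)<Z_r(\ell)$ and again swap with its left neighbour. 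When supports and $Z$ agree, erase the zeros and use the standard permutohedron step; equality of $Z$ guarantees the two letters to be swapped are actually adjacent in $u$. Redo your Step~2 with these choices (max, leftmost) and the right action, and the auxiliary moves you worry about disappear.
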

\begin{proof}
  By definition, $\pi_r\leq_\RR \pi_u$ if there exists $\pi \in \Rnz$ such
  that $\pi_r=\pi_u\pi$. Using the identification $r\leftrightarrow
  \pi_r$ of Corollary~\ref{action_mulr_R0}, this is equivalent to
  $r=u\cdot\pi$. By abuse of notation in this proof we will therefore write
  $r\leq_\RR u$ if there exists $\pi \in \Rnz$ such that $r=u\cdot\pi$.
  \bigskip

  For the direct implication, by induction and transitivity, it is sufficient
  to assume that $r=u\cdot\pi_i$ with $r\neq u$ and show $r<_I u$.
  \begin{itemize}
  \item If $i \neq 0$. Then $\supp(u)=\supp(r)$. Since $r\neq u$ we must
    have $u_i < u_{i+1}$ and also $r=u_1\dots u_{i+1}u_i\dots u_n$. If $u_i\neq0$
    then $\Inv(r)=\Inv(u)\sqcup\lbrace (u_{i+1},u_i)\rbrace$ and
    $Z_r=Z_u$. On the contrary, if $r_i=0$, then $\Inv(r)=\Inv(u)$ and
    $Z_r(\ell)=Z_u(\ell)$ for $\ell\neq u_{i+1}$ and $Z_r(u_{i+1})=Z_u(u_{i+1})+1$.

  \item If $i = 0$. Since $r\neq u$ we have $r_1\neq 0$ and $u=0r_2\dots
    r_n$. We can deduce that $\supp(r) = \supp(u) \cup \{ r_1\}$. Furthermore,
    \begin{equation}
      \Inv(r)
      = \lbrace (u_i, u_j) \in \Inv(u) \mid i\neq 1 \rbrace
      = \lbrace (u_i, u_j) \in \Inv(u) \mid u_i \in r\rbrace\,.
    \end{equation}
    Finally for $\ell\in\supp(r)$, $Z_{u}(\ell)=Z_{r}(\ell)$.
  \end{itemize}

  For the converse implication, assume that $r<_I u$. By induction and
  transitivity it is sufficient to show that there exists $i$ such that
  $r\leq_I u\cdot \pi_i$ and $u\cdot \pi_i\neq u$. We proceed by a case
  analysis. First since $\supp(u)\subseteq\supp(u)$, we can distinguish
  whether $\supp(u)=\supp(u)$ or $\supp(u)\subsetneq\supp(u)$. In the
  equality case, we further distinguish whether $Z_u= Z_r$ or not.
  \begin{itemize}
  \item If $\supp(u)=\supp(r)$, and $Z_u\neq Z_r$, then there must exist
    $\ell\in\supp(r)$ such that $Z_{u}(\ell) < Z_{r}(\ell)$. Pick the leftmost
    $\ell$ in $u$ which verifies this condition. First, there must be some $0$
    on the left of $\ell$ in $u$ because there are $Z_{u}(\ell)$ on the right
    and at least $Z_{r}(\ell)$ in the word. Thus $\ell$ is not the first
    letter of $u$.

    Let $k$ be the letter immediately preceding $\ell$ in $u$. We claim that
    either $k=0$ or $k$ is after $\ell$ in $r$. Indeed if $k\neq0$ and $k$ is
    before $\ell$ in $r$ then we have $Z_{r}(k)\geq Z_{r}(\ell)$. Moreover
    $Z_{u}(\ell)=Z_{u}(k)$ because there is no zero in $u$ between $\ell$ and
    $k$. Therefore $Z_{r}(k)\geq Z_{r}(\ell)>Z_{u}(\ell)=Z_{u}(k)$ which
    contradicts our choice of $\ell$ as being the leftmost.

    Now, call $i$ the position of this $k$ in $u$. If $k=0$, the only
    difference between the rook triples of $u$ and $u\cdot\pi_i$ is that
    $Z_{u\cdot\pi_i}(\ell) = Z_{u}(\ell)+1$ so that $r\leq_I u\cdot\pi_i$. On
    the contrary, if $k\neq 0$, then the only difference between the rook
    triples of $u$ and $u\cdot\pi_i$ is that $\Inv(u\cdot\pi_i) = \Inv(u)
    \sqcup \{(l, k)\}$ so that again $r\leq_I u\cdot\pi_i$.

  \item If $\supp(u)=\supp(r)$, and $Z_u=Z_r$, then necessarily
    $\Inv(u)\subsetneq\Inv(r)$. Write $\tilde{r}$ and $\tilde{u}$ the words
    obtained by removing the zeros in $r$ and $u$. The inclusion of inversions
    shows that $\tilde{u}\leq_S\tilde{r}$ where $\leq_S$ is the right order
    for permutations of $S=\supp(u)$. As a consequence, we know that it is
    possible to exchange two consecutive letters $a<b$ in $\tilde{u}$ to get a
    permutation $\tilde{v}$ of $\supp(u)$ such that
    \begin{equation}
      \Inv(\tilde{v})=\Inv(\tilde{u})\sqcup\{(b,a)\}\subset\Inv(\tilde{r})\,.
    \end{equation}
    From the equality of $Z$, there cannot be any $0$ between $a$ and $b$ in
    $u$, thus $a$ and $b$ are consecutive in $u$ as well. Writing $i$ for the
    position of $a$ in $u$, we have $r\leq_I u\cdot\pi_i$.

  \item The remaining case is $\supp(r)\subsetneq\supp(u)$. Let
    $\ell\eqdef\max(\supp(u)\setminus\supp(r))$. If $\ell$ is in position $1$ in
    $u$ then $r\leq_I u\cdot\pi_0$ and we are done in this case.

    Otherwise if $\ell$ is not in position $1$, we claim that the letter $k$
    immediately preceding $\ell$ in $u$ is smaller than $l$. If not, then
    there is an inversion $(k,\ell)$ in $u$. Since $\supp(r)$ is
    $\Inv(u)$-compatible, then $k\notin\supp(r)$. This contradicts our choice
    of $\ell$ as being the maximum.

    Writing $i$ for the position of $k$ in $u$, we proceed as in the end of
    the first case: the only difference between the rook triples of $u$ and
    $u\cdot\pi_i$ is that $\Inv(u\cdot\pi_i) = \Inv(u) \sqcup \{(\ell,k)\}$ so
    that again $r\leq_I u\cdot\pi_i$. \qedhere
\end{itemize}
\end{proof}
\bigskip

We are now in position to prove the main result of this section:
\begin{corollary}\label{Jtrivial}
  The monoid $\Rnz$ is $\RR$-trivial, $\LL$-trivial and thus $\JJ$-trivial.
\end{corollary}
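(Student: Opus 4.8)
The plan is to assemble the results already proved rather than to do any new computation. First, Theorem~\ref{ordreR0} identifies Green's $\RR$-preorder on $\Rnz$ with the combinatorial relation $\leq_I$ on $R_n$, via the identification $r \leftrightarrow \pi_r$ of Corollary~\ref{action_mulr_R0}: one has $\pi_r \leq_\RR \pi_u$ exactly when $r \leq_I u$. Since Proposition~\ref{Rn_poset} shows that $\leq_I$ is a genuine partial order on $R_n$, in particular antisymmetric, the $\RR$-preorder on $\Rnz$ is antisymmetric. By the very definition of $\RR$-triviality (all $\RR$-classes reduced to singletons, equivalently the $\RR$-preorder is an order), this says precisely that $\Rnz$ is $\RR$-trivial.

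Next I would pass to the left-handed statement using the self-duality already established: by Corollary~\ref{autoopp}, $\Rnz$ is isomorphic to its opposite monoid. Since the opposite of an $\RR$-trivial monoid is $\LL$-trivial (the two-sided ideals, right and left ideals simply swap roles under $m \mapsto m^{\mathrm{op}}$), it follows at once that $\Rnz$ is also $\LL$-trivial.

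Finally, recall that all monoids considered here are finite, so Lemma~\ref{R_and_L_donc_J} applies: a finite monoid which is both $\RR$-trivial and $\LL$-trivial is $\JJ$-trivial. Hence $\Rnz$ is $\JJ$-trivial, which is the assertion. There is no real obstacle in this corollary itself; all the work is in the earlier construction of $\leq_I$ and the proof (Theorem~\ref{ordreR0}) that it coincides with $\leq_\RR$, together with the symmetry of the presentation (Corollary~\ref{autoopp}). The present statement is just the combination of these facts.
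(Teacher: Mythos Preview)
Your proof is correct and follows essentially the same route as the paper: use Theorem~\ref{ordreR0} (together with Proposition~\ref{Rn_poset}) to deduce $\RR$-triviality, invoke Corollary~\ref{autoopp} for $\LL$-triviality, and conclude via Lemma~\ref{R_and_L_donc_J}. The only difference is that you spell out the role of Proposition~\ref{Rn_poset} explicitly, which the paper leaves implicit.
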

\begin{proof}
  A consequence Theorem~\ref{ordreR0} is that the $\RR$-preorder is an order
  so that $\Rnz$ is $\RR$-trivial. Moreover, it is isomorphic to its opposite
  by Corollary \ref{autoopp} and thus it is $\LL$-trivial. We conclude with
  Lemma~\ref{R_and_L_donc_J}.
\end{proof}

\subsection{The lattice of the \texorpdfstring{$\RR$}{R}-order}

Our goal here is to show that, similarly to the weak order of permutations,
the $\RR$-order for the rooks is a lattice. We start with an algorithm which
computes the meet.
\begin{theorem}\label{meet-r-order}
  Let $u$ and $v$ be two rooks of size $n$. Define a new rook $r$ by the
  following algorithm:
  \begin{itemize}
  \item Let $I_0$ be the transitive closure of $\Inv(u)\cup\Inv(v)$.
  \item Let $S$ be the largest (for inclusion) $I_0$-compatible set contained
    in $\supp(u)\cap\supp(v)$.
  \item Let $I \eqdef I_0 \cap S^2$.
  \item Finally, for $x\in s$ let
      $Z(x) \eqdef \max\{Z_u(i), Z_v(i) \mid i = x \text{ or } (x, i)\in I \}$
      with the convention that $Z_s(i)=0$ if $i\notin\supp(s)$.
  \end{itemize}
  Then $(S, I, Z)$ is a rook triple whose associated rook $r$ is the meet
  $u\meR v$ of $u$ and $v$ for the $\RR$-order.
\end{theorem}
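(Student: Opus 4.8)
The plan is to work entirely with the combinatorial order $\leq_I$ of Definition~\ref{def-rook-order} and to invoke Theorem~\ref{ordreR0} only at the very end in order to transfer the result to the $\RR$-order. Thus there are three things to establish: (1) the triple $(S,I,Z)$ output by the algorithm is a rook triple in the sense of Proposition~\ref{rook-triple-characterization}, so that it has a well-defined associated rook $r$ with $\supp(r)=S$, $\Inv(r)=I$ and $Z_r=Z$; (2) $r\leq_I u$ and $r\leq_I v$; (3) every $w\in R_n$ with $w\leq_I u$ and $w\leq_I v$ satisfies $w\leq_I r$. (Note that the largest $I_0$-compatible subset $S$ of $\supp(u)\cap\supp(v)$ exists and is unique because $I_0$-compatible sets are closed under unions.)

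For (1), the guiding observation is that $S$ is compatible with $\Inv(u)$ and with $\Inv(v)$ (since $S$ is $I_0$-compatible and $\Inv(u),\Inv(v)\subseteq I_0$), hence with $\Inv(u)\cup\Inv(v)$ and hence, by the facts stated after Definition~\ref{def-compat-set}, with $I_0$. Writing $\sigma_u$ (resp.\ $\sigma_v$) for the permutation of $S$ obtained from $u$ (resp.\ $v$) by deleting the zeros and the letters outside $S$, compatibility gives $\Inv(\sigma_u)=\Inv(u)\cap S^2$ and $\Inv(\sigma_v)=\Inv(v)\cap S^2$, and shows that $I=I_0\cap S^2$ is exactly the transitive closure of $\Inv(\sigma_u)\cup\Inv(\sigma_v)$ (a transitivity chain between two elements of $S$ stays in $S$). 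By Proposition~\ref{meet-permuto}, applied through the order-isomorphism $S\cong\{1,\dots,|S|\}$, $I$ is therefore the inversion set of $\sigma_u\meR\sigma_v$, so $I$ and $(\Delta\cap S^2)\setminus I$ are transitive by Lemma~\ref{perm_lin_ext}; this is the first condition. The bound $0\leq Z(x)\leq n-|S|$ holds since each $Z_u(i),Z_v(i)$ is at most the number of zeros of $u$ resp.\ $v$, namely $n-|\supp(u)|\leq n-|S|$ resp.\ $n-|\supp(v)|\leq n-|S|$. The last condition is the delicate one: if $(b,a)\in I$, transitivity of $I$ shows that the index set defining $Z(a)$ is contained in the one defining $Z(b)$, so $Z(a)\leq Z(b)$; if $(b,a)\in(\Delta\cap S^2)\setminus I$, one takes any index $i$ in the set defining $Z(b)$ and runs a short case analysis on the relative order of $a$ and $i$ and on which of the pairs $(a,i)$, $(i,a)$ lies in $I$ — using transitivity of $I$ and of $(\Delta\cap S^2)\setminus I$, and the fact that a non-inversion in $\sigma_u$ means $a$ precedes $i$, whence $Z_u(a)\geq Z_u(i)$ (and likewise for $v$) — to conclude $Z_u(i),Z_v(i)\leq Z(a)$; taking the maximum gives $Z(b)\leq Z(a)$.

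For (2): $\supp(r)=S\subseteq\supp(u)\cap\supp(v)$; if $(b,a)\in\Inv(u)$ with $b\in S$ then $a\in S$ by compatibility, so $(b,a)\in\Inv(u)\cap S^2=\Inv(\sigma_u)\subseteq I=\Inv(r)$; and $Z(\ell)\geq Z_u(\ell)$ for $\ell\in S$ since $i=\ell$ always lies in the defining index set. Hence $r\leq_I u$, and symmetrically $r\leq_I v$, so $r$ is a lower bound. For (3), let $w\leq_I u$ and $w\leq_I v$. From condition (ii) of $\leq_I$, $\supp(w)$ is compatible with $\Inv(u)$ and $\Inv(v)$ (an inversion $(b,a)\in\Inv(u)$ with $b\in\supp(w)$ lies in $\Inv(w)$, forcing $a\in\supp(w)$), hence with $I_0$; since $\supp(w)\subseteq\supp(u)\cap\supp(v)$, maximality of $S$ gives $\supp(w)\subseteq S$. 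If $(b,a)\in I=\Inv(r)$ with $b\in\supp(w)$, choose a chain $b=c_0,c_1,\dots,c_k=a$ with each $(c_j,c_{j+1})\in\Inv(u)\cup\Inv(v)$; walking along it and applying condition (ii) of $\leq_I$ at each step shows $c_j\in\supp(w)$ and $(c_j,c_{j+1})\in\Inv(w)$ for all $j$, and transitivity of $\Inv(w)$ (Proposition~\ref{rook-triple-characterization}) gives $(b,a)\in\Inv(w)$. Finally, for $\ell\in\supp(w)$ and any $i$ in the index set defining $Z_r(\ell)=Z(\ell)$: if $i=\ell$, then $Z_u(\ell),Z_v(\ell)\leq Z_w(\ell)$ by condition (iii) of $\leq_I$; if $(\ell,i)\in I=\Inv(r)$, then by the previous point $(\ell,i)\in\Inv(w)$, so $Z_w(\ell)\geq Z_w(i)$ by the third condition of Proposition~\ref{rook-triple-characterization} applied to $w$, while $Z_u(i),Z_v(i)\leq Z_w(i)\leq Z_w(\ell)$. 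Taking the maximum yields $Z(\ell)\leq Z_w(\ell)$, so $w\leq_I r$. Hence $r$ is the greatest lower bound of $u$ and $v$ for $\leq_I$, which equals $\leq_\RR$ by Theorem~\ref{ordreR0}; therefore $r=u\meR v$. The main obstacle is the bookkeeping around the $Z$-component, both in establishing the third rook-triple condition in step (1) and its analogue in step (3): these are the places where one must carefully track how the relative positions of letters in $u$ and $v$ interact with $I$ and with its complement in $\Delta\cap S^2$.
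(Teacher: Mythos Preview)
Your proof is correct and follows essentially the same three-step architecture as the paper: verify that $(S,I,Z)$ is a rook triple, check $r\leq_I u$ and $r\leq_I v$, and show that any common lower bound $w$ satisfies $w\leq_I r$. The arguments for steps (2) and (3) match the paper's almost verbatim.

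The only noteworthy difference is in step (1), in verifying the monotonicity condition on $Z$ when $(b,a)\notin I$. The paper argues by claiming a containment $\mathcal{Z}(b)\subseteq\mathcal{Z}(a)$ of the defining value-sets, deduced from the implication ``$(b,i)\in I\Rightarrow(a,i)\in I$'' via transitivity of $\overline I$. Your case analysis on the relative position of $i$ and $a$ is a bit more explicit: it separately handles the index $i=b$ and the indices $a<i<b$ with $(b,i)\in I$, using that $(b,a)\notin I$ (resp.\ $(i,a)\notin I$) forces $a$ to precede $b$ (resp.\ $i$) in both $u$ and $v$, whence $Z_u(b)\leq Z_u(a)$, etc. This extra care is warranted, since the paper's set-inclusion statement is literally only justified for indices $i<a$; your version makes clear why the remaining indices still contribute values bounded by $Z(a)$.
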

\begin{proof}
  We first prove that $(S, I, Z)$ is indeed a rook triple.
  \begin{itemize}
  \item By definition, $I\subset \Delta\cap S^2$, let us show that $I$
    and $(\Delta\cap S^2) \setminus I$ are transitive. We claim that $I$ is
    the transitive closure of $(\Inv(u)\cap S^2) \cup (\Inv(v)\cap S^2)$.
    Indeed, for any $(b, a)\in I$, then $(b, a)\in I_0$. By definition of the
    transitive closure, there exists a decreasing sequence of integer $b=c_1>
    c_2>\dots>c_k = a$ such that $(c_i, c_{i+1})\in \Inv(u) \cup \Inv(v)$ for
    $i=1,\dots,k-1$. By induction, since $b\in S$, compatibility ensures that
    all of the $c_i$ belong to $S$. Hence the claim.

    As a consequence, using Proposition~\ref{meet-permuto}, $I$ is the
    inversion set of the meet in the permutohedron of the restriction of $u$
    and $v$ to $S$ so that $I$ and $(\Delta\cap S^2) \setminus I$ are
    transitive.
  \item On has $|S| \leq \max(|\supp(u)|, |\supp(v)|)$. So that the condition
    $0\leq Z(x) \leq n - |S|$ holds.
  \item Write $\mathcal{Z}(x) \eqdef\{Z_u(i), Z_v(i) \mid i = x \text{ or } (x,
    i)\in I \}$ so that $Z(x) \eqdef \max\mathcal{Z}(x)$. If $(b,a)\in I$, the
    transitivity of $I$ ensures that as sets
    $\mathcal{Z}(b)\supseteq\mathcal{Z}(a)$ so that $Z(b)\geq Z(a)$.
    Conversely write $\overline{I}\eqdef(\Delta\cap S^2) \setminus I$.  If
    $(b,a)\in\overline{I}$, the transitivity of $\overline{I}$ shows that
    $(a,i)\in\overline{I}$ implies $(b,i)\in\overline{I}$. By contraposition,
    $(b,i)\in I$ implies $(a,i)\in I$ so that
    $\mathcal{Z}(b)\subseteq\mathcal{Z}(a)$ and therefore $Z(b)\leq Z(a)$.
  \end{itemize}
Hence, we have proved that $(S,I,Z)$ is a rook triple. It remains to prove
that its associated rook is the meet $u\meR v$. By construction, $r\leq_I u$
and $r\leq_I v$. So that we only need to prove that for any rook $s$ such that
$s\leq_I u$ and $s\leq_I v$ then $s\leq_I r$.
\begin{itemize}
\item Using the rephrasing of Remark~\ref{rem-compat} we know that then
  $\supp(s)$ is $\Inv(u)$ and $\Inv(v)$-compatible and therefore compatible
  with the transitive closure of their union $I_0$. Since $S=\supp(r)$ is
  defined as the largest such set, $\supp(s)\subseteq\supp(r)$.
\item Suppose $(b, a)\in\Inv(r)$, with $b\in\supp(s)$. Then by construction of $r$,
  there is a decreasing sequence $b=c_1>c_2>\dots>c_k=a$ such that
  $(c_i,c_{i+1})\in\Inv(u)\cup\Inv(v)$ for $i=1,\dots,k-1$. By induction,
  having $s\leq_I u$ and $s\leq_I v$, one prove $c_i\in\supp(s)$ and
  $(c_i,c_{i+1})\in\Inv(s)$. One concludes by transitivity that
  $(b,a)=(c_1,c_k)\in\Inv(s)$.
\item Finally, assume $x\in\supp(s)$. Then $Z_s(x)\geq Z_u(x)$ and $Z_s(x)\geq
  Z_v(x)$. Moreover for any $i$ such that $(x, i)\in\Inv(r)$, by the preceding
  item, $i\in\supp(s)$ and $(x, i)\in\Inv(s)$. One deduces that $Z_s(x)\geq
  Z_s(i)\geq Z_u(i)$ and $Z_s(x)\geq Z_s(i)\geq Z_v(i)$. We just showed that
  $Z_s(x)\geq\max\mathcal{Z}(x)$. \qedhere
\end{itemize}
\end{proof}
\begin{corollary}\label{cor-order-lattice}
  The $\RR$-order of $\Rnz$ is a lattice.
\end{corollary}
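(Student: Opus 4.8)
The plan is to deduce the lattice property from the general order-theoretic principle that a finite poset which possesses a greatest element and in which every pair of elements admits a meet is automatically a lattice. By Theorem~\ref{ordreR0} the $\RR$-order on $\Rnz$ coincides, under the identification $r\leftrightarrow\pi_r$ of Corollary~\ref{action_mulr_R0}, with the order $\leq_I$ on $R_n$; and by Proposition~\ref{Rn_poset} the latter is a finite poset with greatest element $1_n$ (and least element $0_n$). Theorem~\ref{meet-r-order} supplies, for any two rooks $u$ and $v$, a meet $u\meR v$. So the only thing left to produce is joins.

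First I would note that, the poset being finite, the binary meet extends to a meet $\bigwedge_\RR F$ of any nonempty finite subset $F\subseteq R_n$ by iterating $\meR$: associativity and commutativity of $\meR$ follow from its universal characterization as the greatest lower bound, so the iterated meet is well defined and is the greatest lower bound of $F$. Now fix $u,v\in R_n$ and set $U\eqdef\{w\in R_n\mid u\leq_\RR w\text{ and }v\leq_\RR w\}$. This set is nonempty, since $1_n\in U$. Put $w_0\eqdef\bigwedge_\RR U$. Because $u$ and $v$ are lower bounds of $U$, we get $u\leq_\RR w_0$ and $v\leq_\RR w_0$, so $w_0\in U$; and any common upper bound $w$ of $\{u,v\}$ lies in $U$, hence $w_0\leq_\RR w$ by the defining property of the meet. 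Thus $w_0$ is the least upper bound of $\{u,v\}$, i.e. $w_0=u\jnR v$ exists, and $\Rnz$ is a lattice.

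There is no genuine obstacle here: all the substance is already contained in Theorem~\ref{meet-r-order} (existence of binary meets) together with Proposition~\ref{Rn_poset} (finiteness and the top element $1_n$). The only mild point is the passage from binary to arbitrary finite meets, which is immediate from the universal property of $\meR$. An explicit algorithm for the join, dual to the meet algorithm of Theorem~\ref{meet-r-order}, can of course also be given and is the content of Theorem~\ref{join-r-order}, but it is not needed to establish that the $\RR$-order is a lattice.
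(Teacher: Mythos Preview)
Your proof is correct and follows essentially the same approach as the paper: the paper's argument is simply that Theorem~\ref{meet-r-order} makes $\Rnz$ a meet semi-lattice, and a finite meet semi-lattice with a maximum element is a lattice. You have spelled out this ``well known'' fact explicitly, which is fine but not strictly necessary.
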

\begin{proof}
  From the previous theorem, we know that $\Rnz$ is a meet semi-lattice. Now it
  is well known that a meet semi-lattice with a maximum element is a lattice.
\end{proof}
From the proof, we have a more explicit algorithm to compute the meet:
\begin{itemize}
\item Start with $S\eqdef\supp(u)\cap \supp(v)$. Then while one can find a $(b,a)
  \in \Inv(u) \cup \Inv(v)$ with $b\in S$ and $a\notin S$, remove $b$ from
  $S$. When no more such $(b,a)$ can be found, $S$ is the support of $u\meR v$.
\item Using the usual algorithm for permutations of the set $S$ (see the
  sketch of the proof of Lemma~\ref{perm_lin_ext}), compute the meet of the
  restriction $u_{|S}$ and $v_{|S}$.
\item Compute the $Z$ function using $\max$ as in the statement of
  Theorem~\ref{meet-r-order}.
\item Finish inserting the zeros using $Z(x)$ as in the proof of
  Proposition~\ref{rook-triple-characterization}.
\end{itemize}
\begin{example}
  Let $u = 25104$ and $v = 12453$.  So $\supp(u)\cap
  \supp(v) = \{1,2,4,5\}$. But $(4,3)$ and $(5,3) \in \Inv(v)$
  and $3\notin S$. So $S=\{1,2\}$. We then get $I=\{(2,1)\}$, So that $(u\meR
  v)_{|S}=21$. It remains to insert the zeros. One compute $Z(2)=1$ and $Z(1)=1$
  so that $u\meR v = 00210$.
  Here is a bigger example: Let us compute $r=31086502\,\meR\,02178534$. One
  finds that $S=\{1, 2, 3\}$, and $I=\{(3, 2), (3, 1), (2, 1)\}$ and
  $Z=\begin{psmallmatrix}
    1&2&3\\
    2&2&2
    \end{psmallmatrix}$, so that $r=00032100$. Similarly
  $$
  30175082 \meR 02154738 = 00308210\qandq
  43017582 \meR 02154738 = 75430821.
  $$
\end{example}
In the case of permutations, the involution
$\sigma\rightarrow\tilde\sigma=\sigma\omega$ where $\omega$ is the maximal
permutation (otherwise said, $\tilde\sigma$ is the mirror image of $\sigma$) is
an isomorphism from the $\RR$-order to its dual. A a consequence, one can
compute the join using the meet: $\sigma\vee_\RR\mu =
\widetilde{\tilde\sigma\meR\tilde\mu}$. However, as seen for example on
Figure~\ref{CayleyR02} this trick does not work anymore for rooks. This ask
for an algorithm to compute the join of two rooks. To describe this algorithm,
we need a notion of non-inversion and a dual notion of compatibility:
\begin{definition}\label{def-dual-compat-set}
  For any rook $r$, call \emph{set of version} of $r$ the set:
  \begin{equation}
    \InvB(r)\eqdef(\Delta\setminus\Inv(r))\ \cup\
    \{(b,a)\in\Delta\mid a\notin r\text{ and } b\in r\}\,.
  \end{equation}
  Let $I\subseteq \Delta$ and $S\subset \interv{1}{n}$. We say that $S$ is
  \emph{dual $I$-compatible} if $(b,a)\in \Delta\setminus I$ and $a\in S$
  implies $b\in S$.
\end{definition}
\begin{theorem}\label{join-r-order}
  Let $u$ and $v$ be two rooks of size $n$. Define a new rook $r$ by the
  following algorithm:
  \begin{itemize}
  \item Let $I_0\eqdef\Delta\setminus T$ where $T$ is the transitive closure of
    $\InvB(u)\cap\InvB(v)$.
  \item Let $S$ be the smallest dual $I_0$-compatible set containing
    $\supp(u)\cup\supp(v)$.
  \item Let $I \eqdef I_0 \cap S^2$.
  \item Finally, for $x\in s$ let $Z(x) \eqdef \min\{Z_u(i), Z_v(i) \mid i = x
    \text{ or } (x, i)\in \Delta\setminus I\}$,
    with the convention that $Z_s(i)=+\infty$ if $i\notin\supp(s)$.
  \end{itemize}
  Then $(S, I, Z)$ is a rook triple whose associated rook $r$ is the join
  $u\jnR v$.
\end{theorem}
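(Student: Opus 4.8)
The structure of the argument mirrors that of Theorem~\ref{meet-r-order}, but now everything is dualized: cycle supports are replaced by unions of supports, transitive closures of inversion sets by transitive closures of \emph{version} sets, $\max$ by $\min$, and $I$-compatibility by dual $I$-compatibility. First I would verify that $(S,I,Z)$ is a genuine rook triple, using Proposition~\ref{rook-triple-characterization}. For the first condition, I would argue that $I=I_0\cap S^2$ is the inversion set of the join in the permutohedron of the restrictions $u_{|S}$ and $v_{|S}$: since $T$ is the transitive closure of $\InvB(u)\cap\InvB(v)$, its restriction to $S^2$ is the transitive closure of $(\InvB(u)\cap S^2)\cap(\InvB(v)\cap S^2)$ — here one uses dual compatibility of $S$ to descend the witnessing decreasing chains into $S$, exactly as in the meet proof — and then Proposition~\ref{meet-permuto} (the join part) gives that $\Delta\setminus I$ restricted to $S$ and its complement are both transitive. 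The bound $0\le Z(x)\le n-|S|$ follows from $|S|\ge\max(|\supp(u)|,|\supp(v)|)$ and the convention $Z_s(i)=+\infty$ off the support (so the $\min$ is always attained at some $i$ in $\supp(u)$ or $\supp(v)$). For the third condition, I would introduce $\mathcal{Z}(x)\eqdef\{Z_u(i),Z_v(i)\mid i=x\text{ or }(x,i)\in\Delta\setminus I\}$, note that when $(b,a)\in I$ (so $(b,a)\notin\Delta\setminus I$) transitivity of $\Delta\setminus I$ forces $\mathcal{Z}(a)\supseteq\mathcal{Z}(b)$, hence $Z(a)\le Z(b)$; and dually $(b,a)\in\Delta\setminus I$ gives $\mathcal{Z}(b)\subseteq\mathcal{Z}(a)$, hence $Z(b)\le Z(a)$ — precisely the required monotonicity.

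Next I would show $r=u\jnR v$ by the universal property. The easy half is $u\le_I r$ and $v\le_I r$: $\supp(u)\subseteq S$ by construction; for the version/inversion condition one unwinds the definitions of $\InvB$ and $\le_I$ (recalling from Definition~\ref{def-rook-order} and the dual setup that $r\le_I u$ amounts to containment of version sets on the relevant support); and $Z_r(\ell)=Z(\ell)\le Z_u(\ell)$ is immediate from the $\min$. The substantive half is: if $s$ is any rook with $u\le_I s$ and $v\le_I s$, then $r\le_I s$. For the support, $u\le_I s$ and $v\le_I s$ (in the $\RR$-order, which by Theorem~\ref{ordreR0} coincides with $\le_I$) force $\supp(s)$ to be dual-compatible with $\InvB(u)$ and with $\InvB(v)$, hence with their intersection and its transitive closure $T$; since $S$ is the \emph{smallest} dual $I_0$-compatible set containing $\supp(u)\cup\supp(v)\subseteq\supp(s)$, we get $S=\supp(r)\subseteq\supp(s)$. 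For the version condition, suppose $(b,a)\in\InvB(r)$ with $a\in\supp(s)$; by construction of $r$ there is an increasing chain of witnesses in $\InvB(u)\cap\InvB(v)$, and an induction using $u\le_I s$, $v\le_I s$ together with dual compatibility of $\supp(s)$ propagates membership up to $b$ and gives $(b,a)\in\InvB(s)$. For $Z$: for $x\in\supp(r)$ we need $Z_s(x)\le Z_r(x)=\min\mathcal{Z}(x)$; given any $i$ with $i=x$ or $(x,i)\in\Delta\setminus I$, the preceding version argument places $i\in\supp(s)$ with the corresponding non-inversion in $s$, so $Z_s(x)\le Z_s(i)\le Z_u(i)$ and likewise $\le Z_v(i)$; taking the minimum over all such $i$ yields $Z_s(x)\le Z_r(x)$. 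Hence $r\le_I s$, and $r$ is the join.

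The main obstacle, as in the meet case, is the bookkeeping around dual compatibility: one must be careful that the "smallest dual $I_0$-compatible set containing $\supp(u)\cup\supp(v)$" is well-defined (an intersection of dual-compatible sets is dual-compatible, or equivalently one closes upward under the rule $(b,a)\in\Delta\setminus I_0,\ a\in S\Rightarrow b\in S$), and that the chain-following inductions use the \emph{transitive closure} consistently — the subtlety being that $\InvB$ mixes honest non-inversions with "$a\notin r,\ b\in r$" pairs, so one should double-check that the chain argument respects the actual ordering of letters and zeros in $s$, just as the analogous argument in Theorem~\ref{meet-r-order} does for $\Inv$. Everything else is a routine dualization of the meet proof, and I would present it as such, citing Theorem~\ref{meet-r-order} and Proposition~\ref{meet-permuto} for the permutation-level facts rather than re-deriving them. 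One could also record the explicit step-by-step algorithm (dual to the one following Corollary~\ref{cor-order-lattice}) as a remark, but that is cosmetic.
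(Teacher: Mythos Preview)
Your proposal is correct and is precisely the argument the paper has in mind: the paper's own proof of this theorem consists of the single sentence ``The proof is very similar to the one we did for the meet and is left to the reader.'' Your plan carries out exactly that dualization of Theorem~\ref{meet-r-order}, with the right substitutions (inversions $\to$ versions, $\max\to\min$, compatibility $\to$ dual compatibility, Proposition~\ref{meet-permuto} applied on the join side), and you correctly flag the only genuine care point, namely that the chain-descent argument must respect the mixed nature of $\InvB$.
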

The proof is very similar to the one we did for the meet and is left to the reader.
\begin{example}
  Let us compute $r=30175082\,\jnR\,72185043$. One finds
  $S=\{1, 2, 3, 4, 5, 7, 8\}$, $I=\{(7, 5), (4, 3)\}$ and $Z
  =\begin{psmallmatrix}
      1&2&3&4&5&7&8\\
      1&0&0&0&0&0&0
  \end{psmallmatrix}$, so that $r=10243758$.
\end{example}
\bigskip

We want to enumerate the join-irreducible elements. As in the classical
permutohedron, they are related to descents, however, it the case of rooks,
they are two different notions of descents.

\begin{definition}[Weak and strict descents]

  Let $r\in R_n$ be a rook. For any $0\leq i<n$, we say that $i$ is a
  \emph{weak (right) descent} of $r$ if $r\cdot\pi_i=r$. We say that $i$ is a
  \emph{strict (right) descent} if there exists a rook $s\neq r$ such that
  $s\cdot\pi_i=r$. Moreover, in the particular case $i=0$, we say that $0$ is
  a strict descent with multiplicity $k$, if there are exactly $k$ rooks
  $s\neq r$ such that $s\cdot\pi_0=r$.
\end{definition}
Any strict descent is a weak descent. Indeed if $s\cdot\pi_i=r$ then
$r\cdot\pi=s\cdot\pi_i^2=s\cdot\pi_i=r$. Weak descent and strict descents are
equivalent when restricted to permutations, but they differ on rooks. For
example, the rook $04003$, has 3 weak descent namely $0,2,3$, but only $0,2$
are strict ($04003=24003\cdot\pi_0$ and $04003=00403\cdot\pi_2$) and $0$ has
multiplicity $3$: $04003=14003\cdot\pi_0=24003\cdot\pi_0=54003\cdot\pi_0$.
\begin{lemma}\label{des0mult}
  The multiplicity of $0$ as a strict descent in a rook $r$ is $0$ if $r$ does
  not start with $0$ and is the number of $0$ in $r$ otherwise.
\end{lemma}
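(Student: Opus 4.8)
The plan is to unfold the definitions of $\pi_0$ and of strict descent; the statement is essentially immediate once the action of $\pi_0$ is made explicit. Recall from Definition~\ref{def_Ro_fun} (Equation~\ref{act_rook_P1}) that $\pi_0 = P_1$ acts on a rook by $(r_1 r_2\dots r_n)\cdot\pi_0 = 0\,r_2\dots r_n$. Hence, for a rook $s = s_1 s_2\dots s_n$, the equality $s\cdot\pi_0 = r$ holds if and only if $r_1 = 0$ and $s_j = r_j$ for all $j\geq 2$; that is, $s$ can differ from $r$ only in its first letter.

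First I would dispose of the case where $r$ does not begin with $0$: then no rook $s$ satisfies $s\cdot\pi_0 = r$, so the number of $s\neq r$ with $s\cdot\pi_0 = r$ is $0$, which is exactly what the lemma asserts in that case. Next, assume $r = 0\,r_2\dots r_n$. The rooks $s$ with $s\cdot\pi_0 = r$ are precisely those of the form $s = a\,r_2\dots r_n$ for some letter $a$ keeping $s$ a legal rook vector, i.e.\ $a = 0$ or $a\in\{1,\dots,n\}\setminus\{r_2,\dots,r_n\}$. Since $r_1 = 0$ one has $\supp(r) = \{r_2,\dots,r_n\}\cap\{1,\dots,n\}$, so there are $n - |\supp(r)|$ admissible nonzero choices for $a$, plus the choice $a = 0$, which gives back $s = r$. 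Excluding the latter leaves exactly $n - |\supp(r)|$ distinct rooks $s\neq r$ with $s\cdot\pi_0 = r$. Finally $n - |\supp(r)|$ equals the number of $0$'s in $r$, since $r$ has $n$ letters of which $|\supp(r)|$ are distinct nonzero values; this yields the stated multiplicity.

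I do not expect any real obstacle here. The only points requiring a little care are the two pieces of bookkeeping: noting that replacing the first letter of $r$ by $0$ when it is already $0$ leaves $\supp(r)$ unchanged, and remembering to exclude the solution $s = r$ from the count so as not to be off by one.
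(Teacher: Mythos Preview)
Your proof is correct. The paper states this lemma without proof, and your argument is exactly the straightforward unfolding of the action of $\pi_0$ that the authors evidently had in mind; there is nothing to add.
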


\begin{definition}
  An element $z$ of a lattice $L$ is called \emph{meet irreducible} if it can
  not be obtained as a non trivial meet that is $z = z_1 \wedge z_2$ implies
  $z_1=z$ or $z_2=z$.
\end{definition}
An equivalent definition is that $z$ has only one successor in the Hasse
diagram of $L$. By definition, in a finite lattice, any element can be
written as the meet of some meet irreducible elements. As a consequence, they
form the minimal generating set of the meet semi-lattice.

For permutations, the number of meet irreducible for the $\RR$-order (that is
permutation with only one descent) is $a(n) = 2^n - n - 1$. It is a particular
case of Eulerian numbers and is recorded as OEIS A000295. Here are the first
values
\begin{equation}
  0, 0, 1, 4, 11, 26, 57, 120, 247, 502, 1013, 2036, 4083, 8178, 16369, 32752\,.
\end{equation}
For rooks, the number of meet irreducibles has a very simple expression too:
\begin{proposition}
  The number of meet irreducibles for $\leq_\RR$ is $3^n - 2^n$.
\end{proposition}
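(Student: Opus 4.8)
The plan is to count the rooks $r$ for which $\pi_r$ is a meet‑irreducible element of the $\RR$‑order, which (as recalled just above the statement) means that $\pi_r$ has exactly one upper cover in the Hasse diagram. The first task is to identify that Hasse diagram with the right Cayley graph of $\Rnz$ with its loops removed. If $\pi_s\cdot t=\pi_r\neq\pi_s$ for a generator $t\in\{\pi_0,\dots,\pi_{n-1}\}$, then $\pi_r\lessdot\pi_s$: this is checked directly from the combinatorial description of $\leq_\RR$ by rook triples (Theorem~\ref{ordreR0}), distinguishing $t=\pi_i$ with $r_{i+1}>0$, $t=\pi_i$ with $r_{i+1}=0$, and $t=\pi_0$; in each case the rook triples of $r$ and $s$ differ only by one inversion, by one unit of a single value of $Z$, or by one element of the support, so that no rook lies strictly between them. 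Conversely, if $\pi_r\lessdot\pi_u$ and $\pi_r=\pi_u\cdot(t_1\cdots t_k)$ is any factorisation into generators, telescoping $\pi_u\geq_\RR\pi_u t_1\geq_\RR\cdots\geq_\RR\pi_r$ forces every partial product into $\{\pi_r,\pi_u\}$, hence $\pi_r=\pi_u\cdot t_j$ for a single generator. So the number of upper covers of $\pi_r$ is the number of distinct rooks obtained from $r$ by a single generator step, that is, the number of strict descents of $r$ in the sense of the paper, the descent $0$ counted with its multiplicity.

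Next I would make this explicit. For $i\in\interv{1}{n-1}$, the description of the action of $\pi_i$ shows that $i$ is a strict descent of $r$ exactly when $r_i>r_{i+1}$, the unique covering rook then being $r\cdot s_i$; moreover distinct such $i$ yield distinct covers, since two transpositions applied to $r$ produce the same word only when both fix $r$. By Lemma~\ref{des0mult} the multiplicity of $0$ is $n-|\supp(r)|$ when $r_1=0$ and $0$ otherwise, and when $r_1=0$ these covers (which begin with a nonzero letter) are automatically distinct from the covers coming from the $i\geq 1$ (which all begin with the letter $r_1=0$, as no descent can occur at position $1$). Therefore the number of upper covers of $\pi_r$ equals $\#\{i\in\interv{1}{n-1}:r_i>r_{i+1}\}$ if $r_1\neq 0$, and $\#\{i\in\interv{1}{n-1}:r_i>r_{i+1}\}+\bigl(n-|\supp(r)|\bigr)$ if $r_1=0$.

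Finally I would count the rooks realising the value $1$, splitting on $r_1$. If $r_1=0$: meet‑irreducibility forces $n-|\supp(r)|=1$ together with no index $i$ satisfying $r_i>r_{i+1}$, i.e. $r=0\,a_2\cdots a_n$ with $0<a_2<\cdots<a_n$, and there are $\binom{n}{n-1}=n$ such rooks. If $r_1>0$: meet‑irreducibility means exactly one index $i$ with $r_i>r_{i+1}$. Among these the permutations are exactly the permutations of $\interv{1}{n}$ with a single descent, numbering $2^n-n-1$ as recalled above. A non‑permutation of this kind has a first zero, which forces a descent just before it; uniqueness of the descent then forces the normal form $a_1<\cdots<a_p$, then $j\geq1$ zeros, then $b_1<\cdots<b_l$, with $p\geq1$, $j\geq1$, $l\geq0$, $p+j+l=n$, and the $a$'s and $b$'s distinct elements of $\interv{1}{n}$. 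Choosing the front set of size $p$ and the back set of size $l$ contributes $\binom{n}{p,l,j}$, so the number of such rooks is $\sum_{p\geq1,\,j\geq1,\,l\geq0,\ p+j+l=n}\binom{n}{p,l,j}$, which by inclusion--exclusion from $3^n=\sum_{p,l,j\geq0,\ p+j+l=n}\binom{n}{p,l,j}$ — removing $p=0$ (giving $2^n$), removing $j=0$ (giving $2^n$), re‑adding $p=j=0$ (giving $1$) — equals $3^n-2^{n+1}+1$. Adding the three contributions, the number of meet‑irreducibles is $(2^n-n-1)+(3^n-2^{n+1}+1)+n=3^n-2^n$.

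The hardest part will be the first step together with the bookkeeping of the second: proving that upper covers correspond exactly to strict descents and, crucially, that covers arising from the generator $\pi_0$ are never accidentally equal to covers arising from some $\pi_i$ with $i\geq1$, nor two covers from different $\pi_i$'s equal to each other. This requires the explicit case analysis on the action on rook vectors and on rook triples. Once the number of upper covers has been pinned down, the normal‑form description of rooks with a single descent and the concluding inclusion--exclusion are routine.
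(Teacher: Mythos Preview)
Your argument is correct, and you are more careful than the paper in justifying that the upper covers of $\pi_r$ are exactly the strict descents of $r$, counted with multiplicity at $0$, and that distinct strict descents give distinct covers. The paper simply states this and moves on.

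Where you diverge is in the counting. The paper splits the meet-irreducibles according to the value of the first letter $p(r)$ (taking $p(r)=1$ when $r_1=0$) and proves the refined formula $|\{r\text{ meet-irreducible}:p(r)=i\}|=3^{n-i}2^{i-1}$, then sums the geometric-type identity $\sum_{i=1}^n 3^{n-i}2^{i-1}=3^n-2^n$. The case $i>1$ is a direct count; the case $i=1$ requires an explicit bijection with maps $\interv{2}{n}\to\{0,1,2\}$ that is a bit delicate. Your route, by contrast, partitions into $r_1=0$, $r_1>0$ permutation, and $r_1>0$ non-permutation, and handles the third case by a multinomial inclusion--exclusion. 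Your approach is more elementary and avoids the bijection, at the cost of not yielding the refined count by first letter that the paper obtains. Both decompositions lead cleanly to $3^n-2^n$.
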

This sequence is recorded as OEIS A001047. Here are the first values
\begin{equation}
  0,1,5,19,65,211,665,2059,6305,19171,58025,175099,527345,1586131\,.
\end{equation}
We will actually prove a stronger statement, the previous one will follow
thanks to the identity:
\begin{equation}
  3^n - 2^n = \sum_{i=1}^{n}\ 3^{n-i}\,2^{i-1}\,.
\end{equation}
\begin{proposition}\label{prop-meet-irred}
  For any rook vector $r$ denote $p(r)$ the first value $r_0$ if its non zero,
  and $1$ if its zero. The number of meet-irreducibles $r$ of $R_n$ such that
  $p(r)=i$ is $3^{n-i}\,2^{i-1}$.
\end{proposition}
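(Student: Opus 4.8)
The plan is to characterize meet-irreducibles of $R_n$ combinatorially (analogously to the fact that, for permutations, meet-irreducibles for $\leq_\RR$ are exactly those with a unique descent), and then count them according to the value of $p(r)$. Since an element $z$ of a finite lattice is meet-irreducible if and only if it has a unique cover (immediate successor) in the Hasse diagram of $\leq_\RR$, and since covers of $r$ are obtained by $r = s\cdot\pi_i$ for $s\neq r$ with $\ell(\pi_s)=\ell(\pi_r)+1$, I would first reformulate meet-irreducibility as: $r$ has a unique \emph{strict descent}, where in the case $i=0$ ``unique'' must be interpreted with multiplicity one (so $0$ is a strict descent only if $r$ starts with $0$ and has exactly one $0$, by Lemma~\ref{des0mult}). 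I would verify this equivalence carefully: the number of covers of $r$ equals the number of strict descents of $r$ (counted with the $i=0$ multiplicity of Lemma~\ref{des0mult}), because each strict descent $i$ and each preimage $s$ with $s\cdot\pi_i = r$ gives a distinct cover, and conversely every cover arises this way. This reduces the count to: count rooks $r$ of $R_n$ with $p(r)=i$ having exactly one strict descent (with multiplicity).

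Next I would split into the two cases according to whether $r$ begins with $0$ or not, using the description of strict descents. If $r_1 = 0$, then by Lemma~\ref{des0mult} the multiplicity of $0$ as a strict descent is the number of zeros of $r$; for $r$ to be meet-irreducible this forces $r$ to have exactly one zero (namely $r_1=0$), and then $i = p(r) = 1$, and the remaining entries $r_2\dots r_n$ must form a permutation of a size-$(n-1)$ set with no other strict descent, i.e. the word $r_2\dots r_n$ (a permutation of $\{1,\dots,n\}\setminus\{j\}$ for the missing value $j$) must have a single descent among positions $1,\dots,n-1$, plus one has to check whether position $0$ being a "descent at the $\pi_0$-level" in the sense of $r\cdot\pi_0 = r$ versus strict — but since $r_1=0$ already, $\pi_0$ fixes $r$ and contributes the multiplicity we just accounted for. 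Counting permutations of an $(n-1)$-element set with exactly one descent (Eulerian number $2^{n-1}-(n-1)-1$ choices of the pattern) together with the $n-1$ choices of which value is missing — I would organize this so it matches $3^{n-1}2^{0} = 3^{n-1}$, the claimed value for $i=1$. If $r_1 = i \geq 1$, then $0$ is not a strict descent, and $r$ must have exactly one strict descent among $1,\dots,n-1$. Here I would argue combinatorially about what such a rook looks like: its nonzero part $\tilde r$ must be a permutation of its support with essentially one "genuine descent", and the zeros must be placed so as not to create further strict descents — a zero creates a strict descent at the position right before it unless... — and I would turn this into a bijective/generating-function count yielding $3^{n-i}2^{i-1}$.

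The cleanest route for the actual enumeration is probably to set up a direct recursion or a bijection rather than brute-force casework on descents. I expect the \textbf{main obstacle} to be precisely the bookkeeping of how zeros interact with strict descents: a zero in position $k+1$ makes position $k$ a weak descent always, but strictness depends on whether the preceding letter is itself $0$ or is larger than all later nonzero entries in a suitable sense, and one must ensure that introducing the zeros needed to make $|\supp(r)|$ correct does not introduce a second strict descent. I would handle this by proving a structural lemma: a rook $r$ with $r_1 = i\geq 1$ is meet-irreducible iff it has the form (block of decreasing-then-one-ascent permutation of a set $S\ni i$ with $i=\max$-ish constraints) followed by a controlled tail of zeros, and count these by conditioning on $|S|$, summing a product of a Eulerian-type term and a binomial term, then recognizing $\sum 3^{n-i}2^{i-1}$ termwise. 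Finally, summing $\sum_{i=1}^n 3^{n-i}2^{i-1} = 3^n - 2^n$ via the stated identity recovers Proposition~\ref{prop-meet-irred}'s parent statement. I would double-check the small cases $n=1,2,3$ against the listed values $0,1,5,19$ to validate the formula before writing the general argument.
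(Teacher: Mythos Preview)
Your starting point—that a rook is meet-irreducible if and only if it has a unique strict descent counted with multiplicity—is exactly right and matches the paper. The gap is in the enumeration.

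First, your case split does not line up with the statistic $p$. You treat $r_1=0$ as if it accounted for all of $p(r)=1$, and then try to make that subcase alone produce $3^{n-1}$. But $p(r)=1$ also contains all rooks with $r_1=1$, and in fact the $r_1=0$ subcase contributes only $n$ elements: if $r_1=0$ and the rook has a single zero, the condition ``no further strict descent'' forces $r_2<r_3<\dots<r_n$ (zero descents among positions $1,\dots,n-1$, not one), so $r$ is determined by which of the $n$ values is missing. Your claim that $r_2\dots r_n$ should have ``a single descent'' and the ensuing Eulerian-number computation are therefore incorrect; the product $(n-1)\cdot(2^{n-1}-(n-1)-1)$ does not equal $3^{n-1}$ and is counting the wrong objects.

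Second, for $r_1=i\geq 1$ your plan remains a sketch (``recursion or bijection'') with no concrete mechanism, and you anticipate the main difficulty being the interaction of zeros with descents. The paper sidesteps this entirely with a direct structural description. For $i>1$: a meet-irreducible with $r_1=i$ is exactly a concatenation of two nondecreasing blocks, the first starting with $i$; each value $j<i$ independently either appears in the second block or is absent (contributing a leading zero there), giving $2^{i-1}$, and each value $j>i$ independently goes to the first block, the second block, or is absent, giving $3^{n-i}$. The descent between blocks is then automatic. For $i=1$ the paper builds an explicit bijection between the meet-irreducibles with $r_1\in\{0,1\}$ and the set of maps $\{2,\dots,n\}\to\{0,1,2\}$, handling the two subcases together. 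You should replace your Eulerian approach and the unspecified recursion with these two concrete bijections.
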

\begin{proof}
  A rook is meet irreducible if and only if it has a unique strict descent
  (counting multiplicities). Consider a meet irreducible rook $r$ with
  $p(r)=i$. There are two cases:
  \begin{itemize}
  \item if $i>1$, then the rook is composed by two nondecreasing sequences,
    the first one starts with $i$. So each number smaller than $i$, either
    appears in the second subsequence or, do not appear at all so that the
    second sequence starts with some $0$. Similarly each number larger than
    $i$, may appear in any of those two subsequences or not at all. So the
    number of choices is $2^{i-1}3^{n-i}$.
  \item if $i=1$, then $r$ start either with $0$ or $1$. We want to show
    that the number of such rooks is $3^{n-1}$. We show that the set of
    those rooks is in bijection with the set of maps
    $f:\interv{2}{n}\rightarrow\{0,1,2\}$.

    In the following, for any set $S$ of integers we write $W(S)$ the word
    obtained by writing the letter of $S$ in increasing order. Given a map
    $f:\interv{2}{n}\rightarrow\{0,1,2\}$, one build a sequence starting with
    $1$, then ordering the preimage of $0$, putting as many zero as the
    preimage of $1$, and then ordering the preimage of $2$:
    \begin{equation}\label{def_first_r}
      r(f) \eqdef 1\cdot W(f^{-1}(0))\cdot 0^{|f^{-1}(1)|}\cdot W(f^{-1}(2))\,.
    \end{equation}
    By definition, the result is a rook of size $n$ with at most one
    descent. Moreover, each rook with only one descent is obtained exactly
    once as the image of some $f$.

    It remains to show that the maps which give rooks with no descent by the
    preceding construction are in bijection with rooks having $0$ as unique
    descent with multiplicity $1$. The point is the following: $r(f)$ has zero descents,
    that is $r(f)$ is nondecreasing, if and only if there exists a $1\leq
    k\leq n$ such that
    \begin{equation}
      f(i) =
      \begin{cases}
        0 & \text{if $i\leq k$,} \\
        2 & \text{otherwise.}
      \end{cases}
    \end{equation}
    If it is the case, we redefine $r(f)$ as
    \begin{equation}\label{def_second_r}
      r_1(f) \eqdef 0\cdot W(\{i-1\mid i\in f^{-1}(0)\})\cdot W(\{ f^{-1}(2)\}).
    \end{equation}
    The set of the rooks obtained this way is the set of increasing rooks
    which start with a $0$. According to Lemma~\ref{des0mult}, those are
    exactly the rooks having $0$ as unique descent with multiplicity $1$.

    On conclude that there are exactly $3^{n-1}$ rooks starting either by $0$
    or $1$. \qedhere
  \end{itemize}
\end{proof}
\begin{example}
Consider the function $f=\begin{psmallmatrix}
  2&3&4&5&6&7&8&9\\
  2&0&1&0&1&0&2&1\\
\end{psmallmatrix}$. Then $r(f)=1\cdot357\cdot000\cdot28$ which has only one
strict descent (the dots are only here to visualize the different part of the
right hand side of Equation~\ref{def_first_r}).

Now with $f=\begin{psmallmatrix}
  2&3&4&5&6&7&8&9\\
  0&0&0&0&0&2&2&2\\
\end{psmallmatrix}$, Equation~\ref{def_first_r} gives
$r(f)=1\cdot23456\cdot\cdot789$ which has no descent at all. So we take the
second definition (Equation~\ref{def_second_r}) and get the new value
$r_1(f)=0\cdot12345\cdot789$ which has $0$ as unique strict descent.
\end{example}

As a concluding remark on irreducible elements, we note that, on the contrary
to permutations, the poset is not self dual. So there is no reason why the
number of meet irreducible elements should be equal to the number of join
irreducible elements. They indeed differ and we do not have a formula for the
number of join irreducibles. We give here the first values:
\begin{equation}
  0, 1, 5, 16, 43, 106, 249\,.
\end{equation}

\subsection{Chains in the rook lattice}\label{subsec:chains}

We recall that a \emph{chain} in a lattice $(L, \preceq)$ is a sequence of
elements $(x_1, \dots, x_r)$ such that
$x_1 \preceq x_2 \preceq\dots \preceq x_r$. A \emph{maximal chain} is a chain
which is not strictly included in another one. Denoting $m$ and $M$ the
minimal and maximal elements of $L$, this is equivalent to 
$x_1 = m$, $x_r = M$ and for every $i< r$ there is no element between $x_i$
and $x_{i+1}$ for the order $\preceq$. For the weak order on permutations,
maximal chains corresponds to reduced expression of the maximal permutation.

We now consider maximal chains of $R_n$ (thus also $R_n^0$ by
Corollary~\ref{action_mulr_R0}). We see in Figure~\ref{CayleyR02} that all
the maximal chains are not of equal length. Experimental computation of the
numbers of maximal chains give the following sequence: $1, 2, 23, 3625,
16489243$. We did not find any nice property: it is not refered in OEIS and the
numbers contain big prime factor. A more interesting question is to only
consider maximal chains of minimal length, that is reduced expressions of the
maximal rook $P_n = 0\dots 0 \in R_n$. Note by Lemma~\ref{Pn} that $\ell(P_n) =
\binom{n+1}{2}$. We find the following numbers of such chains:
\begin{equation}
1, 2, 12, 286, 33592, 23178480\,.
\end{equation}
This sequence is refered as OEIS A003121. It counts, among many other things,
the number of maximal chains of length $\binom{n+1}{2}$ (hence maximal) in the
\emph{Tamari lattice} $\mathcal{T}_{n+1}$. This suggests that there is a
bijection between the chains. It turns out that the coincidence is much
stronger: the two posets restricted to the elements appearing in their
respective chains of maximal length are isomorphic.

We first need to describe the elements appearing in a reduced expression of
$P_n$. We need the following combinatorial definition for this:
\begin{definition}\label{def:shuffle}
Let $\mathcal{A}$ be an alphabet and $a, b\in \mathcal{A}$, $\un{u}, \un{v}$ be two words over $\mathcal{A}$. The \emph{shuffle product} is defined inductively by:
  \begin{equation}
  a\un{u}\shuffle b\un{v} = a(\un{u} + b\un{v}) + b(a\un{u} + \un{v})\,,
  \end{equation}
the initial condition being that the empty word is the unit element.
\end{definition}
\begin{proposition}
  The rook vectors appearing as a left factor of a reduced expression of $P_n$
  are the rooks:
  \begin{equation}\label{def_MCRn}
    \MCR{n} \eqdef \{ 0\dots 0  \shuffle (k+1)\dots n \mid 0\leq k \leq n \}. 
  \end{equation}
\end{proposition}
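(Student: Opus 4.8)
The plan is to unwind the shuffle, recast ``$r$ is a left factor of a reduced expression of $P_n$'' as an equality of word lengths, and evaluate those lengths via the canonical reduced words $\pi_{\code(r)}$.

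First, unwinding Definition~\ref{def:shuffle}: a rook $r$ lies in $\MCR n$ precisely when $\supp(r)$ is a final segment $\{k{+}1,\dots,n\}$ of $\interv 1n$ and the nonzero letters of $r$ appear in increasing order, i.e. $\Inv(r)=\emptyset$. Next, since a contiguous factor of a reduced word is reduced, $r$ occurs as a left factor of a reduced expression of $P_n=\pi_{0_n}$ if and only if there are reduced words $\un u$ from $1_n$ to $r$ and $\un v$ from $r$ to $0_n$ with $\ell(\un u)+\ell(\un v)=\ell(P_n)=\binom{n+1}2$ (the value of $\ell(P_n)$ being Lemma~\ref{Pn}); necessarily $\ell(\un u)=\ell(\pi_r)$ (by uniqueness of $\pi_r$, Corollary~\ref{action_mulr_R0}) and $\ell(\un v)=d(r)$, where $d(r)$ denotes the least length of a word whose right action sends $r$ to $0_n$. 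Concatenating any word $1_n\to r$ with any word $r\to 0_n$ yields a word for $P_n$, so $\ell(\pi_r)+d(r)\ge\binom{n+1}2$ always; hence $r$ is such a left factor if and only if $\ell(\pi_r)+d(r)=\binom{n+1}2$.

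Now write $\code(r)=\sfc_1\cdots\sfc_n$ and set $\Sigma(r):=\sum_{j=1}^n\sfc_j$. The word $\pi_{\code(r)}=\col{0}{\sfc_1}\col{1}{\sfc_2}\cdots\col{n-1}{\sfc_n}$ is reduced, and inspecting the three cases of the notation $\col{n}{i}$ — using that $\sfc_j\le j$ for every $R$-code — its $j$-th factor $\col{j-1}{\sfc_j}$ has length exactly $j-\sfc_j$; hence $\ell(\pi_r)=\sum_{j=1}^n(j-\sfc_j)=\binom{n+1}2-\Sigma(r)$. Consequently $r$ is a left factor of a reduced expression of $P_n$ if and only if $d(r)=\Sigma(r)$, and since $d(r)\ge\Sigma(r)$ in all cases it only remains to decide when equality holds. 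For $r\in\MCR n$ I would prove $d(r)=\Sigma(r)$ by induction on $\Sigma(r)$ (equivalently on $|\supp(r)|$): if $r=0_n$ this is clear; otherwise the first nonzero letter of $r$ is $k{+}1$, sitting just after the $z_0$ leading zeros, so the $z_0$ slides $\pi_{z_0},\dots,\pi_1$ (each legal because $k{+}1>0$) bring it to the front and one $\pi_0$ then kills it, producing a rook $r''$ still in $\MCR n$. None of these $z_0{+}1$ steps fixes the current rook, so by the length statement in Theorem~\ref{actionsurcode} (together with $\RR$-triviality) $\ell(\pi_{r''})=\ell(\pi_r)+z_0+1$, i.e. $\Sigma(r'')=\Sigma(r)-(z_0{+}1)$; by induction $d(r)\le(z_0{+}1)+d(r'')=(z_0{+}1)+\Sigma(r'')=\Sigma(r)$, and with $d(r)\ge\Sigma(r)$ this gives equality. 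So every element of $\MCR n$ occurs as a left factor of a reduced expression of $P_n$.

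The remaining, and main, difficulty is the converse: if $r\notin\MCR n$ then $d(r)>\Sigma(r)$ — equivalently, a shortest path from $1_n$ to $0_n$ in the right Cayley graph never leaves $\MCR n$. The plan is an induction along such a path: from a rook in $\MCR n$ the only generators acting nontrivially are $\pi_0$ on a nonzero first letter, a zero-slide $\pi_i$, and a swap $\pi_i$ of two nonzero letters; the first two outputs lie in $\MCR n$, and since a step of a shortest path strictly decreases $d$, the whole point is to rule out the third move by showing it does \emph{not} decrease $d$. The intended argument is that any minimal word $\rho\to 0_n$ uses exactly $|\supp(\rho)|$ occurrences of $\pi_0$ (each killing one nonzero letter, with no wasted $\pi_0$) together with $t(\rho)$ transpositions $\pi_i$, $i\ge1$, so $d(\rho)=|\supp(\rho)|+t(\rho)$; reading the recursion for $\code$ one identifies $\Sigma(\rho)-|\supp(\rho)|$ with the number of slides needed to left-justify $\rho$ when $\rho$ has no inversions, plus a strictly positive correction as soon as $\rho$ has an inversion or a value missing below a present one. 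Since the larger entry of a freshly created inversion cannot be removed until that inversion has been undone by a further detour, creating an inversion forces $t$ to jump, so $d(\rho\cdot\pi_i)\ge d(\rho)+1$ and the move is never on a shortest path, completing the induction. Making this last lower bound precise — the bookkeeping of how transpositions must interleave with the $\pi_0$'s — is where the real work of the proof lies.
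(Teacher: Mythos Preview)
Your reformulation ``$r$ is a left factor $\Leftrightarrow d(r)=\Sigma(r)$'' and the forward direction are correct and essentially match the paper's. The gap is exactly where you say it is, in the converse, and your heuristic for it does not work as stated: in the $\pi$-action an inversion is \emph{never} undone (each $\pi_i$ with $i\ge1$ can only create inversions), so ``the larger entry cannot be removed until that inversion has been undone by a further detour'' describes an obstruction that does not exist in that form --- the larger entry is simply slid to the front and killed, possibly creating further inversions along the way. Your decomposition $d(\rho)=|\supp(\rho)|+t(\rho)$ is fine but does not by itself yield the lower bound you need on $t(\rho)$. There is, however, a one-line potential that rescues your scheme: set $\Phi(r):=\sum_{j:\,r_j\neq0}j$, the sum of the positions of the nonzero letters. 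Every generator decreases $\Phi$ by at most~$1$ (in particular a swap of two nonzero letters leaves $\Phi$ unchanged), while the greedy ``bring the leftmost nonzero letter to the front and kill it'' procedure decreases $\Phi$ by exactly~$1$ at each step; hence $d(r)=\Phi(r)$ for \emph{every} rook. Since a nonzero--nonzero swap keeps $\Phi$ fixed it cannot lie on a shortest path to $0_n$, and your induction then goes through cleanly.

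The paper's converse takes a different route. Given a hypothetical reduced factorisation $\un r\,\un m$ of $P_n$ with $r\notin\MCR n$, it replaces $r$ by $\tilde r\in\MCR n$ having the same zero positions but nonzero letters relabelled $k{+}1,\dots,n$ in position order, argues from the $R$-code that $\ell(r)>\ell(\tilde r)$, and asserts that $\tilde r\cdot\un m=0_n$ as well, whence $|\un m|\ge d(\tilde r)$ and $|\un r\,\un m|>\ell(\tilde r)+d(\tilde r)=\binom{n+1}{2}$, a contradiction. The assertion ``$\tilde r\,\un m$ gives $P_n$ as well'' is stated without proof; one clean justification is again $d=\Phi$, since $\Phi(r)=\Phi(\tilde r)$ gives $d(r)=d(\tilde r)$ directly. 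So with the potential $\Phi$ in hand your approach is at least as clean as the paper's and sidesteps that unjustified step.
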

\begin{proof}
  Let $r \in\MCR{n}$ as defined by Equation~\ref{def_MCRn}. We assume that $r$
  has $k$ zeros, so that the nonzero letters appearing in $r$ are $k+1, \dots,
  n$. Take the reduced expression for $r$ given by the $R$-code
  (Definition~\ref{def_word_code}). Since the nonzero letters are in
  order, this expression if of length $\ell(r) = 1 +2+ \dots + k + \sum_{i =
    k+1}^n Z_r(i)$. In order to bring $r$ to $P_n$ by right action we repeat
  the following steps until we reach $P_n$: let $i$ be the first nonzero letter and
  $p=i-Z_r(i)$ its position. Then multiplying $r$ on the right by
  $s_{p-1}\dots s_1\pi_0$ brings $i$ to the front and kills it.  The length of
  the word for $P_n$ obtained this way is equal to
  \begin{equation}
    \ell(r) + \sum_{i=k+1}^n \left(i-Z_r(i)\right) = \sum_{i=1}^n i = \binom{n+1}{2}.
  \end{equation}
  This is the length of $P_n$, hence the expression is reduced, and $r$
  appears in a maximal chain of minimal length.

  Now we prove the converse inclusion by contradiction. Let $r\in R_n
  \setminus \MCR n$, with $k$ zeros. We want to show that there is no reduced
  word for $P_n$ of the form $\un{r}\,\un{m}$ where $\un{r}$ is a word for
  $r$. Assume that we have such a word. Since $r\notin \MCR n$, then either
  there is a nonzero letter~$k$ before a nonzero letter $k'$ with $k'<k$, or
  there is a nonzero letter $k'$ while a letter $k>k'$ is missing. The
  algorithm computing the canonical reduced word
  (Definition~\ref{def_word_code}) shows that:
\begin{equation}
\ell(r) > 1 +2+ \dots + k + \sum_{i \in r,\,i\neq 0} Z_r(i).
\end{equation}
We call $\tilde{r}\in \MCR n$ the rook vector obtained from $r$ by replacing
the nonzero letters by $k+1, \dots, n$ in this order, so that $\sum_{i \in
  r,\, i\neq 0} Z_r(i) = \sum_{i = k+1}^n Z_{\tilde{r}}(i)$. Then
$\tilde{r}\,\un{m}$ gives $P_n$ as well. Thus $\ell(\un{m}) \geq \sum_{i =
  k+1}^n (i-Z_{\tilde{r}}(i))$. So that $\ell(P_n) = \vert \un{r}\,\un{m}\vert >
\binom{n+1}{2} = \ell(P_n)$, which is absurd.
\end{proof}

In particular note that:
$
\displaystyle | \MCR n | = \sum_{i=0}^n \binom{n}{i} = 2^n.
$
\begin{example}
$\MCR 2 
= \{ 12 \} \cup \{ 0 \shuffle 2 \} \cup \{00\}
= \{ 12 \} \cup \{ 02, 20 \} \cup \{00\}$
\begin{alignat*}{2}
\MCR 3 =\ & \{ 123 \} \cup \{ 0 \shuffle 23\} \cup \{ 00 \shuffle 3\} \cup \{000 \}\\
=\ & \{ 123\}\cup \{023, 203, 230\}\cup \{ 003, 030, 300\}\cup \{ 000\}, \\[3mm]
\MCR 4 =\ & \{ 1234 \} \cup \{ 0 \shuffle 234\} \cup \{ 00 \shuffle 34\} \cup \{000 \shuffle 4 \} \cup \{ 0000\} \\
=\ & \{ 1234\}\cup \{ 0234, 2034, 2304, 2340\}\cup \{ 0034, 0304, 3004, 0340, 3040, 3400\} \\ &  \cup \{0004, 0040, 0400, 4000\} \cup \{ 0000\}.
\end{alignat*}
\end{example}
We now introduce a sequence of bijections from $\MCR n$ to some special \emph{Dyck
  paths}, that is vertices of the Tamari lattice. For now, recall that a
\emph{Dyck path} of length $n$ is a path in the plane starting from $(0,0)$,
ending in $(2n,0)$ made with north-east (NE) $(1,1)$ and south-east (SE) $(1,
-1)$ such that the path is always above the line $y=0$. We represent a Dyck
path by a word of size $2n$ with $n$ letters $0$ and $n$ letters $1$, where
$0$ is a SE step, and $1$ a NE step, and such that in every prefix of the word
the number of $0$ is less or equal to the number of $1$. For instance
$101100110$ is a Dyck path. We will also represent it $1^1 0^1 1^2 0^2 1^2 0^1$.
\medskip

The first bijection sends an element of $\MCR n$ to a subset of
$[n+1]$ the following way:
\begin{equation}
  \eta : \left\{ \begin{array}{rcl}
      \MCR n & \longrightarrow & [n+1]\\
      r = r_1\dots r_n & \longmapsto & \{i \mid r_i \neq 0\} .
    \end{array}\right.
\end{equation}
This application is clearly a bijection since the nonzero letters of $r\in
\MCR n$ are $k+1, \dots, n$ in this order, where $k$ is the number of zeros of
$r$. Now that we have a subset of $[n]$ we can use the bijection $\cset$ to
compositions of $n+1$ introduced in Equation~\ref{eq.set.to.comp}. If $I =
(i_1, \dots, i_m) \compof n+1$ the actions of the generators of $\Rnz$ through
the bijection $\cset \circ \eta$ are as follows:
\begin{eqnarray}
  I \cdot \pi_0 & = & (i_1 + i_2, i_3, \dots, i_m), \\
  I \cdot \pi_j & = & (i_1, \dots,  i_{j-1}, i_j -1, i_{j+1} + 1, i_{j+2}, \dots, i_m) \text{ for } 0<j<m.
\end{eqnarray}

We finally send a composition of $n+1$ to a Dyck path as follows:
\begin{equation}
  \delta : (i_1, \dots, i_m) \compof n+1 \longmapsto 1^{n-m}\, 0^{i_1} \,1 \,0^{i_2} \,1\, 0^{i_3} \dots 0^{i_{m-1}} \,1\, 0^{i_m}.
\end{equation}
It is easy to check that the Dyck paths we obtain this way are exactly those
for whose the pattern $011$ is forbidden. Note that the action of the
generators of $\Rnz$ is thus to replace a $01$ by $10$ which pictorially
inserts a diamond in a ``valley''. See Figure~\ref{fig:flip-valley}. We say
that a Dyck path $D$ contains another Dyck path $D'$, and we denote it $D'
\subseteq D$, if the path $D$ is above the path $D'$. Then the $\RR$-order on
$\Rnz$ is mapped to the order $\subseteq$ on Dyck paths avoiding the pattern
$011$ by the bijection $\delta\circ \cset \circ \eta$. The maximal element of the
poset on Dyck path is $1^{n+1} 0^{n+1}$ and its minimal $(10)^{n+1}$. See the
first line of Figure~\ref{fig:chain_max} to see all these
isomorphisms. We finally remark that all these posets are actually lattices.

\begin{figure}[ht]
\centering
\includegraphics[scale=1.5]{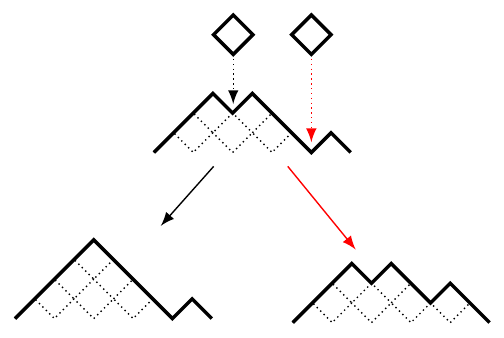}
\caption{The flip of a valley in our special Dyck paths. The generator $\pi_i$ adds a diamond in the $i+1$th valley, counting from the left. Thus $\pi_0$ reduces the number of valley. }\label{fig:flip-valley}
\end{figure}

Now we briefly present the Tamari order, the reader should ref
to~\cite{Tamari.1962} for more details. A Dyck path is called \emph{primitive}
if it is not empty and has no other contact with the line $y=0$ except at the
starting and ending point. If $u$ is a Dyck path such that $u$ has a SE step
$d$ followed by a primitive path $p$. Then the rotation on $u$ is to exchange
the SE step $d$ with the primitive path $p$. See
Figure~\ref{fig:rot_dyck}. These rotations are the cover relations of the
Tamari order $\preceq_{\mathcal{T}}$.

\begin{figure}[ht]
\centering
\includegraphics[scale=1]{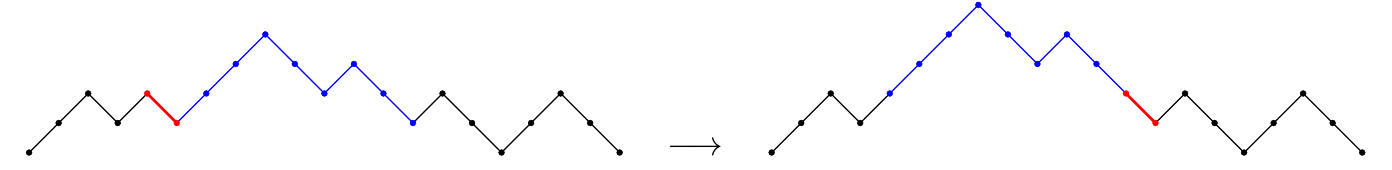}
\caption{\label{fig:rot_dyck} The rotation of Dyck words.}
\end{figure}

We are interested in Dyck paths in a maximal chain of length $\binom{n}{2}$ in
the Tamari lattice of size $n$. We denote by $\MCT n$ their set.
\begin{proposition}
  The set $\MCT n$ is exactly the set of Dyck paths avoiding
  $011$. Furthermore the order $\preceq_{\mathcal{T}}$ restricted to $\MCT n$
  is equal to the order of inclusion $\subseteq$.
\end{proposition}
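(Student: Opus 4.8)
The plan is to transport everything to the combinatorics of the $011$-avoiding Dyck paths, which we already control through the order isomorphism $\delta\circ\cset\circ\eta$ between $(\MCR{n-1},\leq_\RR)$ and these paths ordered by $\subseteq$. Write $\mathcal C_n$ for the set of Dyck paths of semilength $n$ avoiding the factor $011$; recall that $(\mathcal C_n,\subseteq)$ is a lattice with least element $(10)^n$ and greatest element $1^n0^n$, and that its covering relations are the \emph{valley flips}, i.e.\ replacing an occurrence of the factor $010$ by $100$ (these realise the generators of $\Rnz$). Two preliminary facts are needed, and I would establish them first. First, a valley flip is exactly a Tamari rotation whose moved primitive path is the single bump $10$; hence every valley flip is a cover for $\preceq_{\mathcal T}$, and, more generally, a Tamari rotation only lifts a primitive path upwards, so $D\preceq_{\mathcal T}D'$ implies $D\subseteq D'$ for all Dyck paths. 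Second, let $\mathrm{area}$ be the standard area statistic of Dyck paths, normalised so that $\mathrm{area}((10)^n)=0$ and $\mathrm{area}(1^n0^n)=\binom n2$; I would check that a Tamari rotation exchanging a descent step with the following primitive path of semilength $k$ increases $\mathrm{area}$ by exactly $k$. Consequently every Tamari cover raises $\mathrm{area}$ by at least $1$, with equality if and only if it is a valley flip, so any maximal chain of $\mathcal T_n$ has length at most $\binom n2$, with equality if and only if every one of its covers is a valley flip.

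Next I would prove the inclusion $\mathcal C_n\subseteq\MCT n$. Given $D\in\mathcal C_n$, pull it back to $\MCR{n-1}$; by the proposition describing $\MCR{n-1}$ as the set of left factors of reduced words for $P_{n-1}$, the path $D$ lies on a maximal chain of $(R_{n-1}^0,\leq_\RR)$ of minimal length $\ell(P_{n-1})=\binom n2$, which is necessarily made of single generator steps. Its image under $\delta\circ\cset\circ\eta$ is a chain of valley flips running between the two extreme paths $(10)^n$ and $1^n0^n$ and passing through $D$, of length $\binom n2$; since valley flips are Tamari covers, this is a maximal chain of $\mathcal T_n$ of length $\binom n2$ containing $D$, so $D\in\MCT n$. (If one prefers to avoid $\MCR{n-1}$, the same chain can be produced directly: any $D=1^a0^{b_1}1\cdots10^{b_k}\in\mathcal C_n$ is joined to $(10)^n$ by repeatedly un-flipping a $100$ at the start of a block, and to $1^n0^n$ by repeatedly sliding the first separator $1$ leftwards through its block into the initial run $1^a$; one checks each elementary move is a valley flip that stays inside $\mathcal C_n$, and the total number of moves is forced to be $\binom n2$ by the area count.)

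For the reverse inclusion $\MCT n\subseteq\mathcal C_n$: if $D\in\MCT n$ it lies on a maximal chain $(10)^n=D_0\lessdot_{\mathcal T}\cdots\lessdot_{\mathcal T}D_{\binom n2}=1^n0^n$ of length $\binom n2$, so by the area argument of the first paragraph every cover on it is a valley flip; in particular $1^n0^n$ is obtained from $D$ by a finite sequence of valley flips. The key small lemma here is that a valley flip cannot destroy an occurrence of the factor $011$: examining the five ways the flipped factor $010$ can overlap a prospective $011$, four are impossible since they would force some entry to be simultaneously $0$ and $1$, and in the fifth (the $010$ lying two positions to the left of the $011$) the $011$ is left untouched. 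Since $1^n0^n$ contains no $011$, neither does $D$, so $D\in\mathcal C_n$; together with the previous paragraph this yields $\MCT n=\mathcal C_n$. Finally, the equality of orders on $\MCT n=\mathcal C_n$ follows from the two preliminary facts: one containment, that $D'\preceq_{\mathcal T}D$ implies $D'\subseteq D$, is the pointwise monotonicity of rotations; for the other, if $D'\subseteq D$ in $\mathcal C_n$ then a maximal chain from $D'$ to $D$ in the lattice $(\mathcal C_n,\subseteq)$ is a sequence of cover relations, hence of valley flips, hence of Tamari covers, so $D'\preceq_{\mathcal T}D$.

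The step I expect to be the real obstacle is the area computation under a rotation — the claim that exchanging a descent with a following primitive path of semilength $k$ changes $\mathrm{area}$ by exactly $k$ — since this is precisely what identifies the length-$\binom n2$ maximal chains with the valley-flip chains, and thus with $\mathcal C_n$; it has to be done with some care about how a rotation redistributes the cells of the area region (the moved primitive path, of horizontal extent $2k$, is lifted by one level). Everything else is either bookkeeping on the factor $011$ or a direct appeal to the already-established isomorphism and lattice structure on $\mathcal C_n$.
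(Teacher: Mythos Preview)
Your argument is correct and follows essentially the same route as the paper: both hinge on the observation that a Tamari rotation increases the area (what the paper calls the number of ``diamonds'') by the semilength of the moved primitive path, so that a maximal chain of length $\binom{n}{2}$ must consist entirely of valley flips $010\to100$, and then one checks that valley flips preserve $011$-avoidance. Your organisation is slightly different --- you import one inclusion from the already-proven isomorphism with $\MCR{n-1}$ and argue the other via ``valley flips cannot destroy a $011$'' going up to $1^n0^n$, whereas the paper phrases both directions internally --- but the content is the same; in particular, the area computation you flag as the main obstacle is exactly the paper's one-line remark that a rotation at a $0$ followed by $11$ adds at least two diamonds.
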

\begin{proof}
  The difference of diamonds between the minimal element $(10)^n$ and the
  maximal element $1^n 0^n$ is exactly $\binom{n}{2}$, so that each rotation
  must add only one diamond. But a rotation on a SE step $0$ followed by two
  NE steps $11$ adds at least two diamonds, so that we can not rotate in such
  a SE step. Moreover the rotations on another licit SE step preserve the $011$
  pattern, so that an element with pattern $011$ can not be in $\MCT n$. On
  the contrary if $D$ is a Dyck path avoiding $011$, then a rotation is
  exactly to add a diamond in a valley, and the resulting Dyck path also avoids	
  $011$.

  Now that we have the description of elements of $\MCT n$, doing a rotation
  corresponds to adding a diamond on a valley, so that the order
  $\preceq_\mathcal{T}$ implies the order $\subseteq$. Furthermore, by
  definition of the order $\preceq_{\mathcal{T}}$, the converse also holds.
\end{proof}
As a consequence we have proven that the order on $\MCT n$ obtained through
the bijection $\delta \circ \cset \circ \eta$ is exactly the Tamari order, so that
the posets of $\MCR n$ and $\MCT{n+1}$ are isomorphic.

The elements appearing in $\MCT n$ appears in many different
contexts, see \cite{HohlwegLange.2007, HohlwegLangeThomas.2011, LabbeLange.2018} and the references in the latters. They correspond to binary trees which are chains, that
is also binary trees with exactly one linear extension. For this reason they
are called \emph{singletons}. Equivalently they are permutations avoiding the
patterns $132$ and $312$, or permutations with exactly one element in their
sylvester class, that is common vertices between the associahedron and the
permutahedron.  Furthemore the historic definition of the associahedron is to
keep only the faces of the permutahedron which contains such a singleton. See
Figure~\ref{fig:assoc-perm}, and \cite{HohlwegLange.2007} for more details. See also Figure~\ref{fig:chain_max} for all the bijections seen in this section.

\begin{figure}[ht]
\centering
\includegraphics[scale=1]{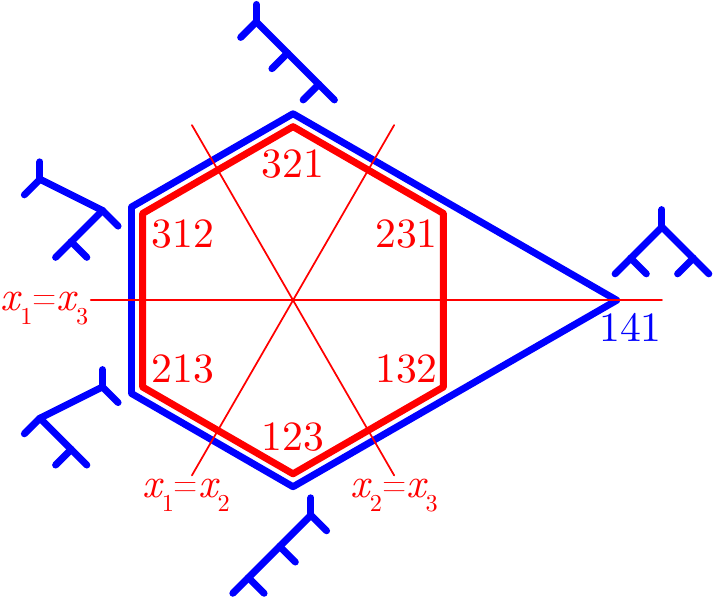}\qquad
\includegraphics[scale=0.6]{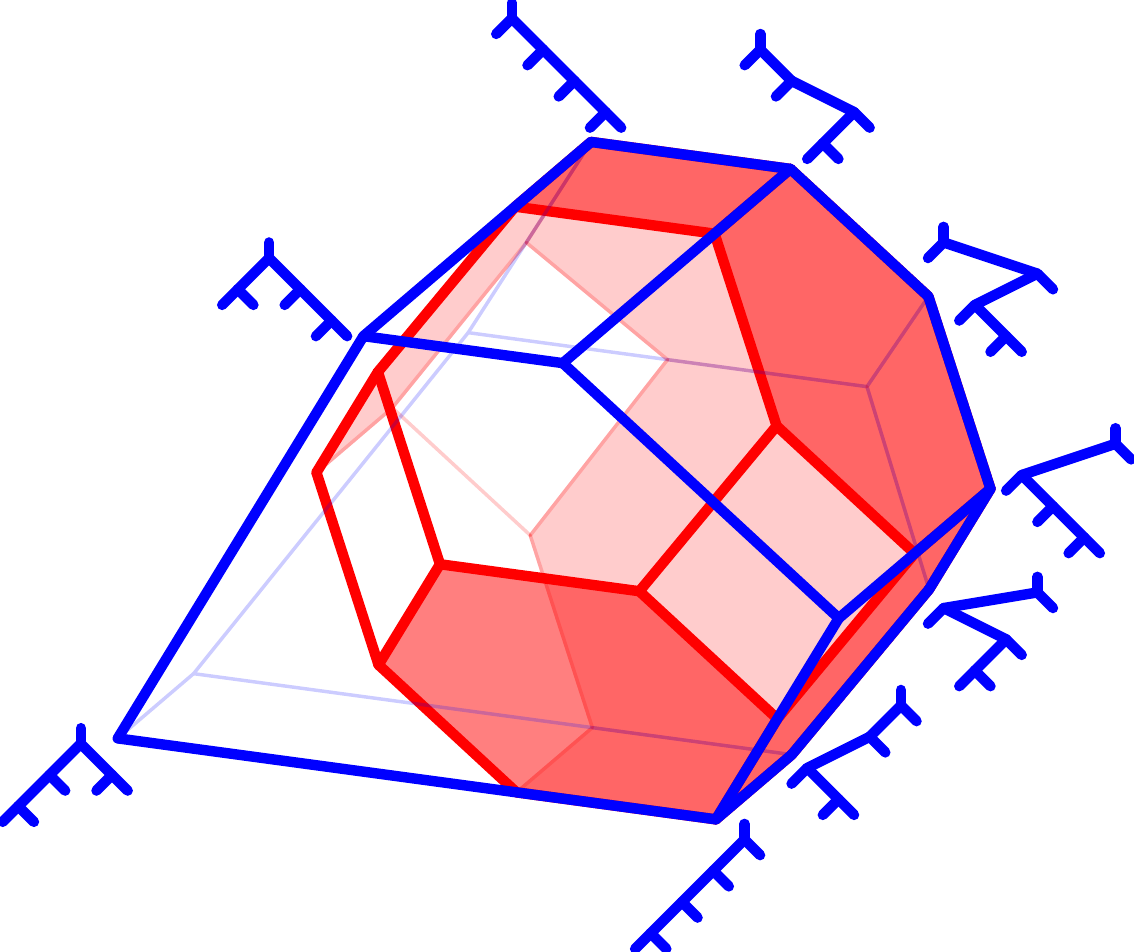}
\caption{\label{fig:assoc-perm} The Associahedron is obtained from the Permutahedron by keeping only faces containing a singleton.}
\end{figure}

\begin{figure}[p]
\centering
\includegraphics[scale=0.5]{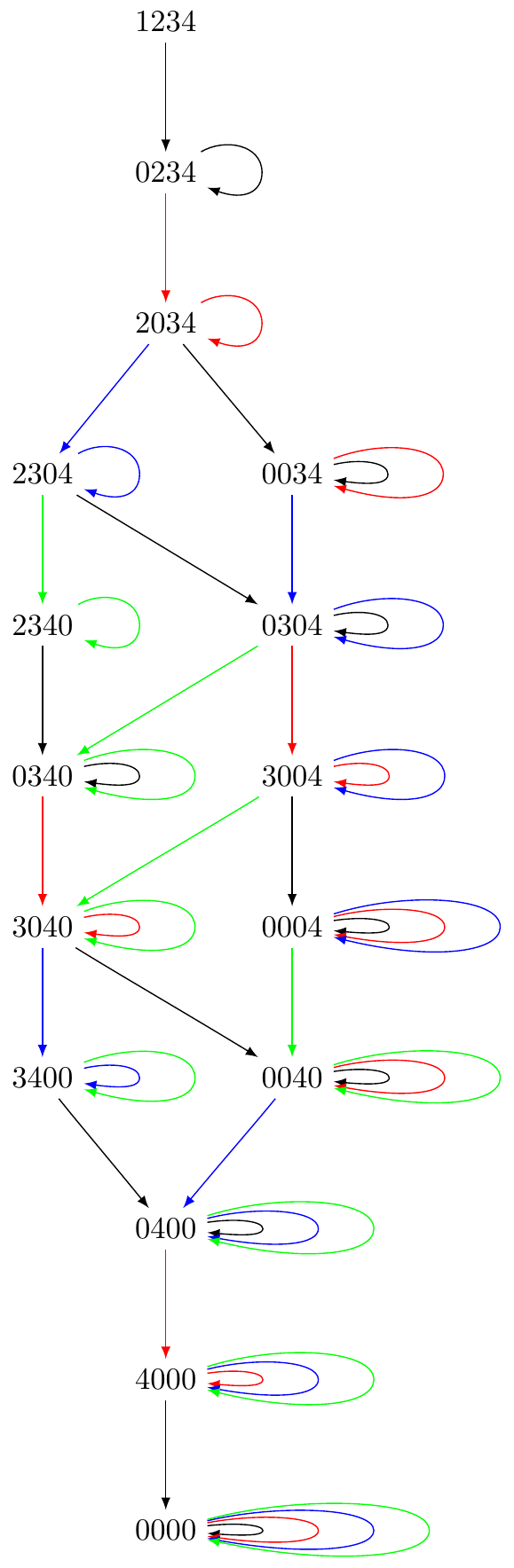} \raisebox{4cm}{$\xrightarrow{\;\;\;\eta\;\;\;}$}
\includegraphics[scale=0.5]{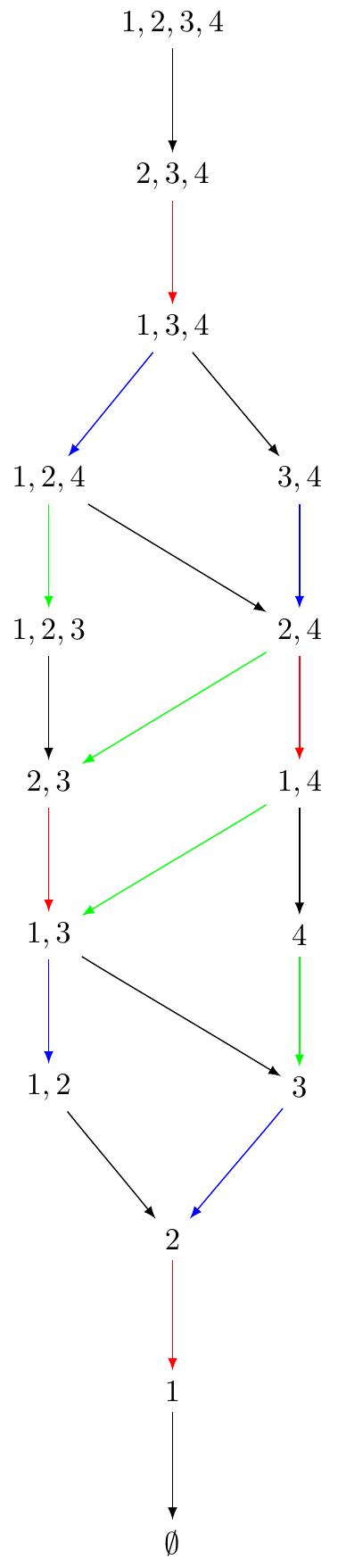}\raisebox{4cm}{$\xrightarrow{\;\;\; \cset\;\;\;}$}
\includegraphics[scale=0.5]{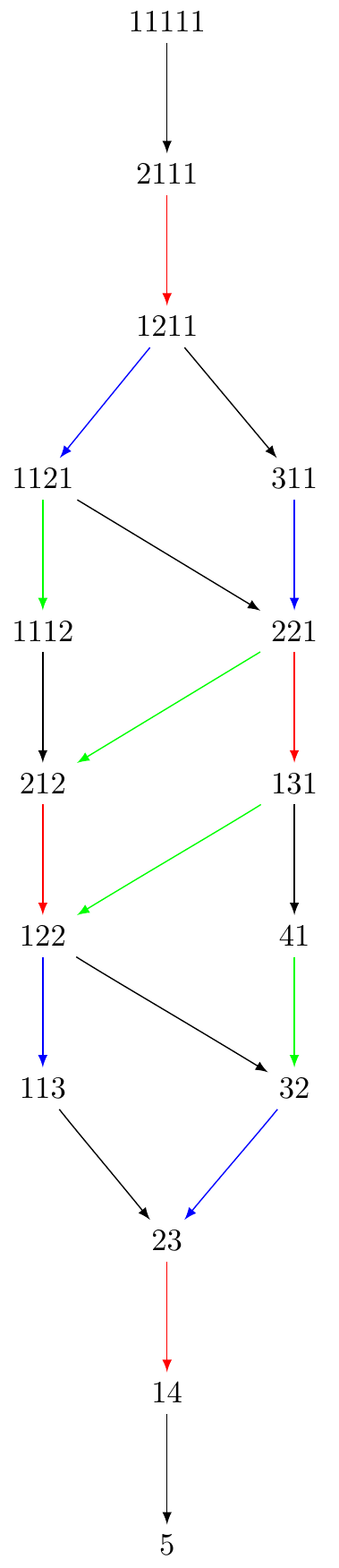}\raisebox{4cm}{$\xrightarrow{\;\;\; \delta\;\;\;}$}
\includegraphics[scale=0.5]{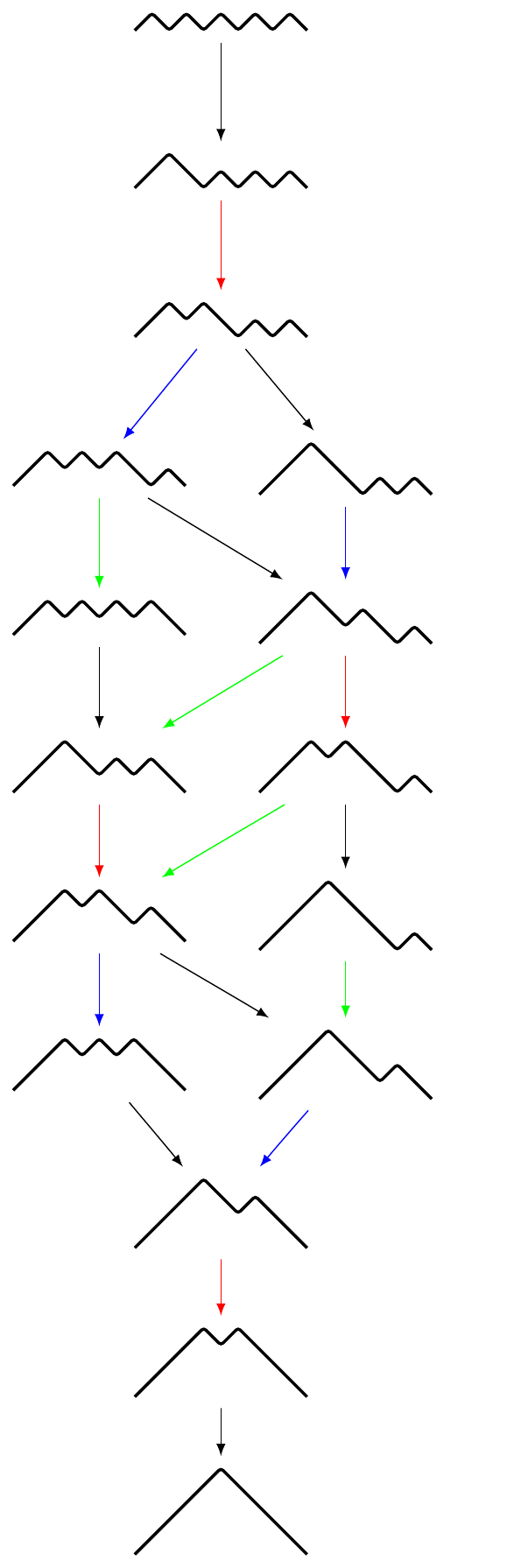}\\[5mm]
\includegraphics[scale=0.5]{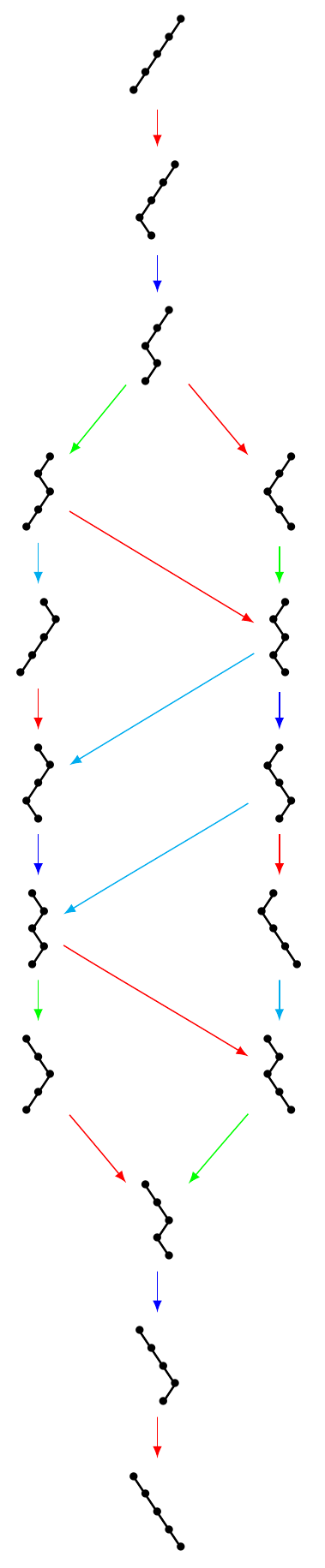}\qquad 
\includegraphics[scale=0.52]{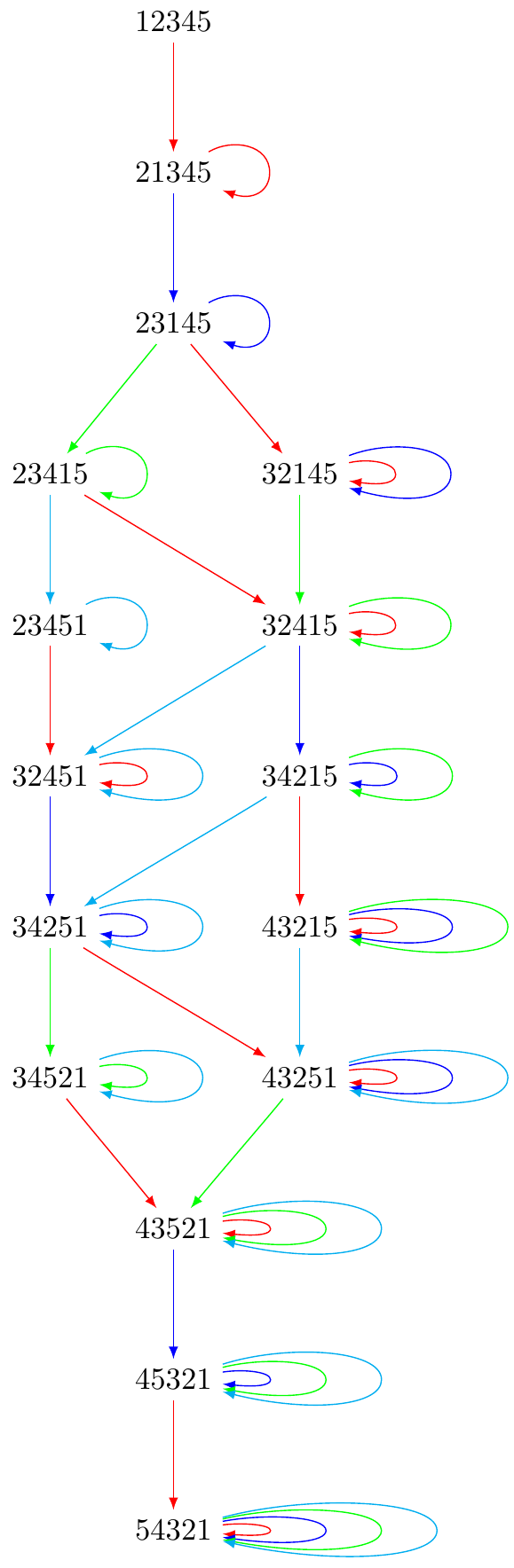}
\hspace{2cm}\raisebox{3cm}{\includegraphics[scale=1]{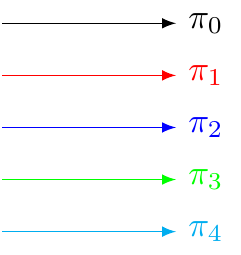}}
\caption{\label{fig:chain_max} The lattice of $\MCR 4$, send to subsets of
  $[4]$, compositions of $5$ and $\MCT 5$. On the second row we represent the
  poset $\MCT 5$ seen on binary trees which are chains, and permutations alone
  in their sylvester class or avoiding $132$ and $312$. We only represent
  loops on the rook vectors and the permutations, the other can be deduced by
  bijection. On the second line we apply generators of $H_5^0$ rather than
  $R_4^0$. Note that the bijection on the generators is only $\pi_i\mapsto
  \pi_{i+1}$.}
\end{figure}

\subsection{Geometrical remarks}
Recall that the right Cayley graph of the symmetric group $\SG{n}$ is the
$1$-skeleton of a polytope namely the permutohedron~\cite[Example
0.10]{ZieglerGeom}. It is defined as the convex hull of the set of points
whose coordinates are permutations. It therefore lives in the hyperplane $\sum
x_i = \frac{n(n+1)}{2}$, so that it is a polytope of dimension $n-1$.

Starting with $n=3$, we can not hope that the right Cayley graph of $R_n$
could be the $1$-skeleton of a polytope. Indeed in $R_n$ the element
$1000\dots$ is always of degree $2$, being linked only to $0000\dots$ and
$0100\dots$, whereas the identity $123\dots$ is of degre $n$. Thus it is
impossible to get a polytope.

\begin{figure}[ht]
  \centering{\includegraphics[width=6cm]{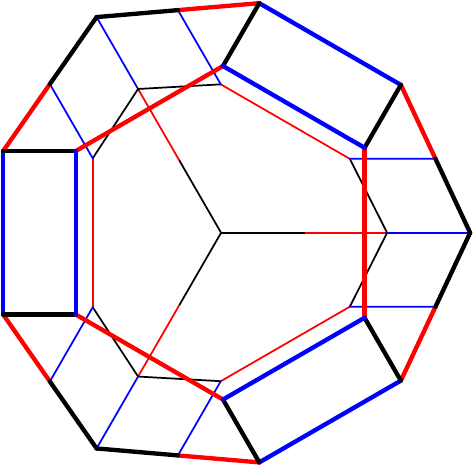}}
  \caption{\label{rookohedron}The Cayley graph of $R_3$ embedded in a
    $3$-dimensional space.}
\end{figure}

Nevertheless, one can consider in a $n$-dimensional space the set of points
whose coordinates are rook vectors (see Figure~\ref{rookohedron}). The
extremal points of its convex hull are the points in
\begin{equation}
  \Stell_n \eqdef
  \{\SG{n}(0\dots 0 k\dots n) \suchthat k\in \interv{1}{n}\}\,.
\end{equation}
This polyedron appeared under the name of stellohedron in~\cite[Figure
18]{MannevillePilaud.2017} where it was defined as the graph associahedron of
a star graph. It is also the secondary polytope of $\Delta_n \cup 2\Delta_n$ (see \cite{GKZ08}),
two concentric copies of a $n$-dimensional simplex, which can also be defined
as
\begin{equation}
  \{e_i \suchthat i\in [n+1]\} \cup
  \{ (n+2) e_i - \mathbf{1} \suchthat i\in [n+1]\}\,.
\end{equation}
So we can see the Cayley graph of $R_n$ as being drawn on the face of the
stellohedron. One can recover this graph from the permutohedron by taking all
its projections on coordinate planes. Indeed, it is just saying that a rook can
be obtained from a permutation replacing some entries by zeros and that edges
are mapped to an identical edge or contracted.

\subsection{A monoid associated to the stellohedron}

The geometrical considerations raise the question whether there is a monoid
structure giving the skeleton of the Stellohedron as Cayley graph. It turns
out that the answer is true. Will we show moreover that there are monoids and
lattice structures on graphs interpolating between the rook case and the
stellar case.

\begin{definition}\label{def-st}
  For any rook $r\in R_n$ denote $M(r):=\max\{i\in\interv{1}{n}\suchthat
  i\notin r\}$ define $\St(r)$ to be the rook obtained by replacing by $0$
  all the letter smaller that $M(r)$ in $r$.
\end{definition}
\begin{example}
  $M(104625)=3$ and thus $\St(104625)=004605$. Similarly $M(10806270)=5$ and
  thus $\St(10806270)=00806070$.
\end{example}
Clearly for any rook $r\in R_n$ then $\St(r)\in\Stell_n$, and for any
$s\in\Stell_n$ one has $\St(s)=s$. We have then proved the following lemma:
\begin{lemma}
  The map $\St$ is a projection (i.e. $\St\circ\St=\St$) on $\Stell_n$.
\end{lemma}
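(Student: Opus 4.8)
The plan is to reduce the lemma to the two remarks recorded just above it and then assemble them: (i) for every $r\in R_n$ the rook $\St(r)$ has nonzero letters forming an up-set of $\interv{1}{n}$, hence lies in $\Stell_n$; (ii) every $s\in\Stell_n$ satisfies $\St(s)=s$; and (iii) idempotence is then immediate, since for any $r$ one has $\St(r)\in\Stell_n$ by (i), whence $\St(\St(r))=\St(r)$ by (ii), which is exactly $\St\circ\St=\St$. Together (i) and (ii) also say that $\Stell_n$ is precisely the image, so $\St$ is a projection onto $\Stell_n$.

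For (i), I would fix $r$ and set $M:=M(r)$. By the definition of $M$ as the largest element of $\interv{1}{n}$ not occurring in the rook vector of $r$, every value in $\{M+1,\dots,n\}$ does occur in $r$ while $M$ itself does not; since $\St(r)$ is obtained from $r$ by zeroing out exactly the letters that are $<M$, the set of nonzero letters of $\St(r)$ is precisely $\{M+1,\dots,n\}$, in the order inherited from $r$, the freed positions being filled with $0$. Hence $\St(r)$ is a rearrangement of the multiset $\{0^{M},\,M{+}1,\dots,n\}$, i.e. $\St(r)=\sigma\cdot(0\cdots 0\,(M{+}1)\cdots n)$ for a suitable $\sigma\in\SG n$, so $\St(r)\in\Stell_n$. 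The degenerate inputs fit the same picture: a permutation has $M=0$ and is left unchanged, and the all-zero rook has $M=n$ and maps to $0_n$; both lie in $\Stell_n$.

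For (ii), let $s\in\Stell_n$, so $\supp(s)=\{k,\dots,n\}$ for some $k\in\interv{1}{n}$ (or $s=0_n$). Then the values $k,\dots,n$ all occur in $s$ and the values $1,\dots,k-1$ do not, so $M(s)=k-1$. Applying $\St$ zeros out the letters of $s$ that are $<k-1$; but every nonzero letter of $s$ is $\ge k>k-1$, so no letter is affected and $\St(s)=s$ (and trivially $\St(0_n)=0_n$). Combining this with (i) as in (iii) yields the claim.

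The whole argument is pure bookkeeping about which letters survive $\St$, so there is no real obstacle. The only point deserving a line of care is the treatment of the degenerate values of $M$ — a permutation input, where $M=0$, and an all-zero output, where $M=n$ — which is why the conventions "$\max\emptyset=0$" and "$0_n\in\Stell_n$" should be made explicit; once they are, every case is handled by the same one-line computation.
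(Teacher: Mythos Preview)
Your proof is correct and follows exactly the same route as the paper: the paper records, immediately before the lemma, that $\St(r)\in\Stell_n$ for every $r$ and that $\St(s)=s$ for every $s\in\Stell_n$, and declares the lemma proved; you simply spell out why these two facts hold and why they combine to give idempotence. Your explicit attention to the boundary conventions (taking $M(r)=0$ when $r$ is a permutation, and including $0_n$ in $\Stell_n$) is a useful addition, since the paper leaves these implicit.
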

\begin{proposition}\label{prop-prod-compat}
  Denote $\St^0:\Rnz\to\Rnz$ the map corresponding to $\St$ in $\Rnz$, that
  is $\St^0(\pi_r) := \pi_{\St(r)}$.  Then $\St^0$ is compatible with the
  product of $\Rnz$, namely for any $r,s\in R_n$
  \begin{equation}
    \St^0(\St^0(\pi_r)\St^0(\pi_s)) =  \St^0(\pi_r\pi_s).
  \end{equation}
  As a consequence, there is a unique monoid structure on
  $\Snz:=\St^0(\Rnz)=\{\pi_s\suchthat s\in\Stell_n\}$ such that
  such that $\St^0:\Rnz\to\Snz$ is a surjective monoid morphism.
\end{proposition}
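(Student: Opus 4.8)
The plan is to reduce the displayed identity to a statement about the natural right and left actions of $\Rnz$ on $R_n$, then to the action of a single generator, and finally to deduce the ``as a consequence'' part by the standard retraction argument. First I would use Corollary~\ref{action_mulr_R0} to identify $\pi_r$ with $r$ and the right action on $R_n$ with right multiplication, and Proposition~\ref{action_mull_R0} to identify the left action with left multiplication; under these identifications $\St^0(\pi_r)=\pi_{\St(r)}$ says exactly that $1_n\cdot\St^0(z)=\St(1_n\cdot z)$ and $\St^0(z)\cdot 1_n=\St(z\cdot 1_n)$ for all $z\in\Rnz$. Since $z\mapsto 1_n\cdot z$ and $z\mapsto z\cdot 1_n$ are bijections onto $R_n$, the displayed identity would follow from the two one-sided compatibilities
\begin{equation}\label{eq-st-onesided}
  \St^0(\St^0(z)\,w)=\St^0(zw)\qandq\St^0(z\,\St^0(w))=\St^0(zw)\qquad(z,w\in\Rnz),
\end{equation}
because then $\St^0(\St^0(\pi_r)\St^0(\pi_s))=\St^0(\pi_r\,\St^0(\pi_s))=\St^0(\pi_r\pi_s)$; and applying $1_n\cdot(-)$ to the first identity in \eqref{eq-st-onesided} and $(-)\cdot 1_n$ to the second turns them into
\begin{equation}\label{eq-st-rook}
  \St(\St(r)\cdot w)=\St(r\cdot w)\qandq\St(w\cdot\St(r))=\St(w\cdot r)
\end{equation}
for all rooks $r$ and all $w\in\Rnz$ (right action in the first, left action in the second). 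An easy induction on the length of a word for $w$, peeling off the outermost letter and using the idempotence $\St\circ\St=\St$, then reduces each identity in \eqref{eq-st-rook} to the case where $w$ is one generator $\pi_k$.

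Thus the technical heart is to prove, for every rook $r$ and every $k\in\{0,\dots,n-1\}$, that $\St(\St(r)\cdot\pi_k)=\St(r\cdot\pi_k)$ and $\St(\pi_k\cdot\St(r))=\St(\pi_k\cdot r)$. Write $M\eqdef M(r)$; by definition $M\notin\supp(r)$ and $\{M+1,\dots,n\}\subseteq\supp(r)$, so $\St(r)$ is obtained from $r$ by keeping in place the letters $>M$ and replacing every other entry (including the original zeros) by $0$, and $\supp(\St(r))=\{M+1,\dots,n\}$. For the right action and $k\ge 1$ I would check the stronger fact that $\St$ intertwines $\pi_k$, i.e.\ $\St(r)\cdot\pi_k=\St(r\cdot\pi_k)$: both sides keep the letters $>M$ in place after the same possible transposition of positions $k,k+1$, since truncating to the letters $>M$ does not change which way the comparison between positions $k$ and $k+1$ goes once at most one of $r_k,r_{k+1}$ is $>M$. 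For $k=0$, right multiplication by $\pi_0$ replaces $r_1$ by $0$, and one splits into the case $r_1\le M$ (then $\St(r)_1=0$ and $M(r\cdot\pi_0)=M$, so both members equal $\St(r)$) and the case $r_1=a>M$ (then $M(\St(r)\cdot\pi_0)=M(r\cdot\pi_0)=a$, and both members equal the rook obtained from $r$ by zeroing position $1$ and every letter $\le a$). The left identity is checked the same way from Definition~\ref{def_left_action} and Lemma~\ref{left_action}, through the trichotomy $k+1\le M$ (then $\pi_k$ fixes $\St(r)$ and $M(\pi_k\cdot r)=M$), $k=M$ (then $\pi_k$ turns the letter $M+1$ into $M$ in both $r$ and $\St(r)$, raising $M$ to $M+1$ on both sides), and $k\ge M+1$ (then $\pi_k$ acts identically on $r$ and on $\St(r)$, since it only transposes or decrements letters that are all $>M$), plus the easy case $k=0$. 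I expect this finite case analysis to be the only place where genuine work is needed; the rest is bookkeeping.

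For the ``as a consequence'' part I would invoke the standard argument for retractions. Given \eqref{eq-st-onesided} and $\St^0\circ\St^0=\St^0$, define $a*b\eqdef\St^0(ab)$ on $\Snz=\St^0(\Rnz)$. Associativity is immediate from \eqref{eq-st-onesided} together with $\St^0(a)=a$ and $\St^0(c)=c$ for $a,c\in\Snz$: $(a*b)*c=\St^0(\St^0(ab)c)=\St^0(abc)=\St^0(a\,\St^0(bc))=a*(b*c)$. The unit is $\pi_{1_n}\in\Snz$, since $\St(1_n)=1_n$. Then $\St^0(x)*\St^0(y)=\St^0(\St^0(x)\St^0(y))=\St^0(xy)$ shows $\St^0:\Rnz\to\Snz$ is a surjective monoid morphism, and any monoid product $\cdot'$ on $\Snz$ for which $\St^0$ is a morphism must satisfy $a\cdot'b=\St^0(a)\cdot'\St^0(b)=\St^0(ab)=a*b$, which gives uniqueness.
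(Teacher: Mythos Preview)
Your proof is correct and follows essentially the same route as the paper: reduce the compatibility to the two one-sided identities $\St^0(\St^0(z)w)=\St^0(zw)$ and $\St^0(z\,\St^0(w))=\St^0(zw)$, check these on single generators via the explicit description of the left and right actions (Definitions~\ref{def_Ro_fun} and~\ref{def_left_action}), and then deduce the monoid structure on $\Snz$. The paper phrases the last step as ``$\equiv$ is a monoid congruence'' while you use the equivalent retraction formulation; and where the paper simply says the generator identities are ``easily checked'', you supply the case analysis in full.
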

\begin{proof}
  It is sufficient to show that for any $i\in\interv{0}{n-1}$ and any $r\in
  R_n$ one has
  \begin{equation}
    \St^0(r\cdot\pi_i)=\St^0(\St^0(r)\cdot\pi_i)
    \qandq
    \St^0(\pi_i\cdot r)=\St^0(\pi_i\cdot \St^0(r))\,.
  \end{equation}
  Indeed these equalities means that the relation $\equiv$ defined by $r\equiv
  s$ if and only if $\St^0(r)=\St^0(s)$ is a monoid congruence. They are
  easily checked on the definition of the left and right
  action~(Definitions~\ref{def_Ro_fun} and~\ref{def_left_action}).
\end{proof}
We now explicit the left and right multiplication of the generator in $\Snz$:
\begin{proposition}
  We denote $\ov\pi_i:=\St^0(\pi_i)$. Then $\Snz$ is generated by
  $\{\ov\pi_i\suchthat 0\leq i<n\}$. And for $i\in\interv{1}{n-1}$ and $s=(s_1
  \dots s_n)\in\Stell_n$, one has $\pi_s\ov\pi_i=\pi_s\pi_i$ and $\pi_{(s_1 \dots
    s_n)}\cdot\ov\pi_0=\pi_u$ where $u$ is the vector obtained by replacing
  all the element less or equal to $s_1$ by $0$ in $s$.

  On the left, the product is given by $\ov\pi_i\pi_{(s_1\dots i+1\dots s_n)}
  = \pi_{(s_1\dots 0\dots s_n)}$ if $i\notin s$ and $i+1\in s$ and
  $\ov\pi_i\pi_r=\pi_i\pi_r$ in all the other cases.
\end{proposition}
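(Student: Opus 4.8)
The plan is to deduce all four assertions from the single structural fact already established, namely that $\St^0\colon\Rnz\to\Snz$ is a surjective monoid morphism (Proposition~\ref{prop-prod-compat}) which, since $\St(s)=s$ for $s\in\Stell_n$, restricts to the identity on $\Snz=\St^0(\Rnz)=\{\pi_s\mid s\in\Stell_n\}$. The generation statement is then immediate: $\Rnz\simeq G_n^0$ is generated by $\pi_0,\dots,\pi_{n-1}$ (Corollary~\ref{relRn0pi0}), so its image under the surjection $\St^0$ is generated by the elements $\St^0(\pi_i)=\ov\pi_i$.

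For the multiplication rules, the computational heart is the identity, valid for every $s\in\Stell_n$ and every generator index $i$,
\[
  \pi_s\,\ov\pi_i \;=\; \St^0(\pi_s\pi_i)\;=\;\St^0(\pi_{s\cdot\pi_i})\;=\;\pi_{\St(s\cdot\pi_i)},
\]
where I use that $\St^0$ is a morphism fixing $\pi_s$, then Corollary~\ref{action_mulr_R0}, then the definition $\St^0(\pi_r)=\pi_{\St(r)}$; the mirror identity $\ov\pi_i\,\pi_s=\pi_{\St(\pi_i\cdot s)}$ follows the same way from Proposition~\ref{action_mull_R0} and the left action of Definition~\ref{def_left_action}. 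So everything reduces to computing $\St(s\cdot\pi_i)$ and $\St(\pi_i\cdot s)$ for a stellar $s$, which I would do by tracking the set of nonzero letters: recall that for $s\in\Stell_n$ this set has the form $\{k+1,\dots,n\}$ and the largest letter missing from $s$ is $k$.

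The routine cases are those where the action of $\pi_i$ does not change the set of nonzero letters. For $i\ge 1$ the right action $s\cdot\pi_i$ only swaps two adjacent entries (or fixes $s$), so $s\cdot\pi_i\in\Stell_n$, $\St$ acts trivially, and $\pi_s\ov\pi_i=\pi_s\pi_i$; similarly on the left, when $i+1\notin s$, or $\{i,i+1\}\subseteq s$, or $i=0$, the rook $\pi_i\cdot s$ is again stellar, so $\ov\pi_i\pi_s=\pi_i\pi_s$. For $\ov\pi_0$ on the right one has $s\cdot\pi_0=(0\,s_2\cdots s_n)$; if $s_1\neq 0$ then $s_1$ becomes the largest missing letter, so $\St$ zeroes every entry $<s_1$, i.e.\ exactly the entries $\le s_1$ once $s_1$ itself has become $0$ in position $1$, which is the rook $u$ of the statement (the case $s_1=0$ being trivial). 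The one genuinely non-trivial case — which I expect to be the main point to get right — is $\ov\pi_i$ on the left with $i\ge1$, $i\notin s$ and $i+1\in s$: stellarity forces $i+1$ to be the smallest nonzero letter, i.e.\ $i=k$, so $\pi_i\cdot s$ replaces $i+1$ by $i$, its set of nonzero letters becomes $\{k,k+2,\dots,n\}$, and the largest missing letter jumps to $k+1=i+1$; hence $\St$ zeroes the unique entry equal to $i$, which sits in the former position of $i+1$, producing precisely $s$ with $i+1$ replaced by $0$, that is $(s_1\cdots 0\cdots s_n)$. Assembling these cases with the displayed identity yields all the stated formulas.
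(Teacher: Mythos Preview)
Your proof is correct and is essentially the natural argument; the paper in fact states this proposition without proof, and your write-up supplies exactly the verification one would expect, reducing everything to the identity $\pi_s\,\ov\pi_i=\St^0(\pi_s\pi_i)=\pi_{\St(s\cdot\pi_i)}$ (and its left analogue) and then checking the few combinatorial cases on stellar rooks. The only case requiring care, the left action with $i\notin s$ and $i+1\in s$, you handle correctly by observing that stellarity forces $i=k$ so that $M(\pi_i\cdot s)=k+1$ and $\St$ kills the newly created letter $k$.
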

We can moreover give a presentation for this new monoid:
\begin{theorem}\label{pres_stellar}
  The stellar monoid $\Snz$ is the quotient of the rook monoid by the
  relations
  \begin{equation}
    \pi_i\pi_{i-1}\dots\pi_1\pi_0\pi_i\ \equiv\ \pi_i\pi_{i-1}\dots\pi_1\pi_0
    \tag{ST}\label{ST}
  \end{equation}
  for $i < n-1$.
\end{theorem}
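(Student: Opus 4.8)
The plan is to show that the congruence $\equiv$ on $\Rnz$ defined by $\pi_r\equiv\pi_s$ iff $\St^0(\pi_r)=\St^0(\pi_s)$ is exactly the congruence generated by the relations \eqref{ST}. Since by Proposition~\ref{prop-prod-compat} we already know that $\St^0:\Rnz\to\Snz$ is a surjective monoid morphism and $\Snz=\Rnz/{\equiv}$, it suffices to prove two things: first, that each relation \eqref{ST} holds in $\Snz$; second, that the relations \eqref{ST} together with the relations of $\Rnz$ (Corollary~\ref{relRn0pi0}) present a monoid of the right cardinality, namely $|\Stell_n|$.

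First I would check that \eqref{ST} holds in $\Snz$. Using the right action, the word $w\eqdef\pi_i\pi_{i-1}\dots\pi_1\pi_0$ is nothing but the column $\col{i}{0}$ from the notation preceding Definition~\ref{def_word_code}; acting on the identity $1_n$ it produces a rook beginning $0\,r_2\dots r_n$ in which the letters $1,\dots,i$ have all been pushed to the front and killed, so the resulting rook $r$ satisfies $\{1,\dots,i\}\cap\supp(r)=\emptyset$. Actually for our purposes it is cleaner to argue directly on $\Snz$: for a general element $\pi_r\in\Rnz$, the rook $r\cdot w$ has its first $i$ positions replaced by zeros in increasing order of what was there, hence (by the description of the action) $i\notin (r\cdot w)$ for all values $\le i$ that were present; applying $\pi_i$ on the right to $r\cdot w$ can then only move an $i+1$ past a $0$, which does not change $\St$ of the result because $M$ of $r\cdot w$ is already $\ge i$, so all letters $\le i$ are already killed by $\St^0$. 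This gives $\St^0(\pi_r\cdot w\cdot\pi_i)=\St^0(\pi_r\cdot w)$ for all $r$, which is exactly \eqref{ST} in $\Snz$.

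The main obstacle is the second part: showing that no further relations are needed, i.e. that the monoid $Q$ presented by the relations of Corollary~\ref{relRn0pi0} plus \eqref{ST} has at most $|\Stell_n|$ elements. The strategy is to produce, for each $s\in\Stell_n$, a canonical word, and to show that \eqref{ST} together with the braid/idempotent relations suffices to rewrite an arbitrary canonical word $\pi_{\code(r)}$ into the canonical word for $\St(r)$. Concretely, recall $\Stell_n=\SG n\cdot(0\dots0\,k\dots n)$, so an element of $\Stell_n$ is determined by a pair (a value $k\in\interv{1}{n}$, a permutation of $\{k,\dots,n\}$ placed in the last $n-k+1$ positions), and $|\Stell_n|=\sum_{k=1}^n (n-k+1)! \cdot \binom{?}{}$ — more precisely $|\Stell_n|=1+\sum_{k=1}^{n}(n-k+1)!\,\big/\,$ counting; in any case the key combinatorial identity is that $\St$ collapses $R_n$ onto exactly this set. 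For the rewriting: given $r$ with $M(r)=m$, I would use Theorem~\ref{actionsurcode} to bring $\pi_r$ to $\pi_{\code(r)}$, then use \eqref{ST} at index $m-1$ (or the appropriate index) to kill the block of letters $1,\dots,m-1$; one checks that $\pi_{\code(r)}$ contains a left factor congruent, via braid relations only, to a word of the form $\un{a}\cdot\pi_{m-1}\pi_{m-2}\dots\pi_0\cdot\pi_{j}$ with $\un{a}$ commuting appropriately, so that \eqref{ST} applies and deletes exactly what $\St$ deletes. Iterating (the value $M$ can only increase or the word shortens) terminates at the canonical word of $\St(r)$, and distinct elements of $\Stell_n$ remain distinct because $\St^0$ realizes them as genuinely different functions on $R_n$. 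Hence $|Q|\le|\Snz|$, and combined with the surjection $Q\twoheadrightarrow\Snz$ we get $Q\simeq\Snz$.

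I expect the delicate point to be the bookkeeping in the rewriting step: one must verify that after reducing to the canonical word, the sub-word encoding "positions holding letters $\le m$ together with the zeros before the first zero" can always be commuted (via \eqref{RB2}, \eqref{RB4}) into the shape required to apply \eqref{ST}, and that the leftover (the part encoding letters $>m$ and their relative order) is untouched. This is of the same flavour as — but simpler than — the big computation in the proof of Theorem~\ref{actionsurcode}, since here we only need to create \emph{one} instance of the left-hand side of \eqref{ST} rather than track $m$ through an arbitrary multiplication. A clean way to organize it is by induction on the number of nonzero letters $\le M(r)$ that still need to be killed, peeling them off one at a time from the front using one application of \eqref{ST} per letter.
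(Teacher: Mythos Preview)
Your overall strategy matches the paper's: verify that \eqref{ST} holds in $\Snz$, then produce a set of canonical words in the quotient and show its cardinality is exactly $|\Stell_n|$. So the architecture is right.

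Where you diverge is in the execution of the rewriting step, and this is where your proposal is genuinely weaker. You plan to locate, inside $\pi_{\code(r)}$, a factor of the shape $\pi_{m-1}\dots\pi_0\pi_j$ (via braid commutations) so that \eqref{ST} can be applied on the \emph{right}. You correctly anticipate this will be ``delicate bookkeeping''. The paper sidesteps that difficulty entirely by first deriving from \eqref{ST} the \emph{left}-absorption relation
\[
  \pi_j\,\pi_i\pi_{i-1}\cdots\pi_1\pi_0\ \equiv\ \pi_i\pi_{i-1}\cdots\pi_1\pi_0
  \qquad(0\le j\le i),
\]
which is a short computation using only \eqref{R1}--\eqref{R4} and one instance of \eqref{ST}. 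With this in hand, the canonical form drops out immediately from the $R$-code: if $m=\max\{i:\sfc_i\le0\}$ (with $m=0$ if no such $i$), then every column $\col{k-1}{\sfc_k}$ with $k\le m$ is a product of generators $\pi_j$ with $j<m$, and all of these are absorbed on the left of $\col{m-1}{0}$. Thus
\[
  \pi_r\ \equiv\ \col{m-1}{0}\,\col{m}{\sfc_{m+1}}\cdots\col{n-1}{\sfc_n}\,,
\]
and counting such ``stellar canonical words'' gives $\sum_{m=0}^n n!/m! = |\Stell_n|$ on the nose. No braid-relation gymnastics are needed, and the inductive peeling you describe (on the number of small nonzero letters still to be killed, governed by $M(r)$) is replaced by a single wholesale absorption governed by the code index $m$.

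Two smaller points: your verification that \eqref{ST} holds in $\Snz$ is correct in spirit but a bit roundabout; it is cleaner to compute the left action of both sides on $1_n$ and observe that $\St$ of the two results coincide (both have all letters $\le i+1$ missing). And your attempted formula for $|\Stell_n|$ is garbled; the clean count is $\sum_{m=0}^n n!/m!$, which is exactly what the stellar canonical words enumerate.
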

In order to prove the theorem, we need two preliminary lemmas.
\begin{lemma}
  Relation~\ref{ST} holds in $\Snz$.
\end{lemma}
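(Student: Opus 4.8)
The plan is to check the relation directly on the faithful right action of $\Rnz$ on $R_n$, using the description of $\St^0$ from Proposition~\ref{prop-prod-compat}. Recall that $\St^0\colon\Rnz\to\Snz$ is a monoid morphism with $\St^0(\pi_r)=\pi_{\St(r)}$ and $\ov\pi_j=\St^0(\pi_j)$, so Relation~\ref{ST} holds in $\Snz$ exactly when
\[
  \St^0(\pi_i\pi_{i-1}\cdots\pi_1\pi_0\pi_i)=\St^0(\pi_i\pi_{i-1}\cdots\pi_1\pi_0)
\]
for every $i<n-1$ (the case $i=0$ being just idempotency of $\ov\pi_0$). By Corollary~\ref{action_mulr_R0} the product of $\Rnz$ is the right action on rook vectors; writing $u\eqdef 1_n\cdot(\pi_i\pi_{i-1}\cdots\pi_1\pi_0)$, and using $1_n\cdot(\pi_i\cdots\pi_0\pi_i)=u\cdot\pi_i$, the displayed identity reduces to the single equality of rooks $\St(u\cdot\pi_i)=\St(u)$.

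First I would compute $u$. Starting from $1_n=(1,2,\dots,n)$, each letter $\pi_k$ occurring in $\pi_i\pi_{i-1}\cdots\pi_1$ acts by an honest transposition, since the adjacent pair it sees is always increasing; the net effect is to bubble the entry $i+1$ leftwards to position $1$, and the final $\pi_0$ then replaces it by $0$. Hence
\[
  u=(0,1,2,\dots,i,\,i+2,i+3,\dots,n),
\]
where the hypothesis $i<n-1$ guarantees that the increasing tail $i+2,\dots,n$ is nonempty and that $i+1$ is the unique value missing from $u$. Applying one further $\pi_i$ only permutes entries within the first $i+1$ positions (it exchanges positions $i$ and $i+1$), so $u\cdot\pi_i$ still has the letters $\{0,1,\dots,i\}$ filling its first $i+1$ slots and $i+2,\dots,n$ in increasing order afterwards; in particular $M(u)=M(u\cdot\pi_i)=i+1$.

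It then remains to unwind Definition~\ref{def-st}. Since $M(u)=i+1$, the rook $\St(u)$ is obtained by replacing with $0$ every letter strictly smaller than $i+1$, that is the block $1,\dots,i$ sitting in positions $2,\dots,i+1$, which yields $\St(u)=(0^{i+1},\,i+2,i+3,\dots,n)$. The same letters $1,\dots,i$ get erased in $u\cdot\pi_i$ — only their positions among the first $i+1$ slots have moved — so $\St(u\cdot\pi_i)=(0^{i+1},\,i+2,\dots,n)=\St(u)$, and therefore $\St^0(\pi_i\cdots\pi_0\pi_i)=\pi_{\St(u\cdot\pi_i)}=\pi_{\St(u)}=\St^0(\pi_i\cdots\pi_0)$, which is Relation~\ref{ST} in $\Snz$. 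I do not expect any real obstacle: the only care needed is the position bookkeeping and the use of $i<n-1$, which is precisely what makes $i+1$ (rather than $n$) the maximal missing value and so keeps a well-defined nonempty tail after applying $\St$. A more pedestrian alternative would be to verify generator by generator, on all rooks, that $\pi_i\cdots\pi_0\pi_i$ and $\pi_i\cdots\pi_0$ are congruent for the congruence $\equiv$ of Proposition~\ref{prop-prod-compat}, but the short computation above is cleaner.
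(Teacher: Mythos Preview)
Your proof is correct and follows essentially the same strategy as the paper: identify each side of Relation~\ref{ST} with a rook by acting on the identity, and check that the two rooks agree after projecting by $\St$. The only cosmetic difference is that the paper phrases this via the \emph{left} action in $\Snz$ (using the rule $\ov\pi_j\cdot s$ replaces $j{+}1$ by $0$ when $j\notin s$, so $\ov\pi_i\cdots\ov\pi_1\ov\pi_0$ successively kills the values $1,2,\dots,i{+}1$ and the extra $\ov\pi_i$ merely swaps two of the doomed values), whereas you compute via the \emph{right} action in $\Rnz$ and then apply $\St$. Both routes land on the same stellar rook $(0^{\,i+1},i{+}2,\dots,n)$, and your write-up is in fact more explicit than the paper's one-line sketch.
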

\begin{proof}
  If we apply both side of Relation~\ref{ST} on the left on the identity
  rook, then $\pi_i$ exchange $i$ and $i+1$ and $\pi_i\pi_{i-1}\dots\pi_1\pi_0$
  kills all letters from $0$ to $i+1$. So both side are equal.
\end{proof}
\begin{lemma}
  In the rook monoid Relations~\ref{ST} implies the following relations:
  \begin{equation}
    \pi_j\ \pi_i\pi_{i-1}\dots\pi_1\pi_0\ \equiv\ \pi_i\pi_{i-1}\dots\pi_1\pi_0
    \tag{ST'}\label{ST'}
  \end{equation}
  for any $0\leq j\leq i < n$.
\end{lemma}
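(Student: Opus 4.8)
The plan is to prove ST' by induction on $i$. Write $w_i\eqdef\pi_i\pi_{i-1}\cdots\pi_1\pi_0$, so that $w_i=\pi_iw_{i-1}$ and ST' at level $i$ reads $\pi_jw_i\equiv w_i$ for all $0\le j\le i$. The base case $i=0$ is exactly \ref{RB1}. For the inductive step one fixes $j\le i$ and splits into cases according to the value of $j$.

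When $j=i$ one has $\pi_iw_i=\pi_i^2w_{i-1}\equiv\pi_iw_{i-1}=w_i$ by \ref{RB1}. When $j\le i-2$, \ref{RB4} lets $\pi_j$ commute past $\pi_i$, so $\pi_jw_i=\pi_i(\pi_jw_{i-1})\equiv\pi_iw_{i-1}=w_i$, the middle step being the induction hypothesis at level $i-1$ (applicable since $j\le i-1$). When $j=i-1$ and $i\ge 2$, the chain
\[
\pi_{i-1}w_i=\pi_{i-1}\pi_i\pi_{i-1}\,\pi_{i-2}\cdots\pi_0
\overset{\ref{RB2}}{\equiv}\pi_i\pi_{i-1}\pi_i\,\pi_{i-2}\cdots\pi_0
\overset{\ref{RB4}}{\equiv}\pi_i\pi_{i-1}\pi_{i-2}\cdots\pi_0\,\pi_i=w_i\pi_i
\overset{\ref{ST}}{\equiv}w_i
\]
does the job, using Relation~\ref{ST} at level $i$. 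Finally the degenerate case $j=0$, $i=1$ is handled separately: $\pi_0w_1=\pi_0\pi_1\pi_0\overset{\ref{RB3}}{\equiv}\pi_1\pi_0\pi_1\pi_0=w_1\pi_1\pi_0\overset{\ref{ST}}{\equiv}w_1\pi_0=\pi_1\pi_0^2\overset{\ref{RB1}}{\equiv}\pi_1\pi_0=w_1$. The only place Relation~\ref{ST} enters is at level $i$ itself.

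The delicate point — and the step I expect to be the main obstacle — is the very top level $i=n-1$. There the sub-cases $j=n-1$ and $j\le n-3$ above still go through verbatim (they use only \ref{RB1}, \ref{RB4} and the induction hypothesis), but the case $j=n-2$ only reduces, via the same manipulation, to $w_{n-1}\pi_{n-1}\equiv w_{n-1}$, i.e. to Relation~\ref{ST} at level $n-1$, which is not among the imposed relations. I would clear this by exploiting the description of the zero $P_n$ from Lemma~\ref{Pn}, namely $P_n\equiv w_0w_1\cdots w_{n-1}$: repeatedly absorbing left factors using the instances of ST' already established (levels $\le n-2$, and level $n-1$ for $j\le n-3$) gives $P_n\equiv w_{n-2}w_{n-1}\equiv\pi_{n-2}w_{n-1}\equiv w_{n-1}\pi_{n-1}$, so that $w_{n-1}\pi_{n-1}$ is the zero; combining this with the fact that $w_{n-1}$ itself collapses to $P_n$ in the quotient (which is really the crux, and can be read off from the two expressions for $P_n$ in Lemma~\ref{Pn} together with the action on rooks) yields $w_{n-1}\pi_{n-1}\equiv w_{n-1}$, completing the induction.
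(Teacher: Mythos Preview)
Your inductive argument for the cases $j=i$, $j\le i-2$, and $j=i-1$ is correct. The paper reaches the same conclusions by a direct case split ($j=0$; $0<j<i$; $j=i$) rather than induction on $i$: in the middle case it commutes $\pi_j$ past $\pi_i,\dots,\pi_{j+2}$, applies the braid relation, commutes the resulting $\pi_{j+1}$ all the way to the right to obtain $w_i\pi_{j+1}$, and then invokes \ref{ST} at level $j{+}1$ on the tail $w_{j+1}\pi_{j+1}$ sitting inside $w_i$. Your use of the inductive hypothesis in place of a direct appeal to \ref{ST} at a lower level is a harmless reorganisation.

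You are right that the case $i=n-1$, $j=n-2$ reduces to \ref{ST} at level $n-1$; the paper's own proof needs this too (its case $0<j<i$ with $j=n-2$ calls \ref{ST} with index $j{+}1=n-1$). This is a typo in the stated range of \ref{ST}: with ``$i<n-1$'' there would be no relations at all when $n=2$, so the quotient would still be $R_2^0$, contradicting $|\Stell_2|=5<7=|R_2|$. The intended range is $1\le i\le n-1$, and with it both arguments are complete with no special treatment of the top level.

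Your attempted workaround, by contrast, is circular. You reduce to showing $w_{n-1}\equiv P_n$ in the quotient, and justify this by appealing to Lemma~\ref{Pn} ``together with the action on rooks''. But those computations live in $R_n^0$, where $w_{n-1}=\pi_{01\cdots(n-1)}$ and $P_n=\pi_{00\cdots0}$ are distinct elements. Their coincidence modulo \ref{ST} is precisely part of what Theorem~\ref{pres_stellar} is establishing, so it cannot be used as an input to the present lemma.
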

\begin{proof}
  We distinguish three cases:
  \begin{itemize}
  \item $j = 0$. In this case, we have
    \begin{alignat*}{2}
      \pi_0\pi_i\pi_{i-1}\dots\pi_1\pi_0
      & = \pi_i\pi_{i-1}\dots\pi_2\pi_0\pi_1\pi_0&\quad&\text{(by~\ref{R4})}\\
      & = \pi_i\pi_{i-1}\dots\pi_2\pi_1\pi_0\pi_1\pi_0&&\text{(by~\ref{R3})}\\
      & \equiv \pi_i\pi_{i-1}\dots\pi_2\pi_1\pi_0\pi_0&&\text{(mod~\ref{ST} with $i=1$)}\\
      & = \pi_i\pi_{i-1}\dots\pi_2\pi_1\pi_0&&\text{(by~\ref{R1})}.
  \end{alignat*}
  \item $0 < j < i$. In this case, we have
    \begin{alignat*}{2}
      \pi_j\pi_i\pi_{i-1}\dots\pi_1\pi_0
      & = \pi_i\pi_{i-1}\dots\pi_j\pi_{j+1}\pi_j\dots\pi_1\pi_0&\quad&\text{(by~\ref{R4})}\\
      & = \pi_i\pi_{i-1}\dots\pi_{j+1}\pi_j\pi_{j+1}\dots\pi_1\pi_0&\quad&\text{(by~\ref{R2})}\\
      & = \pi_i\pi_{i-1}\dots\pi_{j+1}\pi_j\dots\pi_1\pi_0\pi_{j+1}&\quad&\text{(by~\ref{R4})}\\
      & \equiv \pi_i\pi_{i-1}\dots\pi_2\pi_1\pi_0&&\text{(mod~\ref{ST} with $i=j+1$)}.
  \end{alignat*}
  \item $j = i$. In this case, we just have to apply~\ref{R1}. \qedhere
  \end{itemize}
\end{proof}
\begin{proof}[Proof of Theorem~\ref{pres_stellar}]
  From Corollary~\ref{action_mulr_R0}, for any rook $r\in R_n$, its $R$-code
  $\sfc=(\sfc_1, \dots, \sfc_n)$ verifies (with the notation of
  Definition~\ref{def_word_code}):
  \begin{equation}
    \pi_r = \col{0}{\sfc_1}\cdot\col{1}{\sfc_2}\cdot\dots\cdot\col{n-1}{\sfc_n}\,,
  \end{equation}
  We denote $m=\max\{i\suchthat \sfc_i\leq 0\}$ with the convention that $m=0$
  if all the $\sfc_i$ are positive. Then thanks to relation~\ref{ST'} we
  know that
  \begin{equation}
    \pi_r \equiv
    \col{m-1}{0}\col{m}{\sfc_{m+1}}\cdot\dots\cdot\col{n-1}{\sfc_n}
    \qquad\text{(mod~\ref{ST})},
  \end{equation}
  where the first column is empty if $m=0$. We call \emph{stellar canonical
    word} any word appearing on the right hand side of this equation. In this
  case, the $(\sfc_i)_{i>m}$ verify $0<\sfc_i\leq i$, so that there are
  \begin{equation}
    \sum_{m=0}^n (m+1)\dots(n-1)n
    = \sum_{m=0}^n\frac{n!}{m!} = \left|\Snz\right|
  \end{equation}
  stellar canonical words. We have shown that
  each rook is congruent to a stellar canonical words modulo~\ref{ST} and that
  they are as many stellar canonical words as element of $\Snz$. As a
  consequence Relation~\ref{ST} is the only relations needed to get $\Snz$
  from $\Rnz$.
\end{proof}

\subsubsection{The stelloid lattice}

By analogy to the Rook lattice, we might wonder if the $\RR$-order or $\LL$-order
of $\Snz$ are lattices (on the contrary to rooks, they are not isomorphic). It
turns out that the $\LL$-order is a lattice. See Figure~\ref{stellohedron} for a
picture.
\begin{figure}[ht]
  \centering
    \includegraphics[width=8cm]{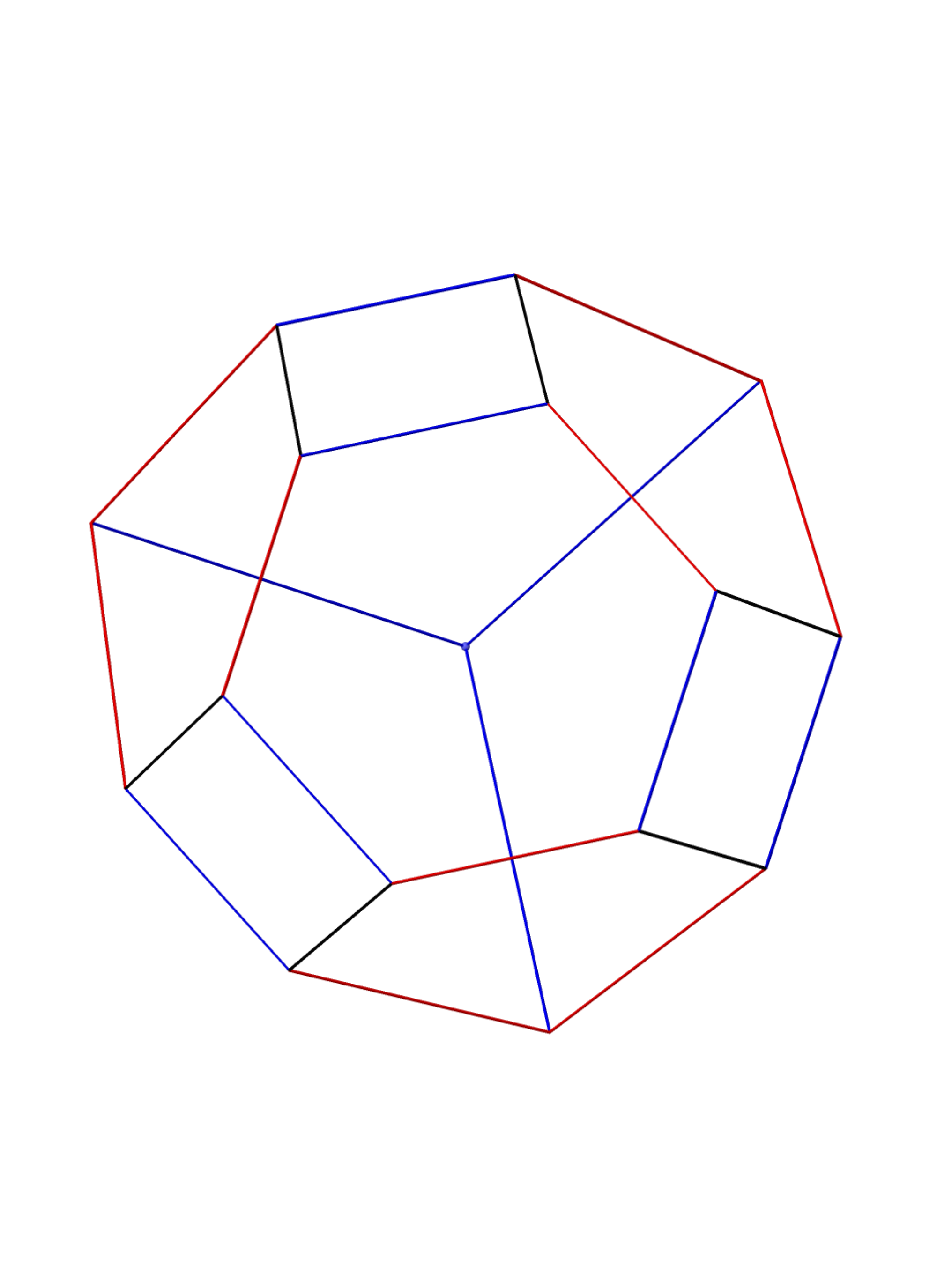}
    \includegraphics[width=6cm]{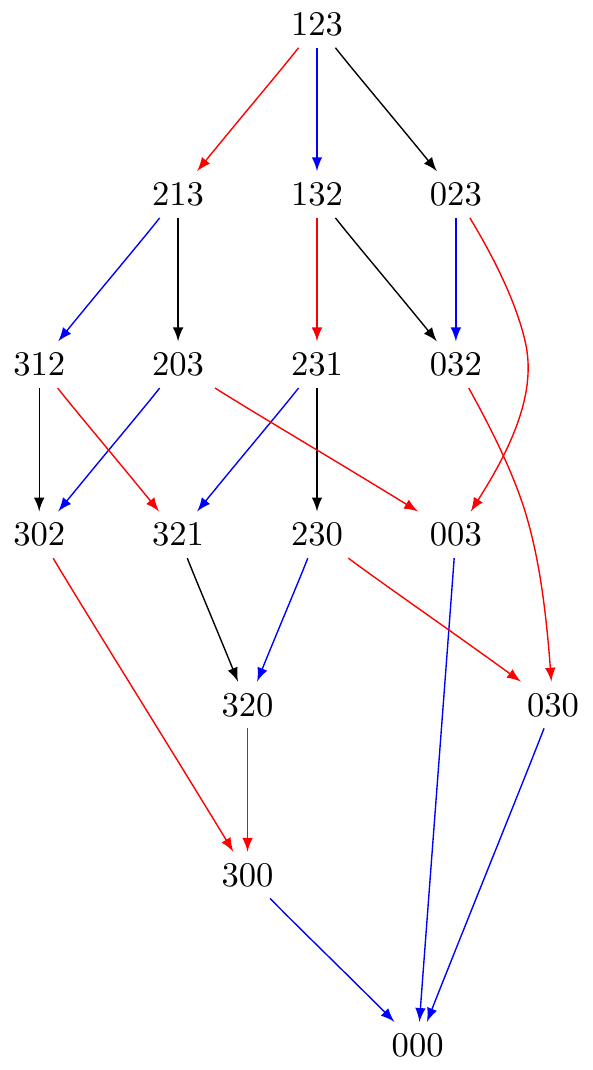}
  \caption{\label{stellohedron}The left order of $\Stell_3^0$}
\end{figure}
We will show actually a stronger result:
\begin{theorem}\label{theo-stell-sublattice}
  The $\LL$-order on $\Snz$ is a sublattice of the $\LL$-order of $\Rnz$.
\end{theorem}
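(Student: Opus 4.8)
The plan is to establish two things: \textbf{(a)} the intrinsic $\LL$-order of the monoid $\Snz$ coincides with the order induced on the subset $\{\pi_s\suchthat s\in\Stell_n\}$ by the $\LL$-order of $\Rnz$; and \textbf{(b)} this subset of $\Rnz$ is closed under the meet $\wedge_\LL$ and the join $\vee_\LL$ of the lattice $(\Rnz,\leq_\LL)$. First observe that $(\Rnz,\leq_\LL)$ really is a lattice: by Corollary~\ref{autoopp} it is anti-isomorphic to itself, hence isomorphic as a poset to $(\Rnz,\leq_\RR)$, which is a lattice by Corollary~\ref{cor-order-lattice}; moreover, by the setup of Definition~\ref{def_left_action} together with Corollary~\ref{action_mulr_R0} and Proposition~\ref{action_mull_R0}, this poset isomorphism is realized by the transpose $\pi_r\mapsto\pi_{r^t}$, so it carries $\wedge_\LL$ to $\wedge_\RR$ and $\vee_\LL$ to $\vee_\RR$. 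I will use throughout that $\St^0\colon\Rnz\to\Snz$ is a surjective monoid morphism (Proposition~\ref{prop-prod-compat}) restricting to the identity on $\{\pi_s\suchthat s\in\Stell_n\}$, and that it is \emph{order-decreasing} for $\leq_\LL$: indeed, by Lemma~\ref{left_action} the element $P_{M(r)}$ acts on the left by killing all entries $<M(r)$, so $P_{M(r)}\,\pi_r=\pi_{\St(r)}=\St^0(\pi_r)$ by Proposition~\ref{action_mull_R0}, whence $\St^0(\pi_r)\leq_\LL\pi_r$.

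For \textbf{(a)}, let $s,s'\in\Stell_n$. If $\pi_s\leq_\LL\pi_{s'}$ holds in $\Snz$, then $\pi_s=\St^0(\pi_t\pi_{s'})$ for some $t\in\Rnz$ (the product of $\Snz$ being $\St^0$ applied to the product of $\Rnz$), and therefore $\pi_s\leq_\LL\pi_t\pi_{s'}\leq_\LL\pi_{s'}$ in $\Rnz$ because $\St^0$ is decreasing. Conversely, if $\pi_s=\pi_t\pi_{s'}$ in $\Rnz$, then applying the morphism $\St^0$ and using $\St^0(\pi_s)=\pi_s$, $\St^0(\pi_{s'})=\pi_{s'}$ gives $\pi_s=\St^0(\pi_t)\cdot\pi_{s'}$ in $\Snz$, i.e.\ $\pi_s\leq_\LL\pi_{s'}$ in $\Snz$. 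So the two orders coincide, and \textbf{(b)} is exactly the statement that $\Snz$ is a sublattice of $(\Rnz,\leq_\LL)$.

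For \textbf{(b)}, transposing reduces us to showing that $\mathcal F\eqdef\{r^t\suchthat r\in\Stell_n\}$ is closed under $\wedge_\RR$ and $\vee_\RR$ in $(\Rnz,\leq_\RR)$. Since $r\in\Stell_n$ iff $\supp(r)$ is an up-interval $\interv kn$ of $\interv1n$, the set $\mathcal F$ is exactly the set of rooks whose zeros occupy an initial block of positions, i.e.\ of the shape $0\cdots0\,\sigma$ with $\sigma$ a word without $0$; equivalently, in the language of rook triples (Proposition~\ref{rook-triple-characterization}), $r\in\mathcal F$ iff $Z_r\equiv0$ on $\supp(r)$. Run the meet algorithm of Theorem~\ref{meet-r-order} on $u,v\in\mathcal F$: every value $Z_u(i)$ or $Z_v(i)$ entering the formula for $Z(x)$ equals $0$ (by the convention when $i\notin\supp(u)$, and because $u\in\mathcal F$ when $i\in\supp(u)$, and likewise for $v$), so $Z\equiv0$ and $u\meR v\in\mathcal F$. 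Running the join algorithm of Theorem~\ref{join-r-order} on $u,v\in\mathcal F$, each $Z_u(i),Z_v(i)$ is now $0$ or $+\infty$, so \emph{a priori} $Z(x)\in\{0,+\infty\}$ for $x\in S$; the point is that $+\infty$ never occurs, i.e.\ that for each $x\in S$ there is an index $i\in\supp(u)\cup\supp(v)$ with $i=x$ or $(x,i)\in\Delta\setminus I$. This is where $S$ being the \emph{smallest} dual $I_0$-compatible superset of $\supp(u)\cup\supp(v)$ is used, together with the transitivity of $\Delta\setminus I_0$: following a descending $(\Delta\setminus I_0)$-chain from $x$ down into $\supp(u)\cup\supp(v)$ and collapsing it by transitivity produces such an $i$ (using $\Delta\setminus I_0\subseteq\Delta\setminus I$). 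Hence $Z\equiv0$ on $S$, so $u\jnR v\in\mathcal F$, and \textbf{(b)} follows, proving the theorem.

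I expect the main obstacle to be precisely this last propagation step on the join side; the meet side is immediate, step \textbf{(a)} is formal, and the remaining subtlety is to recognize that the transpose identifies $\Stell_n$ not with itself but with the ``initial-zeros'' rooks $\mathcal F$, for which the $Z$-coordinate of a rook triple collapses identically to $0$.
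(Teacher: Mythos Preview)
Your proof is correct and follows essentially the same approach as the paper: transpose to the $\RR$-side, recognize that the transposes of stellar rooks are exactly those with $Z\equiv 0$, and then read off closure from the explicit meet/join formulas of Theorems~\ref{meet-r-order} and~\ref{join-r-order}. You are in fact more careful than the paper on two points it glosses over---your step~\textbf{(a)} checking that the intrinsic $\LL$-order on the quotient $\Snz$ agrees with the induced order, and your handling of the $+\infty$ convention on the join side via dual $I_0$-compatibility and the transitivity of $\Delta\setminus I_0$---both of which are genuine (if small) lacunae in the paper's two-line argument.
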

\begin{proof}
  We need to conjugate all the rooks to pass to the left order. The conjugate
  of a stellar rook $r$ is a rook such that all the zeroes are at the
  beginning. Equivalently this means that in its rook triple $(S_r, I_r,
  Z_r)$, the $Z_r$ function is the zero function. Now looking at the algorithm
  for computing the meet and join of two rooks, we have that
  \begin{alignat}{1}
    Z_{u\meR v}(x) &:=
    \max\{Z_u(i), Z_v(i) \mid i = x \text{ or } (x, i)\in I_{u\meR v} \}\,, \\
    Z_{u\jnR v}(x) &:=
    \min\{Z_u(i), Z_v(i) \mid i = x \text{ or } (x, i)\in\Delta\setminus I_{u\meR v}\}\,.
  \end{alignat}
  As a consequence both $Z_{u\meR v}$ and $Z_{u\jnR v}$ are zero functions so
  that $u\meR v$ and $u\jnR v$ are conjugate stellar rooks too.
\end{proof}
\begin{remark}
  The preimage of the stellar rook~$300$ is~$\{300, 301, 310\}$ which is not a
  interval of the $\LL$-order. As a consequence, $\St^0$ can't be a lattice
  morphism and the $\LL$-order of $\Snz$ is a not a lattice quotient of
  the $\LL$-order of $\Rnz$.
\end{remark}

\subsubsection{Higher order Stelloid monoid and lattices}

The proofs of the two previous theorem makes it clear that the $\Snz$ monoid
together with its $\LL$-lattice is a particular case of a more general
construction: for $k\geq0$, define $\St_k$ the map from rooks to rooks which
replace by $0$ all the letter $i$ such that there is $k$ or more missing
letter larger that $i$ (the usual $\St$ map is the case $k=1$). For example
$\St_2(3057016)=(3057006)$ and $\St_2(3407016)=(3407006)$. Also,
$\St_i\circ\St_j=\St_{\max(i,j)}$. So that for all $n$, we have the inclusion
of sets:
\begin{equation}\label{seq-set-st-k}
  \{0^n\}=\St_0(R_n)\subset\St_1(R_n)\subset\St_2(R_n)
  \subset\dots\subset\St_n(R_n)=R_n\,.
\end{equation}
\begin{figure}[ht]
  \centering
    \includegraphics[height=8cm]{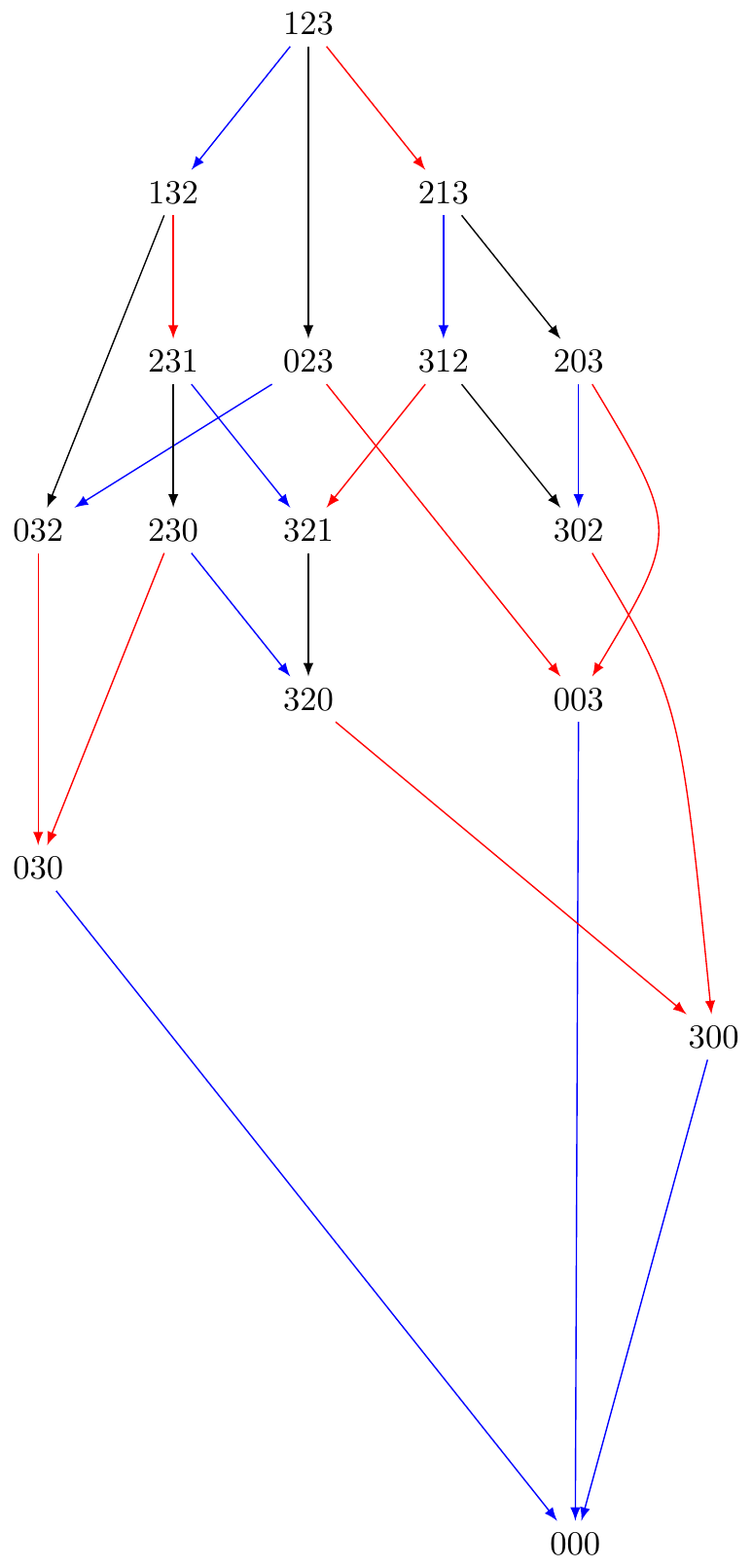}\qquad\quad\
    \includegraphics[height=8cm]{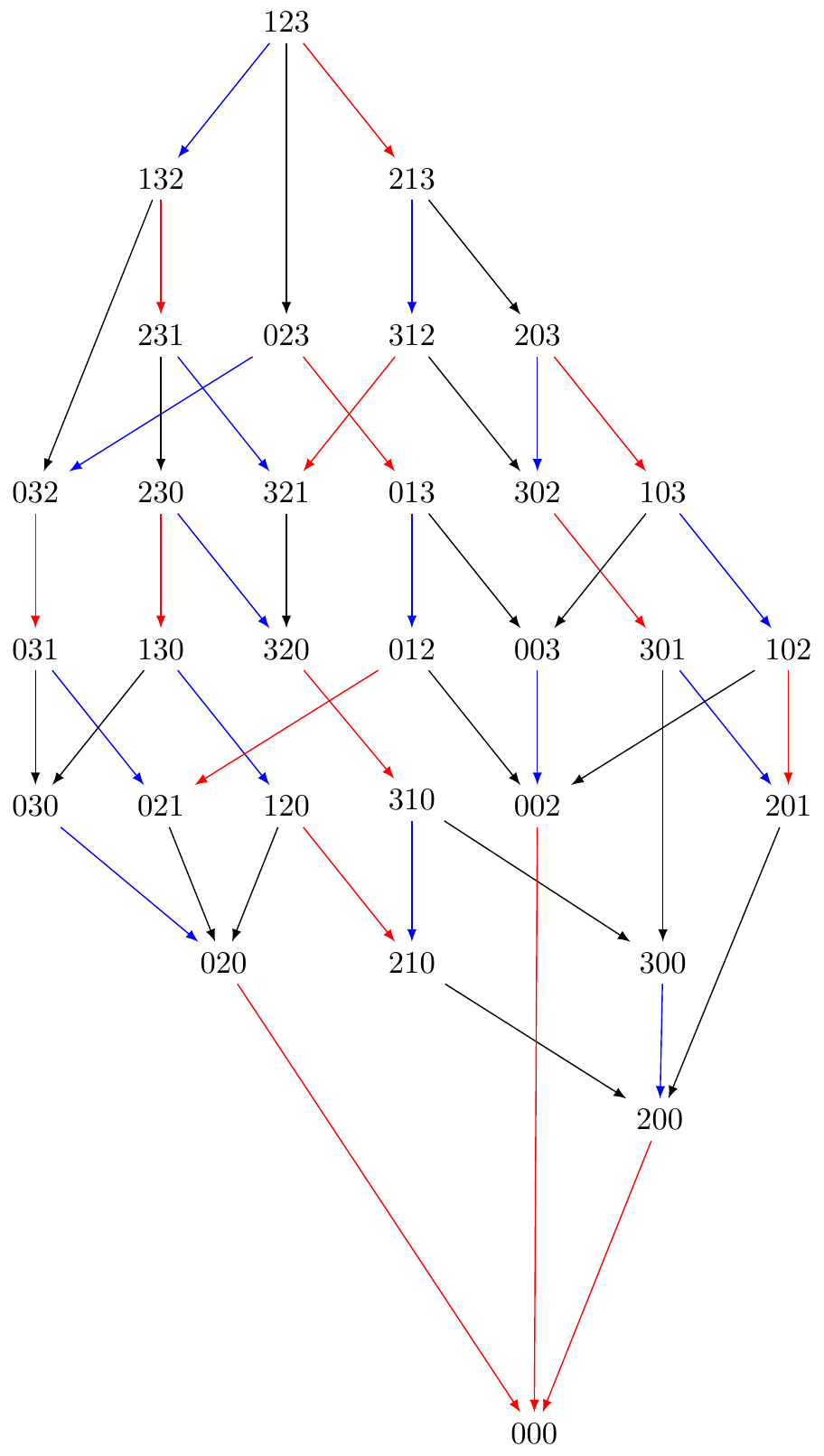}\qquad
    \includegraphics[height=8cm]{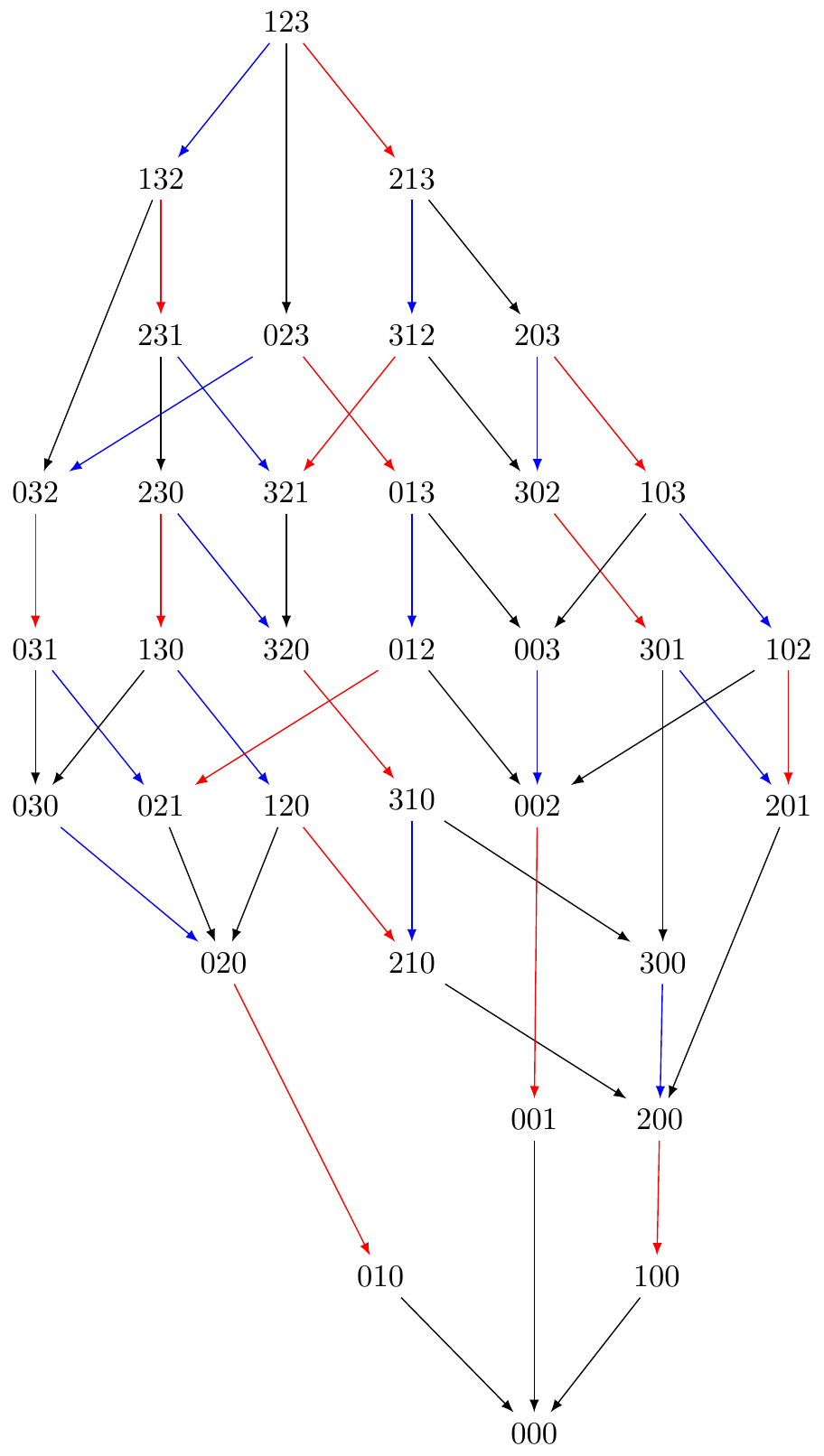}\\
    \includegraphics[width=5cm, trim={3cm 6cm 3cm 6cm}, clip]{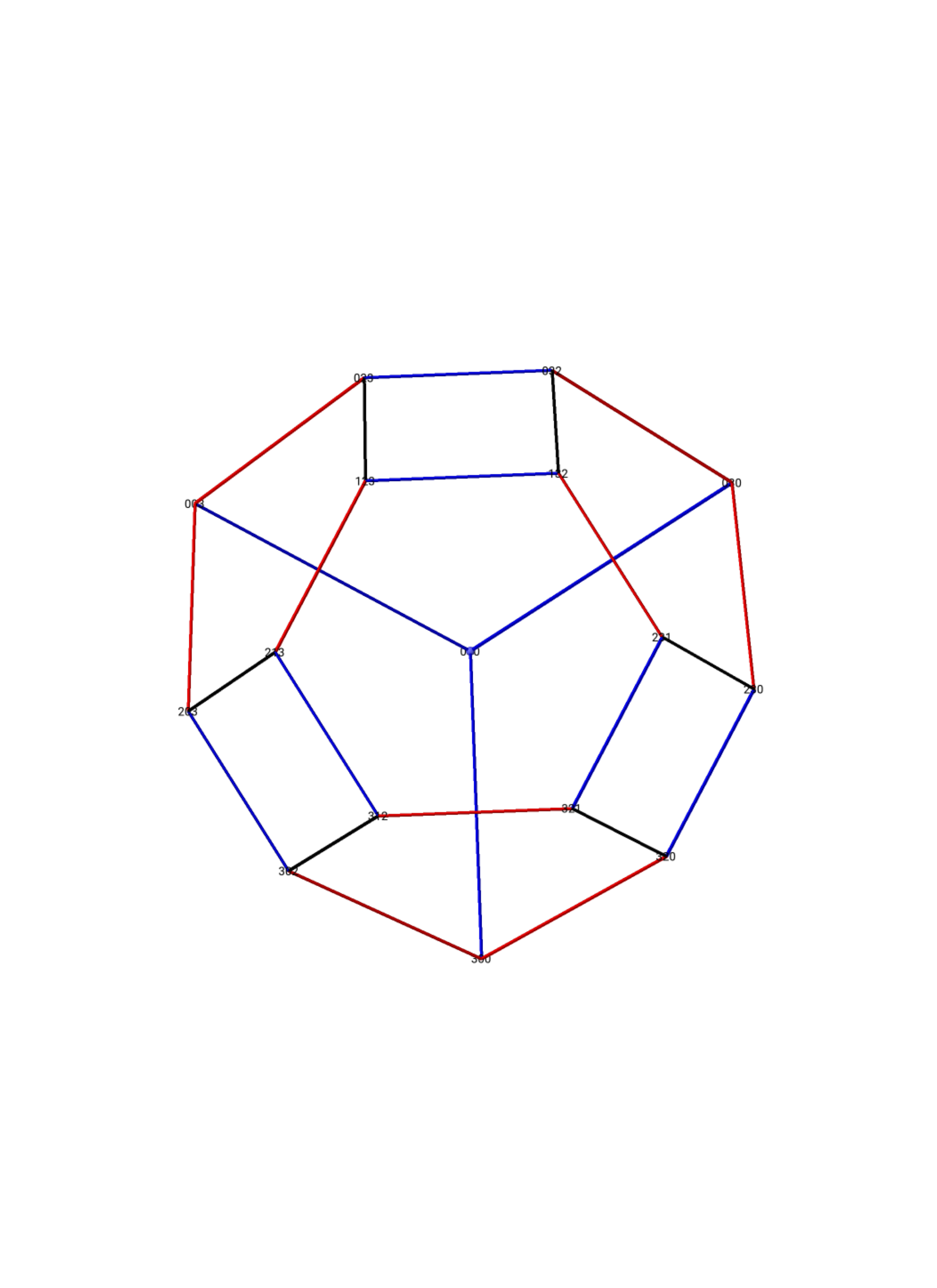}
    \includegraphics[width=5cm, trim={3cm 6cm 3cm 6cm}, clip]{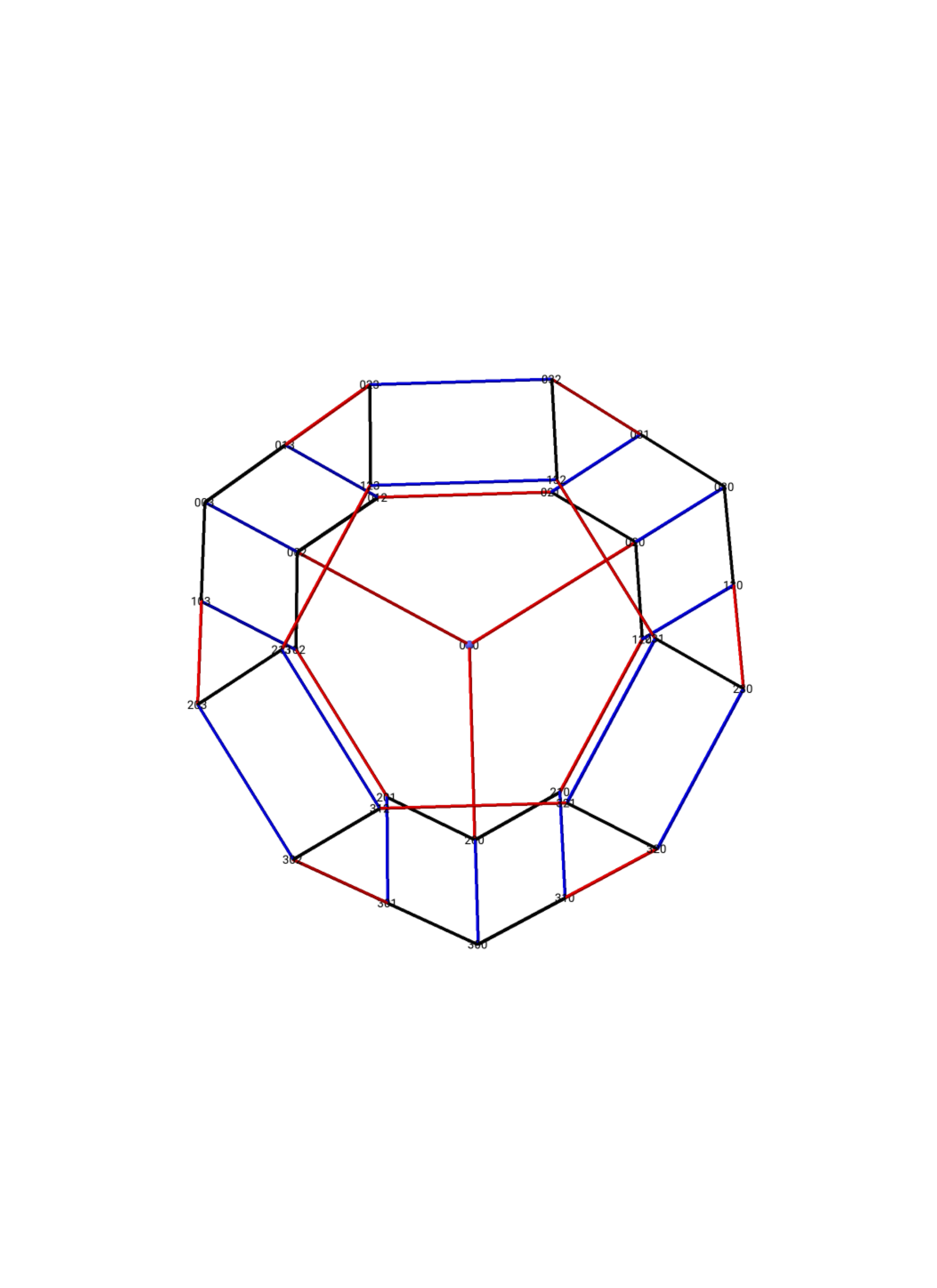}
    \includegraphics[width=5cm, trim={3cm 6cm 3cm 6cm}, clip]{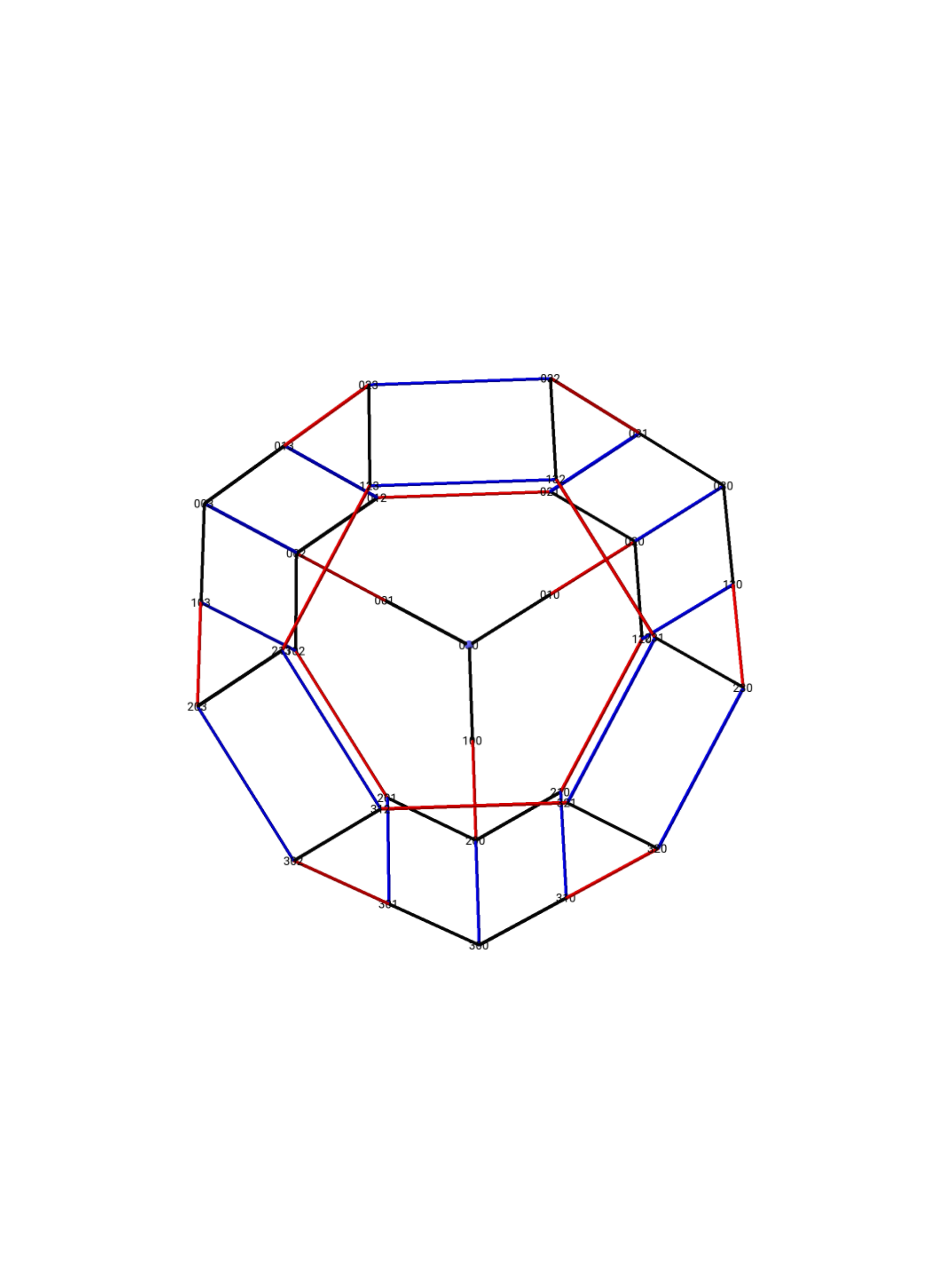}
    \caption{\label{stellohedron-lattgeom}The left order of $\St_k^0(R_3)$ for
      $k=1,2,3$}
\end{figure}

The following array give the cardinality of $\St_k(R_n)$.
\begin{equation*}
  \begin{array}{c|rrrrrrrr}
    k\backslash n& 0 & 1 & 2 &  3 &   4 &    5 &     6 &      7\\
    \hline
    0 & 1 & 1 & 1 &  1 &   1 &    1 &     1 &      1\\
    1 &   & 2 & 5 & 16 &  65 &  326 &  1957 &  13700\\
    2 &   &   & 7 & 31 & 165 & 1031 &  7423 &  60621\\
    3 &   &   &   & 34 & 205 & 1456 & 11839 & 108214\\
    4 &   &   &   &    & 209 & 1541 & 13165 & 127289\\
    5 &   &   &   &    &     & 1546 & 13321 & 130656\\
    6 &   &   &   &    &     &      & 13327 & 130915\\
    7 &   &   &   &    &     &      &       & 130922\\
  \end{array}
\end{equation*}
Then the proof of Proposition~\ref{prop-prod-compat} and
Theorem~\ref{theo-stell-sublattice} generalize to this new cases:
\begin{theorem}
  Denote $\St_k^0:\Rnz\to\Rnz$ the map corresponding to $\St_k$ in
  $\Rnz$. Then this map is a surjective monoid morphism to $\St_k^0(\Rnz)$.
  Moreover, the $\LL$-order of $\St_k^0(\Rnz)$ is a sublattice of the
  $\LL$-order of $\Rnz$.
\end{theorem}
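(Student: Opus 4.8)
The plan is to mimic, almost verbatim, the two results the theorem generalizes, namely Proposition~\ref{prop-prod-compat} (compatibility of $\St^0$ with the product) and Theorem~\ref{theo-stell-sublattice} (the sublattice property), checking at each step that the arguments used there only relied on features shared by all the $\St_k$ maps. First I would record the key combinatorial fact already noted in the excerpt: $\St_i\circ\St_j=\St_{\max(i,j)}$, so in particular each $\St_k$ is idempotent and $\St_k(R_n)$ is exactly its set of fixed points. This is what makes the congruence argument work.

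For the monoid-morphism half, I would proceed exactly as in the proof of Proposition~\ref{prop-prod-compat}: it suffices to show that for every generator $\pi_i$, $i\in\interv{0}{n-1}$, and every rook $r\in R_n$ one has
\begin{equation}
  \St_k^0(r\cdot\pi_i)=\St_k^0(\St_k^0(r)\cdot\pi_i)
  \qandq
  \St_k^0(\pi_i\cdot r)=\St_k^0(\pi_i\cdot \St_k^0(r))\,,
\end{equation}
since these identities say precisely that the relation $r\equiv s \iff \St_k^0(r)=\St_k^0(s)$ is a two-sided monoid congruence on $\Rnz$, and then $\St_k^0(\Rnz)$ carries the unique quotient monoid structure for which $\St_k^0$ is a surjective morphism. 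These identities are checked directly on the description of the left and right actions (Definitions~\ref{def_Ro_fun} and~\ref{def_left_action}): applying $\pi_i$ either permutes two adjacent entries, or lowers an entry, or replaces the entry $1$ (resp.\ the first letter) by $0$; in each case one compares, entry by entry, the effect of first applying $\St_k$ and then $\pi_i$ with that of applying $\pi_i$ and then $\St_k$, using that the set of letters killed by $\St_k$ — those with at least $k$ larger missing letters — is itself stable under the bookkeeping these actions perform on ``missing letters''. The only mildly delicate point is the $\pi_0$ / $P_1$ case, where killing a $1$ can change which letters become ``missing'', hence shrink the threshold set; but since $\St_k^0(r)$ already has all those small letters zeroed out, reapplying $\St_k$ on both sides equalizes the outcome.

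For the sublattice half, I would copy the proof of Theorem~\ref{theo-stell-sublattice}: after conjugating to pass from the $\RR$-order to the $\LL$-order, a rook lies in $\St_k^0(\Rnz)$ precisely when, in its rook triple $(S,I,Z)$, the function $Z$ records a ``stellar-of-level-$k$'' profile — concretely, $Z(x)\geq k$ forces $x\notin S$, i.e.\ every letter with at least $k$ trailing zeros is itself zero. I would then invoke the explicit formulas from Theorems~\ref{meet-r-order} and~\ref{join-r-order} for $Z_{u\meR v}$ and $Z_{u\jnR v}$: since $Z_{u\meR v}(x)=\max\{Z_u(i),Z_v(i)\mid i=x\text{ or }(x,i)\in I\}$ and dually for the join with $\min$, and since the threshold condition ``$Z(x)\geq k\Rightarrow x\notin S$'' is preserved by these $\max$/$\min$ operations (because the indices $i$ ranging over $(x,i)\in I$ or its complement all lie in the support and are controlled by $u$ and $v$ which already satisfy the condition), both the meet and the join of two elements of $\St_k^0(\Rnz)$ again satisfy it, hence belong to $\St_k^0(\Rnz)$. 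Therefore $\St_k^0(\Rnz)$ is closed under the meet and join of $\Rnz$ and is a sublattice.

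The main obstacle I expect is the careful bookkeeping in the $\pi_0$ case of the congruence check: replacing an entry by $0$ is the one operation that genuinely alters the ``missing letters'' data on which $\St_k$ depends, so one has to argue that after projecting with $\St_k^0$ on the inside, the further missing letters created by the $\pi_0$ move were already zeroed, so that the outer $\St_k$ reconciles the two sides. Everything else is a routine but tedious case analysis entirely parallel to the $k=1$ proofs already given, so I would state it as such and leave the explicit verifications to the reader, exactly as the excerpt does for Theorem~\ref{join-r-order}.
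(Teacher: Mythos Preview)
Your approach is exactly the one the paper takes: its entire proof is the single sentence ``the proof of Proposition~\ref{prop-prod-compat} and Theorem~\ref{theo-stell-sublattice} generalize to this new case,'' and you have correctly unpacked what that generalization entails. One small wording issue: your condition ``$Z(x)\geq k$ forces $x\notin S$'' is vacuous as stated (since $Z$ is only defined on $S$); what you mean, and what actually characterizes the transposes of $\St_k$-rooks, is simply $Z(x)<k$ for all $x\in S$, and with that phrasing the $\max$/$\min$ argument you outline goes through cleanly (for the join one uses that $Z_{u\jnR v}(x)$ is finite, hence at least one term in the $\min$ comes from a letter in $\supp(u)\cup\supp(v)$ and is therefore $<k$).
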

Hence Equation~\ref{seq-set-st-k} is actually a sequence of inclusion of
lattices. It also give rise to the following sequence of monoid morphisms:
\begin{equation}
  \{0^n\}=\St_0(R_n^0)\leftarrow\St_1(R_n^0)\leftarrow\St_2(R_n^0)
  \leftarrow\dots\leftarrow\St_n(R_n^0)=R_n^0\,.
\end{equation}
Figure~\ref{stellohedron-lattgeom} shows the three consecutive quotients
of~$\St_k^0(R_3)$ together with their geometric counterpart.

We finally remark that the quotient morphism are in the opposite direction of
the inclusion of lattices of Equation~\ref{seq-set-st-k}. This suggest some
kind of duality, but we haven't been able to give a formulation.
\newpage

\section{Representation theory of the \texorpdfstring{$0$}{0}-Rook monoid \texorpdfstring{$\Rnz$}{Rn0}}
\label{sec-rep-theo}

The goal of this section is to investigate the representation theory of
$\Rnz$. We write $\C[\Rnz]$ the monoid algebra of $\Rnz$. In the sequel of the
article $P_1$ will rather be denoted by $\pi_0$. Moreover, $r$ will usually
denote an element of $\Rnz$. Also, we know from Corollary~\ref{action_mulr_R0}
and Proposition~\ref{action_mull_R0} that for any $r\in \Rnz$ there is a
unique rook $\w{r}\eqdef 1_n\cdot r=r\cdot 1_n$ such that $\pi_{\w{r}} =
r$. So when there is a need to distinguish, we will denote in normal letter
$r$ the elements of the monoid and in boldface as $\w{r}$ their associated
rooks.  \bigskip

We start by summarizing the main results (in particular
Corollary~\ref{action_mulr_R0}) of the Section~\ref{sec-def} which concerns
the representations:
\begin{proposition}\label{isoRegNat}
  The maps
  \begin{equation*}
    f_R : \left|\begin{array}{ccc}
        \C [\Rnz] & \longrightarrow & \C R_n\\
        x & \longmapsto & 1_n\cdot x\,,
      \end{array}\right.
    \qandq
    f_L : \left|\begin{array}{ccc}
        \C [\Rnz] & \longrightarrow & \C R_n\\
        x & \longmapsto & x\cdot 1_n\,,
      \end{array}\right.
  \end{equation*}
  extended by linearity, are two isomorphisms of representations of $\Rnz$
  between the left and right regular representations and the
  natural one (acting on $R_n$).
\end{proposition}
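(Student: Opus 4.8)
The statement asserts that two natural maps $f_R, f_L: \C[\Rnz] \to \C R_n$ are isomorphisms of $\Rnz$-representations. The key facts established in Section~\ref{sec-def}, especially Corollary~\ref{action_mulr_R0} and Proposition~\ref{action_mull_R0}, are: (i) for every rook $r\in R_n$ there is a unique element $\pi_r\in\Rnz$ with $1_n\cdot\pi_r = r$ (and dually $\pi_r\cdot 1_n = r$ on the left); (ii) the action of $\Rnz$ on $R_n$ by right multiplication coincides, under the identification $r\leftrightarrow\pi_r$, with right multiplication inside $\Rnz$ itself, i.e.\ $\pi_r\pi_s = \pi_{r\cdot\pi_s}$; and dually on the left. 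So the plan is simply to unwind these facts.

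First I would treat $f_R$. The map $x\mapsto 1_n\cdot x$ is linear by construction (we extend the $\Rnz$-action linearly). On the basis $\{\pi_r \mid r\in R_n\}$ of $\C[\Rnz]$ it sends $\pi_r$ to $1_n\cdot\pi_r = r$ by Proposition~\ref{action1n} (reproved in Corollary~\ref{action_mulr_R0}), hence it maps the monoid basis bijectively onto the basis $R_n$ of $\C R_n$; therefore $f_R$ is a linear isomorphism. It remains to check equivariance: for the left regular representation, $\Rnz$ acts on $\C[\Rnz]$ by $a\cdot x := ax$ (left multiplication), while on $\C R_n$ it acts by $r\mapsto \pi_a\cdot r$, the \emph{left} natural action. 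Wait --- one must be careful which regular representation matches which natural action. For $f_R$, write $a\in\Rnz$; then $f_R(xa) = 1_n\cdot(xa) = (1_n\cdot x)\cdot a = f_R(x)\cdot a$, so $f_R$ intertwines the \emph{right} regular representation with the \emph{right} natural action. Dually $f_L(ax) = (ax)\cdot 1_n = a\cdot(x\cdot 1_n) = a\cdot f_L(x)$, so $f_L$ intertwines the \emph{left} regular representation with the \emph{left} natural action. I would state this precisely and not conflate the two; the associativity steps are exactly associativity in $\Rnz$ (or equivalently Corollary~\ref{action_mulr_R0} and Corollary~\ref{commutdroitegauche} if one wants the actions phrased as genuine monoid actions).

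Then $f_L$ is handled identically: it is linear, it sends $\pi_r\mapsto\pi_r\cdot 1_n = r$ by Proposition~\ref{action_mull_R0}, hence is a linear bijection on bases, and the computation above shows it is $\Rnz$-equivariant for the left regular representation. Concretely the proof is:
\begin{align*}
  f_R(xa) &= 1_n\cdot(xa) = (1_n\cdot x)\cdot a = f_R(x)\cdot a, \\
  f_L(ax) &= (ax)\cdot 1_n = a\cdot(x\cdot 1_n) = a\cdot f_L(x).
\end{align*}
There is essentially no obstacle here: the whole content of the proposition has already been extracted into Section~\ref{sec-def}, and this statement is a bookkeeping corollary that repackages Corollaries~\ref{action_mulr_R0} and~\ref{action_mull_R0} in representation-theoretic language. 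The only thing requiring a moment's care --- the one place one could slip --- is matching the sidedness correctly: $f_R$ (evaluation on the left at $1_n$) is the iso for the \emph{right} regular module and the \emph{right} natural action, while $f_L$ is the iso for the \emph{left} regular module and the \emph{left} natural action. I would make sure the final write-up states this without ambiguity, perhaps adding the remark that since $\Rnz\simeq(\Rnz)^{\mathrm{op}}$ by Corollary~\ref{autoopp}, the two statements are in fact equivalent.
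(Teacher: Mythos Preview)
Your proposal is correct and matches the paper's approach exactly: the paper itself gives no proof of this proposition, presenting it merely as a repackaging of Corollary~\ref{action_mulr_R0} (and implicitly Proposition~\ref{action_mull_R0}), and your argument is precisely the natural unwinding of those results. Your care with the sidedness is well placed and accurate.
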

% \begin{proof}
%   We have $f(x\cdot \pi) = f(x\pi) = 1_n\cdot (x\pi) = (1_n\cdot x)\cdot \pi =
%   f(x) \cdot \pi$ so $f$ is a morphism. Furthermore $f$ is surjective by
%   Proposition \ref{action1n}, extended by linearity. We conclude by the
%   equality of dimension thanks to $\vert R_n\vert = \vert \Rnz\vert$. The same
%   argument works on the left.
% \end{proof}

\subsection{Idempotents and Simple modules}

As for any algebra, the representation theory of $\C[\Rnz]$ (or equivalently
$\Rnz$) is largely governed by its idempotents, however since~$\Rnz$ is a
$\JJ$-trivial monoid, as shown in~\cite{DHST}, it is sufficient to look for
idempotents in the monoid $\Rnz$ itself.

\begin{proposition}
  For any $S\subset \interv{0}{n-1}$, we denote $\pi_S$ the zero
  of the so-called parabolic submonoid generated by $\{\pi_i \mid i\in S\}$.
\end{proposition}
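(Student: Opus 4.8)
The content of the statement is that the submonoid $N\eqdef\langle \pi_i \mid i\in S\rangle$ of $\Rnz$ has a (necessarily unique) two-sided zero element, which then deserves the name $\pi_S$. The plan is to deduce this purely from $\JJ$-triviality, exactly as in the $0$-Hecke case recalled in Section~\ref{subsec-reptheo-hn0}.

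First I would note that $N$ is a \emph{finite} monoid, being a submonoid of the finite monoid $\Rnz$ (Corollary~\ref{egalitetouscardinaux}), and that it is again $\JJ$-trivial: for $x\in N$ one has $N x N \subseteq \Rnz\, x\, \Rnz$, so the $\JJ$-preorder of $N$ refines the $\JJ$-order of $\Rnz$, which is an order by Corollary~\ref{Jtrivial}; hence the $\JJ$-preorder of $N$ is an order too.

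Next, since $N$ is finite and $\leq_\JJ$ is a partial order on it, the poset $(N,\leq_\JJ)$ has at least one minimal element; fix one and call it $z$. For every $i\in S$, both $z\pi_i$ and $\pi_i z$ lie in $N$ and satisfy $z\pi_i \leq_\JJ z$ and $\pi_i z \leq_\JJ z$; by minimality of $z$ and $\JJ$-triviality of $N$ this forces $z\pi_i = z = \pi_i z$. As the $\pi_i$ with $i\in S$ generate $N$, an immediate induction on word length gives $zx = z = xz$ for every $x\in N$, so $z$ is a two-sided zero of $N$. Uniqueness of a zero element holds in any semigroup, so we may unambiguously set $\pi_S\eqdef z$.

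Finally, I would record two sanity checks that make the notation coherent with what precedes: $\pi_S$ is idempotent (being a zero, $\pi_S^2=\pi_S$), which fits the role it is about to play among the idempotents of $\Rnz$, and for $S=\interv{0}{n-1}$ one recovers $\pi_S=P_n$ by Lemma~\ref{Pn}. There is no real obstacle here; the only point requiring a moment's care is that it is minimality for $\leq_\JJ$ \emph{together with} $\JJ$-triviality that promotes ``$z\pi_i\leq_\JJ z$'' to the equality ``$z\pi_i=z$'', and symmetrically on the left.
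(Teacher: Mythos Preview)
Your argument is correct and follows essentially the same route as the paper's proof: the parabolic submonoid is finite, inherits $\JJ$-triviality from $\Rnz$, and a $\JJ$-minimal element is forced to be a two-sided zero. Your write-up is simply a more detailed unpacking of the paper's two-line proof, which asserts directly that the unique $\JJ$-minimal element of the finite submonoid is a zero.
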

\begin{proof}
  This submonoid is finite since the ambient monoid~$\Rnz$ is finite. By
  Proposition~\ref{Jtrivial} it contains a unique minimal element for the
  $\JJ$-order, which is a zero.
\end{proof}

\begin{proposition}\label{formeidempotentmatrice}
  For any $S\subset\interv{0}{n-1}$, write
  $S^c\eqdef\interv{0}{n-1} \setminus S$ its complement and
  $I=C(S^c)=(i_1,\dots,i_\ell)$ its associated extended composition. Then
  $\pi_S=\pi_{\w{r}}$ where $\w{r}$ is the block diagonal rook matrix of size
  $n$ whose block are anti diagonal matrices of $1$ of size
  $(i_1,\dots,i_\ell)$, except the first block which is a zero matrix.
\end{proposition}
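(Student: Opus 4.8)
The plan is to use the uniqueness of the zero of a finite semigroup. Write $M_S := \langle \pi_i \mid i\in S\rangle$ for the parabolic submonoid, so that $\pi_S$ is by definition its (unique) zero. It therefore suffices to prove two things: first, that $\pi_{\w{r}}$ lies in $M_S$ for the block rook $\w{r}$ described in the statement; and second, that $\pi_i\,\pi_{\w{r}} = \pi_{\w{r}}\,\pi_i = \pi_{\w{r}}$ for every $i\in S$. The second point already gives $m\,\pi_{\w{r}} = \pi_{\w{r}} = \pi_{\w{r}}\,m$ for all $m\in M_S$ by induction on word length, so $\pi_{\w{r}}$ is a zero of $M_S$, hence equal to $\pi_S$.

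For the absorption property, I would use the identification $r\leftrightarrow\pi_r$ together with Corollary~\ref{action_mulr_R0} and Proposition~\ref{action_mull_R0}, which turn it into $\pi_{\w{r}}\,\pi_i = \pi_{\w{r}\cdot\pi_i}$ and $\pi_i\,\pi_{\w{r}} = \pi_{\pi_i\cdot\w{r}}$; thus it is enough to check that every $i\in S$ is both a weak right descent ($\w{r}\cdot\pi_i = \w{r}$) and a weak left descent ($\pi_i\cdot\w{r} = \w{r}$) of $\w{r}$. This is a short case analysis on the explicit block form of $\w{r}$, reading off the right and left actions from Definitions~\ref{def_Ro_fun} and~\ref{def_left_action}. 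If $i$ lies in the run of $S$ of the form $\interv{0}{b}$ (the case $0\in S$), then the first $b+1$ letters of $\w{r}$ are all $0$ and none of the values $1,\dots,b+1$ occur in $\w{r}$, which forces both $\pi_i$-actions to fix $\w{r}$ for every $0\le i\le b$. If $i$ lies in a run of $S$ not containing $0$, then the positions $i, i+1$ lie inside a single anti-diagonal block, so $\w{r}_i = \w{r}_{i+1} + 1$ and the letter $i+1$ occurs immediately to the left of the letter $i$; again both actions fix $\w{r}$.

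For membership in $M_S$, I would decompose $S$ into its maximal runs $R_1, \dots, R_p$ of consecutive integers. A run $R_k$ not containing $0$ generates a parabolic submonoid of the copy of $\Hnz$ sitting inside $\Rnz$, whose zero is $\pi_{\omega}$ for $\omega$ the longest element of the associated symmetric group (Section~\ref{subsec-reptheo-hn0}); applied to $1_n$ it anti-diagonalizes the block of positions spanned by $R_k$. A run of the form $R_1 = \interv{0}{b}$ generates a submonoid satisfying the presentation of the $0$-rook monoid $R^0_{b+1}$ on $b+1$ letters (Corollary~\ref{relRn0pi0}), whose zero is $P_{b+1}$ (Lemma~\ref{Pn}); applied to $1_n$ it kills the first $b+1$ letters. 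Since consecutive runs are separated by at least one index, the corresponding zeros $z_1,\dots,z_p$ pairwise commute and each $z_k$ only modifies the positions of its own (pairwise disjoint) block, so $1_n\cdot(z_1\cdots z_p)$ equals $\w{r}$; by Corollary~\ref{action_mulr_R0} this gives $\pi_{\w{r}} = z_1\cdots z_p\in M_S$. It then only remains to match the parts of the extended composition $C(S^c)$ with the block sizes of $\w{r}$: a part $i_m\ge 1$ is exactly the number of positions of the $m$-th block, the first block is the zero block precisely when $0\in S$, and when $0\notin S$ the prepended initial part $0$ corresponds to an empty first block, leaving only anti-diagonal blocks.

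The conceptual content is small; the main obstacle is purely bookkeeping — keeping straight the correspondence between $S$, its complement $S^c$, the extended composition $C(S^c)$, the runs of $S$, and the position-blocks of $\w{r}$, and in particular checking that the first part of $C(S^c)$ has the right size and the right type (zero block versus empty) according to whether $0\in S$. Once that dictionary is fixed, both the absorption check and the factorization of $\pi_{\w{r}}$ are routine verifications on the two explicit actions.
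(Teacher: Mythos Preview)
Your proof is correct. The absorption step (every $i\in S$ is a left and right weak descent of $\w r$) is argued exactly as in the paper, via the explicit right and left actions on the block rook vector.

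The membership step differs. The paper argues that any reduced word for $\pi_{\w r}$ can only involve generators $\pi_i$ with $i\in S$: if some $\pi_j$ with $j\notin S$ appeared first in a reduced word, then acting on $1_n$ it would either kill a column that can never reappear ($j=0$) or swap two columns across a block boundary that can never be swapped back, contradicting $1_n\cdot\pi_{\w r}=\w r$. You instead exhibit an explicit factorization $\pi_{\w r}=z_1\cdots z_p$ where each $z_k$ is the zero of the parabolic attached to a maximal run of $S$; these commute by \ref{RB4} since distinct runs are separated by at least one missing index, and their product applied to $1_n$ manifestly produces the block form $\w r$. This constructive route is precisely what the paper records in the Remark immediately following the proposition, so you have effectively promoted that remark to the main argument. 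Your approach has the advantage of giving a concrete word in $M_S$; the paper's approach has the advantage of simultaneously yielding the ``only if'' direction of Lemma~\ref{actiongenerateurssurpiJ} (that $\pi_{\w r}\pi_j\neq\pi_{\w r}$ for $j\notin S$), which your argument does not directly address but which is not needed for the proposition itself.
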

Note that if $0\notin S$ then the first part of $I$ is zero, so that the first
zero block is of size $0$ and therefore vanishes.
\begin{example}
If $n = 12$ and $S=\{0,1,2, 5, 7,8, 11\}$. Then $S^c=\{3,4,6,9,10\}$ so that
$I=C(S^c)=(3,1,2,3,1,2)$. Similarly, if $T=\{2, 4,5, 7,8,9,\}$, then
$T^c=\{0,1,3,6,10,11\}$ so that $J=C(T^c)=(0,1,2,3,4,1,1)$. Therefore the
associated matrices are:
$$
\setlength{\arraycolsep}{0.5mm}
\newcommand\Tstrut{\rule{0pt}{1.7ex}}         % = `top' strut
\pi_S = \left(\begin{array}{c|c|c|c|c|c}
\begin{smallmatrix}
0 & 0 & 0\\
0 & 0 & 0\\
0 & 0 & 0
\end{smallmatrix} & & & & & \\[6pt]
\hline
& \begin{smallmatrix}1\end{smallmatrix} & & & & \\[0pt]
\hline
& & \begin{smallmatrix}
 0 & 1\Tstrut\\
 1 & 0
\end{smallmatrix} & & & \\[3pt]
\hline
 & & & \begin{smallmatrix}
 0 & 0 & 1\Tstrut \\
 0 & 1 & 0\\
 1 & 0 & 0
\end{smallmatrix}& & \\[6pt]
\hline
 & & & & \begin{smallmatrix}1\end{smallmatrix} & \\[-1pt]
\hline
 & & & & & \begin{smallmatrix}
 0 & 1\Tstrut \\
 1 & 0
 \end{smallmatrix}
\end{array}\right)
\qandq
\pi_T = \left(\begin{array}{c|c|c|c|c|c}
\begin{smallmatrix}1\end{smallmatrix} & & & & & \\[-1pt]
\hline
& \begin{smallmatrix}
0 & 1\Tstrut \\
1 & 0 \\
\end{smallmatrix} & & & & \\[3pt]
\hline
& & \begin{smallmatrix}
 0 & 0 & 1\Tstrut \\
 0 & 1 & 0\\
 1 & 0 & 0
\end{smallmatrix} & & & \\[6pt]
\hline
 & & & \begin{smallmatrix}
 0 & 0 & 0 & 1\Tstrut \\
 0 & 0 & 1 & 0 \\
 0 & 1 & 0 & 0\\
 1 & 0 & 0 & 0
\end{smallmatrix}& & \\[10pt]
\hline
 & & & & \begin{smallmatrix}1\end{smallmatrix} & \\[-1pt]
\hline
 & & & & & \begin{smallmatrix}1\end{smallmatrix}\\[-1pt]
\end{array}\right)
$$
\end{example}
\begin{proof}
  We fix some $S$ and consider $\w{r}$ the associated rook matrix. The block
  diagonal structure ensures that $\pi_{\w{r}}$ belongs to the parabolic
  submonoid $\langle\pi_i \mid i\in S\rangle$. Indeed, suppose that there is a
  reduced word $\un{w}$ for $\pi_{\w{r}}$ with some $w_i\notin S$. Recall, that from
  Corollary~\ref{action_mulr_R0}, this means that $1_n\cdot \un{w}=\w{r}$. Choose
  the smallest such $i$. There are two cases whether $w_i=\pi_0$ or not.
  \begin{itemize}
  \item if $w_i=\pi_0$ with $0\notin S$, then when computing $1_n\cdot
    w_1\cdots w_{i-1} \cdot w_i$, the action of $\pi_0$ will be to kill a
    column. In this case, the killed column will never appear again so that
    there is no way to get the correct matrix.
  \item if $w_i=\pi_i$ with $i\neq 0$, when computing $1_n\cdot w_1\cdots
    w_{i-1} \cdot w_i$, the action of $w_i$ is to exchange two columns from
    two different blocks. However, acting by any $\pi_j$ will never exchange
    those two columns again, so that it is not possible to get them back in the
    correct order.
  \end{itemize}
  Hence, we have proven that $\un{w}$ only contains $\pi_i$ with $i\in S$ that
  is $r\in \langle\pi_i \mid i\in S\rangle$. Furthermore, using the action
  on matrices one sees that $r\cdot\pi_i=r$ or equivalently that
  $\pi_{\w{r}}\pi_i=\pi_{\w{r}}$ if and only if $i\in S$. This shows that
  $\pi_{\w{r}}$ is the zero of $\langle\pi_i \mid i\in S\rangle$.
\end{proof}
\begin{remark}
  If we decompose the set $S$ into its maximal components of consecutive letters $S_1
  \cup S_2 \cup \dots \cup S_r$, then $\pi_S = \prod_{1\leq i\leq r}
  \pi_{S_i}$ where the product commutes. Moreover, if $0\in S$ then
  $\pi_{S_1}=P_m$ where $m$ is the size of the first block.
\end{remark}
During the proof, we got the following Lemma:
\begin{lemma}\label{actiongenerateurssurpiJ}
  Let $S \subset \interv{0}{n-1}$. Then $\pi_S\pi_i = \pi_S = \pi_i\pi_S$ if
  $i\in S$, and $\pi_S\pi_i\neq\pi_S$ and $\pi_i\pi_S\neq\pi_S$ otherwise.
\end{lemma}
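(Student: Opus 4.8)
The plan is to obtain both assertions with almost no new work, by reading off what was already established inside the proof of Proposition~\ref{formeidempotentmatrice}.

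First I would dispatch the case $i\in S$. By definition $\pi_S$ is the zero of the parabolic submonoid $\langle\pi_j\mid j\in S\rangle$, and since $i\in S$ the generator $\pi_i$ lies in that submonoid. A zero of a monoid absorbs every element of the monoid on both sides, so $\pi_S\pi_i=\pi_S=\pi_i\pi_S$, which is exactly what is claimed.

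Next I would handle the case $i\notin S$. Write $\pi_S=\pi_{\w r}$ with $\w r$ the block diagonal rook matrix of Proposition~\ref{formeidempotentmatrice}. In that proof it was checked, from the explicit matrix description of the right action, that $\w r\cdot\pi_i=\w r$ holds precisely when $i\in S$; in particular $\w r\cdot\pi_i\neq\w r$ in the present case. Since the right action of $\Rnz$ on $R_n$ is right multiplication (Corollary~\ref{action_mulr_R0}), this gives $\pi_S\pi_i=\pi_{\w r}\pi_i=\pi_{\w r\cdot\pi_i}\neq\pi_{\w r}=\pi_S$. For the left-sided inequality I would use that $\w r$ is a symmetric matrix: it is block diagonal with a zero block followed by anti-diagonal blocks, each of which equals its own transpose, so $\w r^{t}=\w r$. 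Hence, by Definition~\ref{def_left_action}, $\pi_i\cdot\w r=(\w r^{t}\cdot\pi_i)^{t}=(\w r\cdot\pi_i)^{t}\neq\w r^{t}=\w r$, and Proposition~\ref{action_mull_R0} then yields $\pi_i\pi_S=\pi_{\pi_i\cdot\w r}\neq\pi_S$. Alternatively, one could invoke Corollary~\ref{autoopp} together with the symmetry of the presentation of $\Rnz$ to transport the right-hand computation to the left.

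The only point demanding a moment of care is this left-sided statement for $i\notin S$; everything else follows immediately from the zero property and from the results of Section~\ref{sec-def}, so I do not anticipate any genuine obstacle.
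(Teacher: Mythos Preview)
Your proof is correct and follows the same approach as the paper, which simply remarks that the lemma was obtained ``during the proof'' of Proposition~\ref{formeidempotentmatrice}. In fact you are more careful than the paper on one point: the proof of that proposition only explicitly verifies the \emph{right}-hand statement $\w r\cdot\pi_i=\w r\iff i\in S$, whereas you supply the left-hand inequality for $i\notin S$ via the observation that $\w r^{t}=\w r$; the paper leaves this step implicit.
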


\begin{proposition}\label{prop-Rn0-idemp}
  The monoid $\Rnz$ has exactly $2^n$ idempotents: these are the zeros of
  every parabolic submonoid.
\end{proposition}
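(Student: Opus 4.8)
The plan is to show that the assignment $S\mapsto\pi_S$ is a bijection from the subsets of $\interv{0}{n-1}$ onto the set of idempotents of $\Rnz$; since $\interv{0}{n-1}$ has $n$ elements, this immediately yields the count $2^n$. One direction is automatic: each $\pi_S$ is the zero of the parabolic submonoid $\langle\pi_i\mid i\in S\rangle$, and a zero is always idempotent. Injectivity is also automatic from Lemma~\ref{actiongenerateurssurpiJ}, which gives $S=\{i\in\interv{0}{n-1}\mid\pi_S\pi_i=\pi_S\}$, so the whole content of the statement is the claim that \emph{every} idempotent of $\Rnz$ arises as some $\pi_S$.

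So fix $e\in\Rnz$ with $e^2=e$ and set $S\eqdef\{i\in\interv{0}{n-1}\mid e\pi_i=e\}$. The first step is to show that $e$ lies in the parabolic submonoid $M_S\eqdef\langle\pi_i\mid i\in S\rangle$. I would pick any word $\un{w}=w_1\cdots w_\ell$ over $\{\pi_0,\dots,\pi_{n-1}\}$ with $w_1\cdots w_\ell=e$ (e.g.\ a reduced one) and look at the chain $x_k\eqdef e\,w_1\cdots w_k$ for $0\le k\le\ell$. Right multiplication only decreases the $\RR$-order (since $ab\,\Rnz\subseteq a\,\Rnz$), so $e=x_0\ge_\RR x_1\ge_\RR\cdots\ge_\RR x_\ell$; but $x_\ell=e\cdot(w_1\cdots w_\ell)=e\cdot e=e$, so the chain both starts and ends at $e$. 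Because $\Rnz$ is $\RR$-trivial (Corollary~\ref{Jtrivial}), the $\RR$-order is a genuine order, hence $x_k=e$ for all $k$. Peeling off one letter at a time, $e\,w_k=x_{k-1}w_k=x_k=e$ for every $k$, so each $w_k$ is a generator $\pi_i$ with $i\in S$; therefore $e=w_1\cdots w_\ell\in M_S$.

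To finish, I would combine the two defining properties of $\pi_S$: on the one hand, applying the relations $e\pi_i=e$ ($i\in S$) successively along a word for $\pi_S$ (itself a product of generators $\pi_i$ with $i\in S$) yields $e\,\pi_S=e$; on the other hand $\pi_S$ is the zero of $M_S$ and $e\in M_S$, so $e\,\pi_S=\pi_S$. Comparing these gives $e=\pi_S$, completing the proof.

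The only delicate point is the middle step, namely deducing from $e^2=e$ that every generator occurring in a word for $e$ already fixes $e$ under right multiplication. With $\RR$-triviality in hand (Corollary~\ref{Jtrivial}) this is just the monotonicity-plus-antisymmetry argument above; the remaining ingredients are pure bookkeeping with Lemma~\ref{actiongenerateurssurpiJ} and the notion of a zero of a submonoid.
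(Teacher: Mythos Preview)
Your proof is correct and follows essentially the same strategy as the paper: show that any idempotent $e$ lies in the parabolic submonoid generated by the $\pi_i$ that fix it on the right, and conclude that it must coincide with the zero $\pi_S$ of that parabolic. The only difference is in how the key step is justified: the paper takes $S=\cont(e)$ and invokes \cite[Lemma~3.6]{DHST} (via $\JJ$-triviality) to get $e\pi_i=e$ for $i\in\cont(e)$, whereas you define $S$ directly as the right-stabilizer set and use your $\RR$-triviality chain argument to force every letter of a word for $e$ into $S$ --- a slightly more self-contained route that avoids the external lemma.
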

\begin{proof}
  We already know that $\Rnz$ has at least $2^n$ idempotents. We now have
  to prove this exhaust the idempotents of $\Rnz$.

  Let $e$ an idempotent of $\Rnz$. Recall that $\cont(e)$ is the set of the
  $\pi_i$ with $i\in\interv{0}{n-1}$ appearing in any reduced word of $e$. Let
  us show that $e=\pi_{\cont(e)}$, that is the zero of the parabolic submonoid
  $\langle\pi_i \mid i\in\cont(e)\rangle$. Indeed for $a\in\cont(e)$, one can
  write $e=\un{u}a\un{v}$ for some $\un{u}$ and $\un{v}$ in $\Rnz$. By
  definition of the $\JJ$-order, this means that $e\leq_\JJ
  a$. Using~\cite[Lemma~3.6]{DHST}, this is equivalent to $ea=e$ and to
  $ae=e$, so that $e$ is stable under all its support.
\end{proof}

\begin{theorem}\label{theo-Rn0-simple}
  The monoid $\Rnz$ has $2^n$ left (and right) simple modules, all
  one-dimensional, indexed by the subsets of $\interv{0}{n-1}$.  Let $S
  \subset\interv{0}{n-1}$. Its associated simple module $S_S$ is the
  one-dimensional module generated by $\varepsilon_S$ with the following
  action of generators:
  \begin{equation}
    \pi_i\cdot\varepsilon_S
    = \begin{cases}
      \varepsilon_S & \text{ if }i\in S,\\
      0 & \text{ otherwise.}
    \end{cases}
  \end{equation}
\end{theorem}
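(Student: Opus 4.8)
The plan is to apply directly the general representation theory of $\JJ$-trivial monoids recalled in Theorem~\ref{proposition.simple}, feeding into it the classification of idempotents obtained in Proposition~\ref{prop-Rn0-idemp}. First, Corollary~\ref{Jtrivial} guarantees that $\Rnz$ is $\JJ$-trivial, so Theorem~\ref{proposition.simple} applies verbatim: the isomorphism classes of simple $\Rnz$-modules are in bijection with the set $E(\Rnz)$ of idempotents of $\Rnz$, and each such module is one-dimensional with the action described there. By Proposition~\ref{prop-Rn0-idemp}, $E(\Rnz)=\{\pi_S \mid S\subseteq\interv{0}{n-1}\}$ and these $2^n$ elements are pairwise distinct, which yields both the count $2^n$ and the indexing of simple modules by subsets of $\interv{0}{n-1}$.

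It remains to make the action explicit. By Theorem~\ref{proposition.simple}, the simple module $S_S$ attached to the idempotent $\pi_S$ is the one-dimensional space $\C\varepsilon_S$ on which $m\cdot\varepsilon_S=\varepsilon_S$ if $m\pi_S=\pi_S$ and $m\cdot\varepsilon_S=0$ otherwise. Specializing $m$ to a generator $\pi_i$ with $0\leq i\leq n-1$, Lemma~\ref{actiongenerateurssurpiJ} states precisely that $\pi_i\pi_S=\pi_S$ if and only if $i\in S$; hence $\pi_i\cdot\varepsilon_S=\varepsilon_S$ for $i\in S$ and $\pi_i\cdot\varepsilon_S=0$ for $i\notin S$, which is the claimed formula. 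Since $\pi_0,\dots,\pi_{n-1}$ generate $\Rnz$, this description of the action on generators determines the module completely, so that two subsets $S\neq S'$ give non-isomorphic modules.

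The statement for right modules follows by the symmetric argument, using the right-handed half of Lemma~\ref{actiongenerateurssurpiJ} (namely $\pi_S\pi_i=\pi_S$ if and only if $i\in S$); alternatively, it is an immediate consequence of $\Rnz$ being isomorphic to its opposite (Corollary~\ref{autoopp}). I do not expect any genuine obstacle here: the substantive work has already been carried out in Corollary~\ref{Jtrivial}, Proposition~\ref{prop-Rn0-idemp} and Lemma~\ref{actiongenerateurssurpiJ}, and the proof amounts to assembling these ingredients inside Theorem~\ref{proposition.simple}. The only point requiring any care is matching the hypothesis ``$m\pi_S=\pi_S$'' of Theorem~\ref{proposition.simple} with the condition ``$i\in S$'' via Lemma~\ref{actiongenerateurssurpiJ}, and keeping track of the left/right convention.
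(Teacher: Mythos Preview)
Your proof is correct and follows exactly the paper's approach: the paper's proof is the single sentence ``We apply \cite[Proposition~3.1]{DHST} using Lemma~\ref{actiongenerateurssurpiJ},'' and you have simply spelled out the details of this application, invoking Corollary~\ref{Jtrivial} and Proposition~\ref{prop-Rn0-idemp} to justify the hypotheses.
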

\begin{proof}
  We apply \cite[Proposition~3.1]{DHST} using Lemma
  \ref{actiongenerateurssurpiJ}.
\end{proof}

Recall that we write $x^\omega$ any sufficiently large power of $x$ which
becomes idempotent, and that the star product of two idempotents is defined as
$e \ast f = (ef)^{\omega}$. This endows the set of idempotents with a
structure of a lattice where $\ast$ is the meet~\cite[Theorem~3.4]{DHST}. We
now explicitely describe this lattice:
\begin{proposition}\label{prod_of_idemp}
  Let $S, T \subset \interv{0}{n-1}$. Then $\pi_S \ast \pi_T = \pi_{S\cup T}$.
\end{proposition}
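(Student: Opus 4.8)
The plan is to reduce the statement to a description of Green's $\JJ$-order on the idempotents of $\Rnz$. Recall from Proposition~\ref{prop-Rn0-idemp} and its proof that every idempotent $e$ of $\Rnz$ is of the form $e=\pi_{\cont(e)}$; in particular $\cont(\pi_S)=S$, and $S\mapsto\pi_S$ is a bijection from the subsets of $\interv{0}{n-1}$ onto the set of idempotents of $\Rnz$. The key step is then the claim that, for any $S,T\subseteq\interv{0}{n-1}$,
\[
  \pi_S\leq_\JJ\pi_T \iff T\subseteq S\,.
\]
For the implication $\Leftarrow$: if $T\subseteq S$ then $\pi_T$ lies in the parabolic submonoid $\langle\pi_i\mid i\in S\rangle$, whose zero is $\pi_S$, so $\pi_S\pi_T=\pi_S$ and hence $\pi_S\leq_\JJ\pi_T$. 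For $\Rightarrow$: from $\pi_S\leq_\JJ\pi_T$ and \cite[Lemma~3.6]{DHST} applied to the idempotent $\pi_S$ we get $\pi_S\pi_T=\pi_S$; then for each $j\in T$, Lemma~\ref{actiongenerateurssurpiJ} gives $\pi_T\pi_j=\pi_T$, so $\pi_S\pi_j=\pi_S\pi_T\pi_j=\pi_S\pi_T=\pi_S$, and the same lemma (its ``otherwise'' part) forces $j\in S$; thus $T\subseteq S$.

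Granting the claim, I would conclude as follows. Set $e\eqdef\pi_S\ast\pi_T=(\pi_S\pi_T)^\omega$, which is an idempotent, so $e=\pi_U$ with $U\eqdef\cont(e)$. By \cite[Theorem~3.4]{DHST} one has $e=\pi_S\wedge_\JJ\pi_T$, hence $e\leq_\JJ\pi_S$ and $e\leq_\JJ\pi_T$; the claim then yields $S\subseteq U$ and $T\subseteq U$, i.e. $S\cup T\subseteq U$. Conversely, since $S\subseteq S\cup T$ and $T\subseteq S\cup T$, the $\Leftarrow$ part of the claim gives $\pi_{S\cup T}\leq_\JJ\pi_S$ and $\pi_{S\cup T}\leq_\JJ\pi_T$, so $\pi_{S\cup T}\leq_\JJ\pi_S\wedge_\JJ\pi_T=e=\pi_U$, and the claim once more gives $U\subseteq S\cup T$. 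Therefore $U=S\cup T$ and $e=\pi_{S\cup T}$, as wanted.

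The only delicate point is the $\Rightarrow$ direction of the order characterization: one must first turn the inclusion of ideals $\pi_S\leq_\JJ\pi_T$ into the honest identity $\pi_S\pi_T=\pi_S$, which is exactly what \cite[Lemma~3.6]{DHST} provides, after which Lemma~\ref{actiongenerateurssurpiJ} finishes the job; everything else is routine manipulation of the parabolic zeros $\pi_S$. Equivalently, the argument shows that $S\mapsto\pi_S$ reverses the $\JJ$-order, so that unions of subsets correspond to $\wedge_\JJ$-meets of the associated idempotents, that is, to the star product $\ast$.
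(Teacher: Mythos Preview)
Your proof is correct. The paper's own argument is more direct but relies on the same ingredients: it observes that $\pi_S\ast\pi_T$ lies in the parabolic submonoid $\langle\pi_i\mid i\in S\cup T\rangle$ and then uses \cite[Lemma~3.6]{DHST} to check it is the zero there, concluding by uniqueness. Your route---first characterizing $\leq_\JJ$ on idempotents as reverse inclusion of the indexing sets, then invoking that $\ast$ is the $\JJ$-meet---is a mild repackaging of the same ideas (Lemma~\ref{actiongenerateurssurpiJ} and \cite[Lemma~3.6]{DHST}) into a lattice isomorphism statement before applying it; this has the small bonus of making explicit that $S\mapsto\pi_S$ is an order-reversing bijection onto $(\idempMon[\Rnz],\leq_\JJ)$, which is exactly what the ensuing Corollary in the paper records.
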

\begin{proof}
  First we note that $\pi_S\ast\pi_T$ is inside the parabolic $S\cup T$.  It
  is enough to show that it is a zero of this submonoid, and then conclude by
  unicity. The product formual is is a consequence of~\cite[Lemma~3.6]{DHST}.
\end{proof}
\begin{corollary}
  The quotient $\C[\Rnz]/\rad(\C[\Rnz])$ is isomorphic to the
  algebra of the lattice of the $n$-dimensionnal cube.
\end{corollary}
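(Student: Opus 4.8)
The plan is to combine the preceding results on the idempotent lattice with the general theory of $\JJ$-trivial monoids recalled in Section~\ref{j-trivial}. Recall from that theory that for a finite $\JJ$-trivial monoid $\tMonoid$ one has an algebra isomorphism $\C[\tMonoid]/\rad(\C[\tMonoid]) \simeq (\C[\idempMon], \star)$, where $\idempMon$ is the set of idempotents of $\tMonoid$ and $\star$ is the product $e\star f = (ef)^\omega$. Since $\Rnz$ is $\JJ$-trivial by Corollary~\ref{Jtrivial}, this applies here. So the statement reduces to identifying the commutative monoid $(\idempMon, \star)$ for $\tMonoid = \Rnz$.

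First I would invoke Proposition~\ref{prop-Rn0-idemp}, which tells us that the idempotents of $\Rnz$ are exactly the $2^n$ elements $\pi_S$ for $S\subseteq\interv{0}{n-1}$, and the map $S\mapsto\pi_S$ is a bijection from the powerset of $\interv{0}{n-1}$ to $\idempMon$. Next I would invoke Proposition~\ref{prod_of_idemp}, which computes $\pi_S \star \pi_T = \pi_{S\cup T}$. Putting these together, the map $S\mapsto\pi_S$ is an isomorphism of monoids from $(\mathcal{P}(\interv{0}{n-1}), \cup)$ to $(\idempMon, \star)$. Since $\interv{0}{n-1}$ has $n$ elements, $(\mathcal{P}(\interv{0}{n-1}), \cup)$ is precisely the join-semilattice of the $n$-dimensional Boolean cube, which is the lattice structure underlying "the lattice of the $n$-dimensional cube'' in the statement. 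Linearizing, we get an algebra isomorphism $\C[\idempMon] \simeq \C[\mathcal{P}(\interv{0}{n-1}), \cup]$, the algebra of that lattice.

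Therefore the proof is essentially a two-line chain of isomorphisms:
\begin{equation*}
  \C[\Rnz]/\rad(\C[\Rnz]) \;\simeq\; \C[\idempMon]\;\simeq\; \C\bigl[\mathcal{P}(\interv{0}{n-1}),\cup\bigr]\,,
\end{equation*}
the first isomorphism being \cite[Theorem~3.4 and~3.7]{DHST} (the radical quotient theorem for $\JJ$-trivial monoids, applicable by Corollary~\ref{Jtrivial}), and the second coming from Propositions~\ref{prop-Rn0-idemp} and~\ref{prod_of_idemp} as just described. I do not anticipate any genuine obstacle: all the substantive work — establishing $\JJ$-triviality, enumerating the idempotents, and computing the $\star$-product — has already been done in the results cited above, so the corollary is purely a matter of assembling them. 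The only point worth a word of care is to make explicit that "the algebra of the lattice of the $n$-dimensional cube'' means the semigroup algebra of $(\mathcal{P}(\interv{0}{n-1}),\cup)$ (equivalently, of the Boolean lattice $\{0,1\}^n$ under join), so that the reader sees the dictionary $S \leftrightarrow \pi_S$ and $\cup \leftrightarrow \star$ unambiguously.
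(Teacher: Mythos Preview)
Your proposal is correct and is exactly the argument the paper intends: the corollary is stated immediately after Proposition~\ref{prod_of_idemp} with no explicit proof, precisely because it follows by combining that proposition and Proposition~\ref{prop-Rn0-idemp} with the general \cite[Theorem~3.4 and~3.7]{DHST} for $\JJ$-trivial monoids, as you outline.
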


\subsection{Indecomposable projective modules}

Recall that the indecomposable projective $\Hnz$-modules are spanned by descent
classes (see Section~\ref{subsec-reptheo-hn0} and reference therein for more
details). Extending the definition from the Hecke monoid, we define left
and right $R$-descents sets of a rook as:
\begin{equation}
  D_R(r) = \lbrace 0 \leq i \leq n-1 \, \mid  \, r\pi_i = r\rbrace
  \text{\quad (resp.\ }
  D_L(r) = \lbrace 0 \leq i \leq n-1 \, \mid \, \pi_i r = r\rbrace)\,.
\end{equation}
\begin{example}
  Let $r = \w{0423007}\in \Rnz$. We have $0<4\geq 2<3\geq 0\geq 0 <7$, and the
  first letter is $0$. So $D_R(r) = \lbrace 0, 2, 4,5\rbrace$.
\end{example}

\begin{notation}
  We choose to represent an element $r\in \Rnz$ by a ribbon notation the usual
  way, with the difference that two zeros are vertical and not horizontal:
  $\gyoung(;0,;0)$ and not $\gyoung(;0;0)$.
\end{notation}
This change of convention compared to \textit{e.g.}~\cite{KrobThibon.1997} is
due to our choice of taking the $\pi$'s and not the $T_i$'s for generators of
the Hecke algebra. As a consequence, the eigenvalues $0$ and $1$ are
exchanged.

For example, $r = \w{0423007}$ is represented by the ribbon
\scalebox{0.7}{\raisebox{20pt}{$\gyoung(;0;4,:;2;3,::;0,::;0;7)$ }}
Figure~\ref{Rdescclass4} shows the ribbon together with their associated
descent sets. Figure~\ref{Rdescclass4Bool} depicts the associated boolean
lattice.
\begin{figure}[ht]
\centering
\includegraphics[scale=0.7]{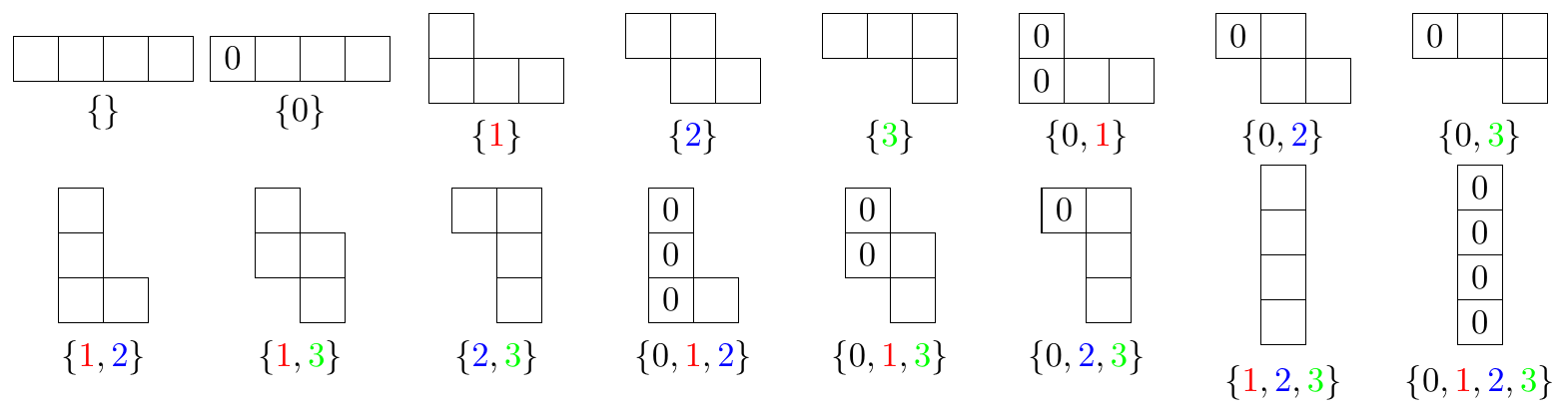}
\caption{\label{Rdescclass4}$R$-descent sets for $R_4^0$.}
\end{figure}
\begin{figure}[ht]
\centering
\includegraphics[scale=0.8]{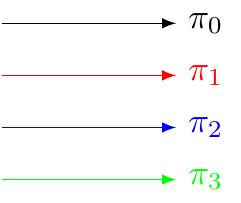}
\hspace{-2cm}
\includegraphics[scale=1]{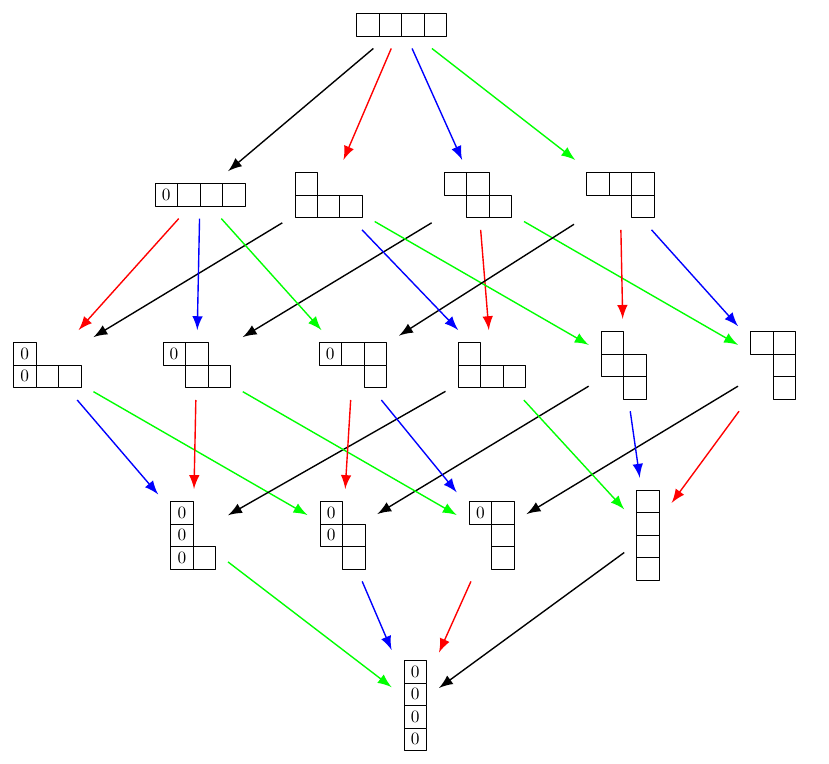}
\caption{\label{Rdescclass4Bool}The lattice of $R$-descent sets for $R_4^0$.}
\end{figure}
With this notation we can easily find the idempotents of each $R$-descent
set:
\begin{proposition}\label{forme_idempotent_Rnz}
  In each $R$-descent class there is a unique idempotent. It is obtained by
  filling ribbon shape by numbers $1$ to $n$ in this order, going through the
  columns left to right and bottom to up. Then if $0$ is in the descent
  class, fill the first column with zeros.
\end{proposition}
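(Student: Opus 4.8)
The plan is to identify the unique idempotent in a given $R$-descent class with the canonical parabolic idempotent $\pi_S$ described in Proposition~\ref{formeidempotentmatrice}, and then to check that the combinatorial filling prescribed in the statement reproduces exactly the block anti-diagonal rook matrix of that proposition. First I would fix a rook $r\in\Rnz$ and let $S\eqdef D_R(r)=\{i\mid r\pi_i=r\}$ be its right $R$-descent set. By Theorem~\ref{theo-Rn0-simple} and Proposition~\ref{prop-Rn0-idemp} the idempotents of $\Rnz$ are exactly the zeros $\pi_T$ of parabolic submonoids, indexed by subsets $T\subseteq\interv{0}{n-1}$, and by Lemma~\ref{actiongenerateurssurpiJ} one has $D_R(\pi_T)=T$. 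Hence an idempotent lies in the $R$-descent class of $r$ if and only if it equals $\pi_S$; this immediately gives both existence and uniqueness of the idempotent in each $R$-descent class. (One should note that not every subset $S\subseteq\interv{0}{n-1}$ is realized as $D_R(r)$ for a given $r$, but the statement only asserts the form of the idempotent \emph{when the class is nonempty}, which is automatic since $\pi_S$ itself lies in the class.)

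The remaining work is purely descriptive: to show that the matrix $\pi_S$ of Proposition~\ref{formeidempotentmatrice} is obtained by the filling rule. Write $S^c=\interv{0}{n-1}\setminus S$ and $I=C(S^c)=(i_1,\dots,i_\ell)$ the associated extended composition of $n$, so that $\pi_S=\pi_{\w r}$ where $\w r$ is block-anti-diagonal with block sizes $(i_1,\dots,i_\ell)$, the first block being a zero matrix exactly when $0\in S$. Now I would translate the block anti-diagonal matrix into the rook-vector / ribbon language: an anti-diagonal identity block of size $k$ occupying rows and columns $\{m+1,\dots,m+k\}$ contributes the decreasing run $(m+k)(m+k-1)\dots(m+1)$ to the rook vector, which in the ribbon diagram is a single column of height $k$ filled from bottom to top with the consecutive values $m+1,\dots,m+k$. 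Walking through the blocks left to right is exactly walking through the columns of the ribbon left to right, and within each block the values are assigned bottom to top; concatenating over all blocks fills $1,\dots,n$ (or $1,\dots,n-i_1$ after the zero column, when $0\in S$) in the claimed order. The first-column-of-zeros case corresponds precisely to the first block being the zero matrix, i.e. to $0\in S$.

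The one genuinely nontrivial point — and the step I would treat most carefully — is matching the ribbon \emph{shape} with the composition $I=C(S^c)$: one must verify that the boundary between consecutive columns of the ribbon falls at position $j$ exactly when $j\notin S$, i.e. when $j$ is a descent of the composition $C(S^c)$. This is the statement that, for the block anti-diagonal rook $\w r$, one has $r\pi_j=r$ iff $j$ and $j+1$ lie in the same anti-diagonal block or $j=0$ and $0\in S$; but this is exactly the computation already carried out inside the proof of Proposition~\ref{formeidempotentmatrice} (``$r\cdot\pi_i=r$ $\Leftrightarrow$ $i\in S$''), combined with the fact that within a column of the ribbon the entries decrease reading downward, which is precisely the defining property of the ribbon of $C(S^c)$ recalled in Section~\ref{subsec-reptheo-hn0}. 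I would therefore simply invoke Proposition~\ref{formeidempotentmatrice} and Lemma~\ref{actiongenerateurssurpiJ} and spell out the dictionary between anti-diagonal blocks and bottom-to-top-filled columns, rather than redo the column-exchange analysis. Assembling these observations completes the proof.
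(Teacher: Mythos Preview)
Your proposal is correct and follows essentially the same approach as the paper: existence and uniqueness via Lemma~\ref{actiongenerateurssurpiJ} together with Proposition~\ref{prop-Rn0-idemp} (every idempotent is a $\pi_T$ and $D_R(\pi_T)=T$), and the explicit filling via Proposition~\ref{formeidempotentmatrice}. The paper's own proof is a two-line citation of exactly these results; your version simply spells out the dictionary between anti-diagonal blocks and bottom-to-top-filled ribbon columns in more detail, and the invocation of Theorem~\ref{theo-Rn0-simple} is unnecessary but harmless.
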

\begin{proof}
  The existence and the uniqueness come from
  Corollary~\ref{actiongenerateurssurpiJ}. The way to fill in comes from
  Proposition~\ref{formeidempotentmatrice}.
\end{proof}
\begin{example}
  Consider the $R$-descent set $\lbrace 0, 1, 2, 5, 6, 7\rbrace$ in size
  $8$. We show below its associated ribbon shape and its idempotent
  $$\gyoung(;0,;0,;0;;,::;;,:::;)\qquad\qquad\gyoung(;0,;0,;0;4;6,::;5;8,:::;7)\,.$$
\end{example}
We now follow~\cite[Section 3.4]{DHST}, specializing it to the
combinatorics of $\Rnz$. Recall that the automorphism sub-monoid $\rAut(x)$
and $\lAut(x)$ are defined by
\begin{equation}
  \rAut(x)\eqdef\{u\in \tMonoid \suchthat xu=x\}
  \qandq
  \lAut(x) \eqdef \{u\in \tMonoid \suchthat ux=x\}\,.
\end{equation}
\begin{proposition}
Let $r\in \Rnz$.
\begin{equation}
\rAut(r) = \langle \pi_i \mid i\in D_R(r)\rangle
\qandq
\lAut(r) = \langle \pi_i\mid i\in D_L(r)\rangle.
\end{equation}
\end{proposition}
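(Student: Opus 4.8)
The plan is to prove the two displayed equalities separately; the statement for $\lAut(r)$ will follow from the one for $\rAut(r)$ either by the mirror-image argument or by invoking the anti-automorphism of Corollary~\ref{autoopp}, so I would concentrate on $\rAut(r) = \langle \pi_i \mid i\in D_R(r)\rangle$.

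The inclusion $\langle \pi_i \mid i\in D_R(r)\rangle \subseteq \rAut(r)$ is immediate: for $i \in D_R(r)$ one has $r\pi_i = r$ by the definition of $D_R$, so $\pi_i \in \rAut(r)$; and $\rAut(r)$ is a submonoid of $\Rnz$, since it contains $1_n$ and is closed under products ($ru=r$ and $rv=r$ give $r(uv)=(ru)v=rv=r$). Hence it contains the whole submonoid generated by those $\pi_i$.

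For the reverse inclusion I would take $u \in \rAut(r)$ and write it as a product of generators $u = \pi_{i_1}\pi_{i_2}\cdots\pi_{i_k}$ (recall the generators of $\Rnz$ are $\pi_0,\dots,\pi_{n-1}$ by Corollary~\ref{relRn0pi0}). The key observation is that along the chain $b_0 \eqdef r$, $b_j \eqdef r\pi_{i_1}\cdots\pi_{i_j}$, one has $b_j = b_{j-1}\pi_{i_j} \in b_{j-1}\Rnz$, hence $b_j \leq_\RR b_{j-1}$ for every $j$, while $b_0 = b_k = r$ because $u \in \rAut(r)$. Since $\Rnz$ is $\RR$-trivial (Corollary~\ref{Jtrivial}), the $\RR$-preorder is an actual order, so a weakly $\leq_\RR$-decreasing chain that returns to its starting point is constant: $b_j = r$ for all $j$. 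Consequently $r\pi_{i_j} = b_{j-1}\pi_{i_j} = b_j = r$, that is $i_j \in D_R(r)$, for each $j$; hence $u \in \langle \pi_i \mid i\in D_R(r)\rangle$.

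Finally, for $\lAut(r)$ I would run the same argument with left and right exchanged, using $\LL$-triviality of $\Rnz$ (again Corollary~\ref{Jtrivial}); equivalently, the word-reversal anti-automorphism of $\Rnz$, which fixes each $\pi_i$ because the relations of Corollary~\ref{relRn0pi0} are symmetric (Corollary~\ref{autoopp}), exchanges $\rAut$ with $\lAut$ and $D_R$ with $D_L$, so the left-hand statement is just the image of the right-hand one. I do not expect any serious obstacle here: the whole content of the proof is the collapse of the chain $(b_j)$ forced by $\RR$-triviality, and everything else is formal monoid bookkeeping.
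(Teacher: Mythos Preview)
Your proof is correct and follows essentially the same approach as the paper: both arguments use the triviality of Green's order on $\Rnz$ to force every generator appearing in a word for $u\in\rAut(r)$ to lie in $D_R(r)$. The only cosmetic difference is that the paper argues by contradiction (picking the first index $j$ with $i_j\notin D_R(r)$ in a reduced word and deducing $ru <_\JJ r$), whereas you collapse the whole chain $b_0\geq_\RR\cdots\geq_\RR b_k=b_0$ directly via $\RR$-triviality; your formulation is if anything slightly cleaner.
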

\begin{proof}
We do the proof for $\rAut$. The first inclusion
$\langle \pi_i \mid i\in D_R(r)\rangle \subseteq \rAut(r)$ is clear.

Let $u \in \rAut(r)$. So $ru = r$. Assume that $u\notin \langle \pi_i\mid i\in
D_R(r)\rangle $. Let $\pi_{i_1}\dots \pi_{i_m}$ a reduced expression of
$u$. Let $j$ be the smallest index such that $i_j \notin D_R(r)$. Then $ru =
r\pi_{i_j}\dots \pi_{i_m}$ by minimality. Since $i_j\notin D_R(r)$,
$r\pi_{i_j} <_{\JJ} r$ and by $\JJ$-triviality we get $ru<_{\JJ}
r$. This contradict the minimality.
\end{proof}
From~\cite[Proposition 3.16]{DHST}, we get the following corollary:
\begin{corollary}
Let $r\in \Rnz$
\begin{equation}
  \rfix(r) = \pi_{D_R(r)} \qandq \lfix(r) = \pi_{D_L(r)}.
\end{equation}
\end{corollary}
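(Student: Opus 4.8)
The plan is to read this off directly from the previous proposition together with the description of $\rfix$ provided by \cite[Proposition~3.16]{DHST}. Recall first that $\pi_{D_R(r)}$ is by definition the zero of the parabolic submonoid $\langle \pi_i \mid i\in D_R(r)\rangle$, hence, since $\Rnz$ is $\JJ$-trivial (Corollary~\ref{Jtrivial}), its unique $\leq_\JJ$-minimal element, and it is idempotent by Proposition~\ref{prop-Rn0-idemp}. On the other hand, \cite[Proposition~3.16]{DHST} asserts that in any finite $\JJ$-trivial monoid the submonoid $\rAut(x)$ admits a minimum for the $\leq_\JJ$-order, that this minimum is idempotent, and that it coincides with $\rfix(x) = \min\{e\in\idempMon \mid xe = x\}$.

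Combining the two, I would argue as follows. By the previous proposition, $\rAut(r) = \langle \pi_i \mid i\in D_R(r)\rangle$, and the $\leq_\JJ$-minimal element of this submonoid is its zero $\pi_{D_R(r)}$. Since $\rAut(r)$ is a genuine submonoid of $\Rnz$, for every $u\in\rAut(r)$ one has $\pi_{D_R(r)} = u\,\pi_{D_R(r)}$, so $\pi_{D_R(r)}\leq_\RR u$ and a fortiori $\pi_{D_R(r)}\leq_\JJ u$; thus $\pi_{D_R(r)}$ is also the $\leq_\JJ$-minimum of $\rAut(r)$ viewed as a subset of $\Rnz$. Invoking \cite[Proposition~3.16]{DHST} then yields $\rfix(r) = \pi_{D_R(r)}$.

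For $\lfix$ the argument is entirely symmetric: one applies the same reasoning to $\lAut(r) = \langle \pi_i \mid i\in D_L(r)\rangle$, either through the left-handed version of \cite[Proposition~3.16]{DHST} or by transporting the statement along the anti-isomorphism of Corollary~\ref{autoopp}, under which $D_L$ and $D_R$ are exchanged. There is no real obstacle here — this is genuinely a corollary; the only point requiring care is to match precisely what \cite[Proposition~3.16]{DHST} gives (that the minimum of $\rAut(x)$ exists, is idempotent, and equals $\rfix(x)$), so that identifying $\rAut(r)$ with a parabolic submonoid immediately forces $\rfix(r)$ to be the zero $\pi_{D_R(r)}$ of that submonoid.
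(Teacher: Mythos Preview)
Your proof is correct and follows exactly the approach the paper intends: apply \cite[Proposition~3.16]{DHST} to the identification $\rAut(r)=\langle\pi_i\mid i\in D_R(r)\rangle$ from the preceding proposition, recognizing that the $\leq_\JJ$-minimum of this parabolic submonoid is its zero $\pi_{D_R(r)}$. The paper's own proof is merely the one-line citation of \cite[Proposition~3.16]{DHST}, so you have simply unpacked the implicit argument.
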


Then, applying Theorem~\ref{theorem.projective_modules}, we get:
\begin{theorem}\label{theo-Rn0-projective}
  The indecomposable projective $\Rnz$-modules are indexed by the $R$-descents
  sets and isomorphic to the quotient of the associated $R$-descent class by
  the finer $R$-descent classes.
\end{theorem}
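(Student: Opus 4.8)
The plan is to obtain the statement by specializing the general structure theorem for projective modules over a $\JJ$-trivial monoid, Theorem~\ref{theorem.projective_modules}, to $\Rnz$, feeding in the combinatorial facts already established. Recall that the idempotents of $\Rnz$ are exactly the $2^n$ zeros $\pi_S$ of the parabolic submonoids, $S\subseteq\interv{0}{n-1}$ (Proposition~\ref{prop-Rn0-idemp}), that $D_R(\pi_S)=S$ by Lemma~\ref{actiongenerateurssurpiJ} so that $S\mapsto\pi_S$ is injective, and that $\rfix(r)=\pi_{D_R(r)}$ for every $r\in\Rnz$ (the corollary just proved). Fix $S$ and write $e=\pi_S$. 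Then $P_e$, the projective cover of $S_S$ (Theorem~\ref{theo-Rn0-simple}), is $\mathbb{K} L(e)/\mathbb{K} L_<(e)$, so the task reduces to identifying $L(e)=\Rnz e$, $L_=(e)$ and $L_<(e)$ combinatorially.

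First I would check that $\Rnz\pi_S=\{r\in\Rnz\mid S\subseteq D_R(r)\}$: since $\pi_S$ is idempotent, $r\in\Rnz\pi_S$ iff $r\pi_S=r$; if $S\subseteq D_R(r)$ then $r$ is fixed on the right by every $\pi_i$ with $i\in S$, hence by any product of these and in particular by $\pi_S$; conversely $r\pi_S=r$ forces $r\pi_i=r\pi_S\pi_i=r\pi_S=r$ for $i\in S$ by Lemma~\ref{actiongenerateurssurpiJ}. Consequently $L_=(\pi_S)=\{r\in\Rnz\pi_S\mid\rfix(r)=\pi_S\}=\{r\mid\pi_{D_R(r)}=\pi_S\}=\{r\mid D_R(r)=S\}$, which is precisely the $R$-descent class of $S$, using injectivity of $S\mapsto\pi_S$. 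For $L_<$, I would use Proposition~\ref{prod_of_idemp}: since $\pi_S\ast\pi_T=\pi_{S\cup T}$ is the meet of $\pi_S$ and $\pi_T$, the order induced on the idempotents is the Boolean lattice ordered by reverse inclusion, so $\rfix(r)=\pi_{D_R(r)}<_\LL\pi_S$ amounts to $S\subsetneq D_R(r)$; thus $L_<(\pi_S)=\bigcup_{S\subsetneq T}\{r\mid D_R(r)=T\}$ is the union of the strictly finer $R$-descent classes.

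Plugging these into Theorem~\ref{theorem.projective_modules} gives that $P_{\pi_S}$ has for basis the image in $\mathbb{K} L(\pi_S)/\mathbb{K} L_<(\pi_S)$ of the $R$-descent class of $S$ — that is, that class taken modulo the finer $R$-descent classes — with $\pi_i$ acting on a class $\overline r$ by $\overline{\pi_i r}$ when $D_R(\pi_i r)=S$ and by $0$ otherwise; the statement for right modules follows symmetrically through the anti-isomorphism of Corollary~\ref{autoopp}, replacing $D_R$ by $D_L$. Letting $S$ run over the $2^n$ subsets of $\interv{0}{n-1}$, these modules are pairwise non-isomorphic (the $S_S$ already are) and exhaust the indecomposable projectives. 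I do not foresee a real obstacle here: the proof is bookkeeping on top of Theorem~\ref{theorem.projective_modules}, the only delicate points being the equality $\Rnz\pi_S=\{r\mid S\subseteq D_R(r)\}$ and the identification of the order on idempotents with reverse inclusion of $R$-descent sets, both immediate from Lemma~\ref{actiongenerateurssurpiJ} and Proposition~\ref{prod_of_idemp}.
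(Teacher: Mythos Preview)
Your proof is correct and takes precisely the same route as the paper: the paper's argument is literally ``applying Theorem~\ref{theorem.projective_modules}'' after having established $\rfix(r)=\pi_{D_R(r)}$, and you have spelled out the verification that $L(\pi_S)$, $L_=(\pi_S)$ and $L_<(\pi_S)$ are the sets of rooks with $R$-descent set containing, equal to, and strictly containing $S$, respectively. The only thing to add is that your side checks (the description of $\Rnz\pi_S$ and the identification of the $\LL$-order on idempotents with reverse inclusion via Proposition~\ref{prod_of_idemp} and Lemma~\ref{actiongenerateurssurpiJ}) are exactly the missing details the paper leaves implicit.
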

\begin{figure}[ht]
\centering
\raisebox{5.5cm}{\includegraphics[scale=0.8]{figures/legende4.pdf}}
\hspace{-2cm}
\raisebox{25pt}{\includegraphics[scale=0.45]{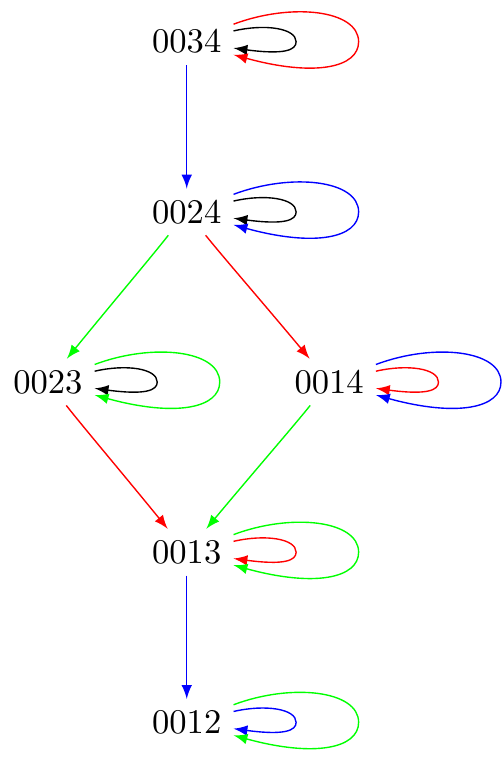}}
\quad
\raisebox{15pt}{\includegraphics[scale=0.45]{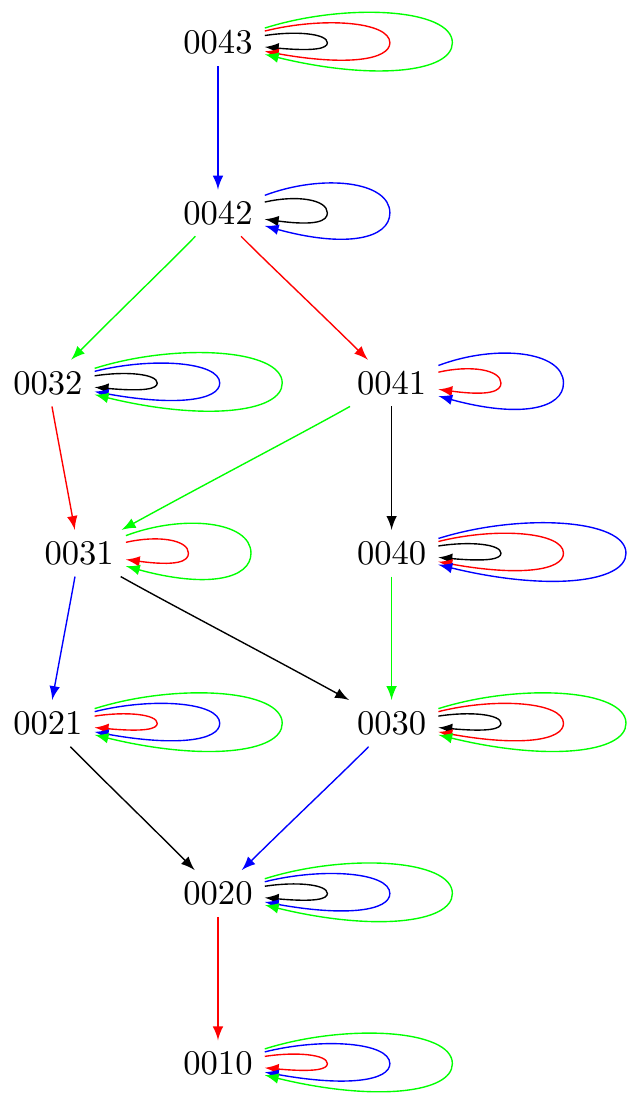}}
\quad
\includegraphics[scale=0.45]{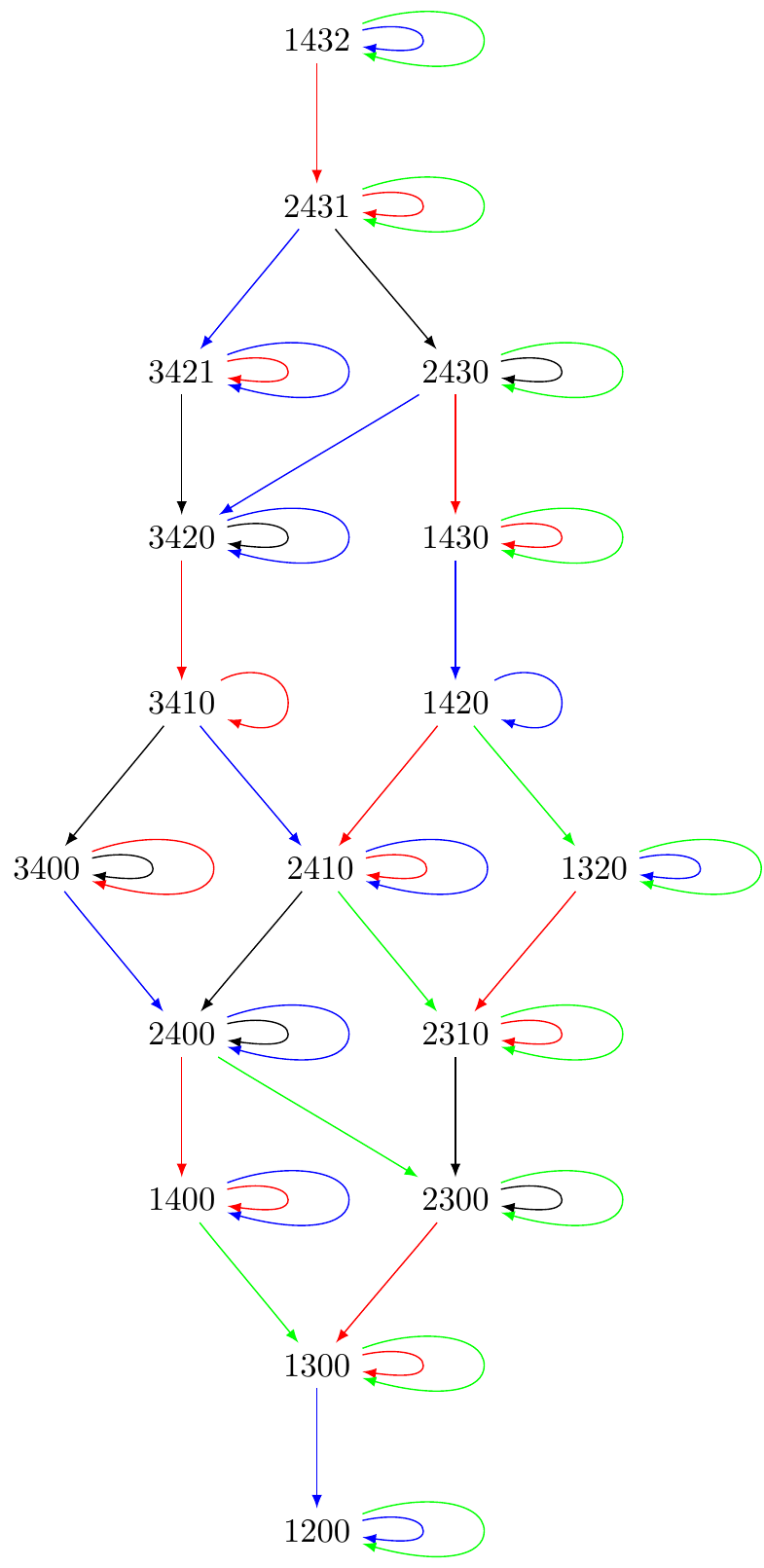}
\quad
\raisebox{5pt}{\includegraphics[scale=0.45]{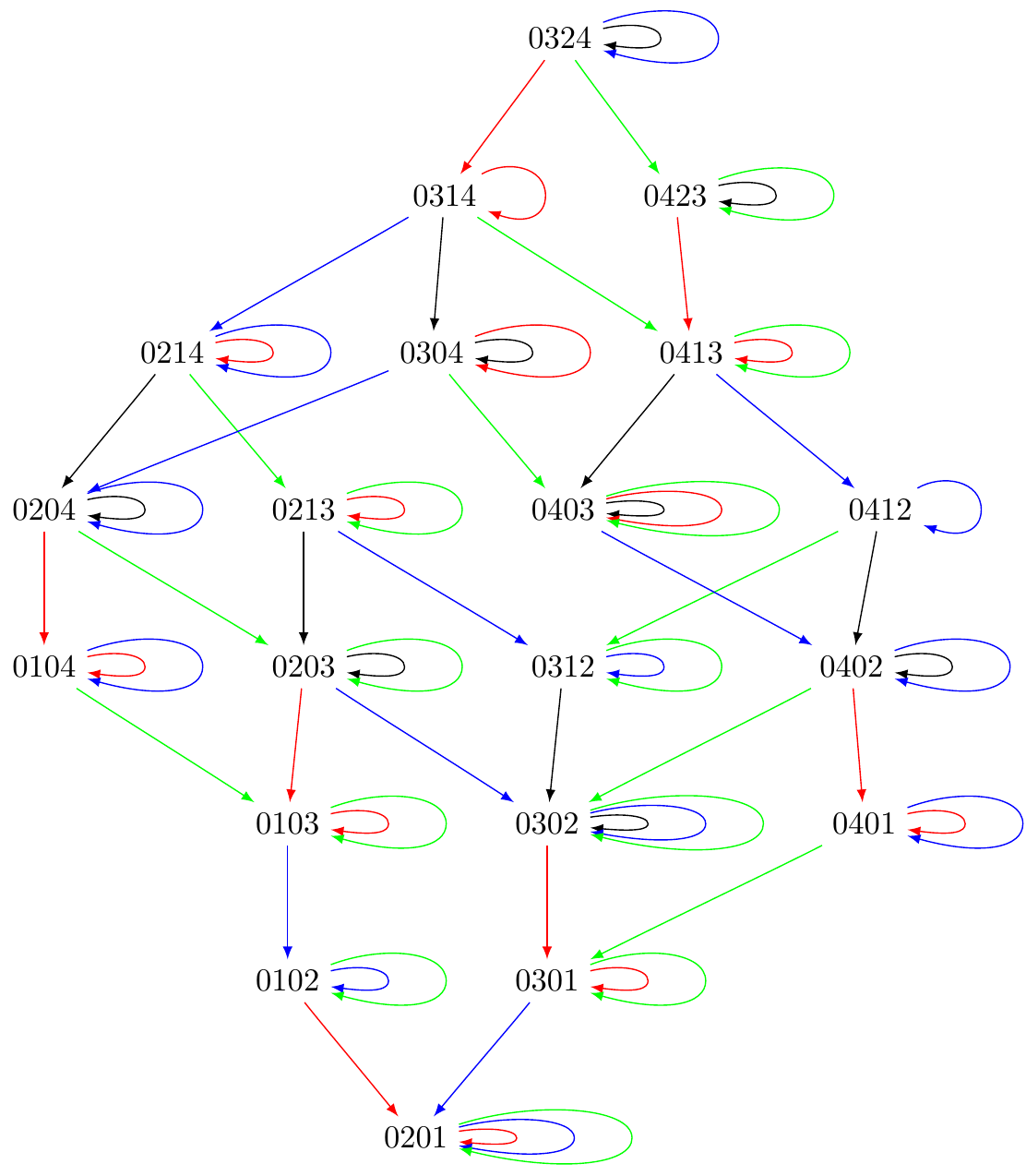}}
\caption{The $R$-descent classes $\lbrace 0,1\rbrace$, $\lbrace 0,1,3\rbrace$, $\lbrace  2,3\rbrace$ and $\lbrace 0,2\rbrace$. }
\end{figure}
Finally \cite[Theorem 3.20]{DHST} gives the coefficients of the Cartan matrix
of $\Rnz$ as the number of rooks with a given left and right descent set. We
give it in Annex~\ref{decFunctorTable}, for $n=1,2,3,4$.

\begin{remark}
  Contrary to the classical case~\cite{KrobThibon.1997} these quotients are
  not intervals of the $\RR$-order: the descent class depicted in
  Figure~\ref{fig_not_interval} has two bottom elements.
\end{remark}
\begin{figure}[!ht]
\centering
\raisebox{8cm}{\includegraphics[scale=0.9]{figures/legende4.pdf}}
\hspace{-2cm}
\includegraphics[scale=0.7]{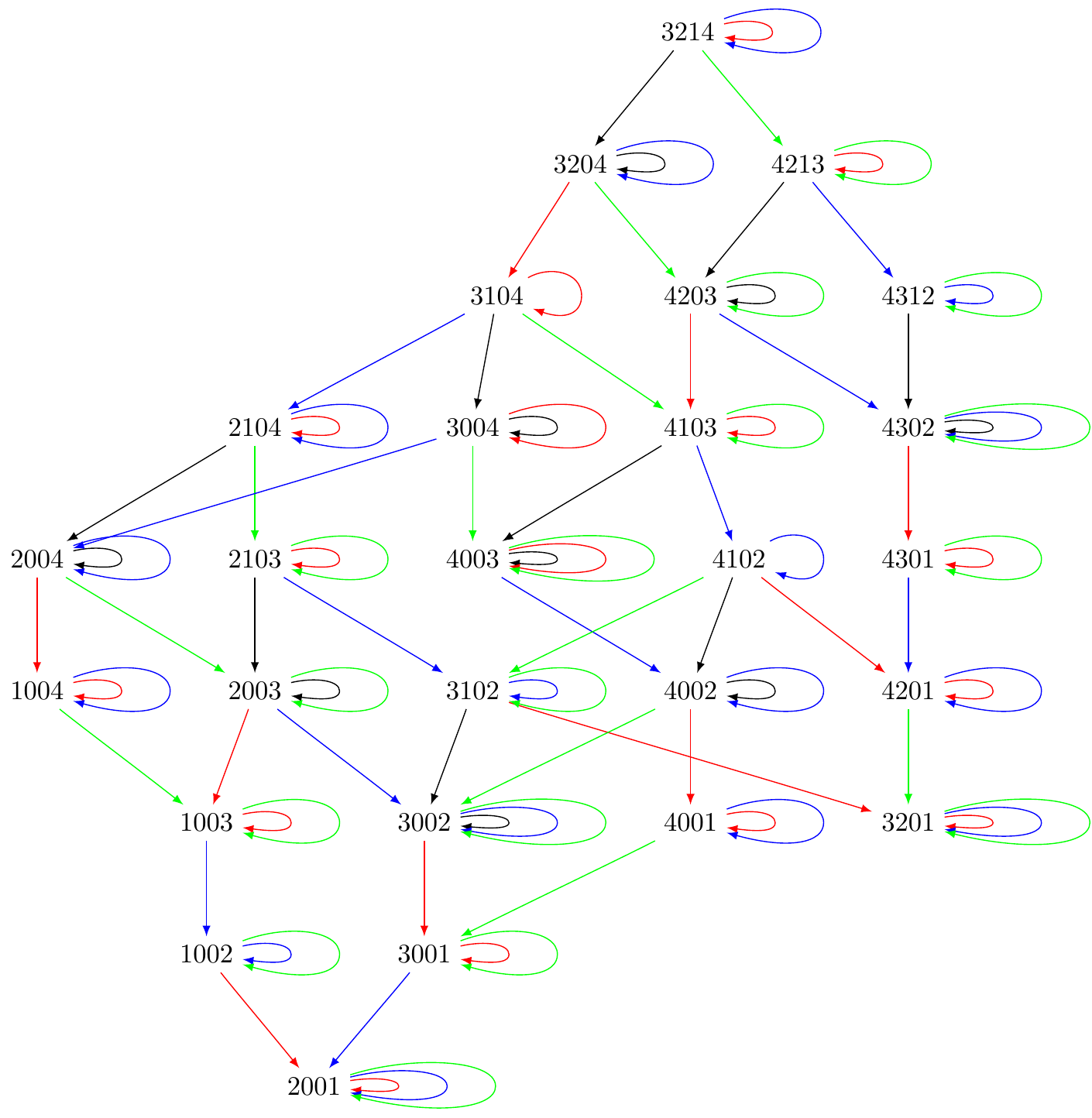}
\caption{An example of a $R$-descent class which is not an interval of the $\RR$-order.}\label{fig_not_interval}
\end{figure}

\subsection{Ext-Quivers}\label{sec-quiv_rnz}

The Ext-quiver of $H_n^0$ were first computed
in~\cite{DuchampHivertThibon.2002} in type $A$, and later in~\cite{Fayers.2005} in
the other types. Moreover, \cite{DHST} describes an algorithm to compute the
quiver of any $\JJ$-trivial monoid. This algorithm is implemented in
\texttt{sage-semigroups} from the second author, Franco~Saliola and
Nicolas~M.~Thiéry~\cite{sage-semigroup}. It turns out that the quiver of rook
monoids are not different from $0$-Hecke monoids:
\begin{theorem}\label{theo-Rn0-quiver}
  The kernel of the two following algebra morphisms
  \begin{equation}
    \C[H^0(B_{n})] \twoheadrightarrow \C[\Rnz]
    \qandq
    \C[\Rnz] \twoheadrightarrow \C[H^0(A_{n+1})]
  \end{equation}
  are included in the square radical of their respective domains. As a
  consequence, these three algebras share the same quiver.
\begin{proof}
  Recall that all of these algebras are monoid algebras of $\JJ$-trivial
  monoids. Thanks to~\cite[Corollary~3.8]{DHST}, their radical is generated by
  commutators. Therefore, the following non zero elements:
  $\pi_0\pi_1\pi_0-\pi_0\pi_1$ and $\pi_0\pi_1\pi_0-\pi_1\pi_0$ lie in the
  radical of each of these three algebras.  The first map has for kernel the
  ideal generated by the relation
  \begin{equation*}
    \pi_0\pi_1\pi_0-\pi_0\pi_1\pi_0\pi_1=
    (\pi_0\pi_1\pi_0-\pi_0\pi_1)(\pi_0\pi_1\pi_0-\pi_1\pi_0)\,.
  \end{equation*}
  which thus lies in the square radical.  Similarly the kernel of the second map is
  the ideal generated by
  \begin{equation*}
    \pi_1\pi_0\pi_1-\pi_1\pi_0\pi_1\pi_0 =
    (\pi_0\pi_1\pi_0-\pi_1\pi_0)(\pi_0\pi_1\pi_0-\pi_0\pi_1)\,.\qedhere
  \end{equation*}
\end{proof}
\end{theorem}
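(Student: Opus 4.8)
All three algebras are the algebras of finite $\JJ$-trivial monoids: $H^0(B_n)$ is $\JJ$-trivial by~\cite{Fayers.2005}, $\Rnz$ by Corollary~\ref{Jtrivial}, and $H^0(A_{n+1})$ by~\cite{DHST}. The plan is to first establish that each of the two kernels lies in the \emph{square} of the radical of its domain, and then to invoke the standard principle that a surjection $\phi\colon A\twoheadrightarrow B$ of finite-dimensional algebras with $\ker\phi\subseteq(\rad A)^2$ preserves the Ext-quiver. I would recall why this holds: for such a $\phi$ one always has $\phi(\rad A)=\rad B$, so $\phi$ induces an isomorphism $A/\rad A\xrightarrow{\sim}B/\rad B$, and when in addition $\ker\phi\subseteq(\rad A)^2$ it induces $(\rad A)/(\rad A)^2\xrightarrow{\sim}(\rad B)/(\rad B)^2$; since the quiver of an algebra is read off from its semisimple quotient together with the bimodule $\rad/\rad^2$, the quivers of $A$ and $B$ then coincide.

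Next I would record that commutators lie in the radical. By the theorem of Denton--Hivert--Schilling--Thi\'ery recalled above, for a finite $\JJ$-trivial monoid $M$ one has $\C[M]/\rad(\C[M])\simeq(\C[\idempMon],\star)$, which is \emph{commutative}; hence every additive commutator $xy-yx$ of elements of $\C[M]$ lies in $\rad(\C[M])$. Writing $\pi_0,\pi_1$ for the first two generators in each of the three presentations (for $H^0(B_n)$ and for $\Rnz$ these are $\pi_0=P_1$ and $\pi_1$; for $H^0(A_{n+1})$, two adjacent Coxeter generators), the elements
\begin{equation*}
  a\eqdef[\pi_0\pi_1,\pi_0]=\pi_0\pi_1\pi_0-\pi_0\pi_1
  \qandq
  b\eqdef[\pi_0,\pi_1\pi_0]=\pi_0\pi_1\pi_0-\pi_1\pi_0
\end{equation*}
(using only $\pi_i^2=\pi_i$) therefore lie in the radical of each of the three algebras. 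They are moreover nonzero, since already in $\C[H^0(A_{n+1})]$ the three monoid elements $\pi_0\pi_1\pi_0$, $\pi_0\pi_1$, $\pi_1\pi_0$ are pairwise distinct, generating a copy of the $0$-Hecke monoid of $\SG 3$.

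It then remains to identify the two kernels and to compute two products. Comparing the standard presentation of $H^0(B_n)$ with the presentation of $\Rnz$ in Corollary~\ref{relRn0pi0}, the latter is obtained from the former by imposing the single extra relation $\pi_0\pi_1\pi_0=\pi_0\pi_1\pi_0\pi_1$ (the companion relation $\pi_0\pi_1\pi_0=\pi_1\pi_0\pi_1\pi_0$ being then forced by the order-$4$ braid relation $\pi_0\pi_1\pi_0\pi_1=\pi_1\pi_0\pi_1\pi_0$ of type $B$). Hence $\ker(\C[H^0(B_n)]\twoheadrightarrow\C[\Rnz])$ is the two-sided ideal generated by $\pi_0\pi_1\pi_0-\pi_0\pi_1\pi_0\pi_1$, and a short direct computation using only $\pi_i^2=\pi_i$ and the type-$B$ braid relation gives, in $\C[H^0(B_n)]$,
\begin{equation*}
  ab=(\pi_0\pi_1\pi_0-\pi_0\pi_1)(\pi_0\pi_1\pi_0-\pi_1\pi_0)=\pi_0\pi_1\pi_0-\pi_0\pi_1\pi_0\pi_1 .
\end{equation*}
As $a,b\in\rad(\C[H^0(B_n)])$ and $(\rad A)^2$ is a two-sided ideal, the whole kernel lies in $(\rad(\C[H^0(B_n)]))^2$. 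Symmetrically, using the extra relation noted in Section~\ref{sec_action_Rnz}, the kernel of $\C[\Rnz]\twoheadrightarrow\C[H^0(A_{n+1})]$ is the two-sided ideal generated by $\pi_1\pi_0\pi_1-\pi_1\pi_0\pi_1\pi_0$, and the analogous computation in $\C[\Rnz]$, now invoking Relation~\ref{RB3}, gives $ba=\pi_1\pi_0\pi_1-\pi_1\pi_0\pi_1\pi_0\in(\rad(\C[\Rnz]))^2$. Applying the principle of the first paragraph then yields the common quiver. I expect the only genuinely delicate point to be keeping the order-$4$ and order-$3$ braid relations straight, so as to use $ab$ for the first kernel and $ba$ for the second; everything else is routine bookkeeping with the three presentations.
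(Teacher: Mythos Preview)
Your proof is correct and follows essentially the same approach as the paper's: identify $a=\pi_0\pi_1\pi_0-\pi_0\pi_1$ and $b=\pi_0\pi_1\pi_0-\pi_1\pi_0$ as commutators (hence radical elements), observe that the two kernel generators factor as $ab$ and $ba$ respectively, and conclude. Your write-up is in fact more complete than the paper's, since you spell out why a surjection with $\ker\phi\subseteq(\rad A)^2$ preserves the quiver and you derive the ``commutators lie in the radical'' fact from the commutativity of $\C[M]/\rad(\C[M])$ rather than just citing it.
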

We refer the reader who want to see actual picture of the quiver
to~\cite{DuchampHivertThibon.2002}. Except for trivial cases, they are not of
known type so that the representation theory of $R^0_n$ starting from $n=3$ is
wild.

\subsection{Restriction functor to \texorpdfstring{$\Hnz$}{Hn0}}

We now further examine the links between representations of $\Hnz$ and
$\Rnz$. Indeed, since $\Hnz$ is a submonoid of $\Rnz$, it acts by
multiplication on $\Rnz$. We can see $\Rnz$ as an $\Hnz$-module. Moreover,
we can transport modules back and between $\Hnz$ and $\Rnz$ trough the
induction and restriction functors.

We first look at simple modules whose restriction rule is immediate:
\begin{proposition}\label{restsimpleRH}
  Let $J\subset \interv{0}{n-1}$, with associated simple $\Rnz$-module $S_J$. Then:
  \begin{equation}
    \Res_{\Hnz}^{\Rnz} S_J = S^H_{J\setminus \lbrace 0\rbrace}\,,
  \end{equation}
  where $S^H_I$ is the simple $\Hnz$-module generated by the parabolic $I \subseteq \llbracket 1, n-1\rrbracket$.
\end{proposition}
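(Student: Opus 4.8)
The plan is to unwind the definitions on both sides and check that the two one-dimensional modules have the same generator action. Recall from Theorem~\ref{theo-Rn0-simple} that $S_J$ is spanned by a vector $\varepsilon_J$ on which $\pi_i$ acts as the identity if $i\in J$ and by $0$ otherwise, for $i\in\interv{0}{n-1}$. On the $\Hnz$ side, the submonoid $\Hnz\subset\Rnz$ is the one generated by $\pi_1,\dots,\pi_{n-1}$ (the elementary generators, not $\pi_0=P_1$), as recalled in Section~\ref{subsec-reptheo-hn0}; its simple module $S^H_I$ indexed by $I\subseteq\interv{1}{n-1}$ is the one-dimensional module where $\pi_i$ acts by the identity for $i\in I$ and by $0$ for $i\in\interv{1}{n-1}\setminus I$.

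First I would note that restricting $S_J$ along the inclusion $\Hnz\hookrightarrow\Rnz$ simply means forgetting the action of $\pi_0$ and keeping only the action of $\pi_1,\dots,\pi_{n-1}$. The underlying vector space is unchanged (still one-dimensional, spanned by $\varepsilon_J$). Then for $i\in\interv{1}{n-1}$, the generator $\pi_i\in\Hnz$ acts on $\varepsilon_J$ exactly as it does inside $\Rnz$: by the identity if $i\in J$, i.e. if $i\in J\setminus\{0\}$ (since $i\neq 0$), and by $0$ otherwise, i.e. if $i\in\interv{1}{n-1}\setminus(J\setminus\{0\})$. This is precisely the defining action of the simple $\Hnz$-module $S^H_{J\setminus\{0\}}$, so $\Res_{\Hnz}^{\Rnz}S_J\cong S^H_{J\setminus\{0\}}$.

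There is essentially no obstacle here: the statement is a direct consequence of the explicit description of the simple modules of $\JJ$-trivial monoids (Theorem~\ref{proposition.simple}, applied in Theorem~\ref{theo-Rn0-simple} to $\Rnz$ and in Section~\ref{subsec-reptheo-hn0} to $\Hnz$), together with the identification of $\Hnz$ as the parabolic submonoid of $\Rnz$ generated by $\{\pi_1,\dots,\pi_{n-1}\}$. The only point that requires a word of care is the indexing convention: the idempotents of $\Hnz$ are indexed by subsets of $\interv{1}{n-1}$ while those of $\Rnz$ are indexed by subsets of $\interv{0}{n-1}$, and the passage from one to the other is exactly intersection with $\interv{1}{n-1}$, which on the level of a subset $J$ containing or not $0$ is the operation $J\mapsto J\setminus\{0\}$. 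I would spell this out in one or two sentences and conclude.
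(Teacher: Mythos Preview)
Your proposal is correct and matches the paper's own approach: the paper in fact gives no separate proof, declaring the restriction rule ``immediate'', and your argument is precisely the unwinding of definitions that justifies this word.
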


The rule of induction for simple $\Hnz$-modules to $\Rnz$-modules is otherwise
quite intricate and would be very technical. It would be very similar to what
we will do in section \ref{mod_simple_tour} for the induction of simple
modules of $\Rnz$ to another $R_{m}^0$, which is already very technical.

We now look at indecomposable $\Rnz$-projective modules.
\begin{proposition}\label{restindecRH}
  Let $I\subset \interv{1}{n-1}$ and $P^H_I$ the associated indecomposable
  $\Hnz$-projective module. Then:
  \begin{equation}
    \Ind_{\Hnz}^{\Rnz} P^H_I = P_I\oplus P_{I\cup\lbrace 0\rbrace}\,.
  \end{equation}
\end{proposition}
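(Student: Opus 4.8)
The plan is to compute the induction $\Ind_{\Hnz}^{\Rnz} P^H_I = \C[\Rnz] \otimes_{\C[\Hnz]} P^H_I$ concretely, using the fact that $\Rnz$ is a \emph{free} left $\C[\Hnz]$-module. First I would establish this freeness: since $\Hnz \subset \Rnz$ is a submonoid, $\C[\Rnz]$ is a left $\C[\Hnz]$-module by multiplication, and I would exhibit an explicit basis by choosing a set of coset representatives. Concretely, using the canonical reduced words $\pi_{\code(r)}$ from Definition~\ref{def_word_code} and the Matsumoto-type result (Theorem~\ref{theoreom-matsumoto}), every element of $\Rnz$ factors as an element of $\Hnz$ times a distinguished ``right part''; the natural candidates for coset representatives are the $\pi_{\w{r}}$ where $\w{r}$ ranges over rooks with \emph{increasing} nonzero entries in their natural one-line reading (equivalently, the $R$-codes whose positive letters are maximal), so that left multiplication by $\Hnz$ permutes/sorts the nonzero letters. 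This is essentially the statement that $R_n = \SG{n}\cdot\{$increasing rooks$\}$, a fact that also underlies the $\MCR{n}$ computation in Section~\ref{subsec:chains}.

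Once freeness is in hand, $\Ind_{\Hnz}^{\Rnz} P^H_I$ has, as a vector space, basis indexed by (coset representatives) $\times$ (basis of $P^H_I$), and the latter basis is the set of permutations $w$ with right descent composition $I$ (Section~\ref{subsec-reptheo-hn0}). I would then identify this induced module with a submodule/quotient sitting inside the regular representation $\C[\Rnz]\simeq\C R_n$ (Proposition~\ref{isoRegNat}), and decompose it into indecomposable projectives using the general $\JJ$-trivial theory (Theorem~\ref{theorem.projective_modules}): the indecomposable projective $P_S$ of $\Rnz$ is the span of the $R$-descent class of $S$ modulo finer classes, so it suffices to track which $R$-descent classes appear and with what multiplicity in $\Ind_{\Hnz}^{\Rnz} P^H_I$. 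The key computation is that, for a rook $r$, the right $R$-descent set $D_R(r)$ either contains $0$ (when $r$ starts with $0$) or not, and restricting/inducing along $\Hnz\hookrightarrow\Rnz$ (whose generators are exactly the $\pi_i$, $i\geq 1$) toggles precisely the membership of $0$: this is the source of the two summands $P_I$ and $P_{I\cup\{0\}}$.

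More precisely, I would argue that $\Ind_{\Hnz}^{\Rnz} P^H_I$ is projective (induction of a projective is projective, since $\C[\Rnz]$ is free hence flat over $\C[\Hnz]$), so it is a direct sum of indecomposable projectives $P_S$ for various $S\subseteq\interv{0}{n-1}$. To pin down the multiplicities I would compute $\dim \Ind_{\Hnz}^{\Rnz} P^H_I$ and compare with $\dim P_I + \dim P_{I\cup\{0\}}$, and also compute $\Hom_{\Rnz}(\Ind_{\Hnz}^{\Rnz} P^H_I, S_S)$ via Frobenius reciprocity:
\begin{equation*}
  \Hom_{\Rnz}(\Ind_{\Hnz}^{\Rnz} P^H_I, S_S) \simeq \Hom_{\Hnz}(P^H_I, \Res_{\Hnz}^{\Rnz} S_S) \simeq \Hom_{\Hnz}(P^H_I, S^H_{S\setminus\{0\}})\,,
\end{equation*}
using Proposition~\ref{restsimpleRH}. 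The right-hand side is $1$-dimensional exactly when $S\setminus\{0\} = I$, i.e. when $S = I$ or $S = I\cup\{0\}$, and $0$ otherwise. Since $\Ind$ of an indecomposable projective need not be indecomposable but \emph{is} projective, and the ``top'' (head) of a projective cover $P_S$ is $S_S$, this Hom computation shows $P_I$ and $P_{I\cup\{0\}}$ are the only indecomposable summands, each appearing at least once; a dimension count (or a second Hom computation on the other side, or noting that $\dim\Hom(\Ind\, P^H_I, \Ind\, P^H_I)$ restricted appropriately) then shows each appears exactly once.

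\textbf{Main obstacle.} The delicate point is establishing that $\C[\Rnz]$ is free as a left $\C[\Hnz]$-module with an explicitly controlled basis, and — more subtly — tracking exactly how the coset representatives interact with the basis $\{b_w : C_R(w)=I\}$ of $P^H_I$ under the quotient defining $P^H_I$ (recall $P^H_I = \C[\Hnz]e_I / (\text{finer descent classes})$), so that the quotient relations on the $\Hnz$ side propagate correctly to $\Rnz$-quotient relations. In other words, verifying that tensoring the presentation of $P^H_I$ up to $\Rnz$ produces precisely the relations cutting out $P_I\oplus P_{I\cup\{0\}}$ and no more is where the real work lies; the Hom/Frobenius-reciprocity argument above is designed precisely to bypass having to do this bookkeeping by hand, reducing everything to the already-proven Proposition~\ref{restsimpleRH} and the structural results of~\cite{DHST}.
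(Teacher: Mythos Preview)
Your Frobenius-reciprocity argument in the third paragraph is exactly the paper's proof: compute $\dim\Hom_{\Rnz}(\Ind P^H_I, S_S)=\dim\Hom_{\Hnz}(P^H_I, S^H_{S\setminus\{0\}})$ via Proposition~\ref{restsimpleRH}, and conclude. Note that this Hom dimension gives the multiplicity of $P_S$ \emph{exactly} (not just a lower bound), so the additional dimension count you worry about is unnecessary, and likewise the freeness/coset-representative discussion in your first two paragraphs is not needed---induction preserves projectives simply because it is left adjoint to the exact restriction functor.
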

\begin{proof}
  This is a consequence of Proposition~\ref{restsimpleRH}, thanks to Frobenius
  reciprocity~(see \textit{e.g.}~\cite{CurtisReiner.1990}). Indeed, since the
  simple module $S^R_J$ is the quotient of the indecomposable projective
  $P^R_J$ by its radical, the multiplicity of $P^R_J$ in a projective module
  $P$ is equal to $\dim\Hom_R(P, S^R_J)$. By Frobenius reciprocity,
  \begin{equation}
    \operatorname{Mult}_{P^R_J}(\Ind_H^R P_I^H) =
    \dim\Hom_R(\Ind_H^R P_I^H, S_J^R) = \dim\Hom_H(P_I^H, \Res S_J^R)\,.
  \end{equation}
  Now, Proposition~\ref{restsimpleRH} says that this dimension is $1$ only if
  $I=J\setminus\{0\}$, otherwise it is $0$.
\end{proof}

The restriction of projective modules from $\Rnz$ to $\Hnz$ is much more
interesting. We will show that $\Rnz$-projective modules are still projective
when restricted to $\Hnz$, and give a precise combinatorial rule.

\begin{definition}\label{fI}
  Let $I\subset\{1,\dots,n\}$ of size $k$, and $\sigma =i_1\dots i_n \in
  \mathfrak{S}_n$. We define $\varphi_I(\sigma)$ to be the rook obtained by
  removing the first $k$ entries of $\sigma$ and inserting zeros in positions
  indexed by the elements of $I$.

  We also denote $\psi : R_n \rightarrow \mathfrak{S}_n$ the map which takes a
  rook, put all zeros at the beginning of the word and replace them by the
  missing letters in decreasing order. %Then $\psi \circ \varphi_I =
%  id_{\mathfrak{S}_n}$.
\end{definition}
\begin{example}
  For instance $\varphi_{\lbrace 1,3\rbrace}(14235) = 02035$ and $\psi(02410)
  = 53241$.
\end{example}

For the next results, we will consider $\Rnz$ to be a left $\Hnz$-module by
left multiplication. Thus the action is on values as in
Definition~\ref{def_left_action}.
\begin{theorem}\label{Rn_proj_Hn}
  $\C\Rnz$ is projective over $\Hnz$. As a consequence any projective
  $\Rnz$-module remains projective when restricted to $\Hnz$.
\end{theorem}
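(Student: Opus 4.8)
The strategy is to exhibit an explicit decomposition of $\C\Rnz$, viewed as a left $\Hnz$-module, as a direct sum of modules each isomorphic to a projective indecomposable $\Hnz$-module, or more directly as a direct sum of modules of the form $\C\Hnz \cdot e$ for idempotents $e$ of $\Hnz$. Since $\Hnz$ is itself $\JJ$-trivial, its projective indecomposables are exactly the left ideals $\C\Hnz\,\pi^H_J$ generated by the parabolic idempotents, so it suffices to partition a basis of $\C\Rnz$ into $\Hnz$-stable ``descent classes'' that match the projective modules of $\Hnz$ described in Section~\ref{subsec-reptheo-hn0}. The natural basis to use is the rook basis $R_n$, on which $\Hnz\subset\Rnz$ acts on the left by the left natural action of Definition~\ref{def_left_action} (so by the symmetric-group generators $\pi_1,\dots,\pi_{n-1}$ acting on values).

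First I would stratify $R_n$ according to its support: for each $k$ and each $k$-subset recording which values $\{1,\dots,n\}$ are \emph{missing}, group together all rooks with that set of missing values. The key observation is that the left action of $\pi_1,\dots,\pi_{n-1}$ does \emph{not} change the set of values occurring in a rook when none of the involved values cause a collapse, but when $i\notin r$ and $i+1\in r$ the generator $\pi_i$ replaces $i+1$ by $i$ — i.e. it can move the ``hole'' downward. So the support is not preserved, but the \emph{multiset of holes sorted to the bottom} is: this is exactly the content of the map $\psi:R_n\to\SG n$ of Definition~\ref{fI}, which pushes all zeros to the front and fills them by the missing letters in decreasing order. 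I would check that $\psi$ intertwines the left $\Hnz$-action on $R_n$ with a suitable left $\Hnz$-action on a union of cosets $\SG{k}\backslash\SG n$; concretely, via the maps $\varphi_I$, for each $k$-subset $I$ of positions the set $\varphi_I(\SG n)$ of rooks with zeros exactly in positions $I$ is an $\Hnz$-stable (for those $\pi_i$ that don't merge a hole) piece, and overall $\C R_n$ decomposes along these pieces into copies of the permutation modules $\C[\SG n/\SG k]$.

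Then I would invoke the classical fact (Krob--Thibon, recalled around Equation~\ref{induct_proj_hn0}) that $\C[\SG n]$ regarded as a left $\Hnz$-module via the natural projection is the regular representation of $\Hnz$, hence projective, and more generally that each $\C[\SG n/\SG k]$ — equivalently $\Ind_{H^0_k\times H^0_{n-k}}^{H^0_n}$ of the appropriate one-dimensional projective — is projective over $\Hnz$. Summing over $k$ (and over the relevant hole-position data) then shows $\C\Rnz\cong\C R_n$ is a direct sum of projective $\Hnz$-modules, hence projective. The ``as a consequence'' clause is then immediate: if $P$ is a projective $\Rnz$-module, write $P$ as a direct summand of a free module $(\C\Rnz)^{\oplus m}$; restriction is exact and additive, so $\Res_{\Hnz}P$ is a direct summand of $(\Res_{\Hnz}\C\Rnz)^{\oplus m}$, which we have just shown to be projective, and a direct summand of a projective module is projective.

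The main obstacle I anticipate is bookkeeping the left action carefully on the hole-carrying rooks: one must verify that on each stratum $\varphi_I(\SG n)$ the generators $\pi_i$ either act as in $\SG n$ (permuting the nonzero values) or send the stratum into a \emph{strictly smaller}-support stratum in a way compatible with the filtration, so that the associated graded is a genuine direct sum of permutation modules $\C[\SG n/\SG k]$ rather than merely having these as subquotients. Equivalently, one needs that the decomposition $\C R_n=\bigoplus$ (over hole data) is a decomposition of $\Hnz$-modules and not just a filtration; this is where the precise description of $\psi$ and $\varphi_I$ in Definition~\ref{fI}, together with $\JJ$-triviality (which forbids any ``return'' of a merged hole), does the work. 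Once that compatibility is nailed down, the projectivity of each $\C[\SG n/\SG k]$ over $\Hnz$ is standard, and the rest is formal.
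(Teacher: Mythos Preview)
Your proposal has the right ingredients --- the map $\psi$, a stratification of $R_n$, and the identification of each piece with a projective $\Hnz$-module --- but you conflate two different stratifications, and this is precisely what creates the obstacle you anticipate in your last paragraph.

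You begin by stratifying by \emph{support} (the set of nonzero values) and correctly observe that this is not preserved: when $i\notin r$ and $i+1\in r$, the generator $\pi_i$ replaces $i+1$ by $i$. You then worry about needing a filtration and an associated-graded argument. Midway you switch to the stratification by the \emph{positions} of the zeros (the sets $\varphi_I(\SG n)=C_I$), without noticing that this second stratification is the correct one and behaves much better.

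The observation you are missing is simple: for $i>0$, the left action of $\pi_i$ on a rook never moves a zero and never creates one. From Definition~\ref{def_left_action}, each case either fixes $r$, swaps two nonzero values at their positions, or replaces the value $i+1$ by the value $i$ in the same position. Hence the set of zero \emph{positions} is genuinely $\Hnz$-invariant, and
\[
\C R_n \;=\; \bigoplus_{I} \C C_I
\]
(sum over subsets $I$ of positions) is a direct-sum decomposition of left $\Hnz$-modules --- not merely a filtration. Your worry about generators that ``merge a hole'' or ``send the stratum into a strictly smaller-support stratum'' is therefore unfounded: no $\pi_i$ with $i>0$ ever leaves its position-stratum.

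From here your plan coincides with the paper's: on each $C_I$ (with $|I|=k$), the restriction of $\psi$ is an injective $\Hnz$-module morphism onto the set of permutations whose first $k$ letters are decreasing, i.e.\ onto the left ideal $\Hnz\cdot\pi_{\{1,\dots,k-1\}}$ generated by an idempotent, hence projective. Your ``consequence'' paragraph (direct summand of a free module restricts to a direct summand of a projective) is correct as stated.

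So the fix is short: drop the support stratification entirely and lead with zero-position invariance. The rest of your outline then works without any filtration detour.
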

\begin{proof}
  The main remark is that according to Definition~\ref{def_left_action}, the
  left action of $\pi_i$ for $i>0$ on any rook does not change the zeros: they
  remain at the same positions and no one are added.

  For any $I\subset\interv{0}{n-1}$, let $C_{I}$ the set of rooks with zeros
  in the positions indexed by $I$. Since the action of $\Hnz$ does not move
  zeros, we have the following decomposition in $\Hnz$-modules:
  \begin{equation}
    \C\Rnz \simeq \bigoplus_{I\subset\interv{0}{n-1}} \C C_{I}\,.
  \end{equation}
  It is enough to prove that each summand $\C C_J$ are projective since
  direct sums of projective modules are projective.

  For such a summand where zeros are in positions $i\in I$, the linearization
  of the map $\psi$ of Definition~\ref{fI} is an injective $\Hnz$-module
  morphism. Its image is the set of permutations which start with $|I|-1$
  descents which is a well known projective $\Hnz$-module. Indeed, it is the
  $\Hnz$-module generated by the element
  $i, i-1, \dots, 2, 1, i+1, i+2, \dots , n$. This element is the zero of the
  parabolic submonoid generated by $\lbrace \pi_1,\dots \pi_{i-1}\rbrace$,
  hence idempotent. Consequently it generates a projective modules. This shows
  that $C_{I}$ is projective on $\Hnz$.
\end{proof}

We now describe explicitely the restriction functor.  Recall from
Equation~\ref{induct_proj_hn0} that the induction product of two
indecomposable projective $H_m^0$-Module (resp. $H_n^0$-Module) $P_I$ and
$P_J$ is given by $P_I\star P_J \eqdef \Ind_{m,n}(P_I\otimes P_J) \simeq P_{I
  \cdot J} \oplus P_{I \triangleright J}$.
\begin{definition}
  Let $I$ be an extended composition of $n$. A zero-filling of $I$ is a ribbon
  of shape $I$ with boxes either empty, either with $0$ inside according to
  the following rules:
  \begin{itemize}
  \item In the first column, either every box contains $0$ if $0\in\Des(I)$, or
    none otherwise.
  \item Outside of the first column, if a box contains $0$ then there is no box
    on its left, and all the boxes below in the same column also contain zeros.
  \end{itemize}
  To each of these fillings $f$ we associate a tuple $\Split(f)$ of ribbon as
  follows
  \begin{itemize}
  \item the first entry of the tuple is a column whose size is the total
    number of zeros in $f$
  \item the other entries of the tuple are the (down-right) connected components
    of $I$ where the boxes containing a $0$ in $f$ are removed.
  \end{itemize}
  To each splitting, it therefore makes sense to consider the $\star$-product
  $\prod_{r\in\Split(f)} P_r$.
\end{definition}

\begin{example}
  The following picture shows an extended composition followed by some of its
  $0$-fillings. There are $3*3*2$ of them.
  \[
  \rsbox{20pt}{0.5}{\gyoung(;0,;0;;,::;,::;;,:::;,:::;;,::::;;;)}
  \qquad
  \rsbox{20pt}{0.5}{\gyoung(;0,;0;;,::;,::;;,:::;,:::;;,::::;0;;)}
  \rsbox{20pt}{0.5}{\gyoung(;0,;0;;,::;,::;;,:::;,:::;0;,::::;;;)}
  \rsbox{20pt}{0.5}{\gyoung(;0,;0;;,::;,::;;,:::;,:::;0;,::::;0;;)}
  \rsbox{20pt}{0.5}{\gyoung(;0,;0;;,::;,::;0;,:::;0,:::;0;,::::;0;;)}
  \rsbox{20pt}{0.5}{\gyoung(;0,;0;;,::;0,::;0;,:::;,:::;;,::::;0;;)}
  \rsbox{20pt}{0.5}{\gyoung(;0,;0;;,::;0,::;0;,:::;,:::;0;,::::;0;;)}
  \rsbox{20pt}{0.5}{\gyoung(;0,;0;;,::;0,::;0;,:::;0,:::;0;,::::;0;;)}
  \]
  We now consider two particular $0$-fillings and show the ribbons appearing
  in the associated respective products (the colors are just to show what
  happens of each box):
  \[
  \rsbox{20pt}{0.5}{\gyoung(!\limeB;0,;0!\ylwB;;,::;!\limeB,::;0!\bluB;!\limeB,:::;0,:::;0!\redB;,::::;;;)}
  \mapsto
  \prod\ 
  \rsbox{25pt}{0.4}{\gyoung(!\limeB;,;,;,;,;,!\ylwB:;;,::;,!\bluB:::;,!\redB::::;,::::;;;)}
  \approx\ 
  \rsbox{25pt}{0.4}{\gyoung(!\limeB;,;,;,;,;!\ylwB;;,::;!\bluB;!\redB;,::::;;;)}
  \rsbox{25pt}{0.4}{\gyoung(!\limeB;,;,;,;,;!\ylwB;;,::;!\bluB;,!\redB:::;,:::;;;)}
  \rsbox{25pt}{0.4}{\gyoung(!\limeB;,;,;,;,;!\ylwB;;,::;,!\bluB::;!\redB;,:::;;;)}
  \rsbox{25pt}{0.4}{\gyoung(!\limeB;,;,;,;,;!\ylwB;;,::;,!\bluB::;,!\redB::;,::;;;)}
  \rsbox{25pt}{0.4}{\gyoung(!\limeB;,;,;,;,;,!\ylwB;;,:;!\bluB;!\redB;,:::;;;)}
  \rsbox{25pt}{0.4}{\gyoung(!\limeB;,;,;,;,;,!\ylwB;;,:;!\bluB;,!\redB::;,::;;;)}
  \rsbox{25pt}{0.4}{\gyoung(!\limeB;,;,;,;,;,!\ylwB;;,:;,!\bluB:;!\redB;,::;;;)}
  \rsbox{25pt}{0.4}{\gyoung(!\limeB;,;,;,;,;,!\ylwB;;,:;,!\bluB:;,!\redB:;,:;;;)}
  \]
  \[
  \rsbox{20pt}{0.5}{\gyoung(!\limeB;0,;0!\ylwB;;,::;!\limeB,::;0!\bluB;,:::;,:::;;,!\limeB::::;0!\redB;;)}
  \mapsto
  \prod\ 
  \rsbox{25pt}{0.4}{\gyoung(!\limeB;,;,;,;,!\ylwB:;;,::;,!\bluB:::;,:::;,:::;;,!\redB:::::;;)}
  \approx\ 
  \rsbox{25pt}{0.4}{\gyoung(!\limeB;,;,;,;!\ylwB;;,::;!\bluB;,:::;,:::;;!\redB;;)}
  \rsbox{25pt}{0.4}{\gyoung(!\limeB;,;,;,;!\ylwB;;,::;!\bluB;,:::;,:::;;,!\redB::::;;)}
  \rsbox{25pt}{0.4}{\gyoung(!\limeB;,;,;,;!\ylwB;;,::;,!\bluB::;,::;,::;;!\redB;;)}
  \rsbox{25pt}{0.4}{\gyoung(!\limeB;,;,;,;!\ylwB;;,::;,!\bluB::;,::;,::;;,!\redB:::;;)}
  \rsbox{25pt}{0.4}{\gyoung(!\limeB;,;,;,;,!\ylwB;;,:;!\bluB;,::;,::;;!\redB;;)}
  \rsbox{25pt}{0.4}{\gyoung(!\limeB;,;,;,;,!\ylwB;;,:;!\bluB;,::;,::;;,!\redB:::;;)}
  \rsbox{25pt}{0.4}{\gyoung(!\limeB;,;,;,;,!\ylwB;;,:;,!\bluB:;,:;,:;;!\redB;;)}
  \rsbox{25pt}{0.4}{\gyoung(!\limeB;,;,;,;,!\ylwB;;,:;,!\bluB:;,:;,:;;,!\redB::;;)}
  \]
\end{example}

\begin{theorem}\label{decompo_H_R}
  The indecomposable projective $\Rnz$-module $P_I^R$ associated to an
  extended composition $I$ splits as a $\Hnz$-module as
  \begin{equation}
    P_I^R\simeq \bigoplus_{f} \prod_{r\in\Split(f)} P_r\,,
  \end{equation}
  where the direct sum spans along all the zero-fillings of $I$, and the product
  is for the induction product $\star$.
\end{theorem}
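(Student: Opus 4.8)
The strategy is to decompose the indecomposable projective $\Rnz$-module $P_I^R$ as an $\Hnz$-module by tracking where the zeros of rooks can sit, then matching the resulting summands with induction products of $\Hnz$-projectives via the map $\psi$ of Definition~\ref{fI}. First I would recall from Theorem~\ref{theo-Rn0-projective} that $P_I^R$ has a basis indexed by the rooks $r$ with right descent composition $I$, that is, with $D_R(r) = \Des(I)$ (extended to include $0$ when the first part of $I$ is zero), the action of $\pi_i$ being the natural right action, killed whenever the $R$-descent composition changes. Since we regard $P_I^R$ as a \emph{left} $\Hnz$-module, the generators $\pi_1,\dots,\pi_{n-1}$ act on values as in Definition~\ref{def_left_action}; the key observation, already used in the proof of Theorem~\ref{Rn_proj_Hn}, is that this left action never moves a zero: the set of positions of zeros is an $\Hnz$-invariant. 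Hence $P_I^R$ splits as a direct sum, indexed by the admissible configurations of zero-positions, of the spans of the rooks in $P_I^R$ with that fixed zero-configuration.

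Next I would identify which zero-configurations are admissible: given a rook $r$ with $R$-descent composition $I$, the constraint $r\pi_i = r$ for $i\in\Des(I)$ forces, column by column, exactly the local shape described in the definition of a zero-filling of $I$ --- in the first column all boxes are zero iff $0\in\Des(I)$, and outside the first column a zero box can only occur with no box to its left and with all boxes below it in the same column also zero (this last point encodes the inequality $Z_r(b)\ge Z_r(a)$ from Proposition~\ref{rook-triple-characterization} together with the descent condition). So admissible zero-configurations correspond bijectively to zero-fillings $f$ of $I$, and $P_I^R = \bigoplus_f V_f$ where $V_f$ is the $\Hnz$-submodule spanned by the rooks of $R$-descent composition $I$ whose zeros sit exactly at the boxes marked in $f$.

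Then I would compute each $V_f$ as an $\Hnz$-module. Applying $\psi$ (move all zeros to the front and replace them by the missing letters in decreasing order) to the rooks in $V_f$ is, as in the proof of Theorem~\ref{Rn_proj_Hn}, an injective $\Hnz$-module morphism onto its image in $\C\SG{n}$; the image is a union of right-descent classes of permutations, and one checks that this $\Hnz$-module is precisely the induction product $\prod_{r\in\Split(f)} P_r$. The point is that once the zeros are pushed aside, the non-zero letters of a rook in $V_f$ decompose according to the connected components obtained from $I$ by deleting the zero boxes, with the front block (the $|I|$-many smallest values reversed, where $|f|$ is the number of zeros) contributing the column factor of size $|f|$ in $\Split(f)$; the $\star$-product structure $P_{I\cdot J}\oplus P_{I\triangleright J}$ of Equation~\ref{induct_proj_hn0} is exactly what appears because the way the separated blocks can interleave in the original rook is governed by the Hecke induction rule. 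I expect the main obstacle to be this last identification: verifying carefully that the $\Hnz$-module generated by a single zero-configuration class, transported by $\psi$, is isomorphic to $\prod_{r\in\Split(f)}P_r$ and not merely to some module with the same dimension --- this requires checking that the generator of $V_f$ (its unique idempotent, by Proposition~\ref{forme_idempotent_Rnz}, or rather the corresponding maximal element) maps under $\psi$ to (a product of) zeros of the appropriate parabolic submonoids, and that the $\Hnz$-action is compatible with the $\triangleright$-merging along the boundary boxes of the components of $\Split(f)$. Once that is established, summing over all zero-fillings $f$ gives the claimed decomposition.
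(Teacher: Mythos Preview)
Your proposal is correct and follows essentially the same route as the paper: decompose $P_I^R$ according to the positions of zeros (which are invariant under the left $\Hnz$-action), identify the admissible zero-configurations with the zero-fillings of $I$, and then use $\psi$ to carry each piece $V_f$ to a permutation module that one recognises as the induction product $\prod_{r\in\Split(f)}P_r$. The paper phrases the last step by observing that the non-zero letters in each connected component of $I\setminus\{\text{zero boxes}\}$ keep their relative order under $\psi$, while between components one may have either a rise or a descent, which is exactly the $P_{I\cdot J}\oplus P_{I\triangleright J}$ alternative of the $\star$-product --- precisely the identification you flagged as the ``main obstacle''.

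One small point worth tightening: you describe the basis action on $P_I^R$ as ``the natural right action, killed whenever the $R$-descent composition changes'', but the module in question is a \emph{left} module and the relevant action is left multiplication (on values). The paper sidesteps any ambiguity by working with $P_I^R$ realised \emph{inside the regular representation} (as a direct summand, not as a quotient), so that the $\Hnz$-action is literally left multiplication with no killing, and the zero-position decomposition is then a genuine direct-sum splitting of submodules. You should adopt the same viewpoint; otherwise one has to argue separately that the quotient realisation and the submodule realisation agree as $\Hnz$-modules, which is true but adds a layer.
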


Before giving a proof, we give a full example.
\begin{example}\label{excalc_decompo_H_R} We decompose restriction of the
  indecomposable projective $R_4^0$-module $P_{\{0,2\}}$ into indecomposable projective
  $H_4^0$-modules. The colors indicate the different products of
  zero-filling. Figure~\ref{fig:decoupage_proj} depict the action of the
  generators.
\begin{center}
\includegraphics[scale=0.8]{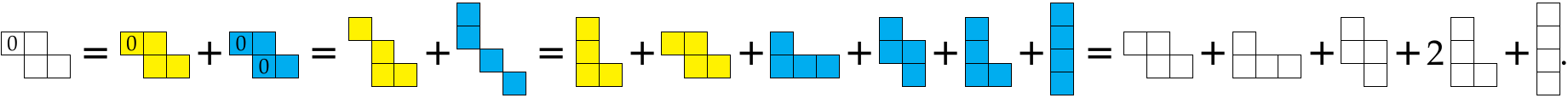}
\end{center}
\end{example}

\begin{proof}Let $P_I$ be an indecomposable projective $\Rnz$-module and look
  at it inside the regular representation. We proceed as in the proof of
  Theorem \ref{Rn_proj_Hn}: we cut $P_I$ according to the positions of zeros,
  which comes down to cutting along the zero-fillings. Indeed the conditions of
  zero-fillings give us only valid fillings, because they still have the good
  descent set. Moreover, we see all of them appearing in the descent class:
  for a given zero-filling $f$, we fill the diagram of $I$ column after
  column, left to right, down to up, by the entries starting from the number
  of zeros in the zero-filling plus $1$ to $n$. We get a rook of descent set
  $I$ with zero in the positions given by $f$. 

  Let $F$ be a zero-filling of shape $I$ with $i$ zeros in positions indexed
  by elements of $D\subset \interv{1}{n}$. Let $M_D \subset \Rnz$ be the
  associated $\Hnz$-projective indecomposable module. We consider the
  restriction $\psi_{F} \eqdef \psi_{\mid M_D}$. We need to describe the image of
  $\psi_F$. First they start with $i$ descents including zeros. We consider
  the connected components of $\interv{1}{n} \setminus I$: the letters at
  these positions are moved to the right by $\psi_F$, but keep their relative
  order. It is only between the connected components that we can have either a
  rise or a descent. Then we are getting a subset from a product associated to
  $F$. And we get them all: take one of them, and fill it with the same rule
  as before; one gets a permutation and then apply $\varphi_I$ defined in
  \ref{fI} to get an element with the good descent set and positions of zeros
  which will be sent by $\psi_F$ to an element of the product.
\end{proof}

\begin{figure}[ht]
\centering
\includegraphics[scale = 0.58]{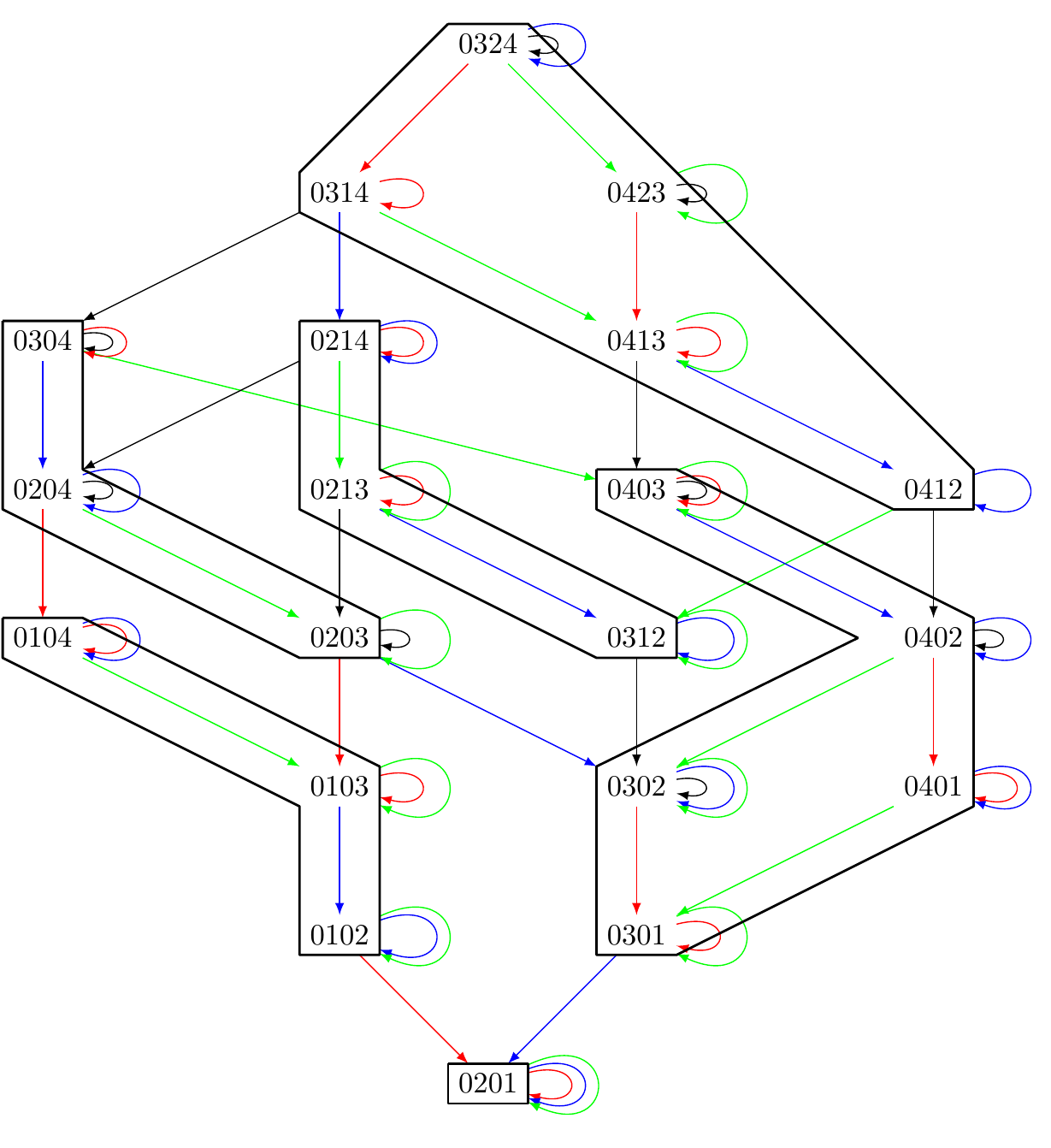}
\quad
\hspace{-1.5cm}
\raisebox{6cm}{\includegraphics[scale=0.8]{figures/legende4.pdf}}
\hspace{-1cm}
\includegraphics[scale = 0.58]{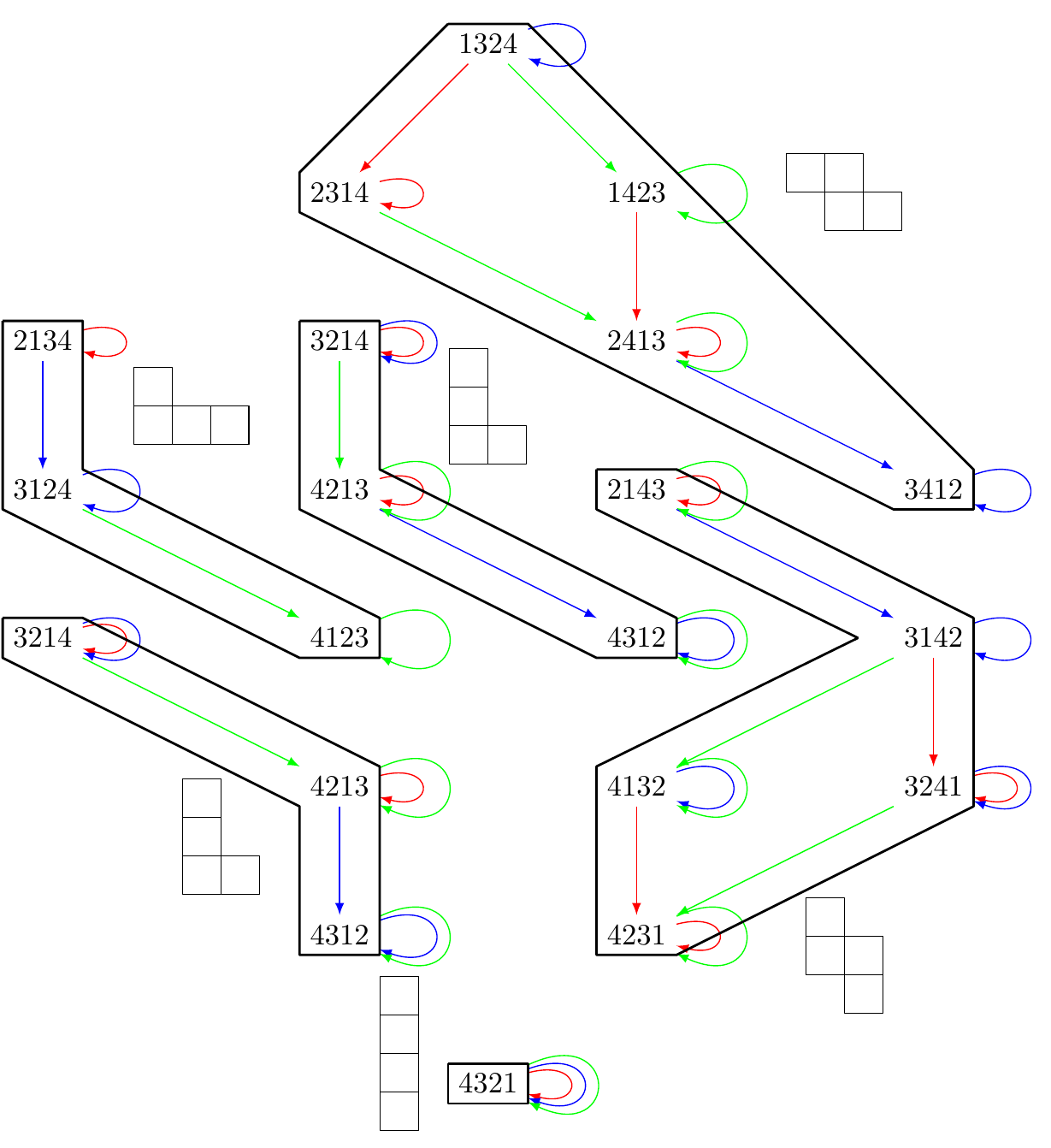}
\caption{The decomposition of a $R_4^0$-projective module associated to $\lbrace 0, 2\rbrace$ into $H_4^0$-projective modules.}
\label{fig:decoupage_proj}
\end{figure}
In Annex~\ref{decFunctorTable}, we give the decomposition
functor from $\Rnz$ to $\Hnz$ for $n=1,2,3,4$.

We can be a little more precise:
\begin{proposition}
  Let $P^R$ be an indecomposable projective module of $\Rnz$. Write
  $P^R=\bigoplus P_I^H$ its decomposition into indecomposable $\Hnz$-projective
  modules. Then the isomorphism of $\Hnz$-module $\tilde{\varphi}: \bigoplus
  P_I^H \rightarrow P^R$ is triangular:
  $\tilde{\varphi}(e)=\varphi_{I}(e)+\sum_{e'<e}e',$ with $\varphi_I$ defined
  in \ref{fI} and $I$ the zero-set linked to $P_I^H$.
\end{proposition}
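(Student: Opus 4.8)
The plan is to follow the $\Hnz$-module isomorphism $\tilde\varphi$ step by step through the construction of Theorem~\ref{decompo_H_R}, keeping track at each step of the leading term with respect to a suitable order on the rook basis of $P^R$ --- the order for which the left $\Hnz$-action on $P^R$ is triangular, that is the $\LL$-order of $\Rnz$ (equivalently, via Corollary~\ref{autoopp}, the $\RR$-order through the anti-isomorphism) restricted to the descent class underlying $P^R$. Once $\tilde\varphi$ is written as a composite of two explicit maps, the triangular shape of its matrix will fall out of the triangularity of one factor and the order-preservation of the other.

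Recall the construction. Realized inside $\C R_n$ via $f_L$, the left $\Hnz$-module $P^R$ is the span of a right descent class $B\eqdef\{r\in R_n\mid D_R(r)=\alpha\}$, and since the left action of $\pi_i$ with $i\geq1$ never moves the zeros of a rook (Definition~\ref{def_left_action}), $B$ decomposes along the set $I$ of positions of zeros, giving an $\Hnz$-module splitting $\C B=\bigoplus_I\C C_I$; here $I$ is exactly the zero-set attached to the summand $P_I^H$ in the statement. On each block $\C C_I$ the map $\psi$ of Definition~\ref{fI} is an injective $\Hnz$-morphism onto a sub-$\Hnz$-module $Q_I$ of the regular module $\C\SG n$ which is a union of right descent classes, and $\psi$ is inverted there exactly by $\varphi_I$: for $r\in C_I$ the word $\psi(r)$ carries the $|I|$ letters missing from $r$ in decreasing order in front, and $\varphi_I$ deletes those $|I|$ front letters and reinserts the zeros in the positions of $I$, recovering $r$. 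Hence $\tilde\varphi$ is, on the block $I$, the composite of a decomposition $\theta_I:\bigoplus_J P_J^H\to Q_I$ of a projective submodule of a regular module into its indecomposable summands, followed by $\varphi_I=\psi^{-1}$ on $Q_I$.

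The triangularity comes from $\theta_I$. The decomposition of a projective submodule of the regular $\Hnz$-module into its descent-class summands is \emph{unitriangular} for the permutation basis and the weak order: the basis vector of $P_J^H$ indexed by a permutation $e$ is sent by $\theta_I$ to $e+(\text{a combination of permutations }e'<e)$, the correction permutations lying in coarser descent classes, hence with strictly more inversions (this is the straightening of Krob--Thibon~\cite{KrobThibon.1997}; it can also be read off the triangularity of the left generator action on $\C\SG n$). Applying $\varphi_I$ we get $\tilde\varphi(e)=\varphi_I(e)+(\text{a combination of the rooks }\varphi_I(e'))$, with $\varphi_I(e)$ a single rook. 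Finally $\varphi_I$ is monotone on $Q_I$: deleting the fixed decreasing front block and reinserting the fixed zero-pattern $I$ turns the inversion data of a permutation of $Q_I$ into the rook triple $(\supp,\Inv,Z)$ of its image in an order-preserving manner, so $e'<e$ forces $\varphi_I(e')<\varphi_I(e)$ for the relevant order on $R_n$ (Definitions~\ref{def-rook-triple} and~\ref{def-rook-order}, and Theorem~\ref{ordreR0}). Collecting, this gives precisely $\tilde\varphi(e)=\varphi_I(e)+\sum_{e'<e}e'$.

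The main obstacle is this last bundle of order-theoretic verifications: pinning down which order on $R_n$ refines the left-$\Hnz$-filtration of $P^R$, checking that $\varphi_I$ (equivalently $\psi^{-1}$ on $Q_I$) is monotone for it by the inversion-set description, and transposing the unitriangular decomposition of $Q_I$ to the present left-module, right-descent-class-indexed setting rather than the right-module one of Krob--Thibon. With these in hand the proposition is bookkeeping on top of Theorem~\ref{decompo_H_R}; alternatively one can avoid Krob--Thibon altogether and induct directly on the chosen order, using that $\tilde\varphi$ is determined by its value on the top element --- the idempotent rook of $B$, which is $\varphi_I$ of the idempotent permutation --- together with the fact that $\varphi_I$ intertwines the left action of each generator up to strictly lower terms.
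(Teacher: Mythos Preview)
Your approach is essentially the same as the paper's: both factor $\tilde\varphi$ through the blocks $M_D$ via $\psi$, invoke the known unitriangularity of the basis change decomposing a projective $\Hnz$-module into its indecomposable summands, and then invert. The only real difference is the reference used for that unitriangularity: the paper appeals to \cite[Theorem~3.11 and Corollary~3.19]{DHST} (the general $\JJ$-trivial monoid result), whereas you invoke the Krob--Thibon straightening. Your write-up is in fact more explicit than the paper's, which simply says ``the statement follows by inverting this map'' without spelling out the monotonicity of $\varphi_I$ that you discuss.
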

\begin{proof}
  We consider a $\Rnz$ indecomposable projective $P^R$, pick a
  $D\subset\interv{1}{n}$ and denote as in the proof of
  Theorem~\ref{decompo_H_R} the $\Hnz$ submodule $M_D$ of rooks whose zeros
  are in positions indexed by the elements of $D$. The setwise map $\psi_{\mid
    M_D}$ extends linearly to an isomorphism to the projective but not
  necessarily indecomposable permutation module $\prod_{r\in\Split(f)} P_r$.
  Using~\cite[Theorem 3.11 and Corollary 3.19]{DHST}, we know that the basis
  change decomposing this module to its indecomposable component is
  uni-triangular. The statement follows by inverting this map.
\end{proof}
\begin{example}
  We know from Example \ref{excalc_decompo_H_R} that there is a
  module~$\raisebox{10pt}{\scalebox{0.6}{\gyoung(;,;,;;)}}$ inside the Figure
  \ref{fig:decoupage_proj}, coming from the zero-filling
  $\raisebox{8pt}{\scalebox{0.6}{\gyoung(0;,:;0;)}}$. This $\Hnz$-module is
  well-known to have the elements $3214$, $4213$ and $4312$. So ours must
  contains $\varphi_{\lbrace 0,2\rbrace}(3214) = 0104$, $\varphi_{\lbrace
    0,2\rbrace}(4213) = 0103$ and ${\varphi_{\lbrace 0,2\rbrace}(4312) =
    0102}$. See Figure \ref{fig:decoupage_proj}.
\end{example}

\subsection{Tower of monoids}\label{sec-tower}
The goal of this section is to investigate if the chain of submonoids
\begin{equation}
  R_1^0\subset R_2^0 \subset R_3^0 \subset \dots
  \subset R_{n-1}^0 \subset R_n^0 \subset R_{n+1}^0 \subset \cdots
\end{equation}

can be endowed with a structure of a tower of monoids~\cite{BergeronLi.2009}.

Recall that an associative tower of monoids is a sequence~$(M_i)_{i\in\N}$
where~$M_0=\{1\}$ together with a collection of monoid morphisms
$\rho_{n,m}:M_n\times M_m\rightarrow M_{n+m}$ such that the product
$a\cdot b \eqdef \rho_{n,m}(a,b)$ defined on the disjoint union
$\sqcup_{i\in\N} M_i$
is associative.

\begin{proposition}\label{plongement}
The maps
\begin{equation}
  \begin{array}{cccccc}
    \rho_{n,m} : & \Rnz & \times & R_m^0 & \longrightarrow & R_{n+m}^0 \\
    &\pi_0, \dots \pi_{n-1} &  & & \longmapsto & \pi_0, \dots \pi_{n-1}\\
    &P_i & &  & \longmapsto & P_i\\
    && & \pi_1, \dots \pi_{n-1} & \longmapsto & \pi_{n+1}, \dots \pi_{n+m-1} \\
    && & P_i & \longmapsto & P_{i+n}
  \end{array}
\end{equation}
defines an associative tower of monoids.
\end{proposition}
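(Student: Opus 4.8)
The plan is to argue entirely from the presentation of $R_p^0$ by generators $\pi_0=P_1,\pi_1,\dots,\pi_{p-1}$ and relations \ref{RB1}--\ref{RB4} of Corollary~\ref{relRn0pi0}, remembering that the other $P_i$ abbreviate the words prescribed by \ref{R8}. I would factor each $\rho_{n,m}$ through two ``shift'' homomorphisms: a left shift $\iota^{L}_{n,m}\colon R_n^0\to R_{n+m}^0$, $\pi_i\mapsto\pi_i$ for $0\le i\le n-1$, and a right shift $\iota^{R}_{n,m}\colon R_m^0\to R_{n+m}^0$, $\pi_j\mapsto\pi_{n+j}$ for $1\le j\le m-1$ and $\pi_0=P_1\mapsto P_{n+1}$; then set $\rho_{n,m}(a,b)\eqdef\iota^{L}_{n,m}(a)\,\iota^{R}_{n,m}(b)$. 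With the convention $R_0^0=\{1\}$, the maps $\rho_{0,n}$ and $\rho_{n,0}$ reduce to the canonical identifications, as required by the definition of a tower of monoids.

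First I would check that the two shift maps are well defined. For $\iota^{L}_{n,m}$ this is immediate, since relations \ref{RB1}--\ref{RB4} among $\pi_0,\dots,\pi_{n-1}$ only involve indices in $\interv{0}{n-1}\subseteq\interv{0}{n+m-1}$. For $\iota^{R}_{n,m}$, the instances of \ref{RB1}, \ref{RB2} and \ref{RB4} not involving $\pi_0$ become the same relations among $\pi_{n+1},\dots,\pi_{n+m-1}$, using \ref{R6} for the commutation that reaches index $n+1$; the one genuinely nontrivial case is \ref{RB3}, which becomes $\pi_{n+1}P_{n+1}\pi_{n+1}P_{n+1}=P_{n+1}\pi_{n+1}P_{n+1}=P_{n+1}\pi_{n+1}P_{n+1}\pi_{n+1}$ in $R_{n+m}^0$ and follows from $P_{n+1}\pi_{n+1}P_{n+1}=P_{n+2}$ (\ref{R8}) together with $\pi_{n+1}P_{n+2}=P_{n+2}\pi_{n+1}=P_{n+2}$ (\ref{R7}, as $n+1<n+2$). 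Next I would note that the images of $\iota^{L}_{n,m}$ and $\iota^{R}_{n,m}$ commute elementwise: it suffices to check this on generators, where $\pi_i$ commutes with $\pi_{n+j}$ by \ref{RB4} (the index gap is $\ge 2$ when $i\le n-1$ and $j\ge1$) and with $P_{n+1}$ by \ref{R7} (as $i<n+1$). By the universal property of the direct product this produces the monoid morphism $\rho_{n,m}\colon R_n^0\times R_m^0\to R_{n+m}^0$, and unwinding \ref{R8} shows it carries the generator $P_i$ of the second factor to $P_{i+n}$, so that $\rho_{n,m}$ is exactly the map of the statement.

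Finally, for associativity of the global product on $\bigsqcup_{i\in\N}R_i^0$ I would fix $a\in R_n^0$, $b\in R_m^0$, $c\in R_k^0$ and expand $\rho_{n+m,k}(\rho_{n,m}(a,b),c)$ and $\rho_{n,m+k}(a,\rho_{m,k}(b,c))$ using that every $\rho$, $\iota^L$ and $\iota^R$ is a homomorphism. The identity $(a\cdot b)\cdot c=a\cdot(b\cdot c)$ then reduces to the three functorialities
\[
  \iota^{L}_{n+m,k}\circ\iota^{L}_{n,m}=\iota^{L}_{n,m+k},\qquad
  \iota^{L}_{n+m,k}\circ\iota^{R}_{n,m}=\iota^{R}_{n,m+k}\circ\iota^{L}_{m,k},\qquad
  \iota^{R}_{n+m,k}=\iota^{R}_{n,m+k}\circ\iota^{R}_{m,k},
\]
each verified on the generators $\pi_i$: the only thing to watch is that both routes send every $P_{p+1}$ to the correctly shifted power-of-$P$, which holds because by \ref{R8} the word $P_{p+1}$ involves only $\pi_0,\dots,\pi_p$ and the two routes shift these indices identically. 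This is a short case check presenting no real difficulty. The single place the argument is more than bookkeeping is the ``seam'' at position $n$: the image $P_{n+1}$ of the second factor's $\pi_0$ must absorb every $\pi_i$ with $i\le n$ (via \ref{R7}) and be built recursively (via \ref{R8}). This is precisely what makes \ref{RB3} hold on the nose under $\iota^R$ and what makes the shift maps compose, so it is the one point I would treat with care.
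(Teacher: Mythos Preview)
Your proof is correct and complete, but it takes a genuinely different route from the paper for the associativity step. Both arguments begin the same way: factor $\rho_{n,m}$ through a left and a right shift and observe that the two images commute (the paper alludes to this in one line; you spell it out carefully on the presentation, which is the right thing to do). Where the two diverge is in proving $(a\cdot b)\cdot c=a\cdot(b\cdot c)$. The paper first establishes a concrete combinatorial formula for $a\cdot b$ at the level of rook vectors (Lemma~\ref{plongelt}: $a\cdot b=\w{a}\,\decn{\w{b}}$ if $0\notin\w{b}$, and $0\dots0\,\decn{\w{b}}$ otherwise), and then checks associativity by a four-case analysis on whether $0$ lies in $\w{b}$ and in $\w{c}$. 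You instead stay entirely inside the presentation and reduce associativity to the three functoriality identities among the shift maps $\iota^{L},\iota^{R}$, which you verify on generators using only \ref{R7} and \ref{R8}. Your approach is arguably cleaner for the bare statement, since it never touches the rook realization. The paper's approach, on the other hand, produces Lemma~\ref{plongelt} as a byproduct, and that explicit formula is reused later (Lemma~\ref{inclplong}, Theorem~\ref{ind_simple_tour}) to compute inductions of simple modules; so the combinatorial detour is not wasted effort in the broader context.
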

\begin{notation}
  If $a\in \Rnz$ and $b\in R_m^0$ we denote $a\cdot b \eqdef \rho_{n,m}(a,b)$.

  Furthermore, if $w$ is a word on nonnegative integers, $\decn{w}$ denotes the
  word $w$ where all nonzero entries have been increased by $n$.
\end{notation}
\begin{proof}
  We first show that $\rho_{n,m}$ are morphisms.  Let $a \in \Rnz$ et $b\in
  R_m^0$. Then, by relation of commutation and absorption we get $\rho(a,b) =
  \rho(a,1)\cdot\rho(1,b) = \rho(1,b)\cdot\rho(a,1)$.

  The proof of the associativity rely on the following lemma:
  \begin{lemma}\label{plongelt}
    Let $a\in \Rnz$ and $b\in R_m^0$. Then
    \begin{equation}
      a \cdot b = \begin{cases}
        \w{a}\decn{\w{b}} & \text{ if } 0\notin \w{b},\\
        0\dots 0\decn{\w{b}} & \text{ otherwise.}
      \end{cases}
    \end{equation}
  \end{lemma}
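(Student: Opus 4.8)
The plan is to use the identification $r\leftrightarrow\pi_r$ of Corollary~\ref{action_mulr_R0}: under it, $a\cdot b=\rho_{n,m}(a,b)$ is determined by the rook $1_{n+m}\cdot(a\cdot b)$, so it suffices to compute that rook. Since $\rho_{n,m}$ is a monoid morphism (established just above) we have $a\cdot b=\rho_{n,m}(a,1)\,\rho_{n,m}(1,b)$, and I would proceed in two steps: first compute $v\eqdef 1_{n+m}\cdot\rho_{n,m}(a,1)$, then compute $v\cdot\rho_{n,m}(1,b)$.

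For the first step I would write $a$ as its canonical reduced word $\pi_{\code(\w a)}$ (Corollary~\ref{picodesurj}); this word only uses the generators $\pi_0,\dots,\pi_{n-1}$, which $\rho_{n,m}$ fixes, and each of these acts on a rook of $R_{n+m}$ only through its first $n$ positions (recall $\pi_0=P_1$ kills position $1$ and, for $1\le i\le n-1$, $\pi_i$ only involves positions $i,i+1\le n$). An immediate induction on word length then shows that the first $n$ letters of $v$ read $1_n\cdot a=\w a$ while the last $m$ letters still read $n+1,n+2,\dots,n+m$; that is, $v=\w a\,\decn{1_m}$. For the second step, take the canonical reduced word $\pi_{\code(\w b)}$ of $b$; its image under $\rho_{n,m}$ uses only $\pi_{n+1},\dots,\pi_{n+m-1}$ together with $P_{n+1}$ (the image of the generator $P_1$ of the second factor). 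The operators $\pi_{n+k}$ touch only the window of positions $n+1,\dots,n+m$, where — after subtracting $n$ from the nonzero entries — they act exactly as $\pi_k$ on a rook of $R_m$, while $P_{n+1}$ kills positions $1,\dots,n+1$, hence on the same window it acts exactly as $P_1$ on a rook of $R_m$. Consequently the window of $v$, starting from $n+1,\dots,n+m=\decn{1_m}$, evolves under $\rho_{n,m}(1,b)$ exactly as $1_m$ evolves under $b$, and thus ends at $\decn{(1_m\cdot b)}=\decn{\w b}$. As for the first $n$ positions of $v$: if $0\notin\w b$ then $\code(\w b)$ is all-positive, so $\rho_{n,m}(1,b)$ contains no $P_{n+1}$ and these positions stay equal to $\w a$, giving $a\cdot b=\w a\,\decn{\w b}$; if $0\in\w b$ then $P_1$ does occur in $\pi_{\code(\w b)}$, and from the first occurrence of $P_{n+1}$ onward every generator ($\pi_{n+k}$ or $P_{n+1}$) keeps positions $1,\dots,n$ equal to $0$, giving $a\cdot b=0\cdots0\,\decn{\w b}$ with $n$ leading zeros.

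The main obstacle will be the bookkeeping in the second step: one must check carefully, on the action formulas of Definition~\ref{def_Ro_fun} and the description of $P_j$, that $P_{n+1}$ really resets positions $1,\dots,n$ (and never restores them) while acting as $P_1$ on the window, and that the value shift $j\mapsto n+j$ together with $0\mapsto 0$ is compatible with the comparison $r_i<r_{i+1}$ governing the $\pi_i$-action, so that the window genuinely evolves as a rook of $R_m$ under $b$. These are routine case analyses, but they must be laid out cleanly; note in particular that the argument only ever uses the single canonical reduced word, so it does not rely on Theorem~\ref{theoreom-matsumoto}.
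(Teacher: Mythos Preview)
Your proposal is correct and follows essentially the same route as the paper: both split $a\cdot b=\rho_{n,m}(a,1)\,\rho_{n,m}(1,b)$ and analyze the action on $1_{n+m}$, distinguishing whether $\pi_0$ occurs in a reduced word for $b$. The only stylistic difference is that in the case $0\in\w b$ the paper invokes the absorption relation $P_{n+1}\pi_j=P_{n+1}$ (for $j\le n$) together with the commutation of $\rho(a,1)$ and $\rho(1,b)$ to cancel $\rho(a,1)$ algebraically, whereas you track the rook vector through the action and observe that once $P_{n+1}$ is applied the first $n$ positions are zero and stay zero; both arguments are valid and equally short.
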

  \begin{proof}
    Indeed $\rho(a,b) = \rho(a,1)\rho(1,b)$. If $0\notin b$ then $\pi_0$
    does not appear in any reduced expression of $b$, thus the reduced
    expressions of $a$ and $b$ contain generators which do not act on
    $1_{n+m}$ on the same positions. Otherwise $P_{n+1}$ appear in
    $\rho(1,b)$, and since all elements of $\rho(1,a)$ commute with those
    of $\rho(1,b)$, $P_{n+1}$ absorbs all the $\rho(a)$.
  \end{proof}
  We now can compute explicitely the products $(a\cdot b)\cdot c$ and $a\cdot
  (b\cdot c)$, do the four cases whether $0\in B$ or not and $0\in C$ and
  check associativity.
\end{proof}

\begin{remark}
  The embedding $\rho$ is not injective since $\forall a,a' \in \Rnz$, and
  $b\in R_m^0$ with $0\in b$ : $a\cdot b = a'\cdot b$ by Lemma \ref{plongelt}. So we do not
  have a tower of monoid in the sense of~\cite{BergeronLi.2009}.
\end{remark}

\begin{remark}
  To map $\Rnz\times R_m^0$ to $R_{n+m}^0$, Remark~\ref{kill-other-0} prevents
  us to use the trivial map $(a, b)\mapsto \w{a}\decn{\w{b}}$: it is not a
  monoid morphism.
\end{remark}

\subsubsection{Restriction and induction of simple modules}\label{mod_simple_tour}

The goal of this section is to describe the restriction and induction rule of
the tower of the $0$-rook monoids. Recall that for $H_n^0$, this gives the
multiplication and comultiplication rule of the Hopf algebra of
quasi-symmetric functions in the fundamental basis~\cite{KrobThibon.1997}.

\paragraph{Restriction of simples modules}

\begin{theorem}\label{rest_simple_RR}
  Let $J \subset \interv{0}{n+m-1}$ a parabolic of $R_{n+m}^0$. Then:
  \begin{equation}
    \Res_{\Rnz\times R_m^0}^{R_{n+m}^0} S_J =
    \begin{cases}
      S_{J\cap \interv{0}{n-1} } \otimes
      S_{\overline{J\cap \interv{ n+1}{n+m-1} }} & \text{ if }J\cap \interv{0}{n} \neq \interv{0}{n},\\
      S_{\interv{0}{n-1} } \otimes  S_{\lbrace 0\rbrace \cup \overline{J\setminus \interv{0}{n} }}  & \text{ otherwise.}
    \end{cases}
  \end{equation}
  where $\overline{X}\eqdef\{x-n\mid x\in X\}$.
\end{theorem}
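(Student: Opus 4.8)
The plan is to use the description of simple modules of a $\JJ$-trivial monoid in terms of idempotents (Theorem~\ref{theo-Rn0-simple}): a simple module $S_J$ is one-dimensional, spanned by a vector $\varepsilon_J$, on which a generator $\pi_i$ acts as the identity if $i\in J$ and as $0$ otherwise. Restricting along $\rho_{n,m}$ means remembering only the action of the generators $\pi_0,\dots,\pi_{n-1}$ of $R_n^0$ and $\pi_1,\dots,\pi_{m-1}$ of $R_m^0$, which under $\rho_{n,m}$ are the elements $\pi_0,\dots,\pi_{n-1}$ and $\pi_{n+1},\dots,\pi_{n+m-1}$ of $R_{n+m}^0$. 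The first and easy case is when $J\cap\interv{0}{n}\neq\interv{0}{n}$: then neither copy of $R^0$ is ``squeezed'' by the absorbing behaviour of $\rho_{n,m}$, and reading off the action of each subset of generators directly gives $S_{J\cap\interv{0}{n-1}}\otimes S_{\overline{J\cap\interv{n+1}{n+m-1}}}$, where the bar is the shift $x\mapsto x-n$ as defined in the statement.

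First I would make precise the dimension count: restriction of a one-dimensional module is one-dimensional, so the result is automatically a single simple $R_n^0\times R_m^0$-module, namely $S_A\otimes S_B$ where $A\subseteq\interv{0}{n-1}$ and $B\subseteq\interv{0}{m-1}$ are determined by which generators act as the identity. For $R_n^0$ this is $A=\{i\in\interv{0}{n-1}\mid \rho_{n,m}(\pi_i,1)\cdot\varepsilon_J=\varepsilon_J\}=J\cap\interv{0}{n-1}$, since $\rho_{n,m}$ sends $\pi_i$ to $\pi_i$ for $i\le n-1$. For $R_m^0$ the generator $\pi_j$ ($1\le j\le m-1$) is sent to $\pi_{n+j}$, so $B\cap\interv{1}{m-1}=\{j\mid n+j\in J\}=\overline{J\cap\interv{n+1}{n+m-1}}$. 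The only subtle point is the generator $\pi_0$ of the \emph{second} factor, i.e.\ $P_1\in R_m^0$, which $\rho_{n,m}$ sends to $P_{n+1}$. Here I would invoke Lemma~\ref{actiongenerateurssurpiJ} together with the explicit form of $\pi_J$ from Proposition~\ref{formeidempotentmatrice}: $P_{n+1}\cdot\varepsilon_J=\varepsilon_J$ iff $P_{n+1}\pi_J=\pi_J$ iff $n\in\cont(\pi_J)$, which happens precisely when $\interv{0}{n}\subseteq J$. Hence $0\in B$ iff $J\cap\interv{0}{n}=\interv{0}{n}$.

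Putting these together yields the two cases. When $J\cap\interv{0}{n}\ne\interv{0}{n}$, then in particular $\interv{0}{n}\not\subseteq J$, so either some $i\le n-1$ is missing from $J$ (then $A\subsetneq\interv{0}{n-1}$) or $n\notin J$; in all these subcases $0\notin B$, $A=J\cap\interv{0}{n-1}$, $B=\overline{J\cap\interv{n+1}{n+m-1}}$, giving the first line. When $\interv{0}{n}\subseteq J$, then $A=\interv{0}{n-1}$ (the full parabolic) and $0\in B$, so $B=\{0\}\cup\overline{J\cap\interv{n+1}{n+m-1}}=\{0\}\cup\overline{J\setminus\interv{0}{n}}$, giving the second line. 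The main obstacle, such as it is, is precisely the bookkeeping around the generator $P_1$ of the second factor and its image $P_{n+1}$: one must check carefully that $P_{n+1}$ absorbs exactly when $n$ belongs to the content of the idempotent $\pi_J$, and that this is recorded correctly by the condition $J\cap\interv{0}{n}=\interv{0}{n}$ rather than, say, merely $n\in J$. Everything else is a direct transcription of Theorem~\ref{theo-Rn0-simple} through the explicit morphism $\rho_{n,m}$ of Proposition~\ref{plongement}.
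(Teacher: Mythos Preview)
Your approach is essentially the same as the paper's: both restrict the one-dimensional module $S_J$ along $\rho_{n,m}$ by checking which generators act as the identity, and both identify the action of $1_n\otimes\pi_0 = P_{n+1}$ as the only nontrivial point.

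There is one slip in your justification of that point. You write
\[
P_{n+1}\pi_J=\pi_J \iff n\in\cont(\pi_J),
\]
but $\cont(\pi_J)=J$, so this would say $P_{n+1}\pi_J=\pi_J\iff n\in J$, which is false (take $J=\{n\}$). The equivalence you actually need, and which you correctly state in the next clause, is $P_{n+1}\cdot\varepsilon_J=\varepsilon_J\iff\interv{0}{n}\subseteq J$. The clean way to get it---and what the paper does---is to use the explicit factorization of $P_{n+1}$ from Lemma~\ref{Pn}: $P_{n+1}$ is a word in $\pi_0,\pi_1,\dots,\pi_n$ in which \emph{each} of these generators occurs. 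Since each $\pi_i$ acts on $\varepsilon_J$ as the identity if $i\in J$ and as $0$ otherwise, the product $P_{n+1}$ fixes $\varepsilon_J$ exactly when all of $0,1,\dots,n$ lie in $J$. No appeal to $\cont(\pi_J)$ or to Lemma~\ref{actiongenerateurssurpiJ} is needed here; those concern the action of a \emph{single} generator on $\pi_J$, not of $P_{n+1}$. Once this step is fixed, your argument coincides with the paper's.
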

\begin{proof}
  We know that $S_J = \langle \varepsilon_J\rangle$, and that $\varepsilon_J
  \cdot \pi_i = \varepsilon_J$ if $i\in J$, and $0$ otherwise.
  The action of $\Rnz\otimes 1_m$ on $S_J$ gives us $S_{J\cap \interv{0}{n-1}
  }$. The generators $1_n\otimes\pi_1,\dots ,1_n\otimes\pi_{m-1}$ of
  $1_n\otimes R_m^0$ act as $\pi_{n+1},\dots, \pi_{n+m-1}$. It remains only to
  see how $1_n\otimes \pi_0 = P_{n+1}$ acts on $S_J$.
  By Lemma \ref{Pn} we have that
  $P_{n+1}=\pi_0\pi_1\pi_0\pi_2\pi_1\pi_0\dots\pi_n\dots\pi_2\pi_1\pi_0$. So
  if there is $0\leq i\leq n$ with $i\notin J$, $\varepsilon_J\cdot \pi_i = 0$
  thus $\varepsilon_J\cdot P_{n+1} = 0$. Otherwise, for all
  $i\in\interv{0}{n}$, $\varepsilon_J\cdot \pi_i = \varepsilon_J$ and so
  $\varepsilon_J\cdot P_{n+1} = \varepsilon_J$.
\end{proof}

\paragraph{Induction of simple modules}

We can compute the induction of simple module thanks to Virmaux~\cite[Theorem
4.3]{Virmaux.2014}, which we reformulate in our context here. The comparisons
are done with the $\RR$-order in $\Rnz$, which we described in Theorem
\ref{ordreR0}.
\begin{theorem}[{\cite[Theorem 4.3]{Virmaux.2014}}]\label{Aladin}
  If $e \in E(\Rnz)$ and $f\in E(R_m^0)$, then
  \begin{equation}\label{aladin_qef}
    \Ind_{\Rnz \times R_m^0}^{R_{n+m}^0} S_e \otimes S_f  =
    \faktor{(e\cdot f) R_{n+m}^0 }%
    {\left[(R_{<e}\cdot f) + (e\cdot R_{<f})\right] R_{n+m}^0 },
  \end{equation}
  where $R_{<e}$ is the set of elements of $\Rnz$ strictly smaller than $e$,
  and $R_{<f}$ those of $R_m^0$ strictly smaller than $f$.
\end{theorem}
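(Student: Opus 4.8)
The plan is to obtain Theorem~\ref{Aladin} as a direct specialization of Virmaux's general induction formula for towers of $\JJ$-trivial monoids~\cite[Theorem~4.3]{Virmaux.2014}; the only work is to check that its hypotheses hold in our setting and to translate the notation. First I would assemble the ingredients. By Corollary~\ref{Jtrivial} each of $\Rnz$, $R_m^0$ and $R_{n+m}^0$ is $\JJ$-trivial, so by Theorem~\ref{theo-Rn0-simple} its simple modules are the one-dimensional modules $S_e$ attached to idempotents, and $S_e\otimes S_f$ is the one-dimensional simple $\Rnz\times R_m^0$-module attached to $(e,f)\in E(\Rnz\times R_m^0)$. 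Since $\rho_{n,m}$ is a monoid morphism (Proposition~\ref{plongement}), $e\cdot f\eqdef\rho_{n,m}(e,f)$ is an idempotent of $R_{n+m}^0$. The functor $\Ind_{\Rnz\times R_m^0}^{R_{n+m}^0}$ is the one attached to the algebra morphism $\C[\Rnz\times R_m^0]\to\C[R_{n+m}^0]$ obtained by linearising $\rho_{n,m}$, namely $M\mapsto\C[R_{n+m}^0]\otimes_{\C[\Rnz\times R_m^0]}M$.

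Then I would apply~\cite[Theorem~4.3]{Virmaux.2014} with $M_n=\Rnz$, $M_m=R_m^0$, $M_{n+m}=R_{n+m}^0$ and the structure maps $\rho_{n,m}$ of Proposition~\ref{plongement}. For idempotents $e\in E(\Rnz)$ and $f\in E(R_m^0)$ it describes the induced module, as a right $R_{n+m}^0$-module, as the quotient of the right ideal $(e\cdot f)R_{n+m}^0$ by the submodule generated by $(R_{<e}\cdot f)\cup(e\cdot R_{<f})$, where $R_{<e}$ and $R_{<f}$ are the sets of elements strictly below $e$ and $f$ for the order used there; for the $0$-rook monoids this is the $\RR$-order, which is a genuine order by Corollary~\ref{Jtrivial} and is made explicit by Theorem~\ref{ordreR0}. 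Reading $R_{<e}\cdot f$ and $e\cdot R_{<f}$ as the images of $R_{<e}\times\{f\}$ and $\{e\}\times R_{<f}$ under $\rho_{n,m}$, this is precisely Equation~\eqref{aladin_qef}.

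The step that genuinely needs care — and the main obstacle — is verifying that the hypotheses of~\cite[Theorem~4.3]{Virmaux.2014} are met here despite the fact, noted after Proposition~\ref{plongement}, that $\rho_{n,m}$ is not injective, so $(R_i^0)_i$ is not a tower of monoids in the strict sense of~\cite{BergeronLi.2009}. Resolving this amounts to inspecting Virmaux's proof: the description of the induced module as the quotient above uses only that $\{\rho_{n,m}\}$ forms an associative system of monoid morphisms (Proposition~\ref{plongement}) and that the monoids involved are $\JJ$-trivial (Corollary~\ref{Jtrivial}) — which guarantees that $e\cdot f$ is idempotent, that $\leq_\RR$ is an order, and that the simple modules are exactly the $S_e$ — and never the injectivity of the structure maps. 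The only remaining check is routine bookkeeping: that, pulled back along $\rho_{n,m}$, the module $S_e\otimes S_f$ is cyclic on one vector annihilated by precisely the generators lying below $(e,f)$, which is immediate from Theorem~\ref{theo-Rn0-simple} together with Lemma~\ref{actiongenerateurssurpiJ}.
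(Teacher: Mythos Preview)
The paper does not give its own proof of this statement: it is quoted verbatim as \cite[Theorem~4.3]{Virmaux.2014}, introduced only by ``which we reformulate in our context here,'' and then used as a black box to compute $Q(e,f)$ in Theorem~\ref{ind_simple_tour}. So there is nothing to compare your argument against; your proposal is in fact \emph{more} than what the paper provides, since you attempt to justify why Virmaux's hypotheses hold for the $0$-rook tower.

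Your observation about the non-injectivity of $\rho_{n,m}$ is the genuinely delicate point, and you are right to flag it: the paper itself remarks just after Proposition~\ref{plongement} that the tower is not one in the sense of~\cite{BergeronLi.2009}, yet invokes Virmaux's theorem without further comment. Your resolution --- that the argument in~\cite{Virmaux.2014} only requires associativity of the $\rho_{n,m}$ and $\JJ$-triviality, not injectivity --- is plausible, but it hinges on an inspection of Virmaux's proof that neither you nor the paper actually carries out. If you want this to stand as a proof rather than a citation, that inspection is the missing step; otherwise your write-up is a correct and careful unpacking of what the paper leaves implicit.
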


\begin{notation}
  In Equation~\ref{aladin_qef}, we will denote by $Q(e,f)$ the right hand side
  of the equality.  It is a $R_{n+m}^0$-module.
  It is also a quotient which is compatible with the canonical basis. By abuse
  of language, we will say that an element $r\in R_{n+m}^0$ remains in
  $Q(e,f)$ and write ${r\in Q(e,f)}$ if $r$ is not mapped to zero in the
  quotient.
\end{notation}

Our first goal is to rephrase Theorem~\ref{Aladin} in a more combinatorial way.
\begin{notation}
  Until now, we used the notation $\pi_I$ to design the idempotent of the
  parabolic submonoid associated to $I$ in $\Rnz$. In order to avoid
  confusion, we will now denote it by $\pi_{I,n}$. Note that as long as $n,
  m\geq \max I+1$, then $\pi_{I,n}$ and $\pi_{I,m}$ have the same
  reduced expressions and thus the same action of the first
  $\min(n,m)$-letters on the identity of size $\max(n,m)$.
\end{notation}
In the sequel of this section, we fix $I\subseteq\interv{0}{n-1}$ and
$J\subseteq\interv{0}{m-1}$. They encode the data of two simple modules of
$\Rnz$ and $R_m^0$ respectively, or equivalently of two idempotents. We denote
$e\eqdef\pi_{I, n}$ and $f\eqdef\pi_{J, m}$ these two idempotents.

Before giving the induction of the simple modules, we go for a serie of lemmas.
\begin{lemma}\label{inclplong}
  The image of $(e,f) \in \Rnz\times R_m^0$ in $R_{n+m}^0$ is the element of
  $R_{n+m}^0$ associated to $\w{e}\decn{\w{f}}$ if $0\notin J$ and to $0\dots
  0\decn{\w{f}}$ otherwise. In particular we have the following cases:
  \begin{itemize}
  \item If $J = \emptyset$ then $e\cdot f = \w{e}\decn{12\dots m} = \pi_{I,
      n+m}$. %Autrement dit on se contente de prolonger $e$ par l'identité.
  \item If $I = \emptyset$ and $0\notin J$ then $e\cdot f = 1\dots n
    \decn{\w{f}} = \pi_{\decn{J},
      n+m}$. % Autrement dit on commence par l'identité et on finit par $f$
             % décalé de $n$.
  \item If $I = \interv{0}{n-1}$ and $0\in J$ then $e\cdot f = 0 \dots 0
    \decn{\w{f}} = \pi_{\left(\interv{ 0}{n } \cup \decn{J\setminus\lbrace
        0\rbrace}\right), n+m}$.
    % où le mot commence par $n$ lettres
    % $0$. % Autrement dit on commence par des zéros et on finit par $f$ décalé de $n$.
  \end{itemize}
\end{lemma}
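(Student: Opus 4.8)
The plan is to derive everything from Lemma~\ref{plongelt}, combined with the explicit shape of the idempotents $\pi_{S,n}$ from Proposition~\ref{formeidempotentmatrice} and the uniqueness of an idempotent in each $R$-descent class (Proposition~\ref{forme_idempotent_Rnz}).

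First I would dispose of the general statement. By Proposition~\ref{formeidempotentmatrice}, the rook $\w f = \w{\pi_{J,m}}$ has a leading zero block of positive size exactly when $0\in J$; equivalently $0\in\w f$ if and only if $0\in J$. Feeding $a=e$ and $b=f$ into Lemma~\ref{plongelt} then yields $e\cdot f = \w e\,\decn{\w f}$ when $0\notin J$ and $e\cdot f = 0\dots0\,\decn{\w f}$ when $0\in J$, which is precisely the claim.

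Next the three special cases. If $J=\emptyset$ then $f$ is the identity of $R_m^0$, so $e\cdot f=\rho_{n,m}(e,1)$ is just $e$ viewed inside $R_{n+m}^0$ via the inclusion $\pi_i\mapsto\pi_i$, $P_i\mapsto P_i$; a reduced word for $e=\pi_{I,n}$ uses only $\pi_i$ with $i\in I\subseteq\interv{0}{n-1}$, stays reduced in $R_{n+m}^0$ with the same content, and $\pi_{I,n+m}$ is the zero of $\langle\pi_i\mid i\in I\rangle\subseteq R_{n+m}^0$, so $e\cdot f=\pi_{I,n+m}$, whose rook is $\w e\,\decn{12\dots m}$ because the letters $n+1,\dots,n+m$ are never touched. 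Symmetrically, if $I=\emptyset$ and $0\notin J$, then $e$ is the identity of $\Rnz$ and $\rho_{n,m}(1,f)$ only involves the generators $\pi_{n+1},\dots,\pi_{n+m-1}$, hence equals the zero $\pi_{\decn J,n+m}$ of the corresponding parabolic, with rook $1\dots n\,\decn{\w f}$ by the general statement.

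The remaining case $I=\interv{0}{n-1}$, $0\in J$ needs one short computation. Here $e=\pi_{\interv{0}{n-1},n}=P_n$, $\w e=0^n$, and the general statement gives $e\cdot f=0^n\decn{\w f}$. I would then note that $e\cdot f=\rho_{n,m}(P_n,1)\,\rho_{n,m}(1,f)$ is a product of two commuting idempotents (the first involves only $\pi_0,\dots,\pi_{n-1}$ and $P_1,\dots,P_n$, the second only $\pi_{n+1},\dots,\pi_{n+m-1}$ and $P_{n+1},\dots,P_{n+m}$, which commute by the commutation relations of Definition~\ref{def-rook_presentation}), hence is itself idempotent; moreover both factors lie in the parabolic $\langle\pi_i\mid i\in S'\rangle$ with $S'=\interv{0}{n}\cup\decn{J\setminus\{0\}}$, since $\{0,\dots,n-1\}\subseteq S'$ and $\decn J\subseteq S'$ because $0\in J$. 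Using that $0\in J$ forces $\w f$, hence $\decn{\w f}$, to begin with a $0$ (Proposition~\ref{formeidempotentmatrice}), a direct check on the rook $0^n\decn{\w f}$ shows that $0^n\decn{\w f}\cdot\pi_i = 0^n\decn{\w f}$ exactly for $i\in S'$: for $i\le n$ both entries touched are $0$, while for $i>n$ the pattern reproduces that of $\w f$ under $\pi_{i-n}$ because shifting the nonzero letters by $n$ preserves the relevant strict order (and sends $0$ to $0$). Therefore $e\cdot f$ is an idempotent of $R_{n+m}^0$ lying in the $S'$-parabolic and fixed by all its generators, so it is the zero $\pi_{S',n+m}$ of that parabolic, equivalently the unique idempotent with $R$-descent set $S'$ (Proposition~\ref{forme_idempotent_Rnz}). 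I expect the only mildly delicate point to be this last descent-set bookkeeping across the boundary $i=n$, but it is routine once the leading zero of $\w f$ is observed.
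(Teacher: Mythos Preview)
Your proof is correct and follows the same approach as the paper, which simply says ``It is straightforward application of Lemma~\ref{plongelt}.'' You have merely made explicit the identifications of the resulting rook vectors with the claimed idempotents $\pi_{K,n+m}$ (via Propositions~\ref{formeidempotentmatrice} and~\ref{forme_idempotent_Rnz}), which the paper leaves to the reader; your descent-set bookkeeping in the third case is a valid way to carry this out, though a direct comparison of block structures from Proposition~\ref{formeidempotentmatrice} would be slightly quicker.
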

\begin{proof}
It is straightforward application of Lemma~\ref{plongelt}.
\end{proof}

\begin{remark}
  Note that because of the form of idempotents, $0\notin I \Leftrightarrow
  0\notin \w{e}$.
\end{remark}

\begin{lemma}\label{lemcasnulinduct}
  Assume that $0 \in J$ and $I \neq \interv{0}{n-1}$. Then $Q(e,f) = 0$.
\end{lemma}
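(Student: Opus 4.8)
The plan is to prove $Q(e,f)=0$ by showing that its cyclic generator, the class of $e\cdot f=\rho_{n,m}(e,f)$, already vanishes in the quotient appearing in Theorem~\ref{Aladin}. First I would use the hypothesis $I\neq\interv{0}{n-1}$ to pick some $i\in\interv{0}{n-1}\setminus I$. By Lemma~\ref{actiongenerateurssurpiJ} one has $e\pi_i\neq e$; since $e\pi_i=e\cdot\pi_i$ lies in $e\,\Rnz$ this gives $e\pi_i\leq_\RR e$, hence $e\pi_i$ is \emph{strictly} below $e$, i.e.\ $e\pi_i\in R_{<e}$.

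The heart of the argument is the identity $e\cdot f=(e\pi_i)\cdot f$ inside $R_{n+m}^0$; in fact I claim $\rho_{n,m}(a,f)=\rho_{n,m}(1,f)$ for \emph{every} $a\in\Rnz$. Here the hypothesis $0\in J$ is crucial: it means $P_1$ lies in the parabolic submonoid of $R_m^0$ of which $f=\pi_{J,m}$ is the zero, so $P_1 f=f$ in $R_m^0$; applying the morphism $\rho_{n,m}(1,-)$, which sends $P_1$ to $P_{n+1}$, we get $P_{n+1}\,\rho_{n,m}(1,f)=\rho_{n,m}(1,f)$. On the other side, $\rho_{n,m}(a,1)$ is a product of the generators $\pi_0,\dots,\pi_{n-1}$ of $R_{n+m}^0$, and each of these absorbs $P_{n+1}$ on its left: $P_1P_{n+1}=P_{n+1}$ by Lemma~\ref{prodPi} and $\pi_jP_{n+1}=P_{n+1}$ for $1\le j\le n-1$ by Relation~\ref{R7} (as $j<n+1$). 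An immediate induction gives $\rho_{n,m}(a,1)\,P_{n+1}=P_{n+1}$, and since $\rho_{n,m}$ is a monoid morphism,
\begin{equation*}
  \rho_{n,m}(a,f)=\rho_{n,m}(a,1)\,\rho_{n,m}(1,f)=\rho_{n,m}(a,1)\,P_{n+1}\,\rho_{n,m}(1,f)=P_{n+1}\,\rho_{n,m}(1,f)=\rho_{n,m}(1,f)\,.
\end{equation*}
Taking $a=e$ and $a=e\pi_i$ yields $e\cdot f=\rho_{n,m}(1,f)=(e\pi_i)\cdot f$ (this is consistent with Lemma~\ref{plongelt}, which describes both as the rook $0\dots0\,\decn{\w f}$).

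To conclude, by the previous step $e\cdot f=(e\pi_i)\cdot f\in R_{<e}\cdot f\subseteq\bigl[(R_{<e}\cdot f)+(e\cdot R_{<f})\bigr]R_{n+m}^0$, so the class of $e\cdot f$ is zero in $Q(e,f)$; since $Q(e,f)$ is generated as an $R_{n+m}^0$-module by this class, $Q(e,f)=0$. I do not expect a genuine obstacle: the two points to watch are the semigroup convention for the $\RR$-order (so that $e\pi_i$ is really \emph{strictly} below $e$, which is exactly what Lemma~\ref{actiongenerateurssurpiJ} supplies) and the careful bookkeeping of the two embedded copies of rook monoids under $\rho_{n,m}$ together with the absorbing behaviour of $P_{n+1}$ coming from $0\in J$, which is the content of Lemma~\ref{plongelt}.
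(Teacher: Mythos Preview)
Your proof is correct and follows essentially the same approach as the paper: pick $i\notin I$ so that $e\pi_i\in R_{<e}$, and use $0\in J$ to see that $(e\pi_i)\cdot f=e\cdot f$, whence $e\cdot f$ lies in the ideal being quotiented out. The only difference is that the paper simply invokes Lemma~\ref{inclplong} (i.e.\ Lemma~\ref{plongelt}) to get $e\cdot f=(e\pi_i)\cdot f=0^n\,\decn{\w f}$, whereas you re-derive this independence of the first coordinate from the absorption of $P_{n+1}$ via Relation~\ref{R7} and Lemma~\ref{prodPi}; both routes are fine.
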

\begin{proof}
  Since $0 \in J$ then $e\cdot f = 0\dots 0 \w{f}$ according to
  Lemma~\ref{inclplong}. On the other hand, let $j \in
  \interv{0}{n-1}\setminus I$. Then in $Q(e,f)$ we are doing a quotient by
  $(e\cdot \pi_j)\cdot 1_m$ which is above $e\cdot f$ by Theorem
  \ref{ordreR0}. Hence $Q(e,f) = 0$.
\end{proof}

We are now considering cases where $0\notin J$, writing $\w{f}=f_0\dots f_m$.
\begin{lemma}\label{leminduction}
  Assume $0\notin J$. Let $r$ be an element of $R_{n+m}^0$ which does not
  vanish in the quotient $Q(e,f)$. Let $a$ and $b$ be two letters of $\w{r}$,
  not both zero.  If $a$ and $b$ appear both in $\w{e}$ (resp.\ $a - n$ and
  $b-n$ appear both in $\w{f}$) then they appear in $\w{r}$ in the same order
  as in $\w{e}$ (resp.\ $\w{f}$).  Furthermore, all the nonzero letters of
  $\w{e}$ appear in $\w{r}$.  Finally, if $f_i+n$ is not in $\w{r}$ then
  $f_j+n$ is not in $\w{r}$, for all $j<i$.
\end{lemma}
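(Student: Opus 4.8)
The plan is to translate ``$r$ remains in $Q(e,f)$'' into explicit order‑theoretic conditions and then read off each assertion from the rook‑triple description of the $\RR$‑order. First I would unwind Theorem~\ref{Aladin}: by Theorem~\ref{ordreR0}, for $x\in R_{n+m}^0$ the right ideal $xR_{n+m}^0$ is exactly the principal down‑set $\{r\mid \w r\leq_I\w x\}$, and since the $\RR$‑order is a lattice (Corollary~\ref{cor-order-lattice}) the denominator of $Q(e,f)$ is spanned by the union of the down‑sets of the rooks $\w{e'\cdot f}$ ($e'\in R_{<e}$) and $\w{e\cdot f'}$ ($f'\in R_{<f}$). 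Moreover the down‑set $R_{<e}$ of $\Rnz$ is generated by the elements $e\pi_j$ with $j\in\interv{0}{n-1}\setminus I$: any $x<_\RR e$ satisfies $x\leq_\RR e\pi_j$ for the first index $j$ along a word realising $x\leq_\RR e$ at which the product actually drops, and such a $j$ lies outside $I=D_R(e)$ by Lemma~\ref{actiongenerateurssurpiJ}; likewise for $R_{<f}$. Hence $\w r$ survives in $Q(e,f)$ if and only if $\w r\leq_I\w{e\cdot f}$, while $\w r\not\leq_I\w{(e\pi_j)\cdot f}$ for every $j\notin I$ and $\w r\not\leq_I\w{e\cdot(f\pi_k)}$ for every $k\notin J$. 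Since $0\notin J$, Lemma~\ref{plongelt} turns all of these into comparisons with honest concatenations: $\w{e\cdot f}=\w e\,\decn{\w f}$, $\w{(e\pi_j)\cdot f}=\w{e\pi_j}\,\decn{\w f}$ and $\w{e\cdot(f\pi_k)}=\w e\,\decn{\w{f\pi_k}}$.

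Next I would extract the ``concatenation half'' of the statement from $\w r\leq_I\w e\,\decn{\w f}$ alone. By Proposition~\ref{formeidempotentmatrice}, $\w e$ is a possibly empty block of zeros followed by a succession of decreasing runs of consecutive integers, the runs themselves increasing, and $\decn{\w f}$ has the same shape shifted by $n$ and carries no zeros. So if $a$ and $b$ both occur in one decreasing run of $\w e$, say $a>b$ with $a$ before $b$, then $(a,b)\in\Inv(\w e)\subseteq\Inv(\w e\,\decn{\w f})$, and since $a\in\supp(\w r)$ the second clause of $\leq_I$ forces $(a,b)\in\Inv(\w r)$; the same argument inside $\decn{\w f}$ gives the claim for letters $a-n,b-n$ of $\w f$. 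This already yields the order‑preservation assertion for pairs lying inside a single run.

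The remaining points — order preservation for pairs of $\w e$ straddling two runs, the presence in $\w r$ of \emph{all} nonzero letters of $\w e$, and the ``left‑closed'' behaviour of the missing letters of $\decn{\w f}$ — must use the negative conditions $\w r\not\leq_I\w{e\pi_j}\,\decn{\w f}$ and $\w r\not\leq_I\w e\,\decn{\w{f\pi_k}}$. For each I would argue by contradiction: assuming the conclusion fails, exhibit an element $e'<_\RR e$ (resp.\ $f'<_\RR f$) whose rook is obtained from $\w e$ by turning some inter‑run ascent into a descent or deleting its leading letter (resp.\ from $\w f$ analogously), and check via the rook‑triple criterion of Proposition~\ref{rook-triple-characterization} that $\w r$ still lies below the corresponding concatenation, contradicting survival. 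The main obstacle is precisely this witness construction: when $a$ and $b$ lie in non‑adjacent runs one must merge the intervening runs, which reshuffles the whole block decomposition, and for the last assertion one must track the zero‑count function $Z_{\w r}$ through the comparison — this is the most delicate step, since in a permutation word $\decn{\w f}$ a value can only be deleted after everything to its left has been deleted, and that combinatorial fact has to be matched against the inequalities $Z_{\w e\,\decn{\w{f\pi_k}}}\leq Z_{\w r}$.
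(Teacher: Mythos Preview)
Your setup is exactly right and matches the paper: reduce survival in $Q(e,f)$ to the conditions $\w r\leq_I\w e\,\decn{\w f}$ together with $\w r\not\leq_I\w{e\pi_j}\,\decn{\w f}$ for each $j\notin I$ and $\w r\not\leq_I\w e\,\decn{\w{f\pi_k}}$ for each $k\notin J$. The intra-run descent case is also handled just as the paper does.

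The gap is in what you flag as ``the main obstacle''. You are making the cross-run case much harder than it is because you have not reduced to \emph{consecutive} letters of $\w e$. If every consecutive pair of letters stays in the correct relative order in $\w r$, then by transitivity every pair does; so it suffices to treat $a$ and $b$ adjacent in $\w e$. For such a pair there are only two possibilities: either $a>b$ (your intra-run case), or $a<b$, in which case the position $i$ of $a$ is an ascent of $\w e$, hence $i\notin I$, so $e\pi_i$ is already one of your quotient witnesses --- no merging of runs, no reshuffling of blocks. The rook $\w{e\pi_i}\,\decn{\w f}$ differs from $\w e\,\decn{\w f}$ only by the single extra inversion $(b,a)$, and if $b$ precedes $a$ in $\w r$ then the rook-triple comparison for $\leq_I$ immediately gives $\w r\leq_I\w{e\pi_i}\,\decn{\w f}$, a contradiction.

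For the second assertion your sketch is also missing a layer. The paper does not directly delete the missing letter $b$: it first uses the already-proven order-preservation to argue that if some nonzero $b\in\w e$ is absent from $\w r$, then the \emph{first} nonzero letter $a$ of $\w e$ is absent as well. Only then does one produce the witness: if $0\notin I$ one uses $e\pi_0$ (which deletes $a$ from the front of $\w e$); if $0\in I$ one uses $e\pi_i$ at the position of the last leading zero (which swaps $a$ past a zero), and argues via an intermediate rook $r'$ in which $a$ has been brought to the front. The third assertion is the same argument transported to $\decn{\w f}$; tracking $Z_{\w r}$ is not really needed once you have reduced to the first letter $f_1+n$.
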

\begin{example}
  If $\w{e} = \w{023}$ and $\w{f} = \w{213}$ then neither $\w{042356}$, or
  $\w{005463}$, or $\w{025306}$ remain in $Q(e,f)$, respectively because of
  the first, second and third rule.
\end{example}
\begin{proof}
  For the first point, it is sufficient to do the proof when the two letters
  are consecutive in~$\w{e}$. Let $\w{r}\in Q(e,f)$. So
  $r\leq \w{e}\decn{\w{f}}$. Assume $\w{e}=\un{L}ab\un{R}$ with $a$ and $b$ non
  both zero, and both present in $\w{r}$.

  Suppose first that $a>b$, so that $a\neq 0$ and $b \neq 0$ since $0\notin
  J$. Since $r<\w{e}\decn{\w{f}}$ we deduce that $a$ is before $b$ in
  $\w{r}$.

  Otherwise, $a<b$. Let $i\eqdef \ell(\un{L})$ be the position of $a$ in
  $\w{e}$. So $i\notin I$. Then $e\cdot \pi_i < e$. Also $\Inv(e\cdot \pi_i) =
  \Inv(e)\cup \lbrace (b,a) \rbrace$. Thus, $\Inv((\w{e}\cdot
  \pi_i)\decn{\w{f}}) = \Inv(\w{e}\decn{\w{f}})\cup \lbrace (b,a) \rbrace$
  while $(b,a) \notin \Inv(\w{e}\decn{\w{f}})$. Since $r < \w{e}\decn{\w{f}}$
  we get $\lbrace (r_i, r_j) \in \Inv(\w{e}\decn{\w{f}}) \, \vert \,
  r_i \in \w{r}\rbrace \subseteq \Inv(r)$. Assume that $b$ is left to
  $a$ in $\w{r}$. In this case we have $\lbrace (r_i, r_j) \in
  \Inv(\w{e}\decn{\w{f}})\cup\lbrace(b,a)\rbrace \, \vert \, r_i \in
  \w{r}\rbrace \subseteq \Inv(r)$, so $r<(\w{e}\cdot
  \pi_i)\decn{\w{f}}$, the latter being an element by which we quotient in
  $Q(e,f)$. It is a contradiction.
  \medskip

  The proof is the same when $a$ and $b$ both come from $\w{f}$ once
  decreased. The only change are that both letter are nonzero, and that we
  have to decrease by $n$.
  \medskip

  Let us prove the second point by contradiction, assuming that a nonzero
  letter $b$ in $\w{e}$ is not in $\w{r}$. We first show that the first
  nonzero letter of $\w{e}$, say $a$, is not in $\w{r}$ either. By
  contradiction, assume that $a\in \w{r}$. If $a>b$ then $e$ has descent
  $(a, b)$. So $r$ must also have it since $r<\w{e}\decn{\w{f}}$ and
  $a \in \w{r}$, but it is not the case since $b\notin \w{r}$, which is a
  contradiction. Otherwise $a<b$. Since $a\in \w{r}$ and $b\notin \w{r}$, and
  that the generator $\pi_0$ can only delete the first letter, ${r}$ is in the
  $\RR$-order between $\w{e}\decn{\w{f}}$ and a rook ${r}'$ in which $a$ is
  there and $b$ is in first position. Because of the first point, this element
  ${r}'$ has been sent to $0$ in the quotient, and thus ${r}$ which is below
  as well. So ${r}=0$, again this is a contradiction.

  The same argument also apply to the third case, with some minor
  adaptation.
  \medskip

  Thus if there is a nonzero letter of $\w{e}$ lacking in $\w{r}$, the first
  one at least is lacking. We now look at $\w{e}$. If $0\notin I$, $\w{e}$
  begins with $a$. Then $\w{q} \eqdef (\w{e}\cdot \pi_0)\decn{\w{f}}$ is an
  element by which we quotient. We have ${r}<\w{e}\decn{\w{f}}$ and $a \notin
  \w{r}$ so ${r}< q$, thus ${r} = 0$, and we get a contradiction.

  Otherwise $0\in I$ so $\w{e} = 0\dots 0a\dots$. We denote by $i$ the
  position of the last $0$ and $q \eqdef (\w{e}\cdot \pi_i)\decn{\w{f}}$ is an
  element by which we quotient. Since $a\notin \w{r}$, ${r}$ is in the
  $\RR$-order between $\w{e}\decn{\w{f}}$ and a rook ${r}'$ in which $a$ is
  there in first position. In particular in $\w{r}$, we have a $0$ right to
  $a$. So ${r}<{r}'<\w{e}\decn{\w{f}}$ and $(a,0)\in \Inv({r}')$, so ${r}'<q$
  and thus ${r} = 0$, for a final contradiction.
\end{proof}

\begin{remark}\label{rem_forme_idem}
  Let $K \subset \interv{1}{n-1}$ and $g\in \Rnz$ the associated idempotent
  (hence $0 \notin g$). We write $\w{g} = g_1g_2\dots g_n$. Because of
  Proposition \ref{forme_idempotent_Rnz} we have that if $g_1 = \ell$ then
  $g_2 = \ell-1$, $g_3 = \ell-2$, \dots , $g_{\ell-1} = 2$ and $g_\ell =
  1$. Furthermore $\ell \notin K$ (since $g_{\ell+1} > g_\ell$) and $\ell =
  \min\left( \interv{1}{n-1} \setminus I\right)$.
\end{remark}

We are now in position to state the formula giving the induction of simple
modules. Recall that $\shuffle$ denote the so-called shuffle product
introduced in Definition~\ref{def:shuffle}. We also denote $0^i$ the word
$00\dots0$ with $i$ letters $0$.

\begin{theorem}\label{ind_simple_tour}
  \newcommand\Qef{\Ind_{\Rnz \times R_m^0}^{R_{n+m}^0} S_I \otimes S_J} For
  $n, m\in \N$, we fix $I\subseteq\interv{0}{n-1}$ and
  $J\subseteq\interv{0}{m-1}$. Denoting $e\eqdef\pi_{I, n}$ and $f\eqdef\pi_{J, m}$,
  the induction of simple modules $S_I=S_e$ and $S_J=S_f$ is given by
  \begin{enumerate}
  \item If $0\in J$ and $I\neq \interv{0}{n-1}$ then $\Qef=0$.
  \item If $0\in J$ and $I = \interv{0}{n-1}$ then
    $\Qef = \left\langle \w{e} \decn{\w{f}} \right\rangle
    \simeq S_{\interv{0}{n} \cup \decn{J\setminus\lbrace 0\rbrace}}$.
  \item If $0\notin J$ and $I =  \interv{0}{n-1}$ then
    $\Qef = \ev{0^n \shuffle \decn{\w{f}}}$.
  \item If $0\notin J$ and $0\in I$, $I\neq \interv{0}{n-1}$, 
    let $\ell:=f_1$ be the first letter of $\w{f} = f_1\dots f_m$. Then:
    \begin{equation}
      \Qef = \left\langle\
        0^i\w{e}\shuffle \decn{f_{i+1}\dots f_m}\mid i=0,\dots, \ell
      \ \right\rangle\,.
    \end{equation}
  \item If $0\notin J$ and $0\notin I$, let $\ell:=f_1$ be the first letter of $\w{f}
    = f_1\dots f_m$. Then
    \begin{equation}
      \Qef = \left\langle\
        0^i\shuffle\w{e}\shuffle \decn{f_{i+1}\dots f_m}\mid i=0,\dots, \ell
      \ \right\rangle\,.
    \end{equation}
  \end{enumerate}
\end{theorem}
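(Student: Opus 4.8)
The strategy is to start from the abstract description in Theorem~\ref{Aladin}, $\Qef = Q(e,f)$, and to identify concretely which rooks survive in the quotient. Case 1 is already Lemma~\ref{lemcasnulinduct}, so nothing remains to do there. For all remaining cases we have $0\notin J$ or $I=\interv{0}{n-1}$, and the image $e\cdot f$ in $R_{n+m}^0$ is given by Lemma~\ref{inclplong}: it is (the element associated to) $\w{e}\decn{\w{f}}$ when $0\notin J$, and $0^n\decn{\w{f}}$ when $0\in J$ and $I=\interv{0}{n-1}$. So $Q(e,f)$ is a quotient of the principal right ideal generated by this rook, and I need to describe the span of its image.

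The core of the argument is a two-sided containment. First, for the \emph{upper bound} on what survives, I would invoke Lemma~\ref{leminduction}: any $\w{r}$ not killed in $Q(e,f)$ must contain every nonzero letter of $\w{e}$, must respect the relative order of letters coming from $\w{e}$ and of letters coming from $\decn{\w{f}}$, and its set of ``missing high $f$-letters'' must be a prefix $\{f_1+n,\dots,f_i+n\}$ of the letters of $\decn{\w{f}}$ read from the left. Translating these constraints: in case 2 ($0\in J$, $I=\interv{0}{n-1}$) the leading $0^n$ can never be removed by the right action (it would need $\pi_j$ with $j<n$, which is $<e\cdot f$), and no $f$-letter can be dropped since $0\notin J\setminus\{0\}$ only after the first; one checks $e\cdot f$ is the idempotent $\pi_{\interv{0}{n}\cup\decn{J\setminus\{0\}},\,n+m}$ and that $Q$ is one-dimensional, hence the simple module $S_{\interv{0}{n}\cup\decn{J\setminus\{0\}}}$ by Theorem~\ref{theo-Rn0-simple}. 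In cases 3,4,5 ($0\notin J$), Lemma~\ref{leminduction} forces the surviving rooks to be exactly words obtained by deleting a prefix $f_1\dots f_i$ of the $f$-part (replaced, when $0\in I$, by appropriate zeros — here Remark~\ref{rem_forme_idem} pins down the shape of $\w{e}$, in particular $\ell=f_1$ bounds how many leading letters of $\w f$ can disappear before we hit the first descent of $e$), and then interleaving $\w{e}$ (or $0^n$, in case 3) with the remaining suffix $\decn{f_{i+1}\dots f_m}$ in an order-preserving way — that is precisely a shuffle. This gives the inclusion ``$\subseteq$'' of each claimed spanning set.

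For the \emph{reverse} inclusion I would argue that every rook listed on the right-hand side of the formula genuinely survives, i.e.\ is $\leq_\RR e\cdot f$ (so lies in the ideal, using the identification of Corollary~\ref{action_mulr_R0} and Theorem~\ref{ordreR0}) and is \emph{not} $\leq_\RR$ any of the quotienting elements $(R_{<e}\cdot f)$ or $(e\cdot R_{<f})$. The first point is a direct check with the combinatorial order $\leq_I$ of Definition~\ref{def-rook-order}: a shuffle of $\w e$ (or $0^i\w e$) with a suffix of $\decn{\w f}$ has support containing $\supp(e\cdot f)$ appropriately, its inversions among $e$-letters and among $f$-letters are forced, and the $Z$-function only grows, exactly as in the proof of Theorem~\ref{ordreR0}. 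The second point uses that passing below any $(\w e\cdot\pi_j)\decn{\w f}$ with $j\notin I$ would create a new inversion or remove a required letter that our shuffles are built to avoid, and symmetrically for $e\cdot R_{<f}$; here the prefix-deletion structure ($f$-letters can only be dropped ``from the left'') matches exactly the fact that the only way $R_{<f}$ acts on $f$ through $\pi_0$ is by killing $f_1$. Combining the two inclusions yields the stated bases.

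The main obstacle I anticipate is the bookkeeping in cases 4 and 5: one must simultaneously control the zeros already present in $\w e$ (when $0\in I$) versus the ``new'' zeros created by applying $\pi_0$-type elements on the right, show that the surviving family is closed and spans without redundancy, and verify the cutoff at $i=\ell=f_1$ is sharp (for $i>\ell$ one would be forced to move $\ell$ past a smaller $e$-letter, contradicting Lemma~\ref{leminduction}). This is the place where Remark~\ref{rem_forme_idem} and a careful case analysis on whether the relevant generator is $\pi_0$ or some $\pi_j$ ($j>0$) must be combined; everything else is a routine, if lengthy, application of the $\leq_I$ characterization of the $\RR$-order together with Lemmas~\ref{inclplong} and~\ref{leminduction}.
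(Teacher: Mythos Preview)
Your overall architecture matches the paper's: start from Theorem~\ref{Aladin}, handle case~1 by Lemma~\ref{lemcasnulinduct}, identify $e\cdot f$ via Lemma~\ref{inclplong}, and in cases~3--5 run a two-sided containment using Lemma~\ref{leminduction} for the upper bound and the $\leq_I$ description of the $\RR$-order for the lower bound. That is exactly what the paper does.

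There is, however, one genuine gap: your explanation of why the sum stops at $i=\ell=f_1$ is not right. You say Remark~\ref{rem_forme_idem} ``pins down the shape of~$\w e$'' and that for $i>\ell$ ``one would be forced to move $\ell$ past a smaller $e$-letter, contradicting Lemma~\ref{leminduction}''. But Remark~\ref{rem_forme_idem} is to be applied to $f$ (we are in the case $0\notin J$), not to $e$; and once the letters $f_1,\dots,f_\ell$ have been replaced by zeros there is no letter $\ell$ left to move, so Lemma~\ref{leminduction} says nothing here. The correct mechanism, which the paper uses, is on the $R_{<f}$ side: by Remark~\ref{rem_forme_idem} the first $\ell$ letters of $\w f$ are $\ell,\ell-1,\dots,1$ and $\ell\notin J$, so $f\pi_\ell\in R_{<f}$. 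Any rook with $i\geq\ell+1$ removed $f$-letters lies below $t=0^{\ell+1}\w e\,\decn{f_{\ell+2}\dots f_m}$, and one checks $t\leq_\RR \w e\,\decn{\w f\cdot\pi_\ell}=e\cdot(f\pi_\ell)$, which is a quotienting element; hence all such rooks die in $Q(e,f)$. Replace your cutoff argument by this one and the proof goes through.
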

\begin{proof}
  \begin{enumerate}
  \item This case follows directly from Lemma~\ref{lemcasnulinduct}.
  \item Let $K \eqdef \interv{0}{n} \cup \decn{J\setminus\lbrace 0\rbrace}$. Then
    by Lemma \ref{inclplong},$e\cdot f = \pi_{{K,n+m}}.$ Since
    $I=\interv{0}{n-1}$ then
    \begin{equation}
      Q(e,f) =
      \faktor{\pi_{{K, n+m}} R_{n+m}^0 }%
      {\left[(0\dots 0\cdot R_{<f})\right] R_{n+m}^0}\,.
    \end{equation}
    On the other hand, let $g\eqdef\pi_{K, n+m}$ be the idempotent associated to
    $K$ in $R^0_{n+m}$. By Theorem~\ref{Aladin},
    \begin{equation}
      S_g = \Ind_{1\times R_{n+m}^0}^{R_{n+m}^0} 1\otimes S_g =
      \faktor{g R_{n+m}^0 }{\left[R_{<g}\right] R_{n+m}^0}\,.
    \end{equation}
    But since $I=\interv{0}{n-1}$ on has $R_{<g} = 0\dots 0 \cdot
    R_{<f}$, so that $Q(e,f) \simeq S_g$.

  \item Since $I= \interv{0}{n-1}$ then $\w{e} = 0\dots 0$ and $e\cdot f =
    0\dots 0\un{\w{f}}$. Let $\w{r}\in 0\dots 0 \shuffle
    \decn{\w{f}}$. Clearly ${r} < e\cdot f$. We know that $\w{r}$ has the same
    number of zeros than $\w{e\cdot f}$ and also that its inversions are
    those of $f$ increased by $n$. We
    deduce that ${r}$ is not below $\w{e}\decn{(\w{f}\cdot \pi_j)}$ in the
    $\RR$-order for $j \in \interv{0}{m-1} \setminus J$. Thus ${r}\in Q(e,f)$.

    Conversely let ${r}\in Q(e,f)$. Since $0\notin J$ then ${r}\not<
    \w{e}\decn{(\w{f}\cdot \pi_0)}$. So the first letter of $\w{f}$ increased
    by $n$ is in $\w{r}$. By Lemma \ref{leminduction} all the letters of
    $\w{f}$ increased by $n$ are in $\w{r}$. Again by Lemma
    \ref{leminduction} they are in the same order, and so $\w{r} \in 0\dots 0
    \shuffle \decn{\w{f}}$.

  \item Denote $S_{ef} \eqdef \w{e}\shuffle \decn{\w{f}} + 0\w{e}\shuffle
    \decn{f_2\dots f_m} +\dots + 0\dots 0\w{e}\shuffle \decn{f_{\ell+1}\dots
      f_m}$ and let $\w{r}\in S_{ef}$. The same argument than the third point shows
    that ${r}\in Q(e,f)$.

    Conversely, let ${r}\in Q(e,f)$. Since $0\in I$ (or equivalently, $0\in
    \w{e}$) Lemma~\ref{leminduction} tells us that the eventual new zeros of
    $\w{r}$ are before the nonzero letters of $\w{e}$. By the same lemma, the letters
    of $\w{f}$ disappear in the same order than in $\w{f}$. So that we have proven:
    \begin{equation}
      \w{r}\in T_{ef} \eqdef
      \w{e}\shuffle \decn{\w{f}} +
      0\w{e}\shuffle \decn{f_2\dots f_m} +\dots +
      0\dots 0\w{e}\shuffle \decn{f_m} + 0\dots 0\w{e}.
    \end{equation}
    We recall that $\ell = f_1$. We have to show
    that elements of $T_{ef}\setminus S_{ef}$ are not in $Q(e,f)$. A first
    immediate remark is that all these elements are below $t = 0\dots
    0\w{e}\decn{f_{\ell+2}\dots f_m}$.  But $t < \w{e}f_1\dots
    f_{\ell-1}f_{\ell+1}f_\ell f_{\ell+2}\dots f_m = (\w{e}\cdot(\w{f}\cdot
    \pi_\ell))$. Thus, since $\ell \notin J$ (by Remark \ref{rem_forme_idem}),
    $t = 0$ in $Q(e,f)$, and so all $T_{ef}\setminus S_{ef}$ also, hence the result.

  \item Denote $S_{ef} \eqdef \w{e}\shuffle \decn{\w{f}} + 0\shuffle\w{e}\shuffle
    \decn{f_2\dots f_m} + \dots + 0\dots 0 \shuffle\w{e}\shuffle
    \decn{f_{\ell+1}\dots f_m}.$ Let $\w{r}\in S_{ef}$. The argument of the
    third point proves that ${r}\in Q(e,f)$.

    Conversely, for ${r}\in Q(e,f)$, the argument of the fourth point shows
    that $\w{r}\in S_{ef}$.\qedhere
\end{enumerate}
\end{proof}

Recall that the corresponding rule for $H_n^0$ is the multiplication of the
fundamental basis $(F_I)$ of quasi-symmetric
function~\cite{KrobThibon.1997}. This rule can be computed as follows
\cite{Gessel.1984,DuchampHivertThibon.2002}. Let $I$ and $J$ be two
compositions. Choose any permutation $\sigma\in\SG{n}$ whose descent
composition is $\cset(\sigma)=I$, for example $\pi_I$ whose corresponding
$H_n^0$ element is idempotent, and $\mu$ such that $\cset(\mu)=J$. Then
\begin{equation}
  F_I\,F_j = \sum_{\nu\in \sigma\shuffle \decn{\mu}} F_{\cset(\nu)}\, .
\end{equation}
As explained by Virmaux~\cite{Virmaux.2014} this is a direct consequence of
Theorem~\ref{Aladin}.

To get the analogue of the product of quasi-symmetric functions, one has to use
the Theorem~\ref{ind_simple_tour} and then get the projection of the induced
module in the Grothendieck ring. This amounts to compute the $R$-descent of
every rook vector appearing in the sum $Q(e,f)$ according to Jordan-Hölder's
theorem.

\begin{example}
  If $n = 2$, $m = 3$, $I = \lbrace 0, 1\rbrace$ and $J = \lbrace 1
  \rbrace$. Then $\w{e} = 00$ and $\w{f} = 213$.
  Theorem~\ref{ind_simple_tour} says that
  {\small$$Q(e,f) = \ev{00\shuffle 435} =
  \ev{\w{00435, 04035, 04305, 04350, 40035, 40305, 40350, 43005, 43050,
      43500}}.$$}%
  This gives the following $R$-descent classes:
  \begin{center}
    \footnotesize
    \begin{tabular}{r|cccccccccc}
      Element & $\w{00435}$&$\w{04035}$&$\w{04305}$&$\w{04350}$&$\w{40035}$&$\w{40305}$&$\w{40350}$&$\w{43005}$&$\w{43050 }$&$\w{43500}$\\
      Descents & 0,1,3 & 0,2 & 0,2,3 & 0,2,4 & 1,2 & 1,3 & 1,4 & 1,2,3 & 1,2,4 & 1,3,4
    \end{tabular}
  \end{center}
  \raisebox{10pt}{Finally:}
  $\begin{aligned}\Ind \Si{0,1}^2 \times \Si{1}^3 & = \Si{0,1,3}^5 + \Si{0,2}^5 + \Si{0,2,3}^5 + \Si{0,2,4}^5 + \Si{1,2}^5 + \Si{1,3}^5 \\
& \quad + \Si{1,4}^5 + \Si{1,2,3}^5 + \Si{1,2,4}^5 + \Si{1,3, 4}^5 \end{aligned}$
\end{example}

\begin{example}\label{ex_ind_2}
  If $n = 3$, $m = 2$, $I = \lbrace 0, 1\rbrace$ and $J = \lbrace 1
  \rbrace$. Then $\w{e} = 003$ and $\w{f} = 21$. Theorem~\ref{ind_simple_tour}
  says that
\[\begin{aligned}
  Q(e,f) & = \ev{003\shuffle 21 + 0003\shuffle 1 + 00003}\\
  &  = \langle\lbrace \w{00321}, \w{00231, 00213, 02031, 02013, 02103, 20031, 20013, 20103, 21003}\rbrace \\
  & \qquad \cup \lbrace \w{00031, 00013, 00103, 01003, 10003}\rbrace \cup \lbrace \w{00003}\rbrace\rangle
\end{aligned}
\]
Then:
\begin{center}
  \footnotesize
  \begin{tabular}{r|cccccccc}
    Element & $\w{00321}$ & $\w{00231}$ & $\w{00213}$ & $\w{02031}$ & $\w{02013}$ & $\w{02103}$ & $\w{20031}$ & $\w{20013}$ \\
    Descents & 0,1,3,4 & 0,1,4 & 0,1,3 & 0,2,4 & 0,2 & 0,2,3 & 1,2,4 & 1,2\\
    \hline
    Element & $\w{20103}$ & $\w{21003}$ & $\w{00031}$ & $\w{00013}$ & $\w{00103}$ & $\w{01003}$ & $\w{10003}$ & $\w{00003}$\\
    Descents & 1,3 & 1,2,3 & 0,1,2,4 & 0,1,2 & 0,1,3 & 0,2,3 & 1,2,3 & 0,1,2,3
  \end{tabular}
\end{center}
\begin{multline*}
  \Ind \Si{0,1}^3\times \Si{1}^2 = \Si{0,1,3,4}^5 + \Si{0,1,4}^5 + 2\Si{0,1,3}^5 + \Si{0,2,4}^5+\Si{0,2}^5 + 2\Si{0,2,3}^5 + \Si{1,2,4}^5 \\
  \quad + \Si{1,2}^5 + \Si{1,3}^5 + \Si{1,2}^5+
  \Si{0,1,2,4}^5+\Si{0,1,2}^5+2\Si{1,2,3}^5 + \Si{0,1,2,3}^5
\end{multline*}
\end{example}
This defines the left (resp. right) dual branching graph, where the arrows
$I \mapsto J$ are labelled by the multiplicity of $S_J$ in the induction of $S_I$
along the morphism $\rho_{1,n}$ (resp. $\rho_{n,1}$). The beginning of those
two graphs are illustrated in Figures~\ref{branching_gauche} and
\ref{branching_droit}.
\begin{figure}[ht]
  $$
  \vcenter{\hbox{\includegraphics[scale=0.6]{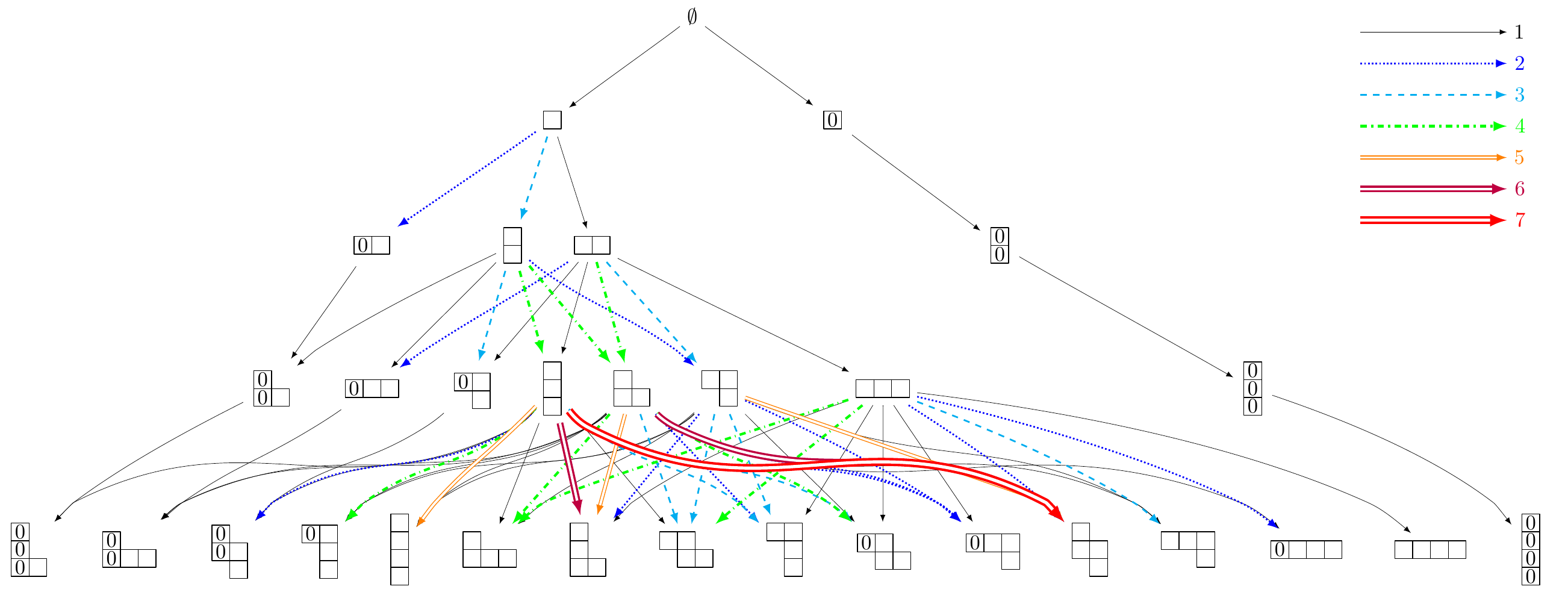}}}
  $$
  \caption{\label{branching_gauche} The left dual branching graph of $\Rnz$.}
\end{figure}
\begin{figure}[ht]
  $$
  \vcenter{\hbox{\includegraphics[scale=0.6]{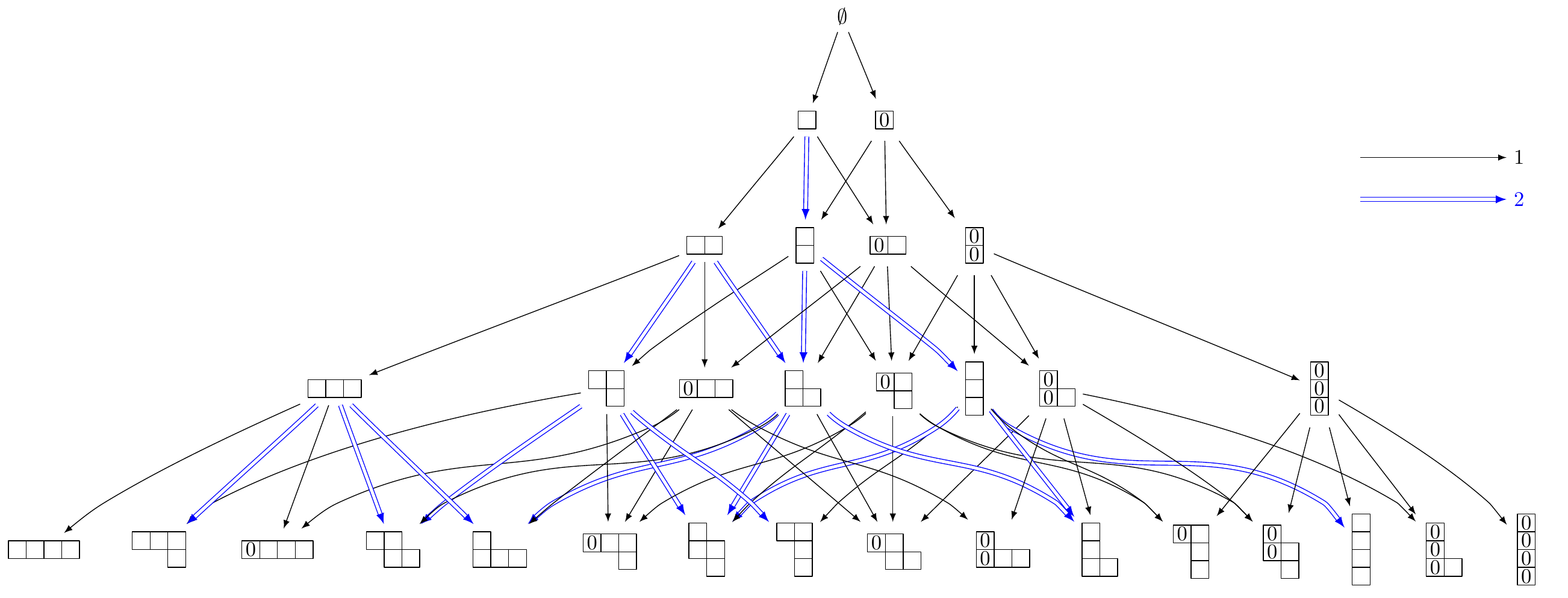}}}
  $$
  \caption{\label{branching_droit} The right dual branching graph of $\Rnz$.}
\end{figure}

\paragraph{Hopf algebra}
On the contrary to $\Hnz$, we do not get a Hopf algebra. Indeed, the following
diagram that express the compatibility of the product with the co-product
does not commute:
$$
\begin{array}{ccc}
R_{a+b}^0 \times R_{c+d}^0 & \overset{\Ind}{\longrightarrow} & R_{a+b+c+d}^0 \\
\raisebox{-30pt}{\rotatebox{90}{$\overset{\Res \times \Res}{\longleftarrow}$}} & & \raisebox{-5pt}{\rotatebox{270}{$\overset{\Res}{\longrightarrow}$}}\\
R_a^0\times R_b^0 \times R_c^0 \times R_d^0 & \overset{\Ind\times \Ind}{\longrightarrow} & R_{a+c}^0 \times R_{b+d}^0
\end{array}
$$
Here is a counter example: Using Theorem~\ref{ind_simple_tour}, we get
$\Res_{R_1^0 \times R_2^0}^{R_3^0} \Si{0,1}^3 = \Si{0}^1\otimes \Si{0}^2$ and
$\Res_{R_1^0\times R_1^0}^{R_2^0} \Si{1}^2 = \Si{}^1\otimes \Si{}^1 $. Then
\begin{equation}
  \begin{aligned}
    \Ind \times \Ind \left(\Res\times \Res \Si{0,1}^3 \otimes \Si{1}^2\right) &
    = \Ind(\Si{0}^1\otimes \Si{}^1) \otimes \Ind(\Si{0}^2\otimes \Si{}^1) \\ & =
    (\Si{0}^2 + \Si{1}^2) \otimes
    (\Si{0}^3+\Si{0,1}^3+\Si{0,2}^3+\Si{1}^3)\,.
  \end{aligned}
\end{equation}
Hence this sum has $8$ elements, with multiplicity. On the other hand, we saw
in Example~\ref{ex_ind_2} that $\Ind \Si{0,1}^3\times \Si{1}^2$ is a sum of
$16$ elements (with multiplicity) and Theorem~\ref{rest_simple_RR} shows
that the multiplicity does not change by restriction. Hence the result is
false.

\paragraph{Induction with $\Hnz$}
One can wonder what would happen if we rather consider the induction and
restriction along the inclusion $\Rnz\times H_m^0 \rightarrow R_{n+m}^m$. It
is not a tower of monoids, but the morphisms $\tilde{\rho}_{n,m} \eqdef
(\rho_{n,m})_{| \Rnz\times H_m^0}$ are injective. We just give the result of
the induction of simple modules:
\begin{theorem}
  \newcommand\Qef{\Ind_{\Rnz \times H_m^0}^{R_{n+m}^0} S_I \otimes S_J} 
  For $n, m\in \N$, let $I\subseteq\interv{0}{n-1}$ and
  $J\subseteq\interv{1}{m-1}$. Denoting $e\eqdef\pi_{I, n}\in \Rnz$ and $f\eqdef\pi_{J, m} \in H_m^0$,
  the induction of simple modules $S_I=S_e$ and $S_J=S_f$ is given by
  \begin{enumerate}
  \item If $0\in I$, let $\ell$ be the first letter of $\w{f} = f_1\dots f_m$. Then:
    \begin{multline}
      \Qef = \left\langle
        \w{e}\shuffle \decn{\w{f}} +
        0\w{e}\shuffle \decn{f_2\dots f_m} +
        00\w{e}\shuffle \decn{f_3\dots f_m} +
      \right.\\ \left.\dots\qquad +
        0\dots 0\w{e}\shuffle \decn{f_{\ell+1}\dots f_m}
      \right\rangle,
    \end{multline}
    where the last term begins with $\ell$ letters $0$.
  \item If $0\notin I$, let $\ell$ be the first letter of $\w{f}
    = f_1\dots f_m$. Then
    \begin{multline}
      \Qef = \left\langle
        \w{e}\shuffle \decn{\w{f}} +
        0\shuffle\w{e}\shuffle\decn{f_2\dots f_m} +
        00\shuffle\w{e}\shuffle \decn{f_3\dots f_m} +
      \right.\\ \left.\dots\qquad +
        0\dots 0 \shuffle\w{e}\shuffle \decn{f_{\ell+1}\dots f_m}
      \right\rangle,
    \end{multline}
    where the last term begins with $\ell$ letters $0$.
  \end{enumerate}
\end{theorem}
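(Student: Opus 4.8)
The plan is to reuse, almost verbatim, the proof of Theorem~\ref{ind_simple_tour}, observing that the only place where the hypothesis $J\subseteq\interv{0}{m-1}$ (as opposed to $J\subseteq\interv{1}{m-1}$) was genuinely used in that proof was to handle the two cases in which $0\in J$ (namely items~1 and~2 of Theorem~\ref{ind_simple_tour}). Since here we work along the inclusion $\Rnz\times H_m^0\to R_{n+m}^0$, the idempotent $f=\pi_{J,m}$ lives in $H_m^0$, so by construction $0\notin J$ and $0\notin\w{f}$; thus $\pi_0$ does not appear in any reduced word for $f$. Hence only the analogues of items~4 and~5 of Theorem~\ref{ind_simple_tour} survive, and these are exactly the two cases in the statement, distinguished by whether $0\in I$ or not.

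First I would check that $\tilde\rho_{n,m}$ is indeed an injective monoid morphism: it is the restriction of $\rho_{n,m}$ (Proposition~\ref{plongement}) to $\Rnz\times H_m^0$, and by Lemma~\ref{plongelt} the non-injectivity of $\rho_{n,m}$ only occurred when $0\in\w{b}$ for the second factor $b$; since $H_m^0$ consists of permutation-like rooks this never happens, so $\tilde\rho_{n,m}$ is injective. Then I would invoke Theorem~\ref{Aladin} (Virmaux), which applies verbatim to any inclusion of $\JJ$-trivial monoids given by an idempotent-preserving morphism, to write
\[
  \Ind_{\Rnz\times H_m^0}^{R_{n+m}^0} S_e\otimes S_f
  = \faktor{(e\cdot f)R_{n+m}^0}
  {\bigl[(R_{<e}\cdot f)+(e\cdot H_{<f})\bigr]R_{n+m}^0}\,.
\]
By Lemma~\ref{inclplong} and the remark that $0\notin\w{f}$, one has $e\cdot f$ associated to $\w{e}\decn{\w{f}}$. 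From here the argument is identical to cases Neg.\ (i.e.\ the two final cases) of the proof of Theorem~\ref{ind_simple_tour}: Lemma~\ref{leminduction} (which only used $0\notin J$) tells us that any surviving rook $\w{r}$ keeps all nonzero letters of $\w{e}$, keeps the relative order of letters coming from $\w{e}$ and from $\decn{\w{f}}$, and loses the letters of $\decn{\w{f}}$ in the same order as they appear; combined with the fact that the only extra zeros allowed are those produced by $\pi_0$ acting on the $\Rnz$-part, one gets $\w{r}\in T_{ef}$ where $T_{ef}$ is the shuffle sum with up to $\ell=f_1$ leading zeros (shuffled into $\w{e}$ or prepended to it according to whether $0\in I$). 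The reverse inclusion $S_{ef}\subseteq Q(e,f)$ is checked exactly as in the third case of that proof: every element of the shuffle sum is below $e\cdot f$ and is not below any of the quotienting elements $\w{e}\cdot\decn{(\w{f}\cdot\pi_j)}$ for $j\notin J$, using Theorem~\ref{ordreR0}. Finally, to cut $T_{ef}$ down to $S_{ef}$ one uses Remark~\ref{rem_forme_idem}: since $f$ is idempotent and $\ell=f_1$, we have $\ell\notin J$, so the element $0\dots0\w{e}\decn{f_{\ell+2}\dots f_m}$ lies below $(\w{e}\cdot(\w{f}\cdot\pi_\ell))$ and hence vanishes in the quotient, killing $T_{ef}\setminus S_{ef}$.

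The main obstacle — and it is a mild one — is to be sure that no case that was specific to $0\in J$ leaves a residual contribution here and that Lemma~\ref{leminduction} and Remark~\ref{rem_forme_idem} transfer without change; but both were proved under hypotheses ($0\notin J$, and $f$ idempotent with $0\notin\w{f}$ respectively) that are automatically met in the $H_m^0$ setting, so the transfer is immediate. I would therefore present the proof as: ``This is proved exactly as cases~4 and~5 of Theorem~\ref{ind_simple_tour}, the only simplification being that $0\notin J$ is now automatic, so that cases~1 and~2 do not arise; Lemmas~\ref{inclplong} and~\ref{leminduction} and Remark~\ref{rem_forme_idem} apply verbatim.'' This keeps the argument short while being complete.
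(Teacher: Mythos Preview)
Your approach is correct and matches the paper's one-line proof (``This is a consequence of Theorem~\ref{ind_simple_tour}''). One small refinement worth making explicit: item~3 of Theorem~\ref{ind_simple_tour} also has $0\notin J$, so your case~1 must absorb it when $I=\interv{0}{n-1}$; the reason the item-4 argument now extends to cover that subcase is precisely that the quotient by $e\cdot(f\cdot\pi_0)$---which is what forced the smaller answer in item~3---is absent when the second factor is $H_m^0$ rather than $R_m^0$.
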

\begin{proof}
  This is a consequence of Theorem~\ref{ind_simple_tour}.
\end{proof}

\subsubsection{Projective indecomposable modules}

\paragraph{Restriction of indecomposable projective modules}

In order to get a co-product on the Grothendieck ring of projective modules,
$\mathcal{K}_0$, we need that $R^0_{m+n}$ is projective over
$R^0_{m}\times\Rnz$. Unfortunately, this is not the case. We will moreover
give counterexamples to the fact that $R_n^0$ is projective over $R_{n-1}^0$
for both embedding $\rho_{n-1,1}$ and $\rho_{1,n-1}$. This forbids to have any
analogues of Bratelli diagrams for projective modules.

Let us take $P_{\lbrace 0, 2, 3\rbrace}$. We want to restrict this projective
indecomposable module of $R_4^0$ to $R_2^0\times R_2^0$.
\begin{figure}[ht]
  \centering
  \includegraphics[scale=0.5]{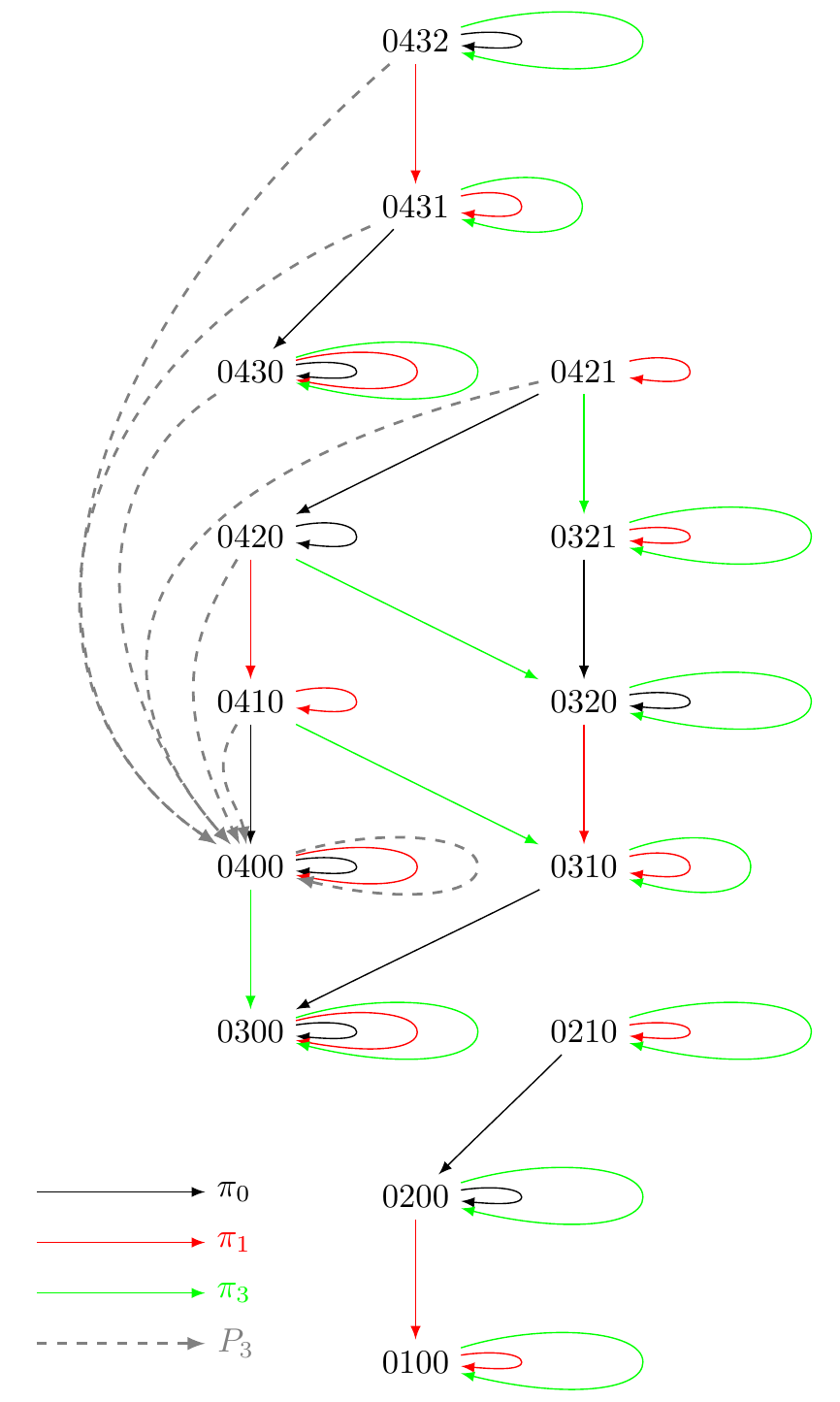}
  \includegraphics[scale=1]{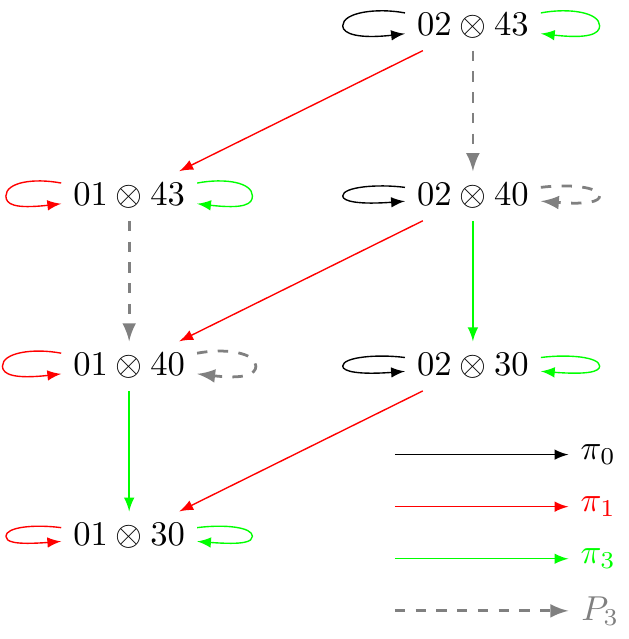}
  \caption{First counterexample for the restriction of projective
    modules.}\label{contreex_restr_proj_1}
\end{figure}
In Figure~\ref{contreex_restr_proj_1} we have on the left the module
$P_{\lbrace 0,2,3\rbrace}^4$ where we deleted the arrows of $\pi_2$ and showed
the action of $P_3$. Here we see that $P_3$ has a stable subspace of dimension
$1$. On the right we represent what would be a necessary part of the
decomposition of $P_{\lbrace 0,2,3\rbrace}^4$, that is $P_{\lbrace 0\rbrace}^2
\otimes P_{\lbrace 1\rbrace}^2$. Here we see that $P_3$ (that is the $\pi_0$
of the right $R_2^0$ according to the embedding \ref{plongement}) as a stable
subspace of dimension $2$. Hence it is impossible to cut the left one to get a
sum of projective indecomposable modules since the right one must be there and
can not be.

We give now two counterexamples which show that it does not work also for the
restriction along both embeddings $\rho_{n-1,1}$ and $\rho_{1,n-1}$.
\begin{figure}[!ht]
\centering
\includegraphics[scale=0.5]{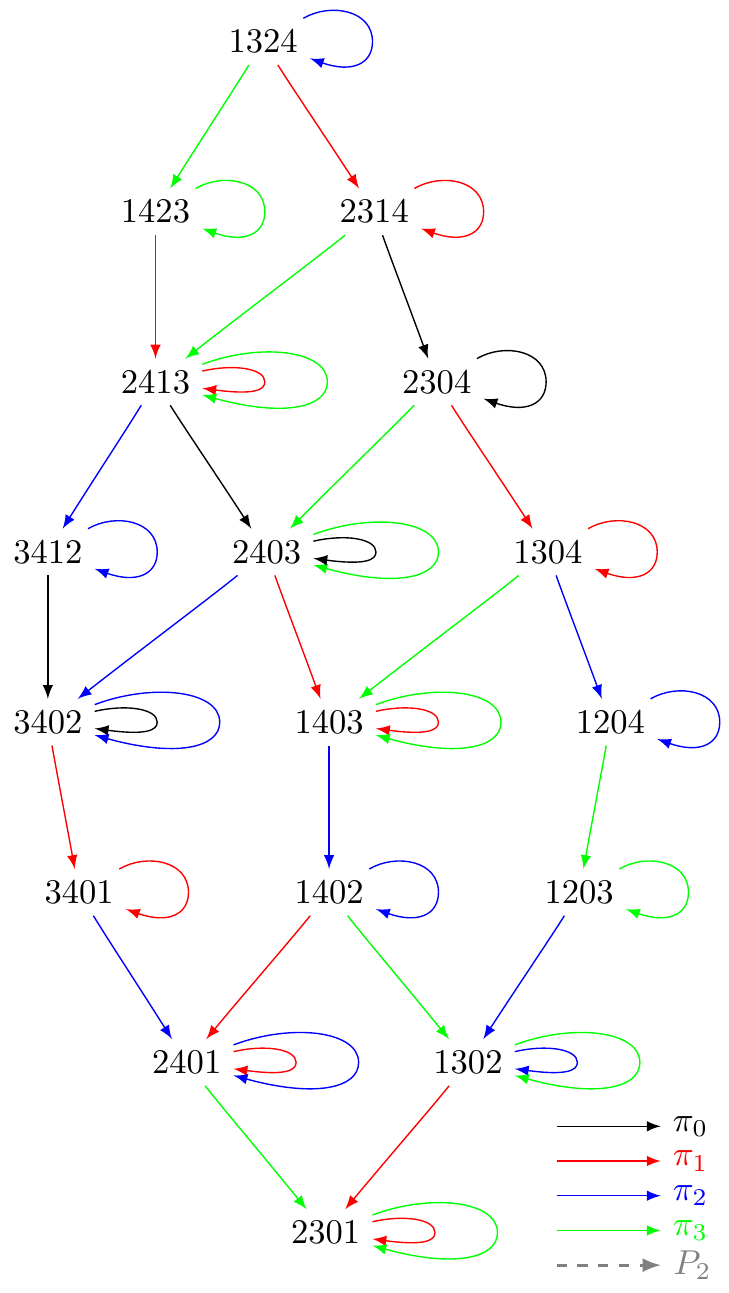}
\includegraphics[scale=0.5]{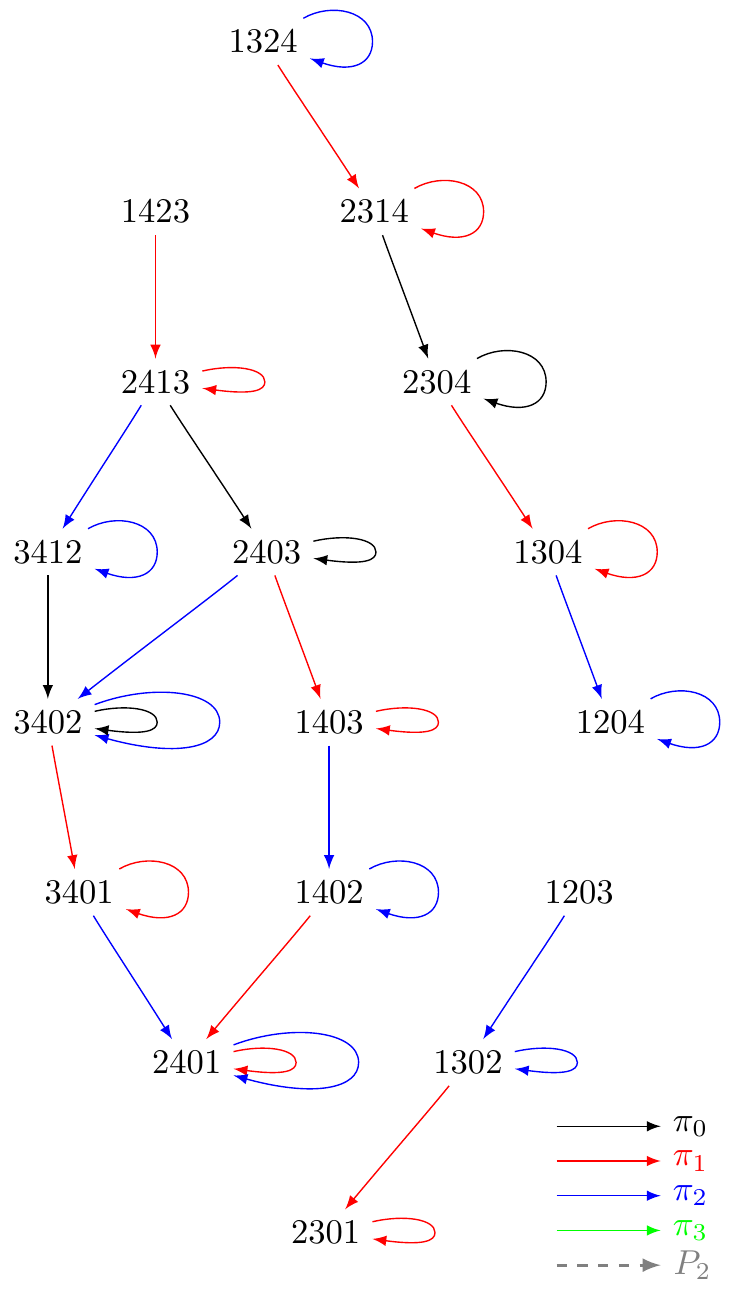}
\includegraphics[scale=0.5]{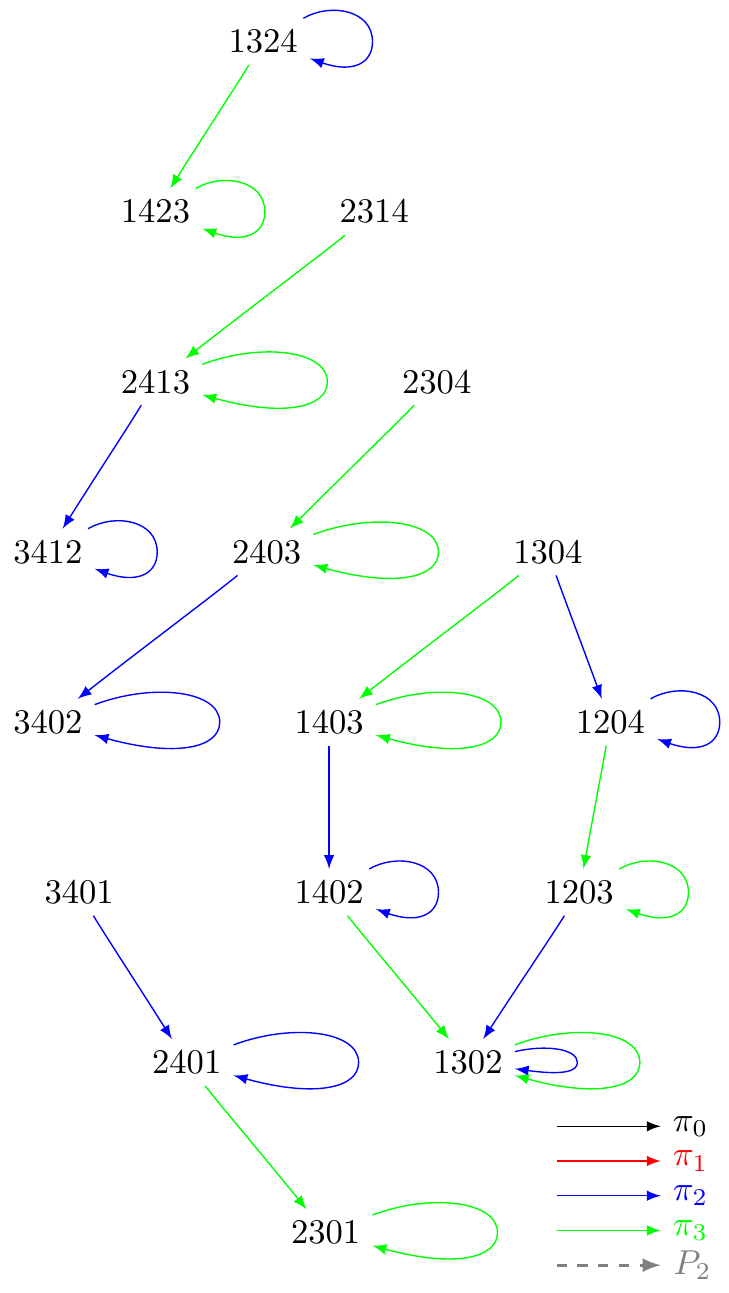}
\caption{Second counterexample for the restriction of projective modules.}\label{contreex_restr_proj_2}
\end{figure}
On the left of Figure \ref{contreex_restr_proj_2} we have the projective
module $P_{\lbrace 2\rbrace}^4$. We see that no element of this module has two
zeros, hence $P_2$ send every element to zero. In the middle of the figure we
have the same module where we forgot the action of $\pi_3$, that is we are
looking at the restriction $R_4^0\rightarrow R_{3}^0\otimes R_1^0$. In the
left one we forgot the action of both $\pi_0$ and $\pi_1$ but put the action
of $P_2$ (none here): we are looking at the restriction along
$R_4^0\rightarrow R_1^0\otimes R_{3}^0$.
If the middle and right modules were projective, these figures could be cut as
projective modules of $R_3^0$. We proceed step by step on the middle
one. First we recognise the first chain of five elements which is $P_{\lbrace
  2 \rbrace}^3$. Then the element $1423$ is $P_{\lbrace \rbrace}^3$. All the
cycles below with element on top $2413$ is $P_{\lbrace 1 \rbrace}^3$. The
element $1203$ is again $P_{\lbrace \rbrace}^3$. But the last two elements do
not correspond to any projective modules of $R_3^0$ (it should correspond to
$P_{\lbrace 2 \rbrace}^3$ since $1302$ only has the loop of $\pi_2$, which is
not the case).

We proceed the same way for the right module. We immediatly have a
contradiction with the first element which should generate $P_{\lbrace 1
  \rbrace}^3$ (be careful of the labels!) which is not the case.

As a conclusion of this paragraph, since we do not have the restriction of
indecomposable projective modules, we will not be able to have a tower of
monoids as for the case of $\Hnz$ to get $\NCSF$ and
$\QSym$~\cite{KrobThibon.1997}.

\paragraph{Induction of indecomposable projective modules}
For this one we can use Frobenius reciprocity as we did in Proposition \ref{restindecRH}, using Theorem \ref{rest_simple_RR}:
\begin{theorem}
Let $I\subset \interv{0}{n-1}$ and $J\subset \interv{0}{m-1}$. Then
$$ \Ind_{R_n^0\times R_m^0}^{R_{n+m}^0} P_I\otimes P_J =
\begin{cases}
P_{I\cup \decn{J}}\oplus P_{I\cup\lbrace n\rbrace\cup  \decn{J}}  & \text{ if } 0\notin J \\
P_{\interv{0}{n}\cup  \decn{J \setminus \lbrace 0\rbrace}} & \text{ if } 0\in J \text{ and } I = \interv{0}{n-1} \\
 0 & \text{ otherwise.}
\end{cases}.$$
\end{theorem}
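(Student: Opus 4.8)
The plan is to follow the same strategy as Proposition~\ref{restindecRH}, namely Frobenius reciprocity played against the restriction of simple modules computed in Theorem~\ref{rest_simple_RR}. Although the embedding $\rho_{n,m}\colon R_n^0\times R_m^0\to R_{n+m}^0$ of Proposition~\ref{plongement} is not injective, it is a unital monoid morphism, so the induced algebra map $\C[R_n^0\times R_m^0]\to\C[R_{n+m}^0]$ is unital, $\Ind$ is left adjoint to $\Res$, and Frobenius reciprocity applies. Since induction sends projectives to projectives, $\Ind_{R_n^0\times R_m^0}^{R_{n+m}^0}(P_I\otimes P_J)$ is a direct sum of indecomposable projectives $P_K^R$ with $K\subseteq\interv{0}{n+m-1}$, and the multiplicity of $P_K^R$ equals $\dim\Hom_{R_{n+m}^0}(\Ind(P_I\otimes P_J),S_K)=\dim\Hom_{R_n^0\times R_m^0}(P_I\otimes P_J,\Res S_K)$.

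Next I would use that $S_K$ is one-dimensional (Theorem~\ref{theo-Rn0-simple}), hence so is $\Res S_K$; as a one-dimensional $R_n^0\times R_m^0$-module it must be of the form $S_{K_1}\otimes S_{K_2}$ for unique $K_1\subseteq\interv{0}{n-1}$ and $K_2\subseteq\interv{0}{m-1}$, and Theorem~\ref{rest_simple_RR} gives $(K_1,K_2)$ explicitly in terms of $K$. Since $\Hom_{R_n^0\times R_m^0}(P_I\otimes P_J,S_{K_1}\otimes S_{K_2})\cong\Hom_{R_n^0}(P_I,S_{K_1})\otimes\Hom_{R_m^0}(P_J,S_{K_2})$ and $P_I,P_J$ are the projective covers of $S_I,S_J$, this space is $1$-dimensional exactly when $K_1=I$ and $K_2=J$, and is $0$ otherwise. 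So the whole computation reduces to enumerating the idempotents $K$ with $\Res S_K\simeq S_I\otimes S_J$: each such $K$ contributes $P_K^R$ once, and there are no further summands.

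Finally I would carry out this enumeration using Theorem~\ref{rest_simple_RR}, writing $K=(K\cap\interv{0}{n-1})\cup(K\cap\{n\})\cup(K\cap\interv{n+1}{n+m-1})$ and splitting on its two clauses. In the first clause ($\interv{0}{n}\not\subseteq K$), matching $S_I\otimes S_J$ forces $K\cap\interv{0}{n-1}=I$ and, only when $0\notin J$, $K\cap\interv{n+1}{n+m-1}=\decn J$, with membership of $n$ free subject to $\interv{0}{n}\not\subseteq K$; in the second clause ($\interv{0}{n}\subseteq K$) it forces $I=\interv{0}{n-1}$, $0\in J$ and $K=\interv{0}{n}\cup\decn{J\setminus\{0\}}$. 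Collecting these according to whether $0\in J$ produces the branches of the statement, with $\Ind=0$ when $0\in J$ and $I\neq\interv{0}{n-1}$ since then no $K$ matches. There is no deep obstacle; the one point needing care is the boundary candidate $K=\interv{0}{n}\cup\decn J$ when $I=\interv{0}{n-1}$ and $0\notin J$: it falls under the second clause of Theorem~\ref{rest_simple_RR}, so its restriction acquires an extra $0$ in the second tensor factor, and one must determine precisely whether it contributes before reading off the final formula.
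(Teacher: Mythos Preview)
Your strategy is exactly the paper's: invoke Frobenius reciprocity against Theorem~\ref{rest_simple_RR}, just as in Proposition~\ref{restindecRH}, and enumerate the $K\subseteq\interv{0}{n+m-1}$ with $\Res S_K\simeq S_I\otimes S_J$. The paper's proof does precisely this, splitting on $0\in J$ and, when $0\notin J$, on whether $n\in K$.

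You are in fact more careful than the paper on the boundary case you flag. When $I=\interv{0}{n-1}$ and $0\notin J$, the candidate $K=I\cup\{n\}\cup\decn{J}=\interv{0}{n}\cup\decn{J}$ satisfies $K\cap\interv{0}{n}=\interv{0}{n}$, so it lands in the \emph{second} clause of Theorem~\ref{rest_simple_RR}; its restriction is $S_{\interv{0}{n-1}}\otimes S_{\{0\}\cup J}$, which is not $S_I\otimes S_J$ since $0\notin J$. Thus by the very Frobenius count both you and the paper use, this $K$ does \emph{not} contribute, and the first line of the displayed formula overcounts by one summand in the special case $I=\interv{0}{n-1}$. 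The paper's proof simply writes ``we conclude considering the two cases whether $n\in K$ or not'' without checking that both choices remain in the first clause of Theorem~\ref{rest_simple_RR}; your instinct to pause there is correct, and you should resolve it explicitly rather than leave it as a caveat.
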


\begin{proof}
  We reason as in the proof of Proposition~\ref{restindecRH}, using Frobenius
  reciprocity:
  \begin{equation}
    \Hom_{R_{n+m}^0}\left(
      \Ind_{R_n^0\times R_m^0}^{R_{n+m}^0} P_I\otimes P_J, S_K \right)
    =
    \Hom_{\Rnz\otimes R_m^0}\left(
      P_I\otimes P_J, \Res_{R_n^0\times R_m^0}^{R_{n+m}^0} S_K \right)\,.
  \end{equation}
  We are looking for sets $K\subset \interv{0}{n+m-1}$ such that the simple
  $R_{n+m}^0$-module $S_K$ restricts to $S_I\otimes S_J$ over $R_n^0\times
  R_m^0$. If $0\notin J$ then $K\cap \interv{0}{n-1} = I$ and $K\cap
  \interv{n+1}{n+m-1} = \decn{J}$. We conclude considering the two cases
  whether $n\in K$ or not. On the contrary, if $0\in J$ then we are in the
  second case of Theorem \ref{rest_simple_RR}. So either
  $K\cap\interv{0}{n}=\interv{0}{n} $ that is $I = \interv{0}{n-1}$, and we
  have the second case, either it is wrong and in this case no restriction can
  be obtained.
\end{proof}

As we have seen, the natural tower of monoids structure of $(\Rnz)_{n\in\N}$
described here does not have a very nice representation theory. However, this
is not the only tower structure, and they may be nice tower structure on their
algebras involving linear combination.

\appendix

\section{Implementation}

A large part of the algorithms here are implemented in
\texttt{Sagemath}~\cite{Sagemath}. The representation theory where computed
using \texttt{sage\_semigroups}~\cite{sage-semigroup} from the second author,
F.~Saliola and N.~Thiéry. The code is freely
accessible at \[\text{\url{https://github.com/hivert/Jupyter-Notebooks}}\] Thanks to
the binder technology, one can experiment with it online
at \[\text{\small\url{https://mybinder.org/v2/gh/hivert/Jupyter-Notebooks/master?filepath=rook-0.ipynb}}\]

\section{Tables}

\paragraph{Decomposition functor}\label{decFunctorTable}

We give the decomposition functor from projective $\Rnz$-modules into
$\Hnz$-modules. They where computed according to Theorem~\ref{decompo_H_R}.
\newcommand{\isom}{\simeq}
\begin{alignat*}{2}
  P^R_{(1)} &\isom P_{(1)}&
  P^R_{(0, 1)} &\isom P_{(1)}\\
  P^R_{(2)} &\isom P_{(2)}&
  P^R_{(0, 2)} &\isom P_{(1, 1)} + P_{(2)}\\
  P^R_{(1, 1)} &\isom 2P_{(1, 1)} + P_{(2)}&
  P^R_{(0, 1, 1)} &\isom P_{(1, 1)}\\
  P^R_{(3)} &\isom P_{(3)}&
  P^R_{(0, 3)} &\isom P_{(1, 2)} + P_{(3)}\\
  P^R_{(2, 1)} &\isom P_{(1, 2)} + P_{(2, 1)} + P_{(3)}&
  P^R_{(0, 2, 1)} &\isom 2P_{(1, 1, 1)} + P_{(1, 2)} + P_{(2, 1)}\\
  P^R_{(1, 2)} &\isom P_{(1, 1, 1)} + 2P_{(1, 2)} + P_{(2, 1)} + P_{(3)}\qquad&
  P^R_{(0, 1, 2)} &\isom P_{(1, 1, 1)} + P_{(1, 2)}\\
  P^R_{(1, 1, 1)} &\isom 3P_{(1, 1, 1)} + P_{(1, 2)} + P_{(2, 1)}\qquad&
  P^R_{(0, 1, 1, 1)} &\isom P_{(1, 1, 1)}\\
\end{alignat*}
\begin{alignat*}{1}
  P^R_{(4)} &\isom P_{(4)}\\
  P^R_{(0, 4)} &\isom P_{(1, 3)} + P_{(4)}\\
  P^R_{(3, 1)} &\isom P_{(1, 3)} + P_{(3, 1)} + P_{(4)}\\
  P^R_{(0, 3, 1)} &\isom P_{(1, 1, 2)} + P_{(1, 2, 1)} + P_{(1, 3)} + P_{(3, 1)}\\
  P^R_{(2, 2)} &\isom P_{(1, 2, 1)} + P_{(1, 3)} + P_{(2, 2)} + P_{(3, 1)} + P_{(4)}\\
  P^R_{(0, 2, 2)} &\isom P_{(1, 1, 1, 1)} + 2P_{(1, 1, 2)} + P_{(1, 2, 1)} + P_{(1, 3)} + P_{(2, 2)}\\
  P^R_{(2, 1, 1)} &\isom P_{(1, 1, 2)} + P_{(1, 2, 1)} + P_{(1, 3)} + P_{(2, 1, 1)} + P_{(3, 1)}\\
  P^R_{(0, 2, 1, 1)} &\isom 3P_{(1, 1, 1, 1)} + P_{(1, 1, 2)} + P_{(1, 2, 1)} + P_{(2, 1, 1)}\\
  P^R_{(1, 3)} &\isom P_{(1, 1, 2)} + 2P_{(1, 3)} + P_{(2, 2)} + P_{(4)}\\
  P^R_{(0, 1, 3)} &\isom P_{(1, 1, 2)} + P_{(1, 3)}\\
  P^R_{(1, 2, 1)} &\isom 2P_{(1, 1, 1, 1)} + 2P_{(1, 1, 2)} + 3P_{(1, 2, 1)} + P_{(1, 3)} + P_{(2, 1, 1)} + P_{(2, 2)} + P_{(3, 1)}\\
  P^R_{(0, 1, 2, 1)} &\isom 2P_{(1, 1, 1, 1)} + P_{(1, 1, 2)} + P_{(1, 2, 1)}\\
  P^R_{(1, 1, 2)} &\isom 2P_{(1, 1, 1, 1)} + 3P_{(1, 1, 2)} + P_{(1, 2, 1)} + P_{(1, 3)} + P_{(2, 1, 1)} + P_{(2, 2)}\\
  P^R_{(0, 1, 1, 2)} &\isom P_{(1, 1, 1, 1)} + P_{(1, 1, 2)}\\
  P^R_{(1, 1, 1, 1)} &\isom 4P_{(1, 1, 1, 1)} + P_{(1, 1, 2)} + P_{(1, 2, 1)} + P_{(2, 1, 1)}\\
  P^R_{(0, 1, 1, 1, 1)} &\isom P_{(1, 1, 1, 1)}
\end{alignat*}

\paragraph{Cartan matrices}
We show below the first Cartan matrices of the $0$-rook monoids $R_n$ for
$n=2,3,4,5$. The column on the left shows the associated idempotents.
\[
\begin{smallmatrix}
12\\02\\21\\00
\end{smallmatrix}
\begin{psmallmatrix}
 1 & . & . & . \\
 . & 1 & 1 & . \\
 . & 1 & 2 & . \\
 . & . & . & 1
 \end{psmallmatrix}
\qquad
\begin{smallmatrix}
123\\023\\213\\003\\132\\032\\321\\000
\end{smallmatrix}
\begin{psmallmatrix}
 1 & . & . & . & . & . & . & . \\
 . & 1 & 1 & . & 1 & . & . & . \\
 . & 1 & 3 & . & 2 & 1 & 1 & . \\
 . & . & . & 1 & . & 1 & 1 & . \\
 . & 1 & 2 & . & 2 & . & . & . \\
 . & . & 1 & 1 & . & 2 & 2 & . \\
 . & . & 1 & 1 & . & 2 & 3 & . \\
 . & . & . & . & . & . & . & 1
\end{psmallmatrix}
\qquad
\begin{smallmatrix}
1234\\0234\\2134\\0034\\1324\\0324\\3214\\0004\\
1243\\0243\\2143\\0043\\1432\\0432\\4321\\0000
\end{smallmatrix}
\begin{psmallmatrix}
 1 & . & . & . & . & . & . & . & . & . & . & . & . & . & . & . \\
 . & 1 & 1 & . & 1 & . & . & . & 1 & . & . & . & . & . & . & . \\
 . & 1 & 3 & . & 3 & 1 & 1 & . & 2 & 1 & 2 & . & 1 & . & . & . \\
 . & . & . & 1 & . & 1 & 1 & . & . & 1 & 1 & . & 1 & . & . & . \\
 . & 1 & 3 & . & 4 & 1 & 1 & . & 2 & 1 & 3 & . & 1 & . & . & . \\
 . & . & 1 & 1 & 1 & 3 & 3 & . & . & 2 & 4 & 1 & 2 & 1 & 1 & . \\
 . & . & 1 & 1 & 1 & 3 & 5 & . & . & 2 & 6 & 1 & 3 & 2 & 2 & . \\
 . & . & . & . & . & . & . & 1 & . & . & . & 1 & . & 1 & 1 & . \\
 . & 1 & 2 & . & 2 & . & . & . & 2 & . & . & . & . & . & . & . \\
 . & . & 1 & 1 & 1 & 2 & 2 & . & . & 2 & 3 & . & 2 & . & . & . \\
 . & . & 2 & 1 & 3 & 4 & 6 & . & . & 3 & 9 & 1 & 4 & 2 & 2 & . \\
 . & . & . & . & . & 1 & 1 & 1 & . & . & 1 & 2 & . & 2 & 2 & . \\
 . & . & 1 & 1 & 1 & 2 & 3 & . & . & 2 & 4 & . & 3 & . & . & . \\
 . & . & . & . & . & 1 & 2 & 1 & . & . & 2 & 2 & . & 3 & 3 & . \\
 . & . & . & . & . & 1 & 2 & 1 & . & . & 2 & 2 & . & 3 & 4 & . \\
 . & . & . & . & . & . & . & . & . & . & . & . & . & . & . & 1
\end{psmallmatrix}\]

\[\begin{smallmatrix}
12345\\02345\\21345\\00345\\13245\\03245\\32145\\00045\\
12435\\02435\\21435\\00435\\14325\\04325\\43215\\00005\\
12354\\02354\\21354\\00354\\13254\\03254\\32154\\00054\\
12543\\02543\\21543\\00543\\15432\\05432\\54321\\00000
\end{smallmatrix}
\begin{psmallmatrix}
 1 & . & . & . & . & . & . & . & . & . & . & . & . & . & . & . & . & . & . & . & . & . & . & . & . & . & . & . & . & . & . & . \\
 . & 1 & 1 & . & 1 & . & . & . & 1 & . & . & . & . & . & . & . & 1 & . & . & . & . & . & . & . & . & . & . & . & . & . & . & . \\
 . & 1 & 3 & . & 3 & 1 & 1 & . & 3 & 1 & 2 & . & 1 & . & . & . & 2 & 1 & 2 & . & 2 & . & . & . & 1 & . & . & . & . & . & . & . \\
 . & . & . & 1 & . & 1 & 1 & . & . & 1 & 1 & . & 1 & . & . & . & . & 1 & 1 & . & 1 & . & . & . & 1 & . & . & . & . & . & . & . \\
 . & 1 & 3 & . & 5 & 1 & 1 & . & 4 & 2 & 5 & . & 2 & . & . & . & 2 & 1 & 4 & . & 4 & 1 & 1 & . & 1 & . & 1 & . & . & . & . & . \\
 . & . & 1 & 1 & 1 & 3 & 3 & . & 1 & 3 & 5 & 1 & 3 & 1 & 1 & . & . & 2 & 4 & 1 & 4 & 2 & 2 & . & 2 & 1 & 2 & . & 1 & . & . & . \\
 . & . & 1 & 1 & 1 & 3 & 5 & . & 1 & 3 & 8 & 1 & 5 & 2 & 2 & . & . & 2 & 6 & 1 & 6 & 3 & 4 & . & 3 & 2 & 4 & . & 2 & . & . & . \\
 . & . & . & . & . & . & . & 1 & . & . & . & 1 & . & 1 & 1 & . & . & . & . & 1 & . & 1 & 1 & . & . & 1 & 1 & . & 1 & . & . & . \\
 . & 1 & 3 & . & 4 & 1 & 1 & . & 4 & 1 & 3 & . & 1 & . & . & . & 2 & 1 & 3 & . & 3 & . & . & . & 1 & . & . & . & . & . & . & . \\
 . & . & 1 & 1 & 2 & 3 & 3 & . & 1 & 4 & 7 & 1 & 4 & 1 & 1 & . & . & 2 & 5 & 1 & 5 & 3 & 3 & . & 2 & 1 & 3 & . & 1 & . & . & . \\
 . & . & 2 & 1 & 5 & 5 & 8 & . & 3 & 7 & 21 & 2 & 11 & 5 & 5 & . & . & 3 & 13 & 2 & 14 & 9 & 13 & . & 4 & 4 & 12 & 1 & 4 & 1 & 1 & . \\
 . & . & . & . & . & 1 & 1 & 1 & . & 1 & 2 & 3 & 1 & 3 & 3 & . & . & . & 1 & 2 & 1 & 4 & 4 & 1 & . & 2 & 4 & 1 & 2 & 1 & 1 & . \\
 . & . & 1 & 1 & 2 & 3 & 5 & . & 1 & 4 & 11 & 1 & 7 & 2 & 2 & . & . & 2 & 7 & 1 & 8 & 4 & 6 & . & 3 & 2 & 6 & . & 2 & . & . & . \\
 . & . & . & . & . & 1 & 2 & 1 & . & 1 & 5 & 3 & 2 & 5 & 5 & . & . & . & 2 & 2 & 2 & 6 & 8 & 1 & . & 3 & 7 & 2 & 3 & 2 & 2 & . \\
 . & . & . & . & . & 1 & 2 & 1 & . & 1 & 5 & 3 & 2 & 5 & 7 & . & . & . & 2 & 2 & 2 & 6 & 10 & 1 & . & 3 & 9 & 2 & 4 & 3 & 3 & . \\
 . & . & . & . & . & . & . & . & . & . & . & . & . & . & . & 1 & . & . & . & . & . & . & . & 1 & . & . & . & 1 & . & 1 & 1 & . \\
 . & 1 & 2 & . & 2 & . & . & . & 2 & . & . & . & . & . & . & . & 2 & . & . & . & . & . & . & . & . & . & . & . & . & . & . & . \\
 . & . & 1 & 1 & 1 & 2 & 2 & . & 1 & 2 & 3 & . & 2 & . & . & . & . & 2 & 3 & . & 3 & . & . & . & 2 & . & . & . & . & . & . & . \\
 . & . & 2 & 1 & 4 & 4 & 6 & . & 3 & 5 & 13 & 1 & 7 & 2 & 2 & . & . & 3 & 10 & 1 & 10 & 4 & 5 & . & 4 & 2 & 5 & . & 2 & . & . & . \\
 . & . & . & . & . & 1 & 1 & 1 & . & 1 & 2 & 2 & 1 & 2 & 2 & . & . & . & 1 & 2 & 1 & 3 & 3 & . & . & 2 & 3 & . & 2 & . & . & . \\
 . & . & 2 & 1 & 4 & 4 & 6 & . & 3 & 5 & 14 & 1 & 8 & 2 & 2 & . & . & 3 & 10 & 1 & 11 & 4 & 6 & . & 4 & 2 & 6 & . & 2 & . & . & . \\
 . & . & . & . & 1 & 2 & 3 & 1 & . & 3 & 9 & 4 & 4 & 6 & 6 & . & . & . & 4 & 3 & 4 & 9 & 11 & 1 & . & 4 & 10 & 2 & 4 & 2 & 2 & . \\
 . & . & . & . & 1 & 2 & 4 & 1 & . & 3 & 13 & 4 & 6 & 8 & 10 & . & . & . & 5 & 3 & 6 & 11 & 18 & 1 & . & 5 & 16 & 3 & 6 & 4 & 4 & . \\
 . & . & . & . & . & . & . & . & . & . & . & 1 & . & 1 & 1 & 1 & . & . & . & . & . & 1 & 1 & 2 & . & . & 1 & 2 & . & 2 & 2 & . \\
 . & . & 1 & 1 & 1 & 2 & 3 & . & 1 & 2 & 4 & . & 3 & . & . & . & . & 2 & 4 & . & 4 & . & . & . & 3 & . & . & . & . & . & . & . \\
 . & . & . & . & . & 1 & 2 & 1 & . & 1 & 4 & 2 & 2 & 3 & 3 & . & . & . & 2 & 2 & 2 & 4 & 5 & . & . & 3 & 5 & . & 3 & . & . & . \\
 . & . & . & . & 1 & 2 & 4 & 1 & . & 3 & 12 & 4 & 6 & 7 & 9 & . & . & . & 5 & 3 & 6 & 10 & 16 & 1 & . & 5 & 15 & 2 & 6 & 3 & 3 & . \\
 . & . & . & . & . & . & . & . & . & . & 1 & 1 & . & 2 & 2 & 1 & . & . & . & . & . & 2 & 3 & 2 & . & . & 2 & 3 & . & 3 & 3 & . \\
 . & . & . & . & . & 1 & 2 & 1 & . & 1 & 4 & 2 & 2 & 3 & 4 & . & . & . & 2 & 2 & 2 & 4 & 6 & . & . & 3 & 6 & . & 4 & . & . & . \\
 . & . & . & . & . & . & . & . & . & . & 1 & 1 & . & 2 & 3 & 1 & . & . & . & . & . & 2 & 4 & 2 & . & . & 3 & 3 & . & 4 & 4 & . \\
 . & . & . & . & . & . & . & . & . & . & 1 & 1 & . & 2 & 3 & 1 & . & . & . & . & . & 2 & 4 & 2 & . & . & 3 & 3 & . & 4 & 5 & . \\
 . & . & . & . & . & . & . & . & . & . & . & . & . & . & . & . & . & . & . & . & . & . & . & . & . & . & . & . & . & . & . & 1
\end{psmallmatrix}\]

\bibliographystyle{plainnat}
\bibliography{article}
\Addresses

\end{document}